\def\ls{\lesssim}
\def\gs{\gtrsim}
\def\fz{\infty}
\def\r{\right}
\def\lf{\left}
\def\supp{{\mathop\mathrm{\,supp\,}}}
\def\aa{{\mathbb A}}
\def\rr{{\mathbb R}}
\def\rh{{\mathbb R}{\mathbb H}}
\def\rn{{{\rr}^n}}
\def\zz{{\mathbb Z}}
\def\nn{{\mathbb N}}
\def\cc{{\mathbb C}}
\newcommand{\wz}{\widetilde}
\newcommand{\oz}{\overline}
\newcommand{\ca}{{\mathcal A}}
\newcommand{\cd}{{\mathcal D}}
\newcommand{\cg}{{\mathcal G}}
\newcommand{\cm}{{\mathcal M}}
\newcommand{\cn}{{\mathcal N}}
\newcommand{\ccr}{{\mathcal R}}
\newcommand{\cs}{{\mathcal S}}
\newcommand{\cx}{{\mathcal X}}
\def\az{\alpha}
\def\lz{\lambda}
\def\blz{\Lambda}
\def\bdz{\Delta}
\def\bfai{\Phi}
\def\dz {\delta}
\def\ez {\eta}
\def\epz{\epsilon}
\def\bz{\beta}
\def\pz{\psi}
\def\fai{\varphi}
\def\gz{{\gamma}}
\def\bgz{{\Gamma}}
\def\vz{\varphi}
\def\tz{\theta}
\def\sz{\sigma}
\def\wz{\widetilde}
\def\ls{\lesssim}
\def\gs{\gtrsim}
\def\ol{\overline}
\def\boz{\Omega}
\def\oz{\omega}
\def\fin{{\mathop\mathrm{fin}}}
\def\esup{\mathop\mathrm{\,esssup\,}}
\def\bbmo{{{\mathop\mathrm{BMO}}}}
\def\at{{{\mathop\mathrm{at}}}}
\def\mol{{{\mathop\mathrm{mol}}}}
\def\fin{{{\mathop\mathrm{fin}}}}
\def\supp{{\mathop\mathrm{\,supp\,}}}
\def\dist{{\mathop\mathrm{\,dist\,}}}
\def\loc{{\mathop\mathrm{loc\,}}}
\def\lfz{\lfloor}
\def\rfz{\rfloor}
\def\hs{\hspace{0.3cm}}
\def\dsum{\displaystyle\sum}
\def\dint{\displaystyle\int}
\def\dfrac{\displaystyle\frac}
\def\dsup{\displaystyle\sup}
\newtheorem{theorem}{Theorem}[section]
\newtheorem{lemma}[theorem]{Lemma}
\newtheorem{corollary}[theorem]{Corollary}
\newtheorem{proposition}[theorem]{Proposition}
\theoremstyle{definition}
\newtheorem{remark}[theorem]{Remark}
\newtheorem{definition}[theorem]{Definition}
\theoremstyle{remark}
\numberwithin{equation}{section}
\begin{document}

\arraycolsep=1pt

\title{\bf Musielak-Orlicz-Hardy Spaces Associated with
Operators and Their Applications\footnotetext {\hspace{-0.35cm}
2010 {\it Mathematics Subject Classification}. Primary: 42B35;
Secondary: 42B30, 42B25, 42B20, 35J10, 46E30, 47B38,
47B06.\endgraf {\it Key words and phrases}. metric measure space, nonnegative
self-adjoint operator, Schr\"odinger operator, Musielak-Orlicz-Hardy space,
Davies-Gaffney estimate, atom, molecule,
maximal function, dual space, spectral multiplier, Littlewood-Paley function,
Riesz transform.
\endgraf The first author is supported by the National
Natural Science Foundation (Grant No. 11171027) of China and
Program for Changjiang Scholars and Innovative Research Team in
University of China.}}
\author{Dachun Yang and Sibei Yang}
\date{ }
\maketitle

\begin{center}
\begin{minipage}{13.8cm}
{\small {\bf Abstract}\quad Let $\mathcal{X}$ be a metric space
with doubling measure and $L$ a nonnegative self-adjoint operator
in $L^2(\mathcal{X})$ satisfying the Davies-Gaffney estimates.
Let $\varphi:\,\mathcal{X}\times[0,\infty)\to[0,\infty)$ be a
function such that $\varphi(x,\cdot)$ is an Orlicz function,
$\varphi(\cdot,t)\in {\mathbb A}_{\infty}(\mathcal{X})$ (the class of
uniformly Muckenhoupt weights), its uniformly critical upper type index
$I(\varphi)\in(0,1]$ and it satisfies the uniformly
reverse H\"older inequality of order $2/[2-I(\varphi)]$. In this paper, the authors
introduce a Musielak-Orlicz-Hardy space $H_{\varphi,\,L}(\mathcal{X})$,
by the Lusin area function associated with
the heat semigroup generated by $L$, and a Musielak-Orlicz
$\mathop\mathrm{BMO}$-type
space $\mathop\mathrm{BMO}_{\varphi,\,L}(\mathcal{X})$, which is further
proved to be the dual space of $H_{\varphi,\,L}(\mathcal{X})$
and hence whose $\varphi$-Carleson measure characterization is deduced.
Characterizations of $H_{\varphi,\,L}(\mathcal{X})$, including the
atom, the molecule and the Lusin area function associated with the
Poisson semigroup of $L$, are presented. Using the atomic
characterization, the authors characterize
$H_{\varphi,\,L}(\mathcal{X})$ in terms of the Littlewood-Paley
$g^\ast_\lambda$-function $g^\ast_{\lambda,\,L}$ and establish a
H\"ormander-type spectral multiplier theorem for $L$ on
$H_{\varphi,\,L}(\mathcal{X})$. Moreover, for the
Musielak-Orlicz-Hardy space $H_{\varphi,\,L}(\mathbb{R}^n)$
associated with the Schr\"odinger operator $L:=-\Delta+V$, where $0\le V\in
L^1_{\mathrm{loc}}(\mathbb{R}^n)$,
the authors obtain its several equivalent
characterizations in terms of the non-tangential maximal
function, the radial maximal function, the atom and the
molecule; finally, the authors show that the Riesz
transform $\nabla L^{-1/2}$ is bounded from
$H_{\varphi,\,L}(\mathbb{R}^n)$ to the Musielak-Orlicz space $L^\varphi(\mathbb{R}^n)$
when $i(\varphi)\in(0,1]$, and from $H_{\varphi,\,L}(\mathbb{R}^n)$ to
the Musielak-Orlicz-Hardy space $H_{\varphi}(\mathbb{R}^n)$
when $i(\varphi)\in(\frac{n}{n+1},1]$, where $i(\varphi)$ denotes
the uniformly critical lower type index of $\varphi$.}
\end{minipage}
\end{center}

\tableofcontents

\section{Introduction\label{s1}}

\hskip\parindent The real-variable theory of Hardy spaces on the $n$-dimensional
Euclidean space $\rn$, initiated by Stein and Weiss \cite{sw60}, plays an important
role in various fields of analysis (see, for example, \cite{fs72,st93,m94,s94}).
It is well known that the Hardy space $H^p(\rn)$ when $p\in(0,1]$ is
a suitable substitute of the Lebesgue space $L^p(\rn)$; for example,
the classical Riesz transform is bounded on $H^p(\rn)$, but not on
$L^p(\rn)$ when $p\in(0,1]$. Moreover, the
practicability of $H^p(\rn)$ with $p\in(0,1]$, as a substitute
for $L^p(\rn)$ with $p\in(0,1]$, comes from its several equivalent
real-variable characterizations, which
were originally motivated by Fefferman and Stein in their seminal paper \cite{fs72}.
Among these characterizations, a very important and useful
characterization of the Hardy spaces $H^p(\rn)$ is their atomic characterizations,
which were obtained by Coifman \cite{c74} when $n=1$ and
Latter \cite{l78} when $n>1$. Moreover, a direct extension of the
atomic characterization of the Hardy spaces is the molecular characterization established
by Taibleson and Weiss \cite{tw80}.

On the other hand,  as a natural generalization of $L^p (\rn)$,
the Orlicz space was introduced by Birnbaum-Orlicz in \cite{bo31}
and Orlicz in \cite{o32}, which has extensive applications
in several branches of mathematics
(see, for example, \cite{rr91,rr00,mw08,io02,h10} for more details).
Moreover, the Orlicz-Hardy space, introduced and studied in \cite{s79,ja80,vi87},
 is also a suitable substitute of the
Orlicz space in the study of the boundedness of operators (see, for
example, \cite{s79,ja80,vi87,jyz09,jy10,jy11}).
Furthermore, weighted local Orlicz-Hardy spaces and their dual spaces
were also studied in \cite{yys1}. All theories of these function spaces
are intimately connected with the \emph{Laplace operator
$\Delta:=\sum_{i=1}^n\frac{\partial^2}{\partial x_i^2}$} on $\rn$.

Recall that the classical $\bbmo$ space (the \emph{space of
functions with bounded mean oscillation}) is originally introduced by
John and Nirenberg \cite{jn1} to solve some problems in
partial differential equations.
Since Fefferman and Stein \cite{fs72} proved that $\bbmo(\rn)$ is the
dual space of $H^1 (\rn)$, the space $\bbmo(\rn)$ plays an important role
in not only partial differential equations but also harmonic
analysis (see, for example, \cite{dxy07,fs72} for more details).
Moreover, the generalized space
$\bbmo_\rho(\rn)$ was introduced and studied in
\cite{s79,ja80,vi87,hsv07} and it was proved therein to be the
dual space of the Orlicz-Hardy space $H_\Phi(\rn)$, where $\Phi$
denotes the Orlicz function on $(0,\fz)$ and
$\rho(t):=t^{-1}/\Phi^{-1}(t^{-1})$ for all $t\in(0,\fz)$. Here and
in what follows, $\Phi^{-1}$ denotes the \emph{inverse function} of
$\Phi$.

Recently, Ky \cite{k} introduced a new
Musielak-Orlicz-Hardy space, $H_{\fai}(\rn)$, via the grand maximal
function, which contains both the Orlicz-Hardy space in \cite{s79,ja80}
and the weighted Hardy space $H^p_\oz(\rn)$ with $\oz\in A_{\fz}(\rn)$ in \cite{gr1,st}
as the spacial cases. Here,
$\fai:\,\rn\times[0,\fz)\to[0,\fz)$ is a function such
that $\fai(x,\cdot)$ is an Orlicz function of uniformly upper type
1 and lower type $p$ for some $p\in(0,1]$ (see Section \ref{s2} below for the
definitions of uniformly upper or lower types), and $\fai(\cdot,t)$ is
a Muckenhoupt weight, and $A_q(\rn)$ with $q\in[1,\fz]$ denotes the
\emph{class of Muckenhoupt's weights} (see, for example,
\cite{g79,gr1,gra1} for their definitions and properties).
Moreover, the Musielak-Orlicz $\mathop\mathrm{BMO}$-type space
$\mathop\mathrm{BMO}_{\fai}(\rn)$ was also introduced and further proved to be the dual
space of $H_{\fai}(\rn)$ in \cite{k} by using the atomic characterization
of $H_{\fai}(\rn)$ established in \cite{k}. Furthermore, some interesting applications
of the spaces $H_\fai(\rn)$ and $\mathop\mathrm{BMO}_{\fai}(\rn)$ were given
in \cite{bfg10,bgk,bijz07,k,k2,k1,k3}. Moreover, the radial and the
non-tangential maximal functions characterizations, the Littlewood-Paley function
characterization and the molecular characterization of $H_\fai(\rn)$ were
obtained in \cite{lhy,hyy}. As an application of the Lusin area function
characterization of $H_\fai(\rn)$, the $\fai$-Carleson measure
characterization of the space $\bbmo_\fai(\rn)$ was obtained in \cite{hyy}. Furthermore,
the local Musielak-Orlicz-Hardy space and its dual space
were studied in \cite{yys2}.
It is worth pointing out that Musielak-Orlicz functions are
the natural generalization of Orlicz functions
(see, for example, \cite{d05,dhr09,k,m83,n50})
and the motivation to study function spaces of
Musielak-Orlicz type is attributed to their extensive applications to
many branches of physics and mathematics (see, for example,
\cite{bfg10,bg10,bgk,bijz07,d05,dhr09,k,k2,l05} for more details).

In recent years, the study of function spaces
associated with different operators inspired great interests (see,
for example, \cite{adm,amr08,ar03,dxy07,dy05a,dy05,hlmmy,hm09,hmm,
jy11,jy10,jyy,jyz09,sy10,y08} and their references). More precisely,
Auscher, Duong and McIntosh \cite{adm} initially studied the Hardy space $H^1_L(\rn)$
associated with the operator $L$ whose heat kernel satisfies a
pointwise Poisson upper bound estimate. Based on this,
Duong and Yan \cite{dy05a,dy05} introduced the $\bbmo$-type space $\bbmo_L(\rn)$
associated with $L$ and proved that the dual space of $H^1_L(\rn)$
is just $\bbmo_{L^\ast}(\rn)$, where $L^\ast$ denotes the \emph{adjoint
operator} of $L$ in $L^2(\rn)$. Moreover, Yan \cite{y08} further generalized
these results to the Hardy space $H^p_L(\rn)$ with $p\in(0,1]$ close
to 1 and its dual space. Also, the Orlicz-Hardy space and its dual
space associated with such an $L$ were studied in \cite{jyz09}.

Moreover, Hofmann and Mayboroda
\cite{hm09} and Hofmann et al. \cite{hmm}
introduced the Hardy and Sobolev spaces associated with a second order
divergence form elliptic operator $L$ on $\rn$ with bounded
measurable complex coefficients and these operators may not have the
pointwise heat kernel bounds, and further established several
equivalent characterizations for these spaces and studied their dual
spaces.  Meanwhile, the Orlicz-Hardy space and its dual space
associated with $L$ were independently studied in \cite{jy10}. Furthermore,
Orlicz-Hardy spaces associated with a second order
divergence form elliptic operator on the strongly Lipschitz
domain of $\rn$ were studied in \cite{yys,yys3}. It is worth pointing out that
the strongly Lipschitz domain of $\rn$ is a special space of
homogeneous type in the sense of Coifman and Weiss \cite{cw71}.
Recall that the Hardy spaces on strongly Lipschitz domains
associated with the Laplace operator having some boundary conditions
were originally and systematically studied
by Chang et al. in \cite{c94,cds99,cks92,cks93} and Auscher et al. \cite{ar03}.

On the other hand, the Hardy space associated with the
Schr\"odinger operator $-\Delta+V$ was studied in \cite{dz99,dz00}, where the
nonnegative potential $V$ satisfies the reverse H\"older inequality
(see, for example, \cite{gr1,gra1} for the definition of the reverse
H\"older inequality). More generally, for nonnegative self-adjoint operators $L$
satisfying the Davies-Gaffney estimates, Hofmann et al.
\cite{hlmmy} studied the Hardy space $H^1_L(\cx)$ associated with
$L$ and its dual space on a metric measure space $\cx$, which was extended
to the Orlicz-Hardy space in \cite{jy11}. As a special case of this
setting, several equivalent characterizations and some applications
of the Hardy space $H^1_L(\rn)$ and the Orlicz-Hardy space
$H_{\Phi,\,L}(\rn)$ associated with the Schr\"odinger operator
$L:=-\Delta+V$ were, respectively, obtained in \cite{hlmmy} and
\cite{jy11}, where $0\le V\in L^1_{\loc}(\rn)$.
Moreover, Song and Yan \cite{sy10} studied the weighted
Hardy space $H^1_{\oz,\,L}(\rn)$ associated with the Schr\"odinger
operator $L$, where $\oz\in A_1(\rn)$. Very recently, some special
Musielak-Orlicz-Hardy spaces  associated with the Schr\"odinger
operator $L:=-\Delta+V$ on $\rn$, where the nonnegative
potential $V$ satisfies the reverse H\"older inequality of order $n/2$,
were studied by Ky \cite{k1,k3} and further applied to the study of
commutators of singular integral operators associated with the operator $L$.

Let $\cx$ be a metric measure space, $L$ a nonnegative
self-adjoint operator on $L^2(\cx)$ satisfying the Davies-Gaffmey
estimates, and $E(\lz)$ the spectral resolution of $L$. For any
bounded Borel function $m:\ [0,\fz)\to\cc$, by using the spectral
theorem, it is well known that the operator
\begin{eqnarray}\label{1.1}
m(L):=\int_0^\fz m(\lz)\,dE(\lz)
\end{eqnarray}
is well defined and bounded on $L^2(\cx)$. It is an interesting
problem to find some sufficient conditions on $m$  and
$L$ such that $m(L)$ in \eqref{1.1} is bounded on various function spaces on
$\cx$, which was extensively studied (see, for example,
\cite{al94,ag94,b03,c91,dos02,dy11,o09,dm87} and their references).
Specially, Duong and Yan \cite{dy11} proved that $m(L)$ is bounded
on the Hardy space $H^p_L(\cx)$, with $p\in(0,\fz)$, associated with
$L$ when $\cx$ is a metric space with doubling measure and the
function $m$ satisfies a H\"ormander-type condition.

\emph{Throughout the whole paper}, let $\cx$ be a metric space with doubling measure $\mu$ and $L$ a
nonnegative self-adjoint operator in $L^2(\cx)$ satisfying the
Davies-Gaffney estimates. Let $\fai:\,\cx\times[0,\fz)\to[0,\fz)$
be a growth function as in Definition \ref{d2.3} below, which means
that $\fai(x,\cdot)$ is an Orlicz function (see Section \ref{s2.3} below),
$\fai(\cdot,t)\in \aa_{\infty}(\cx)$ (the class of uniformly Muckenhoupt
weights in Definition \ref{d2.2} below), and its
uniformly critical upper type index $I(\fai)\in(0,1]$ (see
\eqref{2.10} below). Moreover, we \emph{always assume} that
$\vz\in\rh_{2/[2-I(\vz)]}(\cx)$ (see Definition \ref{d2.2} below). A typical
example of such a $\fai$ is
\begin{equation}\label{1.2}
\fai(x,t):=\oz(x)\Phi(t)
\end{equation}
for all $x\in\cx$ and $t\in[0,\fz)$, where $\oz\in A_{\fz}(\cx)$ (the \emph{class of
Muckenhoupt weights}) and $\Phi$ is an Orlicz function on $[0,\fz)$ of
upper type $p_1\in(0,1]$ and lower type $p_2\in(0,1]$ (see \eqref{2.9} below
for the definition of types). Let $x_0\in\cx$. Another typical
and useful example of the growth function $\fai$ is
\begin{equation}\label{1.3}
\fai(x,t):=\frac{t^{\az}}{[\ln(e+d(x,x_0))]^{\bz}+[\ln(e+t)]^{\gz}}
\end{equation}
for all $x\in\cx$ and $t\in[0,\fz)$ with some $\az\in(0,1]$,
$\bz\in[0,n)$ and $\gz\in [0,2\az(1+\ln2)]$ (see
Section \ref{s2.3} for more details). It is worth pointing out that such a function
$\fai$ naturally appears in the study of the pointwise multiplier characterization
for the BMO-type space on the metric space with doubling measure (see \cite{ny97}).

Motivated by \cite{hlmmy,jy11,dy11,k}, in this paper, we study the
Musielak-Orlicz-Hardy space $H_{\fai,\,L}(\cx)$ and its
dual space. More precisely, for all $f\in L^2(\cx)$ and $x\in\cx$,
define
\begin{equation}\label{1.4}
S_L(f)(x):=\lf\{\int_{\bgz(x)}\lf|t^2Le^{-t^2L}f(y)\r|^2
\frac{d\mu(y)\,dt}{V(x,t)t}\r\}^{1/2},
\end{equation}
here and in what follows, $\bgz(x):=\{(y,t)\in\cx\times(0,\fz):\
d(x,y)<t\}$, $d$ denotes the metric on $\cx$,
$B(x,t):=\{y\in\cx:\ d(x,y)<t\}$,
$\mu$ denotes the nonnegative Borel regular measure on $\cx$
and $V(x,t):=\mu(B(x,t))$. The \emph{Musielak-Orlicz-Hardy space $H_{\fai,\,L}(\cx)$} is then defined to be the
completion of the set $\{f\in H^2(\cx):\ S_L(f)\in L^\fai(\cx)\}$
with respect to the quasi-norm
$$\|f\|_{H_{\fai,\,L}(\cx)}:=\|S_L(f)\|_{L^\fai(\cx)}:=
\inf\lf\{\lz\in(0,\fz):\
\int_{\cx}\fai\lf(x,\frac{S_L(f)(x)}{\lz}\r)\,d\mu(x)\le1\r\},
$$
where $H^2(\cx):=\overline{R(L)}$ and $\overline{R(L)}$ denotes
the \emph{closure of the range
of $L$ in $L^2(\cx)$}.

In this paper, we first establish the atomic decomposition
of $H_{\fai,\,L}(\cx)$ and further obtain its
molecular decomposition. Using the atomic and the molecular
decompositions of $H_{\fai,\,L}(\cx)$, we then prove that its dual
space is the Musielak-Orlicz  ${\mathop\mathrm{BMO}}$-type space
$\bbmo_{\fai,\,L}(\cx)$, which is characterized by the
$\fai$-Carleson measure,
and further establish the atomic and the
molecular characterizations of $H_{\fai,\,L}(\cx)$. We also obtain
another characterization of $H_{\fai,\,L}(\cx)$ via the Lusin area
function associated with the Poisson semigroup of $L$. As
applications, by using the atomic characterization, we prove that
Littlewood-Paley functions $g_L$ and $g^\ast_{\lambda,\,L}$ are
bounded from $H_{\varphi,\,L}(\mathcal{X})$ to the
Musielak-Orlicz space $L^\varphi(\mathcal{X})$; as a
corollary, we characterize $H_{\varphi,\,L}(\mathcal{X})$ in terms
of the Littlewood-Paley $g^\ast_\lz$-function $g^\ast_{\lambda,\,L}$. We further establish
a H\"ormander-type spectral multiplier theorem for $L$ on
$H_{\varphi,\,L}(\mathcal{X})$ by using the atomic and the molecular
characterizations of  $H_{\varphi,\,L}(\mathcal{X})$. As further
applications, we obtain several equivalent characterizations of the
Musielak-Orlicz-Hardy space $H_{\varphi,\,L}(\mathbb{R}^n)$
associated with the Schr\"odinger operator $L:=-\Delta+V$, where
$0\le V\in L^1_{\mathrm{loc}}(\mathbb{R}^n)$, in terms of the Lusin-area function, the non-tangential
maximal function, the radial maximal function, the atom and the molecule.
Finally, we show that the Riesz transform
$\nabla L^{-1/2}$ is bounded from $H_{\varphi,\,L}(\mathbb{R}^n)$ to
$L^\varphi(\mathbb{R}^n)$ when $i(\varphi)\in(0,1]$ and from
$H_{\varphi,\,L}(\mathbb{R}^n)$ to the Musielak-Orlicz-Hardy space
$H_{\varphi}(\mathbb{R}^n)$ when $i(\varphi)\in(\frac{n}{n+1},1]$,
where $i(\vz)$ denotes the uniformly critical lower type index of $\vz$
(see \eqref{2.11} below).

The key step of the above approach is to establish the atomic
(molecular) characterization of the Musielak-Orlicz-Hardy space
$H_{\fai,\,L}(\cx)$. To this end, we inherit a method used in
\cite{amr08,jy10,jy11}. We first establish the atomic
decomposition of the Musielak-Orlicz tent space
$T_\fai(\cx\times(0,\fz))$ associated with $\fai$, whose proof
implies that if $f\in T_\fai(\cx\times(0,\fz))\cap
T^2_2(\cx\times(0,\fz))$, then the atomic decomposition of $f$
holds true in both $T_\fai(\cx\times(0,\fz))$ and
$T^2_2(\cx\times(0,\fz))$. We point out that in this paper, by the
assumptions on $L$, we only know that the Lusin area function
$S_L$ as in \eqref{1.4} is bounded on $L^2(\cx)$ (see \eqref{2.7}
below). To prove that the atomic decomposition of $f\in
T_\fai(\cx\times(0,\fz))\cap T^2_2(\cx\times(0,\fz))$ holds true in
$T^2_2 (\cx\times(0,\fz))$ (see Corollary \ref{c3.1} below), we
need the \emph{additional assumption} that $\fai(\cdot,t)$ for all
$t\in [0,\fz)$ belongs to the uniformly reverse H\"older class
$\rh_{2/[2-I(\fai)]}(\cx)$. Then by the fact that the operator
$\pi_{\Psi,\,L}$ in \eqref{4.2} below is bounded from
$T^2_2(\cx\times(0,\fz))$ to $L^2(\cx)$, we further obtain the
$L^2(\cx)$-convergence of the corresponding atomic decomposition
for functions in $H_{\fai,\,L}(\cx)\cap L^2(\cx)$, since for all
$f\in H_{\fai,\,L}(\cx)\cap L^2(\cx)$, $t^2Le^{-t^2L}f\in
T^2_2(\cx\times(0,\fz))\cap T_\fai(\cx\times(0,\fz))$. This
technique plays a fundamental role in the whole paper.

We remark that the method used to obtain the atomic
characterization of the Musielak-Orlicz-Hardy space $H_{\fai,\,L}(\cx)$
in this paper is different from that in \cite{sy10}, but more close to the method
in \cite{hyy,bd11,jy11}. More precisely, in \cite{sy10}, the atomic
characterization of the weighted Hardy space $H^1_L(\rn)$, associated with the Schr\"odinger
operator $L$, was established  by using the Calder\'on
reproducing formula associated with $L$ and a subtle decomposition of all dyadic cubes
in $\rn$. However, in this paper, we establish the atomic
characterization of $H_{\fai,\,L}(\cx)$ by using the Calder\'on reproducing formula
associated with $L$ (see \eqref{4.x1} below),
the atomic decomposition of the Musielak-Orlicz tent space
established in Theorem \ref{t3.1} below and some boundedness (see Proposition
\ref{p4.1} below) of the operator $\pi_{\Psi,\,L}$ defined in \eqref{4.2} below.
Moreover, we also point out that the notion of atoms in our atomic decomposition of
the Musielak-Orlicz tent space is different from
that in \cite{bd11}. Since the weight also appears in the norm of atoms used by
Bui and Duong \cite{bd11} when establishing
the atomic decomposition of elements in the weighted tent space,
Bui and Duong \cite{bd11} had to require
the weight $\oz\in A_1(\cx)\cap RH_{2/(2-p)}(\cx)$ in order to obtain the atomic
decomposition of the weighted Hardy
space $H^p_{\oz,\,L}(\cx)$ with $p\in(0,1]$ (see the proof of
\cite[Proposition 3.9]{bd11} for the details).
Instead of this, we do not use the weight in the norm
of our $T_\fai(\cx\times(0,\fz))$-atoms. Due to this subtle choice,
we are able to relax the requirements on the growth function into
$\fai\in\aa_\fz(\cx)\cap\rh_{2/[2-I(\fai)]}(\cx)$,
which essentially improves the results of Bui and Duong \cite{bd11} even when
$\vz$ is as in \eqref{1.2}.

Another important estimate, appeared in the approach of this paper,
is that there exists a positive
constant $C$ such that, for any $\lz\in\cc$ and $(\fai,\,M)$-atom
$\az$ adapted to the ball $B$ (or any $(\fai,\,M,\,\epz)$-molecule
$\az$ adapted to the ball $B$),
\begin{equation}\label{1.5}
\int_{\cx}\fai(x,S_L(\lz\az)(x))\,d\mu(x) \le
C\fai\lf(B,|\lz|\|\chi_B\|^{-1}_{L^\fai(\cx)}\r);
\end{equation}
see Definitions \ref{d4.2} and
\ref{d4.3} below for the notions of $(\fai,\,M)$-atoms and
$(\fai,\,M,\,\epz)$-molecules. A main difficulty to prove \eqref{1.5} is how to take
$S_L(\lz\az)(x)$ out of the position of the time variable of
$\fai$. In \cite{jy10,jy11}, to obtain \eqref{1.5} when $\vz$ is as in \eqref{1.2}
with $\oz\equiv 1$, it was assumed
that $\Phi$ is a concave Orlicz function on $(0,\fz)$. In
this case, Jensen's inequality does the job. In the present
setting, the spatial variable and the time variable of $\fai$ are
combinative, so Jensen's inequality does not work even when $\fai$
is concave about the time variable. To overcome this difficulty,
we subtly use the properties of $\fai$ which are the uniformly
upper $p_1\in(0,1]$ and lower type $p_2\in(0,1]$ (see the proof of
\eqref{4.5} below).

Precisely, this paper is organized as follows. In Section
\ref{s2}, we first recall some
notions and notation on metric measure spaces and then
describe some basic assumptions on the
operator $L$ studied in this paper. We also
recall some notation,  some examples
and some basic properties concerning growth functions
considered in this paper.

In Section \ref{s3}, we first recall some notions about tent
spaces and then study the Musielak-Orlicz tent space $T_\fai(\cx\times(0,\fz))$
associated with growth function $\fai$. The main target of
this section is to establish the atomic characterization for
$T_\fai(\cx\times(0,\fz))$ (see Theorem \ref{t3.1} below). Assume further
that $\fai\in\rh_{2/[2-I(\fai)]}(\cx)$.
As a byproduct, we know that if
$f\in T_\fai(\cx\times(0,\fz))\cap T_2^2(\cx\times(0,\fz))$,
then the atomic decomposition of $f$ holds true in both $T_\fai(\cx\times(0,\fz))$
and $T_2^2(\cx\times(0,\fz))$, which plays an important role in the remainder
of this paper (see Corollary \ref{c3.1} below). We point out that Theorem \ref{t3.1}
and Corollary \ref{c3.1} completely cover
\cite[Theorem 3.1 and Corollary 3.1]{jy11} by
taking $\fai$ as in \eqref{1.2} with $\oz\equiv1$ and $\Phi$ concave.

In Section \ref{s4}, we first introduce the Musielak-Orlicz-Hardy
space $H_{\fai,\,L}(\cx)$ and prove that the operator
$\pi_{\Psi,\,L}$ in \eqref{4.2} below maps the Musielak-Orlicz
tent space $T_\fai(\cx\times(0,\fz))$ continuously into
$H_{\fai,\,L}(\cx)$ (see Proposition \ref{p4.1} below). By this
and the atomic decomposition of $T_\fai(\cx\times(0,\fz))$, we
conclude that, for each $f\in H_{\fai,\,L}(\cx)$, there exists an
atomic decomposition of $f$ holding true in $H_{\fai,\,L}(\cx)$ (see
Corollary \ref{c4.1} below). We should point out that to obtain
the atomic decomposition of $H_{\fai,\,L}(\cx)$, we borrow some
ideas from \cite{hlmmy,jy11}, and the estimate \eqref{1.5} is very
important for this procedure. Via this atomic decomposition of
$H_{\fai,\,L}(\cx)$, we further prove that the dual space of
$H_{\fai,\,L}(\cx)$ is just the Musielak-Orlicz
${\mathop\mathrm{BMO}}$-type space $\bbmo_{\fai,\,L}(\cx)$ (see
Theorem \ref{t4.1} below). As an application of this duality, we
establish the $\fai$-Carleson measure characterization of the
space $\bbmo_{\fai,\,L}(\cx)$ (see Theorem \ref{t4.2} below).

We remark that when $\fai$ is as in \eqref{1.2}
with $\oz\equiv1$ and $\Phi$ concave, the Musielak-Orlicz-Hardy
space $H_{\fai,\,L}(\cx)$ and
the Musielak-Orlicz  ${\mathop\mathrm{BMO}}$-type space
$\bbmo_{\fai,\,L}(\cx)$ are respectively the Orlicz-Hardy space
$H_{\Phi,\,L}(\cx)$ and the $\bbmo$-type space
$\bbmo_{\rho,\,L}(\cx)$ introduced in \cite{jy11}.

In Section \ref{s5}, by Proposition \ref{p4.3} and Theorem
\ref{t4.1}, we establish the equivalence between $H_{\fai,\,L}(\cx)$ and
the atomic (resp. molecular) Musielak-Orlicz-Hardy space $H^M_{\fai,\,\at}(\cx)$
(resp. $H^{M,\,\epz}_{\fai,\,\mol}(\cx)$) (see
Theorem \ref{t5.1} below). We notice that the series in
$H^M_{\fai,\,\at}(\cx)$ (resp. $H^{M,\,\epz}_{\fai,\,\mol}(\cx)$) is
required to converge in the norm of $(\bbmo_{\fai,\,L}(\cx))^\ast$, where
$(\bbmo_{\fai,\,L}(\cx))^\ast$ denotes the dual space of
$\bbmo_{\fai,\,L}(\cx)$; while in Corollary \ref{c4.1} below, the
atomic decomposition holds true in $H_{\fai,\,L}(\cx)$.
Applying its atomic characterization, we
further characterize the Hardy space $H_{\fai,\,L}(\cx)$ in
terms of the Lusin area function associated with the Poisson
semigroup of $L$ (see Theorem \ref{t5.2} below). Observe that
Theorems \ref{t5.1} and \ref{t5.2} completely cover
\cite[Theorems 5.1 and 5.2]{jy11} by taking $\fai$ as in \eqref{1.2}
with $\oz\equiv1$ and $\Phi$ concave.

In Section \ref{s6}, we give some applications of the Musielak-Orlicz-Hardy
space $H_{\fai,\,L}(\cx)$ to the boundedness of
operators. More precisely, in Subsection \ref{s6.1}, we prove that
the Littlewood-Paley $g$-function $g_L$ is bounded from
$H_{\fai,\,L}(\cx)$ to the Musielak-Orlicz space $L^\fai(\cx)$
(see Theorem \ref{t6.1} below);
in Subsection \ref{s6.2}, we show that the $g_\lz^\ast$-function
$g^\ast_{\lz,\,L}$ is bounded from $H_{\fai,\,L}(\cx)$ to
$L^\fai(\cx)$ (see Theorem \ref{t6.2} below). As a
corollary, we characterize $H_{\varphi,\,L}(\mathcal{X})$ in terms
of the $g_\lz^\ast$-function $g^\ast_{\lambda,\,L}$ (see
Corollary \ref{c6.1} below).
Observe that when $\cx:=\rn$ and $L:=-\Delta$,
$g^\ast_{\lz,\,L}$ is just the classical Littlewood-Paley
$g^\ast_\lz$-function. Moreover, the range of $\lz$ in
Theorem \ref{t6.2} coincides with the
corresponding result on the classical Littlewood-Paley
$g^\ast_\lz$-function on $\rn$ in the case that $\fai$ is as in
\eqref{1.2} with that $\oz\in A_q(\rn)$, $q\in[1,\fz)$, and $\Phi(t):=t^p$
for all $t\in[0,\fz)$, $p\in(0,1]$ (see Remark
\ref{r6.2} below). Thus, in some sense, the range of $\lz$ in Theorem \ref{t6.2}
is sharp, which is attributed to the use of the unweighted norm
in our definition of tent atoms, appearing in the atomic
decomposition of the tent space $T_\fai(\cx\times(0,\fz))$. Finally, in
Subsection \ref{s6.3}, we establish a
H\"ormander-type spectral multiplier theorem for $m(L)$ as in
\eqref{1.1} on $H_{\varphi,\,L}(\mathcal{X})$ (see Theorem \ref{t6.3}
below). Let $p\in(0,1]$. We remark that Theorem \ref{t6.3} covers
\cite[Theorem 1.1]{dy11} in the case that $p\in(0,1]$ by taking
$\fai(x,t):=t^p$ for all
$x\in\rn$ and $t\in[0,\fz)$. A typical example of the function $m$
satisfying the condition of Theorem \ref{t6.3} is
$m(\lz)=\lz^{i\gz}$ for all $\lz\in\rr$ and some real value $\gz$,
where $i$ denotes the \emph{unit imaginary number} (see Corollary
\ref{c6.2} below).

As applications, in Section \ref{s7}, we study the Musielak-Orlicz-Hardy
spaces $H_{\fai,\,L}(\rn)$ associated with the Schr\"odinger operator
$L:=-\Delta+V,$ where $0\le V\in L^1_{\loc}(\rn)$.
As an application of Theorems \ref{t5.1} and \ref{t5.2},
we characterize $H_{\fai,\,L}(\rn)$ in terms of the Lusin-area
function associated with the Poisson semigroup of $L$, the atom
and the molecule (see Theorem \ref{t7.1} below). Moreover,
characterizations of $H_{\fai,\,L}(\rn)$, in terms of the
non-tangential maximal functions associated with the heat
semigroup and the Poisson semigroup of $L$, the radial maximal
functions associated with the heat semigroup and the Poisson
semigroup of $L$, are also established (see Theorem \ref{t7.2}
below). Observe that Theorem \ref{t7.2} completely covers
\cite[Theorem 6.4]{jy11} by taking $\fai$ as in \eqref{1.2} with
$\oz\equiv1$ and $\Phi$ satisfying that there exist
$q_1,\,q_2\in(0,\fz)$ such that $q_1 < 1 < q_2$ and
$[\Phi(t^{q_2})]^{q_1}$ is a convex function on $(0,\fz)$.
Finally, we show that the Riesz transform $\nabla L^{-1/2}$
associated with $L$ is bounded from $H_{\fai,\,L}(\rn)$ to
$L^\fai(\rn)$ when $i(\fai)\in(0,1]$, and from $H_{\fai,\,L}(\rn)$
to the Musielak-Orlicz-Hardy space $H_{\fai}(\rn)$
introduced by Ky \cite{k} when $i(\fai)\in(\frac{n}{n+1},1]$ (see
Theorems \ref{t7.3} and \ref{t7.4} below). We remark that the
boundedness of $\nabla L^{-1/2}$ from $H^1_L(\rn)$ to the
classical Hardy space $H^1(\rn)$ was first established in
\cite[Theorem 8.6]{hlmmy} and that Theorems \ref{t7.3} and \ref{t7.4} are
respectively \cite[Theorems 6.2 and 6.3]{jy11} when $\fai$ is as
in \eqref{1.2} with $\oz\equiv1$ and $\Phi$ concave. We
also point out that when $n=1$ and $\fai(x,t):=t$ for all
$x\in\rn$ and $t\in[0,\fz)$, the Hardy space $H_{\fai,\,L}(\rn)$
coincides with the Hardy space introduced by Czaja and Zienkiewicz
\cite{cz08}; if $L:=-\Delta+V$ with $V$ belonging to the reverse
H\"older class ${\mathop\mathrm{RH}}_q(\rn)$ for some $q\ge n/2$ and $n\ge3$,
and $\fai(x,t):=t^p$ with $p\in(\frac{n}{n+1},1]$ for all $x\in\rn$
and $t\in[0,\fz)$, then the Hardy space $H_{\fai,\,L}(\rn)$
coincides with the Hardy space introduced by Dziuba\'nski and
Zienkiewicz \cite{dz99,dz00}.

To prove Theorem \ref{t7.2} below, we borrow some ideas from the proof
of \cite[Theorem 8.2]{hlmmy}. To this end, via invoking the Caccioppoli inequality
associated with $L$, the special differential structure
of $L$ itself and the divergence theorem, we first establish
a weighted ``good-$\lz$ inequality" concerning the non-tangential maximal function
$\cn_P (f)$, associated with the Poisson semigroup of $L$, and the truncated variant
of the Lusin area function $\wz{S}_P (f)$ in Lemma \ref{l7.2} below, which is a suitable
substitute, in the present setting, of a distribution inequality concerning the non-tangential maximal function
$\cn_P (f)$ and the Lusin area function $\wz{S}_P (f)$, appeared in the proof of
\cite[Theorem 8.2]{hlmmy} (see also \cite[(6.5)]{jy11}).
We then use the Moser type local
boundedness estimate from \cite[Lemma 8.4]{hlmmy} (see also
Lemma \ref{l7.3} below), which is the substitute of
the classical mean value property for harmonic functions in this setting.  Moreover,
a more delicate estimate in \eqref{7.15} below than that used in the proof
of \cite[Theorem 6.4]{jy11} is established, which leads us in Theorem \ref{t7.2} below
to remove the additional assumption, appeared in \cite[Theorem 6.4]{jy11},
that there exist $q_1,\,q_2\in(0,\fz)$ such
that $q_1 < 1 < q_2$ and $[\Phi(t^{q_2})]^{q_1}$ is a convex function on $(0,\fz)$ even when
$\fai$ is as in \eqref{1.2} with $\oz\equiv1$. The proof of Theorem \ref{t7.3}
is a skillful application of the atomic characterization of the
Musielak-Orlicz-Hardy space $H_{\fai,\,L}(\rn)$, a Davies-Gaffney type estimate
(see \cite[Lemma 8.5]{hlmmy} or Lemma \ref{l7.4} below) and the $L^2(\rn)$-boundedness of
the Riesz transform $\nabla L^{-1/2}$. Furthermore,
as an application of the atomic characterization of $H_{\fai,\,L}(\rn)$ obtained in Theorem \ref{t7.1}
and the atomic characterization of the Musielak-Orlicz-Hardy space $H_\fai(\rn)$ established
by Ky \cite[Theorem 3.1]{k} (see also Lemma \ref{l7.5} below), we obtain
the boundedness of the Riesz transform $\nabla L^{-1/2}$ from $H_{\fai,\,L}(\rn)$
to $H_\fai(\rn)$ in Theorem \ref{t7.4} below.
More precisely, for any given atom $\az$ as
in Definition \ref{d4.2} below, we prove that
$$\nabla L^{-1/2}(\az)=\sum_j b_j$$
in $L^2(\rn)$, where, for each $j$, $b_j$ is a
multiples of an atom introduced by Ky \cite[Definition 2.4]{k}.
Observe that the atom in Definition \ref{d4.2} below is different
from the atom in \cite[Definition 2.4]{k} in that
the norm of the atom in Definition \ref{d4.2} is not weighted, but the
atom introduced by Ky \cite[Definition 2.4]{k} is weighted and,
moreover, that, in the present setting,
$\nabla L^{-1/2}$ is known to be bounded on $L^p(\rn)$ only with $p\in(1,2]$. Thus,
in order to prove that, for each $j$, $b_j$ is a multiple of an
atom as in \cite[Definition 2.4]{k}, we need the assumption that $q(\fai)<2$ and
$r(\fai)>2/[2-q(\fai)]$ (see \eqref{7.36} below for the details),
where $q(\fai)$ and $r(\fai)$ are, respectively,
as in \eqref{2.12}  and \eqref{2.13} below.

We remark that there exist more applications of the results in this paper.
For example, motivated by \cite{k1,k2,k3}, in a forthcoming paper, we will apply the
Musielak-Orlicz-Hardy space $H_{\fai,\,L}(\rn)$ and the Musielak-Orlicz $\bbmo$-type
space $\bbmo_{\fai,\,L}(\rn)$ associated with the Schr\"odinger operator $L$,
introduced in this paper, to the study of \emph{pointwise multipliers} on BMO-type
space associated with the Schr\"odinger operator $L$ and \emph{commutators} of
singular integral operators associated with the operator $L$.
This is reasonable, since $\vz$ in \eqref{1.3} naturally appears in
the study of these problems in \cite{ny97,ny85}. Moreover, motivated by \cite{c94,cks92,
cks93,cds99,ar03}, in another forthcoming paper, we will further establish
various \emph{maximal function characterizations}
of the Musielak-Orlicz-Hardy space $H_{\fai,\,L}(\boz)$ on the strongly
Lipschitz domain $\boz$ of $\rn$ associated with the Schr\"odinger operator $L$ with some
boundary conditions, which is a special case of the Musielak-Orlicz-Hardy space
$H_{\fai,\,L}(\cx)$ introduced in this paper.

After the first version of this paper was put on arXiv,
we learned from Dr. Bui that, in \cite{bd11},
Bui and Duong also introduced the weighted Hardy space
$H^p_{L,\,\oz}(\cx)$, with $p\in (0,1]$ and $\oz\in A_1(\cx)$
satisfying the reverse H\"older inequality of order $2/(2-p)$,
by the Lusin area function associated with the heat semigroup
generated by $L$. Moreover, Bui and Duong \cite{bd11} established
the atomic and the molecular characterizations of $H^p_{L,\,\oz}(\cx)$
and, as applications, obtained the boundedness on $H^p_{L,\,\oz}(\cx)$
of the generalized Riesz transforms associated with $L$
and of the spectral multipliers of $L$. These results are partially
overlapped with the results of this paper when $\vz$ is as in
\eqref{1.2} with $\Phi(t):=t^p$ for $p\in (0,1]$ and $t\in[0,\fz)$.
As have observed above, the atomic decomposition of the weighted tent space
obtained in \cite{bd11} and the Riesz transforms considered in \cite{bd11}
are different from these in this paper. We also point out that, it is motivated by
\cite{bd11}, in the present version of this paper, we replace the assumption
in the first version that the growth function $\fai$ is
of uniformly upper type 1 by the assumption that $\fai$ is of uniformly upper
type $p_1$ for some $p_1\in(0,1]$ and hence, in the main results of this paper,
we improve the assumption in the
first version  that $\fai\in\rh_2(\cx)$ into the weaker assumption that
$\fai\in\rh_{2/[2-I(\fai)]}(\cx)$.

Finally we make some conventions on notation. Throughout the whole
paper, we denote by $C$ a \emph{positive constant} which is
independent of the main parameters, but it may vary from line to
line. We also use $C(\gz,\bz,\cdots)$ to denote a \emph{positive
constant depending on the indicated parameters $\gz$, $\bz$,
$\cdots$}. The \emph{symbol} $A\ls B$ means that $A\le CB$. If
$A\ls B$ and $B\ls A$, then we write $A\sim B$. The  \emph{symbol}
$\lfz s\rfz$ for $s\in\rr$ denotes the maximal integer not more
than $s$. For any given normed spaces $\mathcal A$ and $\mathcal
B$ with the corresponding norms $\|\cdot\|_{\mathcal A}$ and
$\|\cdot\|_{\mathcal B}$, the \emph{symbol} ${\mathcal
A}\subset{\mathcal B}$ means that for all $f\in \mathcal A$, then
$f\in\mathcal B$ and $\|f\|_{\mathcal B}\ls \|f\|_{\mathcal A}$.
For any measurable subset $E$ of $\cx$, we denote by $E^\complement$
the \emph{set} $\cx\setminus E$ and by $\chi_{E}$ its
\emph{characteristic function}. We also set
$\nn:=\{1,\,2,\, \cdots\}$ and $\zz_+:=\{0\}\cup\nn$. For any
$\tz:=(\tz_{1},\ldots,\tz_{n})\in\zz_+^{n}$, let
$|\tz|:=\tz_{1}+\cdots+\tz_{n}$. For any subsets
$E$, $F\subset\cx$ and $z\in\cx$, let $\dist(E, F):=\inf_{x\in
E,\,y\in F}d(x,y)$ and $\dist(z, E):=\inf_{x\in E}d(z,x).$

\section{Preliminaries\label{s2}}

\hskip\parindent In Subsection \ref{s2.1}, we first recall some
notions on metric measure spaces and then, in
Subsection \ref{s2.2}, describe some basic assumptions on the
operator $L$ studied in this paper. In Subsection \ref{s2.3}, we
recall some notions concerning growth functions
considered in this paper and also give some specific examples of
growth functions satisfying the assumptions of this paper.
Subsection \ref{s2.4} is devoted to recalling some properties
of growth functions established in \cite{k}.

\subsection{Metric measure spaces\label{s2.1}}

\hskip\parindent Throughout the whole paper, we let $\cx$ be a
\emph{set}, $d$ a \emph{metric} on $\cx$ and $\mu$ a \emph{nonnegative Borel regular
measure} on $\cx$. For all $x\in\cx$ and $r\in(0,\fz)$, let
$$B(x,r):=\{y\in\cx:\ d(x,y)<r\}$$
and $V(x,r):=\mu(B(x,r))$.
Moreover, we assume that there exists a constant
$C_1\in[1,\fz)$ such that, for all $x\in\cx$ and $r\in(0,\fz)$,
\begin{equation}\label{2.1}
V(x,2r)\le C_1V(x,r)<\fz.
\end{equation}

Observe that $(\cx,\,d,\,\mu)$ is a \emph{space of homogeneous type} in
the sense of Coifman and Weiss \cite{cw71}. Recall that in the
definition of spaces of homogeneous type in \cite[Chapter 3]{cw71}, $d$ is assumed to be a
quasi-metric. However, for simplicity, we always assume that
$d$ is a metric. Notice that the doubling property
\eqref{2.1} implies that the following strong homogeneity property
that, for some positive constants $C$ and $n$,
\begin{equation}\label{2.2}
V(x,\lz r)\le C\lz^n V(x,r)
\end{equation}
uniformly for all
$\lz\in[1,\fz)$, $x\in\cx$ and $r\in(0,\fz)$. There also
exist constants $C\in(0,\fz)$ and $N\in[0,n]$ such that, for all
$x,\,y\in\cx$ and $r\in(0,\fz)$,
\begin{equation}\label{2.3}
V(x,r)\le C\lf[1+\frac{d(x,y)}{r}\r]^NV(y,r).
\end{equation}
Indeed, the property \eqref{2.3} with $N=n$ is a simple corollary
of the triangle inequality for the metric $d$ and the strong
homogeneity property \eqref{2.2}. In the cases of Euclidean spaces
and Lie groups of polynomial growth, $N$ can be chosen to be $0$.

In what follows, to simplify the notation, for each ball
$B\subset\cx$, set
\begin{equation}\label{2.4}
U_0(B):=B\ \text{and}\ U_j(B):=2^jB\setminus2^{j-1}B \ \text{for}\
j\in\nn.
\end{equation}

Furthermore, for $p\in(0,\fz]$, the \emph{space of $p$-integrable
functions on $\cx$} is denoted by $L^p(\cx)$ and the \emph{(quasi-)norm} of
$f\in L^p(\cx)$ by $\|f\|_{L^p(\cx)}$.

\subsection{Assumptions on operators $L$\label{s2.2}}

\hskip\parindent Throughout the whole paper, as in \cite{hlmmy,jy11},
we always suppose that the considered operators $L$ satisfy the following
assumptions.

\medskip

\noindent {\bf Assumption (A)} $L$ is a nonnegative self-adjoint
operator in $L^2(\cx)$.

\noindent {\bf Assumption (B)} The operator $L$ generates an
analytic semigroup $\{e^{-tL}\}_{t>0}$ which satisfies the
Davies-Gaffney estimates, namely, there exist positive constants
$C_2$ and $C_3$ such that, for all closed sets $E$ and $F$ in
$\cx$, $t\in(0,\fz)$ and $f\in L^2(E)$,
\begin{equation}\label{2.5}
\lf\|e^{-tL}f\r\|_{L^2(F)}\le
C_2\exp\lf\{-\frac{[\dist(E,F)]^2}{C_3t}\r\}\|f\|_{L^2(E)},
\end{equation}
here and in what follows, $\dist(E,F):=\inf_{x\in E,\,y\in
F}d(x,y)$ and $L^2(E)$ is the \emph{set of all $\mu$-measurable
functions supported in $E$} such that
$\|f\|_{L^2(E)}:=\{\int_E|f(x)|^2\,d\mu(x)\}^{1/2}<\fz$.
\medskip

Examples of operators satisfying Assumptions (A) and (B) include
second order elliptic self-adjoint operators in divergence form on
$\rn$ with bounded measurable coefficients, (degenerate) Schr\"odinger operators
with nonnegative potential or with magnetic field, and Laplace-Beltrami
operators on all complete Riemannian manifolds (see, for example, \cite{d92,g59,s95,s04}).

By Assumptions (A) and (B), we have the following results which
were established in \cite{hlmmy}.

\begin{lemma}\label{l2.1}
Let $L$ satisfy Assumptions (A) and (B). Then for every fixed
$k\in\nn$, the family of operators, $\{(t^2L)^k e^{-t^2L}\}_{t>0}$,
satisfies the Davies-Gaffney estimates \eqref{2.5} with positive
constants $C_2$ and $C_3$ only depending on $n$, $N$ and $k$.
\end{lemma}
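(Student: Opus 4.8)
\textbf{Proof proposal for Lemma \ref{l2.1}.}

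The plan is to reduce the Davies-Gaffney estimate for $\{(t^2L)^k e^{-t^2L}\}_{t>0}$ to the corresponding estimate for $\{e^{-t^2L}\}_{t>0}$ supplied by Assumption (B), using only the spectral calculus of the nonnegative self-adjoint operator $L$ and a semigroup-splitting trick. First I would fix closed sets $E, F \subset \cx$, put $\dz := \dist(E,F)$, take $f \in L^2(E)$, and assume $\dz > 0$ (the case $\dz = 0$ being trivial, since $(t^2L)^k e^{-t^2L}$ is bounded on $L^2(\cx)$ with norm controlled by $\sup_{\lz \ge 0}(t^2\lz)^k e^{-t^2\lz} = (k/e)^k$ by the spectral theorem). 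The key observation is the algebraic identity, for any $s \in (0,1)$,
\begin{equation*}
(t^2L)^k e^{-t^2L} = \lf(\frac{1}{1-s}\r)^k \lf[\lf((1-s)t^2L\r)^k e^{-(1-s)t^2L}\r] e^{-st^2L},
\end{equation*}
so that writing $g := e^{-st^2 L} f$ and using that $\phi_k(u) := u^k e^{-u}$ satisfies $\|\phi_k\|_{L^\fz([0,\fz))} = (k/e)^k$, the spectral theorem gives $\|((1-s)t^2L)^k e^{-(1-s)t^2L} g\|_{L^2(\cx)} \le (k/e)^k \|g\|_{L^2(\cx)}$; hence
\begin{equation*}
\lf\|(t^2L)^k e^{-t^2L} f\r\|_{L^2(F)} \le \lf(\frac{1}{1-s}\r)^k \lf(\frac{k}{e}\r)^k \lf\|e^{-st^2L} f\r\|_{L^2(\cx)}.
\end{equation*}
That last estimate does not yet see the sets $E$ and $F$, so this crude splitting is not enough; instead I would apply the operator $(t^2L)^k e^{-t^2L}$ to $f$ and, before taking the $L^2(F)$ norm, insert the heat semigroup at the \emph{end}: the cleanest route is to write $(t^2L)^k e^{-t^2L} = [(t^2L)^k e^{-(1-s)t^2L}] e^{-st^2L}$ is wrong in the order of who acts on $f$; the correct factorization to exploit (B) is
\begin{equation*}
(t^2L)^k e^{-t^2L} f = e^{-st^2L}\,\Bigl[(t^2L)^k e^{-(1-s)t^2L} f\Bigr],
\end{equation*}
valid since all operators are functions of $L$ and hence commute. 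Applying the Davies-Gaffney estimate \eqref{2.5} for $e^{-st^2L}$ (with $t$ replaced by $st^2$) to the function $h := (t^2L)^k e^{-(1-s)t^2L} f$, which is supported... no: $h$ need not be supported in $E$.

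So the actual argument must be more careful, and this is the main obstacle: one cannot simply pull a heat factor out and quote (B), because the intermediate function loses the support in $E$. The standard fix, which I would carry out, uses the \emph{Fourier/subordination formula} or, more elementarily, the Cauchy/contour representation
\begin{equation*}
(t^2L)^k e^{-t^2L} = c_k \int_0^\fz (t^2/r)^k e^{-t^2 L/r}\, \nu_k(r)\, \frac{dr}{r}
\end{equation*}
expressing $\phi_k(u) = u^k e^{-u}$ as a superposition $\int_0^\fz e^{-u/r} d\sigma_k(r)$ of pure exponentials with a finite (signed) measure $\sigma_k$ of rapid decay, OR — cleaner still in this self-adjoint setting — one avoids superpositions entirely by noting that the map $u \mapsto (t^2 u)^k e^{-t^2 u}$ can be compared to $u \mapsto e^{-ct^2 u}$ times a bounded function only after localization, which brings us back to needing support. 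The genuinely clean proof, and the one I would write, is: since $L$ is self-adjoint, $(t^2L)^k e^{-t^2L}$ is self-adjoint, and by duality it suffices to bound $\langle (t^2L)^ke^{-t^2L}f, g\rangle$ for $f \in L^2(E)$, $g \in L^2(F)$. Now use the identity (with $s=1/2$ say)
\begin{equation*}
\langle (t^2L)^k e^{-t^2L} f, g\rangle = \langle (t^2L)^{k} e^{-t^2L/2}\,[e^{-t^2L/2}f],\, g\rangle = \langle e^{-t^2L/2}f,\ (t^2L)^k e^{-t^2L/2} g\rangle,
\end{equation*}
but this splits the distance badly between two operators. The honest resolution: represent $\phi_k(u)=u^ke^{-u}$ via the formula $u^k e^{-u} = \int_0^\infty \bigl(\tfrac{d^k}{ds^k}\big|_{s=1}\,(-1)^k\bigr)$...

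\textbf{Cleaner plan I will actually follow.} Write $m_{k,t}(u) := (t^2 u)^k e^{-t^2 u}$. One has the elementary identity $m_{k,t}(u) = p_k(t^2 u)\, e^{-\frac12 t^2 u} \cdot e^{-\frac12 t^2 u}$ where $p_k(v) := v^k e^{-v/2}$ is bounded on $[0,\fz)$ with $\|p_k\|_\fz = (2k/e)^k$. Thus $(t^2L)^k e^{-t^2L} = \Phi_k(t^2L/2)\, e^{-\frac12 t^2 L}$ where $\Phi_k$ is bounded, and \emph{more importantly} one can also write $(t^2 L)^k e^{-t^2 L} = e^{-\frac{1}{4}t^2 L}\, \Phi_k(t^2 L/2)\, e^{-\frac14 t^2 L}$ by the same token (distributing the two heat factors of $e^{-\frac12 t^2L}$ symmetrically). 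Now, for $f \in L^2(E)$ and $g \in L^2(F)$,
\begin{equation*}
\lf|\langle (t^2L)^k e^{-t^2L} f, g\rangle\r| = \lf|\langle \Phi_k(t^2L/2)\, e^{-\frac14 t^2 L} f,\ e^{-\frac14 t^2 L} g\rangle\r| \le \|p_k\|_\fz\, \lf\|e^{-\frac14 t^2 L} f\r\|_{L^2(\cx)}\, \lf\|e^{-\frac14 t^2 L} g\r\|_{L^2(\cx)},
\end{equation*}
which again does not see the distance. The distance is recovered by introducing an intermediate decomposition over an annular decomposition of $\cx$ around $E$: write $\cx = \bigcup_j A_j$ with $A_0 := \{x : \dist(x,E) \le \dz/2\}$ and $A_j := \{x: 2^{j-1}\dz/2 < \dist(x,E) \le 2^j \dz /2\}$, insert $1 = \sum_j \chi_{A_j}$ between $\Phi_k(t^2L/2)$ and $e^{-\frac14 t^2L} g$... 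This is getting long; the point for the proposal is that the \textbf{main obstacle is exactly the intermediate-support problem}, handled by the ``three-factor'' splitting $e^{-\frac14 t^2L}\Phi_k(t^2L/2) e^{-\frac14 t^2 L}$ together with an annular decomposition of $\cx$, controlling $\dist(A_j, E)$ from below when $x \in A_j$ and $\dist(A_j, F)$ from below (noting $\dist(x,F) \ge \dz - \dist(x,E)$, so at least one of $\dist(x,E), \dist(x,F)$ is $\gtrsim \dz$ plus the annular radius), summing the resulting Gaussian factors $\exp\{-c(2^j\dz)^2/t^2\}$ (which is summable in $j$ and retains an overall factor $\exp\{-c\dz^2/(C_3' t^2)\}$), and using the $L^2$-boundedness of $\Phi_k(t^2L/2)$ on each piece. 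The constants $C_2, C_3'$ produced this way depend only on $C_2, C_3$ from \eqref{2.5}, on $k$ (through $\|p_k\|_\fz$ and the summation in $j$), and on $n, N$ through the doubling used to sum over the annuli; this yields the claim. I expect the bookkeeping of which of the two distances $\dist(x,E)$, $\dist(x,F)$ is large to be the only subtle point, and the rest to be routine manipulation of Gaussian bounds and the spectral functional calculus.
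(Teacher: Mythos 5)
Your proposal, despite its length, ends in a genuine gap, and it is worth noting first that the paper itself does not supply a proof of Lemma \ref{l2.1}: it simply cites \cite{hlmmy}. So the question is whether your blind argument would in fact close, and it would not. The ``three-factor'' factorization $(t^2L)^ke^{-t^2L}=e^{-\frac14t^2L}\,\Phi_k(L)\,e^{-\frac14t^2L}$ with $\Phi_k(L):=(t^2L)^ke^{-\frac12t^2L}$ bounded on $L^2(\cx)$ is correct, and the trivial-case computation $\sup_{\lz\ge0}(t^2\lz)^ke^{-t^2\lz}=(k/e)^k$ is correct; but the annular (or midpoint) decomposition does not do what you need. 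Write $\dz:=\dist(E,F)$ and let $M:=\{x:\dist(x,E)\le\dist(x,F)\}$, so that $\dist(M,F)\ge\dz/2$ and $\dist(M^\complement,E)\ge\dz/2$. For the piece supported in $M$, Cauchy--Schwarz plus Davies--Gaffney for the heat factor next to $g$ gives the desired Gaussian. But for the piece supported in $M^\complement$ you need Davies--Gaffney \emph{from $E$ to $M^\complement$} on the factor containing $f$, i.e.\ a smallness estimate for $\|\chi_{M^\complement}\Phi_k(L)e^{-\frac14t^2L}f\|_{L^2(\cx)}$. Since $\Phi_k(L)$ is merely an $L^2$-bounded Borel function of $L$, it does not preserve supports, so after applying it the function is no longer concentrated near $E$ and no Gaussian can be extracted. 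Moving $\Phi_k(L)$ across the inner product by self-adjointness just transfers the same obstruction to the other side. In short: only one of the two outer heat factors is ever adjacent to a localized function, so only one distance can ever be ``seen,'' and the complementary piece of your decomposition contributes a term of size $\|\Phi_k\|_\fz\|f\|_{L^2(\cx)}\|g\|_{L^2(\cx)}$ with no decay in $\dz/t$. You identified the bookkeeping of ``which distance is large'' as the only subtle point, but the actual obstruction is precisely the nonlocality of $\Phi_k(L)$, and no choice of annuli fixes it. (The secondary worry about the logarithmic divergence of $\sum_j e^{-c4^j\dz^2/t^2}$ for $\dz/t$ small is harmless, since one can always fall back on the trivial $L^2$ bound in that regime; that part you could repair. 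The localization failure you cannot.)

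The missing input is one of two standard tools, neither of which appears in your proposal. Either one uses the analyticity of $z\mapsto e^{-zL}$ in a sector of the right half-plane: interpolating the Davies--Gaffney bound on the positive real axis against trivial $L^2$-boundedness in a sector gives a Davies--Gaffney--type bound for $e^{-zL}$ in a smaller sector, and then Cauchy's formula for $\partial_s^k e^{-sL}\big|_{s=t^2}=(-1)^kL^ke^{-t^2L}$ converts this into the desired estimate with constants depending only on $k$ and the original $C_2,C_3$. Or, more in the spirit of the present paper (which already records the finite-propagation-speed fact \eqref{2.6}), one writes $(t^2L)^ke^{-t^2L}=\Psi_k(t\sqrt L)$ with $\Psi_k(s):=s^{2k}e^{-s^2}$ even and Schwartz, uses Fourier inversion $\Psi_k(t\sqrt L)=\frac1\pi\int_0^\fz\widehat{\Psi_k}(\xi)\cos(t\xi\sqrt L)\,d\xi$, and invokes the finite speed of propagation of $\cos(r\sqrt L)$ (equivalent to the Davies--Gaffney estimates for a nonnegative self-adjoint $L$, by the theorem of Sikora and Coulhon--Sikora): for $f\in L^2(E)$ one has $\cos(t\xi\sqrt L)f=0$ on $F$ whenever $C_4t\xi<\dz$, so $\|\Psi_k(t\sqrt L)f\|_{L^2(F)}\le\|f\|_{L^2(E)}\int_{\dz/(C_4t)}^\fz|\widehat{\Psi_k}(\xi)|\,d\xi$, and the Gaussian tail of $\widehat{\Psi_k}$ finishes the proof. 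Either route gives the stated lemma; your annular argument, as written, does not.
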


In what follows, for any operator $T$, let $K_T$ denote its
integral kernel. It is well known that if $L$ satisfies
Assumptions (A) and (B), and $T:=\cos(t\sqrt{L})$ with
$t\in(0,\fz)$, then there exists a positive constant $C_4$ such
that
\begin{equation}\label{2.6}
\supp K_T\subset\cd_t:=\lf\{(x,y)\in\cx\times\cx:\ d(x,y)\le C_4t\r\}
\end{equation}
(see, for example, \cite[Theorem 2]{s04}, \cite[Theorem 3.14]{cs08}
and \cite[Proposition 3.4]{hlmmy}). This observation plays a key
role in obtaining the atomic characterization of the Musielak-Orlicz-Hardy space
$H_{\fai,\,L}(\cx)$ (see \cite{hlmmy,jy11}
and Proposition \ref{p4.2} below).

\begin{lemma}\label{l2.2}
Assume that $L$ satisfies Assumptions (A) and (B). Let $\pz\in
C^\fz_c(\rr)$ be even and $\supp\pz\subset(-C_4^{-1},C_4^{-1})$,
where $C_4$ is as in \eqref{2.6}. Let $\wz\Phi$ denote the Fourier
transform of $\pz$. Then for every $\kappa\in\nn$ and
$t\in(0,\fz)$, the kernel $K_{(t^2L)^{\kappa}\wz\Phi(t\sqrt{L})}$
of $(t^2L)^{\kappa}\wz\Phi(t\sqrt{L})$ satisfies that
$\supp K_{(t^2L)^{\kappa}\wz\Phi(t\sqrt{L})}\subset\{(x,y)
\in\cx\times\cx:\ d(x,y)\le t\}$.
\end{lemma}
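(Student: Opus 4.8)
\textbf{Proof proposal for Lemma \ref{l2.2}.}

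The plan is to represent $(t^2L)^{\kappa}\wz\Phi(t\sqrt{L})$ as an integral, over the real line, of the wave operators $\cos(s\sqrt{L})$ against a suitable compactly supported weight, and then to exploit the finite-propagation-speed property \eqref{2.6}. First I would recall that, since $\pz\in C^\fz_c(\rr)$ is even and $\wz\Phi$ is its Fourier transform, one has (by the Fourier inversion formula, up to a normalizing constant) the identity $\wz\Phi(t\sqrt{L})=c\int_\rr \pz(s)\cos(ts\sqrt{L})\,ds$, where the integral converges strongly on $L^2(\cx)$ by the spectral theorem and the fact that $\{\cos(s\sqrt{L})\}_{s\in\rr}$ is a uniformly bounded family of self-adjoint operators. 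Differentiating in $t$ (or equivalently multiplying by the spectral variable), one gets an analogous representation for $(t^2L)^{\kappa}\wz\Phi(t\sqrt{L})$, namely $(t^2L)^{\kappa}\wz\Phi(t\sqrt{L})=c\int_\rr \Psi_\kappa(s)\cos(ts\sqrt{L})\,ds$ for some function $\Psi_\kappa$ which is again even, smooth and supported in $(-C_4^{-1},C_4^{-1})$; concretely $\Psi_\kappa$ is, up to constants and the rescaling by $t$, obtained from $\pz$ by applying the operator $(-\tfrac{d^2}{ds^2})^\kappa$ and hence $\supp\Psi_\kappa\subset\supp\pz\subset(-C_4^{-1},C_4^{-1})$.

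Next I would pass to kernels. From the representation above, the Schwartz kernel of $(t^2L)^{\kappa}\wz\Phi(t\sqrt{L})$ is $K_{(t^2L)^{\kappa}\wz\Phi(t\sqrt{L})}(x,y)=c\int_\rr \Psi_\kappa(s)\,K_{\cos(ts\sqrt{L})}(x,y)\,ds$, interpreted in the appropriate distributional sense. By \eqref{2.6} applied with $t$ replaced by $t|s|$, the kernel $K_{\cos(ts\sqrt{L})}(x,y)$ is supported in $\{d(x,y)\le C_4 t|s|\}$. Therefore, in the integral over $s$, only those $s$ with $|s|<C_4^{-1}$ contribute (because $\Psi_\kappa$ vanishes outside that interval), and for such $s$ we have $C_4 t|s|<C_4 t\cdot C_4^{-1}=t$. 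Consequently the kernel $K_{(t^2L)^{\kappa}\wz\Phi(t\sqrt{L})}(x,y)$ vanishes whenever $d(x,y)\ge t$, which is exactly the claimed support condition $\supp K_{(t^2L)^{\kappa}\wz\Phi(t\sqrt{L})}\subset\{(x,y)\in\cx\times\cx:\ d(x,y)\le t\}$.

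The main obstacle, and the point requiring the most care, is the rigorous justification of the kernel identity $K_{(t^2L)^{\kappa}\wz\Phi(t\sqrt{L})}(x,y)=c\int_\rr\Psi_\kappa(s)K_{\cos(ts\sqrt{L})}(x,y)\,ds$ and the manipulation of supports at the level of kernels, since a priori $(t^2L)^{\kappa}\wz\Phi(t\sqrt{L})$ is only known to be bounded on $L^2(\cx)$ and need not have a pointwise kernel. The clean way around this is to argue entirely at the operator level rather than pointwise: for closed sets $E,F\subset\cx$ with $\dist(E,F)\ge t$ and any $f\in L^2(E)$, one shows $(t^2L)^{\kappa}\wz\Phi(t\sqrt{L})f$ is supported in $F^\complement$, equivalently that $\langle (t^2L)^{\kappa}\wz\Phi(t\sqrt{L})f,\,g\rangle=0$ for all $g\in L^2(F)$. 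Using the representation $\langle (t^2L)^{\kappa}\wz\Phi(t\sqrt{L})f,g\rangle=c\int_\rr\Psi_\kappa(s)\langle\cos(ts\sqrt{L})f,g\rangle\,ds$ (valid by Fubini, since the $s$-integrand is bounded by $\|\Psi_\kappa\|_{L^\fz}\|f\|_{L^2}\|g\|_{L^2}\chi_{\{|s|<C_4^{-1}\}}$) and the fact that, by finite propagation speed, $\langle\cos(ts\sqrt{L})f,g\rangle=0$ whenever $C_4t|s|<\dist(E,F)$, which holds for all $|s|<C_4^{-1}$ once $\dist(E,F)\ge t$, we conclude the inner product vanishes. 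Since this holds for all such $E$, $F$, the kernel of $(t^2L)^{\kappa}\wz\Phi(t\sqrt{L})$ is supported in $\{d(x,y)\le t\}$ in the distributional sense, which is the assertion. This is precisely the approach used for the analogous statements in \cite{hlmmy}, and I would follow it verbatim, with the only new ingredient being the explicit identification of $\Psi_\kappa$ via $\kappa$-fold differentiation.
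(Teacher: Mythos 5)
Your proof is correct and follows essentially the same approach that the paper implicitly relies on (by citing \cite{hlmmy}): write $(t^2L)^\kappa\wz\Phi(t\sqrt{L})$ via Fourier inversion as $\int_\rr\Psi_\kappa(s)\cos(ts\sqrt{L})\,ds$ and invoke the finite-propagation-speed property \eqref{2.6} for $\cos(\cdot\sqrt{L})$, arguing at the level of inner products $\langle Tf,g\rangle$ for $f,g$ with separated supports to avoid any pointwise-kernel hypothesis. One small slip: there is no ``rescaling by $t$'' in $\Psi_\kappa$ --- the $2\kappa$-fold integration by parts against $\cos(ts\sqrt{L})$ produces exactly the factor $(t\sqrt{L})^{2\kappa}=(t^2L)^\kappa$, so $\Psi_\kappa=\lf(-\tfrac{d^2}{ds^2}\r)^\kappa\pz$ is $t$-independent, with $\supp\Psi_\kappa\subset\supp\pz\subset(-C_4^{-1},C_4^{-1})$ as you use.
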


For any given $\dz\in (0,\fz)$, let $\phi$ be a measurable
function from $\cc$ to $\cc$ satisfying that
there exists a positive constant $C(\dz)$ such that,
for all $z\in\cc$,  $|\phi(z)|\le
C(\dz)\frac{|z|^\dz}{1+|z|^{2\dz}}$.
Then $\int_0^{\fz}|\phi(t)|^2t^{-1}\,dt<\fz$. It was proved in
\cite[(3.14)]{hlmmy} that, for all $f\in L^2(\cx)$,
\begin{equation}\label{2.7}
\int_0^\fz\lf\|\phi(t\sqrt{L})f\r\|^2_{L^2(\cx)}\,\frac{dt}{t}\le
\lf\{\int_0^\fz|\phi(t)|^2\,\frac{dt}{t}\r\}\|f\|^2_{L^2(\cx)},
\end{equation}
which is often used in what follows.

\subsection{Growth functions\label{s2.3}}

\hskip\parindent We recall that a function
$\Phi:[0,\fz)\to[0,\fz)$ is called an \emph{Orlicz function} if it
is nondecreasing, $\Phi(0)=0$, $\Phi(t)>0$ for all $t\in(0,\fz)$ and
$\lim_{t\to\fz}\Phi(t)=\fz$ (see, for example,
\cite{m83,n50,rr91,rr00}). The function $\Phi$ is said to be of {\it
upper type $p$} (resp. \emph{lower type $p$}) for some $p\in[0,\fz)$, if
there exists a positive constant $C$ such that, for all
$t\in[1,\fz)$ (resp. $t\in[0,1]$) and $s\in[0,\fz)$,
$\Phi(st)\le Ct^p \Phi(s)$.
If $\Phi$ is of both upper type $p_1$ and lower type $p_2$, then
$\Phi$ is said to be of \emph{type $(p_1,\,p_2)$}.
The function $\Phi$ is said to be of {\it strictly lower type $p$}
if, for all $t\in[0,1]$ and $s\in[0,\fz)$, $\Phi(st)\le
t^p\Phi(s)$. Define
\begin{equation}\label{2.8}
p_{\Phi}:=\sup\{p\in[0,\fz):\,\Phi(st)\le t^p\Phi(s) \,\text{holds true
for all}\ t\in[0,1]\,\text{and}\ s\in[0,\fz)\}.
\end{equation}
It was proved in \cite[Remark 2.1]{jy10} that $\Phi$ is also of strictly
lower type $p_\Phi$; in other words, $p_\Phi$ is
attainable.

For a given function $\fai:\,\cx\times[0,\fz)\to[0,\fz)$ such that, for
any $x\in\cx$, $\fai(x,\cdot)$ is an Orlicz function,
$\fai$ is said to be of \emph{uniformly upper type $p$} (resp.
\emph{uniformly lower type $p$}) for some $p\in[0,\fz)$, if there
exists a positive constant $C$ such that, for all $x\in\cx$,
$t\in[1,\fz)$ (resp. $t\in[0,1]$) and $s\in[0,\fz)$,
\begin{equation}\label{2.9}
\fai(x,st)\le Ct^p\fai(x,s);
\end{equation}
$\fai$ is said to be of \emph{positive uniformly upper type}
(resp. \emph{uniformly lower type}) if it is of uniformly upper
type (resp. uniformly lower type) $p$ for some $p\in(0,\fz)$. Moreover,
let
\begin{equation}\label{2.10}
I(\fai):=\inf\{p\in(0,\fz):\ \fai\ \text{is of uniformly upper
type}\ p\}
\end{equation}
and
\begin{equation}\label{2.11}
i(\fai):=\sup\{p\in(0,\fz):\ \fai\ \text{is of uniformly lower
type}\ p\}.
\end{equation}
In what follows, $I(\fai)$ and
$i(\fai)$ are, respectively, called the \emph{uniformly critical
upper type index} and the \emph{uniformly critical lower type index} of $\fai$.
Observe that $I(\fai)$ and $i(\fai)$ may not be attainable, namely, $\fai$ may not
be of uniformly upper type $I(\fai)$ and uniformly lower type $i(\fai)$
(see below for some examples).

Let $\fai:\cx\times[0,\fz)\to[0,\fz)$ satisfy that
$x\mapsto\fai(x,t)$ is measurable for all $t\in[0,\fz)$. Following Ky
\cite{k}, $\fai(\cdot,t)$ is said to be \emph{uniformly locally
integrable} if, for all bounded subsets $K$ of $\cx$,
$$\int_{K}\sup_{t\in(0,\fz)}\lf\{\fai(x,t)
\lf[\int_{K}\fai(y,t)\,d\mu(y)\r]^{-1}\r\}\,d\mu(x)<\fz.$$

\begin{definition}\label{d2.2}
Let $\fai:\cx\times[0,\fz)\to[0,\fz)$ be uniformly locally
integrable. The function $\fai(\cdot,t)$ is said to satisfy the
\emph{uniformly Muckenhoupt condition for some $q\in[1,\fz)$},
denoted by $\fai\in\aa_q(\cx)$, if, when $q\in (1,\fz)$,
\begin{equation*}
\aa_q (\fai):=\sup_{t\in(0,\fz)}\sup_{B\subset\cx}\frac{1}{\mu(B)}\int_B
\fai(x,t)\,d\mu(x) \lf\{\frac{1}{\mu(B)}\int_B
[\fai(y,t)]^{-q'/q}\,d\mu(y)\r\}^{q/q'}<\fz,
\end{equation*}
where $1/q+1/q'=1$, or
\begin{equation*}
\aa_1 (\fai):=\sup_{t\in(0,\fz)}
\sup_{B\subset\cx}\frac{1}{\mu(B)}\int_B \fai(x,t)\,d\mu(x)
\lf(\esup_{y\in B}[\fai(y,t)]^{-1}\r)<\fz.
\end{equation*}
Here the first supremums are taken over all $t\in(0,\fz)$ and the
second ones over all balls $B\subset\cx$.

The function $\fai(\cdot,t)$ is said to satisfy the
\emph{uniformly reverse H\"older condition for some
$q\in(1,\fz]$}, denoted by $\fai\in \rh_q(\cx)$, if, when $q\in
(1,\fz)$,
\begin{eqnarray*}
\rh_q (\fai):&&=\sup_{t\in(0,\fz)}\sup_{B\subset\cx}\lf\{\frac{1}
{\mu(B)}\int_B [\fai(x,t)]^q\,d\mu(x)\r\}^{1/q}\\
\nonumber &&\hspace{6 em}\times\lf\{\frac{1}{\mu(B)}\int_B
\fai(x,t)\,d\mu(x)\r\}^{-1}<\fz,
\end{eqnarray*}
or
\begin{equation*}
\rh_{\fz} (\fai):=\sup_{t\in
(0,\fz)}\sup_{B\subset\cx}\lf\{\esup_{y\in
B}\fai(y,t)\r\}\lf\{\frac{1}{\mu(B)}\int_B
\fai(x,t)\,d\mu(x)\r\}^{-1} <\fz.
\end{equation*}
Here the first supremums are taken over all $t\in(0,\fz)$ and the
second ones over all balls $B\subset\cx$.
\end{definition}

Recall that in Definition \ref{d2.2}, when $\cx=\rn$,
$\aa_q(\rn)$ with $q\in[1,\fz)$ was introduced by Ky \cite{k}.

Let $\aa_{\fz}(\cx):=\cup_{q\in[1,\fz)}\aa_{q}(\cx)$ and define
the \emph{critical indices} of $\fai$ as follows:
\begin{equation}\label{2.12}
q(\fai):=\inf\lf\{q\in[1,\fz):\ \fai\in\aa_{q}(\cx)\r\}
\end{equation}
and
\begin{equation}\label{2.13}
r(\fai):=\sup\lf\{q\in(1,\fz]:\ \fai\in\rh_{q}(\cx)\r\}.
\end{equation}
Observe that, if $q(\fai)\in(1,\fz)$, then $\fai\not\in\aa_{q(\fai)}(\cx)$,
and there exists $\fai\not\in\aa_1(\cx)$ such that $q(\fai)=1$
(see, for example, \cite{jn87}). Similarly,
if $r(\fai)\in(1,\fz)$, then $\fai\not\in\rh_{r(\fai)}(\cx)$, and there exists
$\fai\not\in\rh_\fz(\cx)$ such that $r(\fai)=\fz$ (see, for example, \cite{cn95}).

Now we introduce the notion of growth functions.

\begin{definition}\label{d2.3}
A function $\fai:\ \cx\times[0,\fz)\to[0,\fz)$ is called
 a \emph{growth function} if the following hold true:
 \vspace{-0.25cm}
\begin{enumerate}
\item[(i)] $\fai$ is a \emph{Musielak-Orlicz function}, namely,
\vspace{-0.2cm}
\begin{enumerate}
    \item[(i)$_1$] the function $\fai(x,\cdot):\ [0,\fz)\to[0,\fz)$ is an
    Orlicz function for all $x\in\cx$;
    \vspace{-0.2cm}
    \item [(i)$_2$] the function $\fai(\cdot,t)$ is a measurable
    function for all $t\in[0,\fz)$.
\end{enumerate}
\vspace{-0.25cm} \item[(ii)] $\fai\in \aa_{\fz}(\cx)$.
\vspace{-0.25cm} \item[(iii)] The function $\fai$ is of positive
uniformly upper type $p_1$ for some $p_1\in(0,1]$ and of uniformly
lower type $p_2$ for some $p_2\in(0,1]$.
\end{enumerate}
\end{definition}

\begin{remark}\label{r2.1}
By the definitions of the uniformly upper type and the uniformly lower type, we see
that, if the growth function $\fai$ is of positive uniformly upper
type $p_1$ and of positive uniformly lower type $p_2$, then $p_1\ge p_2$.
\end{remark}

Clearly, $\fai(x,t):=\oz(x)\Phi(t)$ is a growth function if
$\oz\in A_{\fz}(\cx)$ and $\Phi$ is an Orlicz function of upper
type $p_1$ for some $p_1\in(0,1]$ and of lower type $p_2$ for some $p_2\in(0,1]$.
It is known that, for $p\in(0,1]$, if $\Phi(t):=t^p$ for all $t\in [0,\fz)$,
then $\Phi$ is an Orlicz function of type $(p,p)$;
for $p\in[\frac{1}{2},1]$, if
$\Phi(t):= t^p/\ln(e+t)$ for all $t\in [0,\fz)$, then $\Phi$ is an
Orlicz function of lower type $q$ for $q\in(0, p)$ and of upper type $p$; for
$p\in(0,\frac{1}{2}]$, if $\Phi(t):=t^p\ln(e+t)$ for all $t\in
[0,\fz)$, then $\Phi$ is an Orlicz function of lower type $p$ and of upper type $q$
for $q\in(p,1]$. Recall that if an Orlicz function is of upper
type $p\in(0,1)$, then it is also of upper type 1.

Another typical and useful growth function is $\fai$ as in
\eqref{1.3}. It is easy to show that if $\fai$ is as in
\eqref{1.3}, then $\fai\in \aa_1(\cx)$, $\fai$ is of uniformly
upper type $\az$, $I(\fai)=i(\fai)=\az$, $i(\fai)$ is not attainable,
but $I(\fai)$ is attainable.
Moreover, it is worth to point out that such function $\fai$
naturally appears in the study of the pointwise multiplier
characterization for the BMO-type space on the metric space with
doubling measure (see \cite{ny97}). We also point out that when
$\cx=\rn$, a similar example of such $\fai$ is given by Ky
\cite{k} replacing $d(x,x_0)$ by $|x|$, where $|\cdot|$ denotes
the Euclidean distance on $\rn$.

\subsection{Some basic properties on growth functions\label{s2.4}}

\hskip\parindent Throughout the whole paper, we \emph{always
assume that $\fai$ is a growth function} as in Definition
\ref{d2.3}. Let us now introduce the Musielak-Orlicz space.

The \emph{Musielak-Orlicz space $L^{\fai}(\cx)$} is defined to be the set
of all measurable functions $f$ such that
$\int_{\cx}\fai(x,|f(x)|)\,d\mu(x)<\fz$ with \emph{Luxembourg
norm}
$$\|f\|_{L^{\fai}(\cx)}:=\inf\lf\{\lz\in(0,\fz):\ \int_{\cx}
\fai\lf(x,\frac{|f(x)|}{\lz}\r)\,d\mu(x)\le1\r\}.
$$
In what follows, for any measurable subset $E$ of $\cx$ and $t\in[0,\fz)$, let
$$\fai(E,t):=\int_E\fai(x,t)\,d\mu(x).$$

The following Lemmas \ref{l2.3} and \ref{l2.5} on the
properties of growth functions are, respectively, \cite[Lemmas
4.1 and 4.3]{k}.

\begin{lemma}\label{l2.3}
{\rm(i)} Let $\fai$ be a growth function. Then $\fai$ is uniformly
$\sigma$-quasi-subadditive on $\cx\times[0,\fz)$, namely, there
exists a positive constant $C$ such that, for all
$(x,t_j)\in\cx\times[0,\fz)$ with $j\in\nn$,
$\fai(x,\sum_{j=1}^{\fz}t_j)\le C\sum_{j=1}^{\fz}\fai(x,t_j).$

{\rm(ii)} Let $\fai$ be a growth function and
$\wz{\fai}(x,t):=\int_0^t\frac{\fai(x,s)}{s}\,ds$ for all
$(x,t)\in\cx\times[0,\fz)$. Then $\wz{\fai}$ is a growth function,
which is equivalent to $\fai$; moreover, $\wz{\fai}(x,\cdot)$ is
continuous and strictly increasing.
\end{lemma}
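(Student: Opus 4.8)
The two assertions are essentially structural facts about Musielak-Orlicz functions, and my plan is to reduce both to the uniform lower-type and uniform upper-type hypotheses that are built into the definition of a growth function (Definition \ref{d2.3}(iii)), together with the monotonicity of $\fai(x,\cdot)$.

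For part (i), I would first establish the two-term inequality $\fai(x,t_1+t_2)\le C[\fai(x,t_1)+\fai(x,t_2)]$ uniformly in $x$, and then iterate/sum to get the $\sigma$-quasi-subadditivity. To get the two-term bound, fix $x$ and $t_1,t_2\in[0,\fz)$, and assume without loss of generality that $t_1\le t_2$ (and $t_2>0$, the case $t_2=0$ being trivial). Then $t_1+t_2\le 2t_2$, so by monotonicity $\fai(x,t_1+t_2)\le\fai(x,2t_2)$, and since $\fai$ is of uniformly upper type $p_1$ (apply \eqref{2.9} with $t=2\in[1,\fz)$ and $s=t_2$) we get $\fai(x,2t_2)\le C2^{p_1}\fai(x,t_2)\le C2^{p_1}[\fai(x,t_1)+\fai(x,t_2)]$. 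This $C$ depends only on the upper-type constant of $\fai$, hence is uniform in $x$. For the countable version, write $S_k:=\sum_{j=1}^k t_j$ and note that, for the tail, one can control $\fai(x,\sum_{j=1}^\fz t_j)$ by first reducing to the case where the series $\sum_j\fai(x,t_j)$ converges (otherwise the right-hand side is infinite and there is nothing to prove), and then exploiting the uniform lower type $p_2$: for each $j$, $t_j\le\sum_i t_i=:T$, so $\fai(x,t_j)\ge c(t_j/T)^{p_2}\fai(x,T)$ after applying \eqref{2.9} with the lower-type exponent, whence $\sum_j(t_j/T)^{p_2}\le C\sum_j\fai(x,t_j)/\fai(x,T)$; since $p_2\le1$ and $\sum_j t_j/T=1$ we have $\sum_j(t_j/T)^{p_2}\ge\sum_j t_j/T=1$, and therefore $\fai(x,T)\le C\sum_j\fai(x,t_j)$, which is exactly the claim. (This is the standard argument; one must be slightly careful that the constant is uniform in $x$, but each application of \eqref{2.9} has an $x$-independent constant, so this is automatic.)

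For part (ii), the candidate $\wz\fai(x,t):=\int_0^t\fai(x,s)\,s^{-1}\,ds$ is finite because near $s=0$ the uniform lower type gives $\fai(x,s)\lesssim s^{p_2}\fai(x,1)$ with $p_2>0$, so the integrand is integrable at $0$; and the integral over $[1,t]$ is finite by monotonicity. Checking that $\wz\fai$ is a growth function splits into: (a) $\wz\fai(x,\cdot)$ is an Orlicz function — it is clearly nondecreasing (the integrand is nonnegative), $\wz\fai(x,0)=0$, $\wz\fai(x,t)>0$ for $t>0$ since $\fai(x,\cdot)>0$ on $(0,\fz)$, and $\wz\fai(x,t)\to\fz$ as $t\to\fz$ because $\int_1^t\fai(x,s)\,s^{-1}\,ds\ge\fai(x,1)\ln t\to\fz$; (b) measurability of $\wz\fai(\cdot,t)$ follows from Tonelli/Fubini applied to the measurable function $(x,s)\mapsto\fai(x,s)/s$; (c) the uniformly upper/lower type bounds transfer from $\fai$ to $\wz\fai$ by a change of variables $s\mapsto ts$ inside the integral defining $\wz\fai(x,t\sigma)$; (d) the $\aa_\fz(\cx)$ membership transfers because of the equivalence $\wz\fai\sim\fai$, which I prove next. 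The \emph{equivalence} $\wz\fai(x,t)\sim\fai(x,t)$ uniformly in $(x,t)$ is the heart of the matter: the upper bound $\wz\fai(x,t)=\int_0^t\fai(x,s)\,s^{-1}\,ds\le\fai(x,t)\int_0^t s^{-1}\,(s/t)^{?}$ — more precisely, writing $s=t\tau$ and using uniform lower type $p_2$ (valid for $\tau\in[0,1]$), $\fai(x,t\tau)\le C\tau^{p_2}\fai(x,t)$, so $\wz\fai(x,t)=\int_0^1\fai(x,t\tau)\,\tau^{-1}\,d\tau\le C\fai(x,t)\int_0^1\tau^{p_2-1}\,d\tau=Cp_2^{-1}\fai(x,t)$; the lower bound $\wz\fai(x,t)\ge\int_{t/2}^t\fai(x,s)\,s^{-1}\,ds\ge\fai(x,t/2)\ln 2\ge c\fai(x,t)$, where the last step uses the uniform upper type to compare $\fai(x,t)\le\fai(x,2\cdot t/2)\le C2^{p_1}\fai(x,t/2)$. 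Finally, continuity and strict monotonicity of $\wz\fai(x,\cdot)$ are immediate: $\wz\fai(x,\cdot)$ is an indefinite integral of a locally integrable function, hence absolutely continuous, and its derivative $\fai(x,t)/t$ is strictly positive for $t>0$, so $\wz\fai(x,\cdot)$ is strictly increasing on $(0,\fz)$.

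The main obstacle, such as it is, is bookkeeping: ensuring at every step that the constants produced by invoking \eqref{2.9} are genuinely independent of $x\in\cx$ (which they are, since the uniform type conditions provide $x$-independent constants) and handling the degenerate edge cases ($t=0$, divergent series, the behavior of the integrand at $s=0$) cleanly. There is no deep difficulty here — the lemma is a preparatory structural result — so I would present it compactly, citing \cite[Lemmas 4.1 and 4.3]{k} for the original $\rn$ version and noting that the proofs carry over verbatim to the space of homogeneous type $\cx$ because they use only the pointwise-in-$x$ Orlicz structure and the uniform type bounds, never the metric or the measure.
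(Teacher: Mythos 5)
The paper does not actually give a proof of Lemma~\ref{l2.3}: it is quoted verbatim from \cite[Lemmas 4.1 and 4.3]{k} (as the sentence preceding the lemma says), so there is no internal argument to compare against; what follows is a check of your proposal on its own terms.

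Your treatment of part (ii) is complete and correct: the equivalence $\wz\fai\sim\fai$ via the change of variables $s=t\tau$ (lower type for the upper bound, monotonicity plus upper type for the lower bound over $[t/2,t]$), the transfer of the type conditions, the Tonelli argument for measurability, and absolute continuity/strict positivity of the integrand for continuity and strict monotonicity are all standard and correctly executed.

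Part (i), however, contains a genuine error in the key inequality. You invoke uniform lower type $p_2$ to claim $\fai(x,t_j)\ge c(t_j/T)^{p_2}\fai(x,T)$ for $t_j\le T:=\sum_i t_i$. This is the wrong direction: by \eqref{2.9} with $t=t_j/T\in[0,1]$ and $s=T$, the lower type condition gives $\fai(x,t_j)\le C(t_j/T)^{p_2}\fai(x,T)$ --- an \emph{upper} bound on $\fai(x,t_j)$, which, when summed, gives $\sum_j\fai(x,t_j)\le C\fai(x,T)\sum_j(t_j/T)^{p_2}$ and hence says nothing of use. The lower bound you actually need comes from the uniform \emph{upper} type $p_1$: applying \eqref{2.9} with $t=T/t_j\in[1,\fz)$ and $s=t_j$ yields $\fai(x,T)\le C(T/t_j)^{p_1}\fai(x,t_j)$, i.e. $\fai(x,t_j)\ge C^{-1}(t_j/T)^{p_1}\fai(x,T)$. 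Summing, and using $p_1\le1$ (guaranteed by Definition~\ref{d2.3}(iii)) so that $\sum_j(t_j/T)^{p_1}\ge\sum_j(t_j/T)=1$, gives $\sum_j\fai(x,t_j)\ge C^{-1}\fai(x,T)$, which is the claim. So the fix is simply to replace ``lower type $p_2$'' by ``upper type $p_1$'' throughout that paragraph; the degenerate cases ($t_j=0$, $T=0$, divergent right-hand side) are handled as you indicate. (An alternative route, closer to Ky's original: deduce (i) from (ii) by noting that upper type $p_1\le1$ makes $\fai(x,u)/u$ quasi-nonincreasing, whence $\wz\fai(x,s+t)\le\wz\fai(x,s)+C\wz\fai(x,t)$ by splitting the integral at $s$ and shifting; iterating with $s$ the running partial sum gives a uniform constant, and the equivalence $\wz\fai\sim\fai$ transfers $\sigma$-quasi-subadditivity to $\fai$.)

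Finally, your preliminary two-term bound $\fai(x,t_1+t_2)\le C2^{p_1}[\fai(x,t_1)+\fai(x,t_2)]$ is correct, but as you seem to be aware, naive iteration would produce constants growing like $C^k$, so it cannot substitute for the direct argument just described; it is best omitted or relegated to motivation.
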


\begin{lemma}\label{l2.5}
Let $c$ be a positive constant. Then there exists a positive
constant $C$ such that

{\rm(i)} $\int_\cx
\fai(x,\frac{|f(x)|}{\lz})\,d\mu(x)\le c$ for some $\lz\in(0,\fz)$
implies that $\|f\|_{L^{\fai}(\cx)}\le C\lz;$

{\rm(ii)} $\sum_j\fai(B_j,\frac{t_j}{\lz})\le c$
for some $\lz\in(0,\fz)$ implies that
$$\inf\lf\{\az\in(0,\fz):\ \sum_j\fai\lf(B_j,\frac{t_j}{\az}\r)\le1\r\}\le
C\lz,$$ where $\{t_j\}_j$ is a sequence of positive numbers and
$\{B_j\}_j$ a sequence of balls.
\end{lemma}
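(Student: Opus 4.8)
The plan is to deduce both parts from a single structural ingredient. Since $\fai$ is a growth function, by Definition \ref{d2.3}(iii) it is of uniformly lower type $p_2$ for some $p_2\in(0,1]$; that is, there is a positive constant $C_0$ such that $\fai(x,st)\le C_0t^{p_2}\fai(x,s)$ for all $x\in\cx$, $s\in[0,\fz)$ and $t\in[0,1]$. Integrating this pointwise inequality over a measurable set $E\subset\cx$ shows that the set function $t\mapsto\fai(E,t)$ inherits the same estimate, $\fai(E,st)\le C_0t^{p_2}\fai(E,s)$ for $t\in[0,1]$; and summing it over a family of balls shows $\sum_j\fai(B_j,st_j)\le C_0t^{p_2}\sum_j\fai(B_j,t_j)$ for $t\in[0,1]$. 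This scaling control is all that is needed.

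For (i): if $c\le1$, the hypothesis $\int_\cx\fai(x,|f(x)|/\lz)\,d\mu(x)\le c\le1$ already shows that $\lz$ belongs to the set defining the Luxembourg norm, whence $\|f\|_{L^\fai(\cx)}\le\lz$ and we may take $C:=1$. If $c>1$, set $C:=(C_0c)^{1/p_2}$, which is at least $1$ and depends only on $c$ (and on the fixed data $C_0,p_2$ of $\fai$). Writing $|f(x)|/(C\lz)=(1/C)\,(|f(x)|/\lz)$ with $1/C\in(0,1]$, applying the uniformly lower type $p_2$ estimate pointwise, and then integrating, we obtain
$$\int_\cx\fai\lf(x,\frac{|f(x)|}{C\lz}\r)\,d\mu(x)\le C_0C^{-p_2}\int_\cx\fai\lf(x,\frac{|f(x)|}{\lz}\r)\,d\mu(x)\le C_0C^{-p_2}c=1.$$
Hence $C\lz$ lies in the set defining $\|f\|_{L^\fai(\cx)}$, so $\|f\|_{L^\fai(\cx)}\le C\lz$.

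For (ii): the argument is verbatim with the summed quantity $\sum_j\fai(B_j,\cdot)$ playing the role of $\int_\cx\fai(x,\cdot)\,d\mu(x)$, using the uniformly lower type estimate for $\sum_j\fai(B_j,\cdot)$ noted above. With $C:=\max\{1,(C_0c)^{1/p_2}\}$ one gets $\sum_j\fai(B_j,t_j/(C\lz))\le C_0C^{-p_2}\sum_j\fai(B_j,t_j/\lz)\le1$, so $C\lz$ belongs to the set whose infimum is being estimated, which gives the assertion. There is no serious obstacle here; the only points to watch are to ensure the chosen $C$ is $\ge1$ so that the lower type inequality may be applied, to observe that the lower type estimate transfers to the averaged and summed quantities, and to keep $C$ depending only on $c$ and not on $f$, $\lz$, or the balls $B_j$.
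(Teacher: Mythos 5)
Your proof is correct, and since the paper itself does not include an argument (it simply cites \cite[Lemma 4.3]{k}), your reduction to the uniformly lower type $p_2$ scaling estimate is the natural and essentially canonical route. The case split on whether $c\le1$ or $c>1$, together with the observation that the lower type inequality passes through the integral (respectively, the sum) and the choice $C=\max\{1,(C_0c)^{1/p_2}\}$, gives a complete and clean derivation of both parts.
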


In what follows, for any given ball $B:=B(x,t)$, with $x\in\cx$ and $r\in(0,\fz)$, and
$\lz\in(0,\fz)$, we write $\lz B$ for the {\it$\lz$-dilated ball} of $B$, namely,
$\lz B:=B(x,\lz t)$.

We have the following properties for $\aa_\fz(\cx)$, whose proofs
are similar to those in \cite{gr1,gra1}, and we omit the details.
In what follows, $\cm$ denotes the
\emph{Hardy-Littlewood maximal function} on $\cx$, namely, for all $x\in\cx$,
$$\cm(f)(x):=\sup_{x\in B}\frac{1}{\mu(B)}\int_B|f(y)|\,d\mu(y),$$
where the supremum is taken over all balls $B\ni x$.

\begin{lemma}\label{l2.6}
$\mathrm{(i)}$ $\aa_1(\cx)\subset\aa_p(\cx)\subset\aa_q(\cx)$ for
$1\le p\le q<\fz$.

$\mathrm{(ii)}$ $\rh_{\fz}(\cx)\subset\rh_p(\cx)\subset\rh_q(\cx)$
for $1<q\le p\le\fz$.

$\mathrm{(iii)}$ If $\fai\in\aa_p(\cx)$ with $p\in(1,\fz)$, then
there exists some $q\in(1,p)$ such that $\fai\in\aa_q(\cx)$.

$\mathrm{(iv)}$ If $\fai\in\rh_p(\cx)$ with $p\in(1,\fz)$, then
there exists some $q\in(p,\fz)$ such that $\fai\in\rh_q(\cx)$.

$\mathrm{(v)}$ $\aa_\fz(\cx)=\cup_{p\in[1,\fz)}\aa_p(\cx)
\subset\cup_{q\in(1,\fz]}\rh_q(\cx)$.

$\mathrm{(vi)}$ If $p\in(1,\fz)$ and $\fai\in \aa_{p}(\cx)$, then
there exists a positive constant $C$ such that, for all measurable
functions $f$ on $\cx$ and $t\in[0,\fz)$,
$$\int_{\cx}\lf[\cm(f)(x)\r]^p\fai(x,t)\,d\mu(x)\le
C\int_{\cx}|f(x)|^p\fai(x,t)\,d\mu(x).$$

$\mathrm{(vii)}$ If $\fai\in \aa_{p}(\cx)$ with $p\in[1,\fz)$, then
there exists a positive constant $C$ such that, for all balls
$B_1,\,B_2\subset\cx$ with $B_1\subset B_2$ and $t\in[0,\fz)$,
$\frac{\fai(B_2,t)}{\fai(B_1,t)}\le
C[\frac{\mu(B_2)}{\mu(B_1)}]^p.$

$\mathrm{(viii)}$ If $\fai\in \rh_{q}(\cx)$ with $q\in(1,\fz]$,
then there exists a positive constant $C$ such that, for all balls
$B_1,\,B_2\subset\cx$ with $B_1\subset B_2$ and $t\in[0,\fz)$,
$\frac{\fai(B_2,t)}{\fai(B_1,t)}\ge
C[\frac{\mu(B_2)}{\mu(B_1)}]^{(q-1)/q}.$
\end{lemma}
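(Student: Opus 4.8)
The plan is to fix $t\in[0,\fz)$, regard $w_t:=\fai(\cdot,t)$ as a single weight whose Muckenhoupt constant $[w_t]_{\aa_q}$ is bounded by $\aa_q(\fai)$ and whose reverse H\"older constant $[w_t]_{\rh_q}$ is bounded by $\rh_q(\fai)$ \emph{uniformly in $t$}, then run the classical arguments of \cite{gr1,gra1} on the space of homogeneous type $(\cx,d,\mu)$, and finally take the supremum over $t$. The only point requiring care is that every exponent and constant produced below depends solely on $p$, $q$, the doubling constant $C_1$ in \eqref{2.1}, and the uniform quantities $\aa_q(\fai)$, $\rh_q(\fai)$ --- never on $t$ or on the particular ball. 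Throughout write $\langle g\rangle_B:=\frac{1}{\mu(B)}\int_B g\,d\mu$.

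Parts (i), (ii), (vi), (vii) and (viii) are the soft ones. For (i) and (ii), for each $t$ the nestings $\aa_p\subset\aa_q$ ($p\le q$) and $\rh_p\subset\rh_q$ ($q\le p$) follow from Jensen's inequality (applied to $w_t^{-1}$, resp. $w_t^q$), while $\aa_1\subset\aa_p$ and $\rh_\fz\subset\rh_p$ are immediate from the definitions. For (vii), test the $\aa_p$-condition on $B_2$ and apply H\"older's inequality to $\mu(B_1)=\int_{B_1}w_t^{1/p}w_t^{-1/p}\,d\mu$ to obtain $\mu(B_1)^p\le\fai(B_1,t)\,\aa_p(\fai)\,\mu(B_2)^p/\fai(B_2,t)$, which rearranges to the claim; (viii) is the symmetric computation with $w_t^q$ and the $\rh_q$-condition, the cases $p=1$ and $q=\fz$ using $\einf$ and $\esup$ in place of the integral averages. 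For (vi), since $[w_t]_{\aa_p}\le\aa_p(\fai)$, the standard proof of Muckenhoupt's weighted maximal theorem on $(\cx,d,\mu)$ (a Vitali-type covering together with the $\aa_p$-inequality giving the weak type bound, then Marcinkiewicz interpolation using the reverse H\"older step below) yields $\|\cm f\|_{L^p(w_t\,d\mu)}\le C\|f\|_{L^p(w_t\,d\mu)}$ with $C=C(p,\aa_p(\fai),C_1)$ independent of $t$.

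The substance is in (iii), (iv) and the inclusion in (v). For (iii) I would prove the reverse H\"older inequality for $\aa_p$-weights: if $\fai\in\aa_p(\cx)$, there are $\dz,C\in(0,\fz)$, depending only on $p$, $\aa_p(\fai)$ and $C_1$, with $\langle w_t^{1+\dz}\rangle_B^{1/(1+\dz)}\le C\langle w_t\rangle_B$ for all $B$ and $t$; this is the usual argument using the $\aa_\fz$-type estimate $\mu(E)/\mu(B)\ls(w_t(E)/w_t(B))^\sz$ for $E\subset B$ (with $\sz$ and the implied constant controlled by $\aa_p(\fai)$ and $C_1$), a Calder\'on--Zygmund decomposition of $w_t$ on $B$ at the heights $2^k\langle w_t\rangle_B$, and a geometric-series summation. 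Reinserting this into the $\aa_p$-condition (replacing the exponent $-1/(p-1)$ by $-(1+\dz')/(p-1)$ for small $\dz'$) yields $q\in(1,p)$ with $\fai\in\aa_q(\cx)$, and the inequality itself shows $\fai\in\rh_{1+\dz}(\cx)$, which together with the defining identity $\aa_\fz(\cx)=\cup_p\aa_p(\cx)$ gives the inclusion in (v). For (iv) I would invoke Gehring's lemma on $(\cx,d,\mu)$: the hypothesis $\fai\in\rh_q(\cx)$ is exactly the self-improving reverse H\"older hypothesis for $g:=w_t^q$, and Gehring's iteration (again a Calder\'on--Zygmund decomposition at geometric heights, followed by a bootstrap absorbing a small term) produces $\epz,C\in(0,\fz)$, depending only on $q$, $\rh_q(\fai)$ and $C_1$, with $\langle w_t^{q+\epz}\rangle_B^{1/(q+\epz)}\le C\langle w_t\rangle_B$, that is, $\fai\in\rh_{q+\epz}(\cx)$.

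The only real obstacle is bookkeeping: one must check at each step --- the $\aa_\fz$-estimate for level sets, the Calder\'on--Zygmund stopping-time decomposition on a ball in $(\cx,d,\mu)$, the summations in both the reverse H\"older inequality and Gehring's lemma --- that the exponents $\dz$, $\epz$, $q$ and the constants are controlled entirely by $p$ or $q$, by $\aa_q(\fai)$ or $\rh_q(\fai)$, and by $C_1$. Granting this, taking the supremum over $t\in[0,\fz)$ in each fixed-$t$ inequality converts the classical results into the uniform statements (i)--(viii); no property of $(\cx,d,\mu)$ beyond the doubling condition \eqref{2.1} is needed, which is precisely why these proofs are the routine transcriptions of those in \cite{gr1,gra1}.
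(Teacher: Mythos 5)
Your proposal is correct and matches the paper's intent exactly: the paper omits the proof and simply notes that it is ``similar to those in \cite{gr1,gra1}''. You correctly identify the one non-trivial point, namely that the classical constants and exponents (in the open-endedness of $\aa_p$ and in Gehring's lemma) depend only on the $\aa_p$- or $\rh_q$-constant of the weight and on the doubling constant, so the uniform bounds $\aa_q(\fai)$ and $\rh_q(\fai)$ pass through to give bounds independent of $t$, and the supremum over $t$ can then be taken.
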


\begin{remark}\label{r2.2}
We remark that in the setting of the Euclidean space $\rn$,
Lemma \ref{l2.6}(v) can be improved to
$\aa_\fz(\rn)=\cup_{p\in[1,\fz)}\aa_p(\rn)=\cup_{q\in(1,\fz]}\rh_q(\rn)$
(see, for example, \cite[Lemma 2.4(iv)]{hyy}).
However, in the present setting, the inverse
inclusion in Lemma \ref{l2.6}(v)
may not be true (see \cite[p.\,9]{st} for a counterexample).
\end{remark}

\section{Musielak-Orlicz tent spaces $T_\fai(\cx\times(0,\fz))$\label{s3}}

\hskip\parindent In this section, we study the Musielak-Orlicz
tent space associated with the growth function.
We first recall some notions as follows.

For any $\nu\in(0,\fz)$ and $x\in\cx$, let
$\bgz_{\nu}(x):=\{(y,t)\in\cx\times(0,\fz):\ d(x,y)<\nu t\}$
be the \emph{cone of aperture $\nu$ with vertex $x\in\cx$}. For any
closed subset $F$ of $\cx$, denote by $\ccr_{\nu}F$ the \emph{union of all
cones with vertices in $F$}, namely, $\ccr_{\nu}F:=\cup_{x\in
F}\bgz_{\nu}(x)$ and, for any open subset $O$ of $\cx$, denote the
\emph{tent over $O$} by $T_{\nu}(O)$, which is defined as
$T_{\nu}(O):=[\ccr_{\nu}(O^\complement)]^{\complement}$. It is
easy to see that
$T_{\nu}(O)=\{(x,t)\in\cx\times(0,\fz):\ d(x,O^\complement)
\ge\nu t\}.$
In what follows, we denote $\bgz_1(x)$ and
$T_1(O)$ simply by $\bgz(x)$ and $\widehat{O}$, respectively.

For all measurable functions $g$ on $\cx\times(0,\fz)$ and
$x\in\cx$, define
$$\ca(g)(x):=\lf\{\int_{\bgz(x)}|g(y,t)|^2
\frac{d\mu(y)}{V(x,t)}\frac{dt}{t}\r\}^{1/2}.
$$
If $\cx=\rn$, Coifman, Meyer and Stein \cite{cms85} introduced the
tent space $T^p_2(\rr^{n+1}_+)$ for $p\in(0,\fz)$, here and in what
follows, $\rr^{n+1}_+:=\rn\times(0,\fz)$. The tent space
$T^p_2(\cx\times(0,\fz))$ on spaces of homogenous type was introduced
by Russ \cite{ru07}. Recall that a measurable function $g$ is said
to belong to the \emph{tent space} $T^p_2(\cx\times(0,\fz))$ with $p\in(0,\fz)$, if
$\|g\|_{T^p_2(\cx\times(0,\fz))}:=\|\ca(g)\|_{L^p(\cx)}<\fz$. Moreover,
Harboure, Salinas and Viviani \cite{hsv07} and Jiang and Yang
\cite{jy11}, respectively, introduced the Orlicz tent spaces
$T_{\bfai}(\rr^{n+1}_+)$ and $T_{\bfai}(\cx\times(0,\fz))$.

Let $\fai$ be as in Definition \ref{d2.3}. In what follows, we
denote by $T_\fai(\cx\times(0,\fz))$ the \emph{space of all measurable functions
$g$ on $\cx\times(0,\fz)$ such that $\ca(g)\in L^{\fai}(\cx)$} and,
for any $g\in T_\fai(\cx\times(0,\fz))$, define its \emph{quasi-norm} by
$$\|g\|_{T_\fai(\cx\times(0,\fz))}:=\|\ca(g)\|_{L^{\fai}(\cx)}=
\inf\lf\{\lz\in(0,\fz):\
\int_{\cx}\fai\lf(x,\frac{\ca(g)(x)}{\lz}\r)\,d\mu(x)\le1\r\}.
$$

A function $a$ on $\cx\times(0,\fz)$ is called a
\emph{$T_\fai(\cx\times(0,\fz))$-atom} if

(i) there exists a ball $B\subset\cx$ such that $\supp
a\subset\widehat{B}$;

(ii) $\int_{\widehat{B}}|a(x,t)|^2\frac{d\mu(x)\,dt}{t}\le
\mu(B)\|\chi_{B}\|_{L^\fai(\cx)}^{-2}$.

For functions in $T_\fai(\cx\times(0,\fz))$, we have the following
atomic decomposition.

\begin{theorem}\label{t3.1}
Let $\fai$ be as in Definition \ref{d2.3}. Then for any $f\in
T_\fai(\cx\times(0,\fz))$, there exist $\{\lz_j\}_j\subset\cc$ and a sequence
$\{a_j\}_j$ of $T_\fai(\cx\times(0,\fz))$-atoms such that, for almost every
$(x,t)\in\cx\times(0,\fz)$,
\begin{equation}\label{3.1}
f(x,t)=\sum_{j}\lz_ja_j(x,t).
\end{equation}
Moreover, there exists a positive constant $C$ such that, for all
$f\in T_\fai(\cx\times(0,\fz))$,
\begin{eqnarray}\label{3.2}
\blz(\{\lz_j a_j\}_j)&&:=\inf\lf\{\lz\in(0,\fz):\
\sum_j\fai\lf(B_j,\frac{|\lz_j|}
{\lz\|\chi_{B_j}\|_{L^\fai(\cx)}}\r)\le1\r\}\\
&&\le
C\|f\|_{T_\fai(\cx\times(0,\fz))},\nonumber
\end{eqnarray}
where, for each $j$, $\widehat{B_j}$ appears in the support of
$a_j$.
\end{theorem}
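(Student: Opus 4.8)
The plan is to adapt the classical Coifman--Meyer--Stein tent space decomposition, as carried out for Orlicz tent spaces in \cite{hsv07,jy11}, to the Musielak--Orlicz setting. First I would set up the standard stopping-time/level-set machinery: for each $k\in\zz$ let $O_k:=\{x\in\cx:\ \ca(f)(x)>2^k\}$, which is open since $\ca(f)$ is lower semicontinuous, and let $\widehat{O_k}$ be its tent. Since $f\in T_\fai(\cx\times(0,\fz))$ forces $\ca(f)\in L^\fai(\cx)$ and hence $\ca(f)<\fz$ $\mu$-a.e., one shows $\supp f\subset\bigcup_k(\widehat{O_k}\setminus\widehat{O_{k+1}})$ up to a null set. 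Next apply the Whitney-type decomposition of each $O_k$ available on spaces of homogeneous type: write $O_k=\bigcup_j Q_{k,j}$ with balls $B_{k,j}$ of controlled dilates still inside $O_k$, bounded overlap, and radius comparable to $\dist(B_{k,j},O_k^\complement)$. Then set
\begin{equation*}
a_{k,j}:=\lz_{k,j}^{-1}f\,\chi_{(\widehat{O_k}\setminus\widehat{O_{k+1}})\cap\wt{Q}_{k,j}},\qquad
\lz_{k,j}:=C\,2^k\,\mu(B_{k,j})^{1/2}\,\|\chi_{B_{k,j}}\|_{L^\fai(\cx)},
\end{equation*}
where $\wt{Q}_{k,j}$ is a suitable enlarged Whitney region; the support condition $\supp a_{k,j}\subset\widehat{B_{k,j}}$ (after mildly dilating $B_{k,j}$) is geometric, and \eqref{3.1} holds pointwise a.e. by the covering.

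The substantive analytic point is the size estimate $\int_{\widehat{B_{k,j}}}|a_{k,j}(x,t)|^2\,\frac{d\mu(x)\,dt}{t}\le\mu(B_{k,j})\|\chi_{B_{k,j}}\|_{L^\fai(\cx)}^{-2}$, i.e. that $\int_{(\widehat{O_k}\setminus\widehat{O_{k+1}})\cap\wt{Q}_{k,j}}|f|^2\,\frac{d\mu\,dt}{t}\ls 2^{2k}\mu(B_{k,j})$ with the right constant. This is exactly the classical computation: on $\widehat{O_{k+1}}^{\complement}$ one has, for $x$ in a fixed proportion of $B_{k,j}$ (using that $\dist(B_{k,j},O_k^\complement)$ is comparable to its radius, so the cone over such $x$ captures the Whitney piece), $\ca$ of the truncated $f$ controlled by $2^{k+1}$ away from $O_{k+1}$; integrating $\ca(f\chi_{\dots})(x)^2$ over $x\in\tfrac12 B_{k,j}\setminus O_{k+1}$ and using $\mu(B_{k,j}\setminus O_{k+1})\gtrsim\mu(B_{k,j})$ (another standard property of Whitney balls, since $B_{k,j}$ is not contained in $O_{k+1}$) and Fubini with $V(x,t)\sim V(y,t)$ on the cone gives the bound. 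I would present this via the same lemma used in \cite{jy11} (the intersection of the complement of $\widehat{O_{k+1}}$ with the Whitney tent), citing it rather than redoing it, since nothing about $\fai$ enters here — only the $T^2_2$ geometry.

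Finally, for \eqref{3.2} I would convert the $L^\fai$ control of $\ca(f)$ into control of $\blz(\{\lz_j a_j\}_j)$. The key inequality is
\begin{equation*}
\sum_{k,j}\fai\!\lf(B_{k,j},\frac{|\lz_{k,j}|}{\lz\,\|\chi_{B_{k,j}}\|_{L^\fai(\cx)}}\r)
=\sum_{k,j}\fai\!\lf(B_{k,j},\frac{C\,2^k\,\mu(B_{k,j})^{1/2}}{\lz}\r)
\ls\sum_k\fai\!\lf(O_k,\frac{C'\,2^k}{\lz}\r)
\ls\int_\cx\fai\!\lf(x,\frac{\ca(f)(x)}{\lz}\r)d\mu(x),
\end{equation*}
where the first $\ls$ uses bounded overlap of $\{B_{k,j}\}_j$ together with $B_{k,j}\subset O_k$ and the doubling/$\aa_\fz$ comparison $\fai(B_{k,j},s)\ls\fai(O_k,s)$ wait --- that direction is false, so instead one sums $\sum_j\fai(B_{k,j},\cdot)\ls\fai(\cup_j B_{k,j},\cdot)\le\fai(O_k^{\ast},\cdot)$ using finite overlap and the quasi-subadditivity in Lemma \ref{l2.3}(i), with $O_k^\ast$ a fixed dilate still comparable to $O_k$ up to $\aa_\fz$ constants; the second $\ls$ is the layer-cake estimate $\sum_k 2^{k}\chi_{O_k}\sim\ca(f)$ combined with the uniformly lower type $p_2$ of $\fai$ to absorb the geometric factor $C'$ and the dyadic sum. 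Taking $\lz\sim\|f\|_{T_\fai(\cx\times(0,\fz))}$ makes the right side $\le1$, and then Lemma \ref{l2.5}(ii) yields $\blz(\{\lz_j a_j\}_j)\ls\|f\|_{T_\fai(\cx\times(0,\fz))}$. The main obstacle is precisely bookkeeping the passage from $\sum_j\fai(B_{k,j},\cdot)$ to $\fai(O_k,\cdot)$ without a reverse doubling hypothesis and then summing in $k$ using only uniformly upper/lower type rather than convexity; the uniform $\aa_\fz$ and type bounds on $\fai$, and the finite overlap of the Whitney balls, are exactly what make this go through, and this is where the argument genuinely differs from the Orlicz case in \cite{jy11}.
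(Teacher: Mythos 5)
Your plan is recognizably the paper's own: dyadic level sets $O_k=\{\ca(f)>2^k\}$, a Whitney-type covering, and then the combination of quasi-subadditivity (Lemma \ref{l2.3}(i)), the uniform type of $\fai$, and Lemma \ref{l2.5}(ii) to convert $\ca(f)\in L^\fai(\cx)$ into the $\blz$-bound. However, as written there are two concrete problems that would sink the argument.

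First, the normalizing coefficient $\lz_{k,j}$ is wrong. A $T_\fai(\cx\times(0,\fz))$-atom $a$ adapted to $B$ must satisfy $\int_{\widehat B}|a|^2\,\tfrac{d\mu\,dt}{t}\le\mu(B)\|\chi_B\|_{L^\fai(\cx)}^{-2}$, i.e.\ $\|a\|_{T^2_2(\cx\times(0,\fz))}\le[\mu(B)]^{1/2}\|\chi_B\|_{L^\fai(\cx)}^{-1}$. Your size estimate gives $\|f\chi_{\dots}\|_{T^2_2(\cx\times(0,\fz))}\ls 2^k[\mu(B_{k,j})]^{1/2}$, so the correct normalization is $\lz_{k,j}\sim 2^k\|\chi_{B_{k,j}}\|_{L^\fai(\cx)}$; the extra factor $[\mu(B_{k,j})]^{1/2}$ in your $\lz_{k,j}$ makes $a_{k,j}$ an atom only when $\mu(B_{k,j})\gs1$. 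Moreover, this error propagates into the key display: with your $\lz_{k,j}$, the argument of $\fai$ is $C2^k[\mu(B_{k,j})]^{1/2}/\lz$, which depends on $j$, and the claimed bound $\ls\sum_k\fai(O_k,C'2^k/\lz)$ simply does not follow (there is no uniform control of $[\mu(B_{k,j})]^{1/2}$ in either direction). With the corrected coefficient the argument inside $\fai$ is the $j$-independent constant $C2^k/\lz$, and the finite-overlap plus $\aa_\fz$ argument closes.

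Second, the claim $\mu(B_{k,j}\setminus O_{k+1})\gs\mu(B_{k,j})$ for Whitney balls of $O_k$ is false in general and cannot be patched without the machinery you are trying to avoid. On $\rr$, take $O_k=(-1,1)$ and $O_{k+1}=(-1,-\epz)\cup(\epz,1)$: the Whitney ball near the origin has radius $\sim1$ but meets $F_{k+1}=[-\epz,\epz]$ in a set of measure $\sim\epz\ll1$. This is exactly why the paper passes from $O_k$ to the $\gz$-density sets $O^\ast_k$ and invokes Russ's integral inequality (Lemma \ref{l3.1}): the $T^2_2$-norm of a function supported in $\ccr_{1-\ez}(F^\ast_{k+1})\cap\widehat{C(\ez)B_{k,j}}$ is controlled by an \emph{integral} over $F_{k+1}\cap C(\ez)B_{k,j}$, on which $\ca(f)\le2^{k+1}$, and the bound $\ls2^{2(k+1)}\mu(C(\ez)B_{k,j})$ holds even if that intersection is tiny or empty. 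You hedge by saying you would cite the corresponding lemma from \cite{jy11} ``rather than redoing it'' --- that is the right instinct --- but the direct averaging argument you sketch in parallel would not survive scrutiny. Relatedly, the support claim $\supp f\subset\bigcup_k(\widehat{O_k}\setminus\widehat{O_{k+1}})$ (mod null sets) is not immediate from $\ca(f)<\fz$ a.e.\ with the raw $O_k$; the paper derives it via the density sets and the Lebesgue differentiation theorem, and it also uses the partition of unity $\{\phi_{k,j}\}$ of Lemma \ref{l3.2} so that the atoms genuinely sum back to $f$ without double-counting the overlapping Whitney balls.
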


We prove Theorem \ref{t3.1} by borrowing some ideas from the proof of
\cite[Theorem 3.1]{jy11} (see also \cite{cms85} and \cite{ru07}).
To this end, we first need some known facts
as follows.

Let $F$ be a closed subset of $\cx$ and $O:=F^\complement$. Assume
that $\mu(O)<\fz$. For any fixed $\gz\in(0,1)$, we say that
$x\in\cx$ has the \emph{global $\gz$-density with respect to $F$} if, for
all $r\in(0,\fz)$,
$$\frac{\mu(B(x,r)\cap F)}{\mu(B(x,r))}\ge\gz.$$
Denote by $F_\gz^{\ast}$ the \emph{set of all such $x$}. It is easy to
prove that $F_\gz^{\ast}$ with $\gz\in(0,1)$ is a closed subset of
$F$. Let $\gz\in(0,1)$ and
$O_\gz^{\ast}:=(F_\gz^{\ast})^\complement$. Then it is easy to see
that $O\subset O_\gz^{\ast}$. Indeed, from the definition of
$O^{\ast}$, we deduce that $O_\gz^\ast=\{x\in\cx:\
\wz{\cm}(\chi_O)(x)>1-\gz\}$, where $\wz{\cm}$ denotes the
\emph{centered Hardy-Littlewood maximal function on $\cx$}, which,
together with the fact that $\wz{\cm}$ is of weak type  $(1,1)$ (see \cite{cw71}),
further implies that there exists a positive constant $C(\gz)$,
depending on $\gz$, such that $\mu(O_\gz^\ast)\le C(\gz)\mu(O)$.
Recall that, for all $f\in L^1_\loc(\cx)$ and $x\in\cx$,
$$\wz{\cm}(f)(x):=\sup_{r\in(0,\fz)}\frac{1}{\mu(B(x,r))}
\int_{B(x,r)}|f(y)|\,d\mu(y).
$$
It is well known that there exists a
positive constant $C_5$ such that, for all $x\in\cx$ and $f\in
L^1_{\loc}(\cx)$,
\begin{equation}\label{3.3}
C_5^{-1}\wz{\cm}(f)(x)\le\cm(f)(x)\le C_5\wz{\cm}(f)(x).
\end{equation}

The following Lemma \ref{l3.1} was established in \cite{ru07}.

\begin{lemma}\label{l3.1}
Let $\ez\in(0,1)$. Then there exist $\gz_0\in(0,1)$ and
$C(\ez,\gz_0)\in(0,\fz)$ such that, for any closed subset $F$ of
$\cx$ whose complement has finite measure, $\gz\in[\gz_0,1)$
and nonnegative
measurable function $H$ on $\cx\times(0,\fz)$,
$$\int_{\ccr_{1-\ez}(F^\ast_\gz)}H(y,t)V(y,t)\,d\mu(y)\,dt\le
C(\ez,\gz_0)\int_F\lf\{\int_{\bgz(x)}H(y,t)\,d\mu(y)\,dt\r\}\,d\mu(x),
$$
where $F^\ast_\gz$ denotes the set of points in $\cx$ with the
global $\gz$-density with respect to $F$.
\end{lemma}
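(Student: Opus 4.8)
The plan is to reduce the asserted integral inequality, via Tonelli's theorem, to a single pointwise comparison of the measures $V(y,t)$ and $\mu(B(y,t)\cap F)$, and then to extract that comparison from the definition of global $\gz$-density together with the doubling property \eqref{2.1}.

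First I would rewrite the right-hand side. Since $H\ge0$, the set $\{(x,y,t)\in\cx\times\cx\times(0,\fz):\ d(x,y)<t\}$ is open, and $\{x\in F:\ d(x,y)<t\}=B(y,t)\cap F$, Tonelli's theorem applied to the nonnegative measurable function $(x,y,t)\mapsto H(y,t)\chi_F(x)\chi_{\{d(x,y)<t\}}(x,y,t)$ on $\cx\times\cx\times(0,\fz)$ yields
\begin{equation*}
\int_F\lf\{\int_{\bgz(x)}H(y,t)\,d\mu(y)\,dt\r\}\,d\mu(x)
=\int_\cx\int_0^\fz H(y,t)\,\mu\lf(B(y,t)\cap F\r)\,dt\,d\mu(y).
\end{equation*}
On the other hand, by the definition of $\ccr_{1-\ez}(F^\ast_\gz)$, one has $(y,t)\in\ccr_{1-\ez}(F^\ast_\gz)$ precisely when there exists $z\in F^\ast_\gz$ with $d(z,y)<(1-\ez)t$, so the left-hand side of the claimed inequality equals $\int_\cx\int_0^\fz H(y,t)V(y,t)\chi_{\ccr_{1-\ez}(F^\ast_\gz)}(y,t)\,dt\,d\mu(y)$. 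Consequently, since $H\ge0$, it suffices to produce $\gz_0\in(0,1)$ and $C(\ez,\gz_0)\in(0,\fz)$ such that, for all $\gz\in[\gz_0,1)$ and all $(y,t)\in\ccr_{1-\ez}(F^\ast_\gz)$,
\begin{equation*}
V(y,t)\le C(\ez,\gz_0)\,\mu\lf(B(y,t)\cap F\r).
\end{equation*}

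To prove this pointwise estimate, fix $(y,t)\in\ccr_{1-\ez}(F^\ast_\gz)$ and pick $z\in F^\ast_\gz$ with $d(z,y)<(1-\ez)t$. Invoking the global $\gz$-density of $z$ with respect to $F$ at radius $r:=\ez t$ gives $\mu(B(z,\ez t)\cap F)\ge\gz\,\mu(B(z,\ez t))$. The triangle inequality yields $B(z,\ez t)\subset B(y,t)$, hence $\mu(B(y,t)\cap F)\ge\gz\,\mu(B(z,\ez t))$; the triangle inequality also gives $B(y,t)\subset B(z,(2-\ez)t)\subset B(z,2t)$. Choosing $k=k(\ez)\in\nn$ with $2^k\ez\ge2$ and iterating \eqref{2.1} $k$ times, we get $\mu(B(z,2t))\le C_1^k\,\mu(B(z,\ez t))$, and therefore
\begin{equation*}
V(y,t)=\mu(B(y,t))\le C_1^k\,\mu(B(z,\ez t))\le\gz^{-1}C_1^k\,\mu\lf(B(y,t)\cap F\r)
\le\gz_0^{-1}C_1^k\,\mu\lf(B(y,t)\cap F\r),
\end{equation*}
using $\gz\ge\gz_0$ in the last step. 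Thus $\gz_0$ may be taken to be any fixed number in $(0,1)$ and $C(\ez,\gz_0):=\gz_0^{-1}C_1^{k(\ez)}$ works; combining the three displays finishes the proof.

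I do not anticipate a genuine obstacle: the only points needing care are the measurability required to apply Tonelli — handled by passing to the triple product $\cx\times\cx\times(0,\fz)$, on which all integrands are nonnegative and Borel — and the fact that the doubling exponent $k(\ez)$, and hence the constant, must be independent of $\gz$ and of $F$; the latter is automatic, since the estimate above is monotone in $\gz$ and the doubling constant $C_1$ depends only on $(\cx,d,\mu)$.
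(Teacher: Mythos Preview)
Your proof is correct. The paper does not actually prove Lemma~\ref{l3.1}; it merely states the result and cites Russ~\cite{ru07}. Your argument --- rewriting both sides via Tonelli so that the inequality reduces to the pointwise bound $V(y,t)\le C(\ez,\gz_0)\,\mu(B(y,t)\cap F)$ for $(y,t)\in\ccr_{1-\ez}(F^\ast_\gz)$, and then obtaining that bound from the density condition at radius $\ez t$ combined with doubling --- is precisely the standard route and is almost certainly what the cited reference does as well. You even observe, slightly beyond the stated lemma, that any $\gz_0\in(0,1)$ works, with $C(\ez,\gz_0)=\gz_0^{-1}C_1^{k(\ez)}$.
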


To prove Theorem \ref{t3.1}, we need a covering lemma established
in \cite{cms83}.

\begin{lemma}\label{l3.2}
Let $\boz$ be a proper open subset of finite measure of $\cx$. For
any $x\in\cx$, define $r(x):=d(x,\boz^\complement)/10$. Then there
exist a positive integer $M$ and a sequence $\{x_j\}_j$ of points
in $\cx$ such that, if $r_j:=r(x_j)$, then

{\rm(i)} $\boz=\cup_j B(x_j,r_j)$;

{\rm(ii)} $B(x_i,r_i/4)\cap B(x_j,r_j/4)=\emptyset$ if $i\neq j$;

{\rm(iii)} for each $j$, $\sharp\{i:\ B(x_i,5r_i)\cap
B(x_j,5r_j)\neq\emptyset\}\le M$, where $\sharp E$ denotes the
cardinality of the set $E$;

Moreover, there exist nonnegative functions $\{\phi_j\}_j$ on
$\cx$ such that

{\rm(iv)} for all $j$, $\supp\phi_j\subset B(x_j,2r_j)$;

{\rm(v)} for all $j$ and $x\in B(x_j,r_j)$, $\phi_j(x)\ge1/M$;

{\rm(vi)} $\sum_j\phi_j=\chi_{\boz}$.
\end{lemma}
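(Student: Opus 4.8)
The plan is the classical Whitney-type construction adapted to the space of homogeneous type $(\cx,d,\mu)$, whose decisive geometric input is that $r$ satisfies $|r(x)-r(y)|\le d(x,y)/10$ for all $x,y\in\boz$ (since $d(\cdot,\boz^\complement)$ is $1$-Lipschitz), so that two centers whose ``quarter-balls'' meet have comparable radii. \emph{Selection of the centers.} Consider the family $\{B(x,r(x)/4):\ x\in\boz\}$; recall $0<\mu(B)<\fz$ for every ball $B$, and each of these balls lies in $\boz$. By Zorn's lemma pick a maximal pairwise disjoint subfamily; since for every $k\in\nn$ at most $k\mu(\boz)$ of these pairwise disjoint balls can have measure exceeding $1/k$, the subfamily is at most countable, and we enumerate it as $\{B(x_j,r_j/4)\}_j$ with $r_j:=r(x_j)$. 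Disjointness is precisely (ii). Moreover $B(x_j,2r_j)\subset\boz$ for every $j$, since $d(y,\boz^\complement)\ge d(x_j,\boz^\complement)-d(x_j,y)>10r_j-2r_j>0$ whenever $d(x_j,y)<2r_j$.

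\emph{The covering property (i).} If $B(x,r(x)/4)$ meets $B(x_j,r_j/4)$, then $d(x,x_j)\le(r(x)+r_j)/4$, hence $|r(x)-r_j|\le d(x,x_j)/10\le(r(x)+r_j)/40$, which forces $r(x)\le\frac{41}{39}r_j$, and therefore $d(x,x_j)\le\frac14\big(\frac{41}{39}+1\big)r_j=\frac{20}{39}r_j<r_j$, i.e.\ $x\in B(x_j,r_j)$. For an arbitrary $x\in\boz$, maximality of the chosen subfamily prevents $B(x,r(x)/4)$ from being disjoint from all the $B(x_j,r_j/4)$, so $x\in B(x_j,r_j)$ for some $j$; combined with $B(x_j,r_j)\subset\boz$ this gives (i).

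\emph{Bounded overlap (iii).} Fix $j$, and let $i$ satisfy $B(x_i,5r_i)\cap B(x_j,5r_j)\ne\emptyset$. Then $d(x_i,x_j)\le5(r_i+r_j)$, so $|r_i-r_j|\le\frac12(r_i+r_j)$ and hence $\frac13r_j\le r_i\le3r_j$; a direct estimate then yields $B(x_i,r_i/4)\subset B(x_j,21r_j)$ while $B(x_j,21r_j)\subset B(x_i,C_0r_i)$ for an absolute constant $C_0$. By the doubling property \eqref{2.1} (equivalently \eqref{2.2}) there is a constant $C=C(C_1)$, depending only on the doubling constant, with $\mu(B(x_j,21r_j))\le\mu(B(x_i,C_0r_i))\le C\mu(B(x_i,r_i/4))$, i.e.\ $\mu(B(x_i,r_i/4))\ge C^{-1}\mu(B(x_j,21r_j))$. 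Since the balls $\{B(x_i,r_i/4)\}_i$ are pairwise disjoint and all contained in $B(x_j,21r_j)$, summing this lower bound over all such $i$ and using $0<\mu(B(x_j,21r_j))<\fz$ gives $\sharp\{i:\ B(x_i,5r_i)\cap B(x_j,5r_j)\ne\emptyset\}\le C$; take $M:=\lfz C\rfz+1$. The same reasoning shows that every point of $\cx$ lies in at most $M$ of the balls $\{B(x_j,5r_j)\}_j$, hence in at most $M$ of the balls $\{B(x_j,2r_j)\}_j$.

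\emph{The partition of unity (iv)--(vi), and the main difficulty.} For each $j$ set $\wz\phi_j(x):=\min\{1,\max\{0,2-d(x,x_j)/r_j\}\}$, so that $0\le\wz\phi_j\le1$, $\wz\phi_j\equiv1$ on $B(x_j,r_j)$, and $\supp\wz\phi_j\subset B(x_j,2r_j)\subset\boz$; put $h:=\sum_j\wz\phi_j$. By (i), $h\ge1$ on $\boz$; by the last observation above, $h\le M$ on $\boz$; and $h\equiv0$ on $\cx\setminus\boz$. Define $\phi_j:=\wz\phi_j/h$ on $\boz$ and $\phi_j:=0$ elsewhere. Then $\supp\phi_j\subset B(x_j,2r_j)$ is (iv); for $x\in B(x_j,r_j)$ one has $\phi_j(x)=1/h(x)\ge1/M$, which is (v); and $\sum_j\phi_j=h/h=\chi_{\boz}$, which is (vi). No step here is deep; the only point demanding care is the constant bookkeeping in the two geometric paragraphs, namely checking that the dilation factor $10$ in the definition of $r(x)$ is large enough that intersecting ``quarter-balls'' have comparable radii and nest inside each other's fixed dilates, and that the overlap bound $M$ is forced by the doubling constant $C_1$ alone through a volume-packing (disjointness) argument rather than by any finer structure of $\cx$.
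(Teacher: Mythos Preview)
The paper does not prove this lemma; it simply cites it from Coifman--Meyer--Stein \cite{cms83}. Your argument is a correct and complete rendition of the standard Whitney-type covering construction on spaces of homogeneous type: the Lipschitz control $|r(x)-r(y)|\le d(x,y)/10$ forces comparable radii for nearby centers, maximality of the disjoint quarter-ball family yields the covering (i), and the doubling volume-packing argument gives the uniform overlap bound (iii) from which the partition of unity (iv)--(vi) follows by the usual normalization $\phi_j=\wz\phi_j/\sum_i\wz\phi_i$. This is essentially the same route taken in the cited reference, so there is nothing further to compare.
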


Moreover, we also need the following Lemma \ref{l3.3}, whose proof
is similar to that of \cite[Lemma 5.4]{k}. We omit the details.

\begin{lemma}\label{l3.3}
Let $f\in T_\fai(\cx\times(0,\fz))$ and $\boz_k:=\{x\in\cx:\ \ca(f)(x)>2^k\}$
for all $k\in\zz$. Then there exists a positive constant $C$ such
that, for all $\lz\in(0,\fz)$,
$$\sum_{k\in\zz}\fai\lf(\boz_k,\frac{2^k}{\lz}\r)\le
C\int_{\cx}\fai\lf(x,\frac{\ca(f)(x)}{\lz}\r)\,d\mu(x).
$$
\end{lemma}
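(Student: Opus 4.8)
The plan is to establish the inequality by a layer-cake rearrangement of the sum over the level sets $\boz_k$, using only the monotonicity of $\fai(x,\cdot)$ and the uniformly lower type $p_2\in(0,1]$ of $\fai$ from Definition \ref{d2.3}(iii). First I would note that, since $f\in T_\fai(\cx\times(0,\fz))$ means $\ca(f)\in L^\fai(\cx)$, we have $\ca(f)(x)<\fz$ for $\mu$-almost every $x\in\cx$; hence the sets $\boz_k:=\{x\in\cx:\ \ca(f)(x)>2^k\}$ are decreasing in $k$ and $\mu(\bigcap_{k\in\zz}\boz_k)=0$. Writing $E_j:=\boz_j\setminus\boz_{j+1}=\{x\in\cx:\ 2^j<\ca(f)(x)\le2^{j+1}\}$, the sets $E_j$ are pairwise disjoint and, modulo a null set, $\boz_k=\bigcup_{j\ge k}E_j$; since $E\mapsto\fai(E,t)$ is, for each fixed $t\in[0,\fz)$, a measure, this gives $\fai(\boz_k,2^k/\lz)=\sum_{j\ge k}\fai(E_j,2^k/\lz)$ for every $\lz\in(0,\fz)$.

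Next I would sum this identity over $k\in\zz$ and interchange the order of summation (legitimate by the Tonelli theorem, all summands being nonnegative), obtaining $\sum_{k\in\zz}\fai(\boz_k,2^k/\lz)=\sum_{j\in\zz}\sum_{k\le j}\fai(E_j,2^k/\lz)$. For the inner sum, for each $k\le j$ I would write $2^k/\lz=2^{k-j}\cdot(2^j/\lz)$ with $2^{k-j}\in(0,1]$, and invoke the uniformly lower type $p_2$: there is a constant $C>0$, independent of everything, with $\fai(x,2^k/\lz)\le C\,2^{(k-j)p_2}\fai(x,2^j/\lz)$ for all $x\in\cx$; integrating over $E_j$ and summing the geometric series $\sum_{k\le j}2^{(k-j)p_2}=\sum_{m\ge0}2^{-mp_2}=(1-2^{-p_2})^{-1}<\fz$ (this is exactly the place where the strict positivity $p_2>0$ is needed) yields $\sum_{k\le j}\fai(E_j,2^k/\lz)\ls\fai(E_j,2^j/\lz)$.

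Finally, on $E_j$ one has $2^j<\ca(f)(x)$, so $2^j/\lz<\ca(f)(x)/\lz$, and since $\fai(x,\cdot)$ is nondecreasing, $\fai(x,2^j/\lz)\le\fai(x,\ca(f)(x)/\lz)$; hence $\sum_{j\in\zz}\fai(E_j,2^j/\lz)\le\sum_{j\in\zz}\int_{E_j}\fai(x,\ca(f)(x)/\lz)\,d\mu(x)\le\int_{\cx}\fai(x,\ca(f)(x)/\lz)\,d\mu(x)$, the last step using that the $E_j$ are disjoint subsets of $\cx$. Chaining the three displayed bounds gives the assertion, with $C$ the product of $(1-2^{-p_2})^{-1}$ and the uniformly lower type constant of $\fai$. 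The whole argument is essentially bookkeeping; the only genuine point is to reorganize the double sum so that every piece $E_j$ is weighted by $\fai(\cdot,2^j/\lz)$ rather than by the smaller arguments $2^k/\lz$ with $k\le j$, which forces the use of the uniformly lower type $p_2>0$ (mere monotonicity of $\fai$ would not make the series converge), and I do not anticipate any obstacle beyond carefully handling the null set $\bigcap_k\boz_k$ and tracking strict versus non-strict inequalities.
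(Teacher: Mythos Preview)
Your proof is correct and follows the standard layer-cake argument; the paper itself omits the proof and refers to \cite[Lemma 5.4]{k}, whose argument is essentially the same decomposition into the annuli $E_j=\boz_j\setminus\boz_{j+1}$, interchange of summation, and use of the uniformly lower type $p_2>0$ to sum the geometric series. There is nothing to add.
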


Now we prove Theorem \ref{t3.1} by using Lemmas \ref{l3.2} and
\ref{l3.3}.

\begin{proof}[Proof of Theorem \ref{t3.1}]
Let $f\in T_\fai(\cx\times(0,\fz))$. For any $k\in\zz$, let
$O_k:=\{x\in\cx:\ \ca(f)(x)>2^k\}$
and $F_k:=O_k^\complement$. Since $f\in T_\fai(\cx\times(0,\fz))$, for each
$k$, $O_k$ is an open set of $\cx$ with $\mu(O_k)<\fz$.

Let $\ez\in(0,1)$ and $\gz_0$ be as in Lemma \ref{l3.1}.
Let $\gz\in[\gz_0,1)$ such that $C_5(1-\gz)\le1/2$. In what follows,
we denote $(F_{k,\,\gz})^\ast$ and $(O_{k,\,\gz})^\ast$ simply by
$F_k^\ast$ and $O_k^\ast$, respectively. We claim that $\supp
f\subset \cup_{k\in\zz} T_{1-\ez}(O^{\ast}_k)\cup E$, where
$E\subset\cx\times(0,\fz)$ satisfies
$\int_E\frac{d\mu(y)\,dt}{t}=0$. Indeed, let $(x,t)$ be the
Lebesgue point of $f$ and $(x,t)\not\in
\cup_{k\in\zz}T_{1-\ez}(O^{\ast}_k)$. Then there exists a sequence
$\{y_k\}_{k\in\zz}$ of points such that $\{y_k\}_{k\in\zz}\subset
B(x,(1-\ez)t)$ and for each $k$, $y_k\not\in T_{1-\ez}(O^{\ast}_k)$,
which implies that, for each $k\in\zz$,
$\wz{\cm}(\chi_{O_k})(y_k)\le 1-\gz$. From
this, \eqref{3.3} and $C_5(1-\gz)\le1/2$, we deduce that
$\mu(B(x,t)\cap\{z\in\cx:\ \ca(f)(z)\le2^k\})\ge\mu(B(x,t))/2.$
Letting $k\to-\fz$, we then see that
$\mu(B(x,t)\cap\{z\in\cx:\
\ca(f)(z)=0\})\ge\mu(B(x,t))/2$. Therefore, there exists
$y\in B(x,t)$ such that $f=0$ almost everywhere in $\bgz(y)$,
which, together with Lebesgue's differentiation theorem (see
\cite[Theorem 1.8]{h01}), implies that $f(x,t)=0$. By this, we
know that the claim holds true.

If $O_k^\ast=\cx$ for some $k\in\zz$, then $\mu(\cx)<\fz$,
which implies that $\cx$ is a ball (see \cite[Lemma 5.1]{ny97}). In
this case, set $I_k:=\{1\}$, $B_{k,\,1}:=\cx$ and
$\phi_{k,1}\equiv1$. If $O_k^{\ast}$ is a proper subset of $\cx$,
by Lemma \ref{l3.2} with $\boz=O_k^{\ast}$, we obtain a set $I_k$
of indices and balls $\{B_{k,\,j}\}_{j\in
I_k}:=\{B(x_{k,\,j},2r_{k,\,j})\}_{j\in I_k}$ and functions
$\{\phi_{k\,j}\}_{j\in I_k}$ satisfying that, for each $j\in I_k$,
$\supp\phi_{k\,j}\subset B(x_{x,\,j},2r_{k,\,j})$ and $\sum_{j\in
I_k}\phi_{k,\,j}=\chi_{O^\ast_k}$. Thus, for each
$(x,t)\in\cx\times(0,\fz)$, we see that
$$\lf(\chi_{T_{1-\ez}(O^{\ast}_k)}-\chi_{T_{1-\ez}
(O^{\ast}_{k+1})}\r)(x,t)
=\sum_{j\in I_k}\phi_{k,\,j}(x)\lf(\chi_{T_{1-\ez}(O^{\ast}_k)}
-\chi_{T_{1-\ez}(O^{\ast}_{k+1})}\r)(x,t).$$ From this, $\supp
f\subset\{\cup_{k\in\zz}T_{1-\ez}(O^{\ast}_k)\cup E\}$ and
$\int_E\frac{d\mu(y)\,dt}{t}=0$, we infer that
$$f=\sum_{k\in\zz} f\lf(\chi_{T_{1-\ez}(O^{\ast}_k)}
-\chi_{T_{1-\ez}(O^{\ast}_{k+1})}\r)=\sum_{k\in\zz}\sum_{j\in
I_k}f\phi_{k,\,j}\lf(\chi_{T_{1-\ez}(O^{\ast}_k)}
-\chi_{T_{1-\ez}(O^{\ast}_{k+1})}\r)$$ almost everywhere on $\cx\times(0,\fz)$. For each
$k$ and $j$, let
$$a_{k,\,j}:=2^{-k}\|\chi_{B_{k,\,j}}\|_{L^\fai(\cx)}^{-1}
f\phi_{k,\,j}\lf(\chi_{T_{1-\ez}(O^{\ast}_k)}-
\chi_{T_{1-\ez}(O^{\ast}_{k+1})}\r)$$
and
$\lz_{k\,j}:=2^{k}\|\chi_{B_{k,\,j}}\|_{L^\fai(\cx)}$. Then $f=
\sum_{k\in\zz}\sum_{j\in I_k}\lz_{k,\,j}a_{k,\,j}$ almost
everywhere. Similar to the proof of \cite[(2.4)]{ru07}, we see that, for
each $k$ and $j$, $\supp a_{k,\,j}\subset\widehat{C(\ez)
B_{k,\,j}}$, where $C(\ez)\in(1,\fz)$ is a positive constant
independent of
$k$ and $j$. By Lemma \ref{l3.1}, $\supp
a_{k,\,j}\subset(T_{1-\ez}(O^{\ast}_{k+1}))^\complement
=\ccr_{1-\ez}(F^\ast_{k+1})$
and the definition of $F_{k+1}$, we know that, for each $k$ and
$j$,
\begin{eqnarray*}
\lf\|a_{k,\,j}\r\|^2_{T^2_2(\cx\times(0,\fz))}&&=\int_{\cx\times(0,\fz)}
|a_{k,\,j}(y,t)|^2\,
\frac{d\mu(y)\,dt}{t}\ls\int_{\ccr_{1-\ez}(F^\ast_{k+1})}|a_{k,\,j}(y,t)|^2\,
\frac{d\mu(y)\,dt}{t}\\
&&\ls\int_{F_{k+1}}\lf\{\int_{\bgz(x)}
|a_{k,\,j}(y,t)|^2\, \frac{d\mu(y)\,dt}{V(y,t)t}\r\}\,d\mu(x)\\
&&\ls\int_{F_{k+1}\cap(C(\ez)
B_{k,\,j})}\lf[\ca(a_{k,\,j})(x)\r]^2\,d\mu(x)\\
&&\ls2^{-2k}
\lf\|\chi_{B_{k,\,j}}\r\|^{-2}_{L^\fai(\cx)}\int_{F_{k+1}\cap(C(\ez)
B_{k,\,j})}\lf[\ca(f)(x)\r]^2\,d\mu(x)\\
&&\ls V\lf(C(\ez) B_{k,\,j}\r)\lf\|\chi_{C(\ez)
B_{k,\,j}}\r\|^{-2}_{L^\fai(\cx)},
\end{eqnarray*}
which implies that up to a harmless multiplicative constant, each
$a_{k,\,j}$ is a $T_\fai(\cx\times(0,\fz))$-atom. Moreover, by \eqref{2.2},
Lemma \ref{l2.5}(i) and Lemma
\ref{l3.3}, we know that, for all $\lz\in(0,\fz)$,
\begin{eqnarray*}
\sum_{k\in\zz}\sum_{j\in I_k}\fai\lf(C(\ez)
B_{k,\,j},\frac{|\lz_{k,\,j}|}{\lz\|\chi_{C(\ez)
B_{k,\,j}}\|_{L^\fai(\cx)}}\r)&&\ls\sum_{k\in\zz}\sum_{j\in
I_k}\fai\lf(B_{k,\,j},\frac{2^k}{\lz}\r)\ls\sum_{k\in\zz} \fai\lf(O^\ast_k,\frac{2^k}{\lz}\r)\\
&&\ls\int_{\cx}\fai\lf(x,\frac{\ca(f)(x)}{\lz}\r)\,d\mu(x),
\end{eqnarray*}
which implies that
$\blz(\{\lz_{k,\,j}a_{k,\,j}\}_{k\in\zz,\,j})\ls\|f\|_{T_\fai(\cx\times(0,\fz))}$.
This finishes the proof of Theorem \ref{t3.1}.
\end{proof}

\begin{corollary}\label{c3.1}
Let $\fai$ be as in Definition \ref{d2.3} with $\fai\in \rh_{2/[2-I(\fai)]}(\cx)$,
where $I(\fai)$ is as in \eqref{2.10}. If $f\in T_\fai(\cx\times(0,\fz))
\cap T^2_2(\cx\times(0,\fz)) $, then \eqref{3.1} in Theorem
\ref{t3.1} holds true in both $T_\fai(\cx\times(0,\fz))$ and
$T^2_2(\cx\times(0,\fz))$.
\end{corollary}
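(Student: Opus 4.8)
The plan is to revisit the atomic decomposition $f=\sum_{k\in\zz}\sum_{j\in I_k}\lz_{k,\,j}a_{k,\,j}$ constructed in the proof of Theorem \ref{t3.1} and to upgrade the mode of convergence. Since it has already been shown there that \eqref{3.1} holds almost everywhere and that $\blz(\{\lz_{k,\,j}a_{k,\,j}\}_{k,j})\ls\|f\|_{T_\fai(\cx\times(0,\fz))}<\fz$, the convergence in $T_\fai(\cx\times(0,\fz))$ will follow once we know that the series of partial sums is Cauchy in $T_\fai(\cx\times(0,\fz))$; this in turn reduces, via the uniformly lower type $p_2$ of $\fai$ and Lemma \ref{l2.5}(ii), to controlling tails $\sum_{|k|>K}\sum_{j\in I_k}\fai(C(\ez)B_{k,\,j},|\lz_{k,\,j}|/(\lz\|\chi_{C(\ez)B_{k,\,j}}\|_{L^\fai(\cx)}))$, which are tails of a convergent series by the last displayed estimate in the proof of Theorem \ref{t3.1}. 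So the $T_\fai$-convergence part is essentially free.

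The substantive part is convergence in $T^2_2(\cx\times(0,\fz))$, and this is exactly where the extra hypothesis $\fai\in\rh_{2/[2-I(\fai)]}(\cx)$ enters. First I would show that each truncated tail $\sum_{|k|>K}\sum_{j\in I_k}\lz_{k,\,j}a_{k,\,j}$ has small $T^2_2$-norm. Using the almost-disjointness of the $\{T_{1-\ez}(O^\ast_k)\setminus T_{1-\ez}(O^\ast_{k+1})\}_k$ (the supports, up to the finite overlap $M$ from Lemma \ref{l3.2}(iii)), together with the $T^2_2$-atom estimate already derived, one gets
$$\lf\|\sum_{|k|>K}\sum_{j\in I_k}\lz_{k,\,j}a_{k,\,j}\r\|^2_{T^2_2(\cx\times(0,\fz))}
\ls\sum_{|k|>K}\sum_{j\in I_k}2^{2k}\mu(B_{k,\,j})\ls\sum_{|k|>K}2^{2k}\mu(O^\ast_k)
\ls\sum_{|k|>K}2^{2k}\mu(O_k).$$
The point is then to show $\sum_{k\in\zz}2^{2k}\mu(O_k)\sim\|\ca(f)\|^2_{L^2(\cx)}=\|f\|^2_{T^2_2(\cx\times(0,\fz))}<\fz$, which is the classical layer-cake identity and uses only $f\in T^2_2(\cx\times(0,\fz))$; hence the tails tend to $0$ and the series converges in $T^2_2(\cx\times(0,\fz))$. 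The role of $\fai\in\rh_{2/[2-I(\fai)]}(\cx)$ is to guarantee that the two notions of atom are compatible in the quantitative sense needed — specifically, to pass between the weighted quantity $\mu(B)\|\chi_B\|^{-2}_{L^\fai(\cx)}$ appearing in the $T_\fai$-atom normalization and the bare $\mu(B)$ appearing in the $T^2_2$ estimate; concretely, the reverse Hölder condition with exponent $2/[2-I(\fai)]$ is precisely what ensures $\|\chi_B\|_{L^\fai(\cx)}$ and $\mu(B)$ are comparable in the way required so that a $T_\fai$-atom, after rescaling, is a bona fide (multiple of a) $T^2_2$-atom with the right norm bound, letting the two series literally coincide term by term.

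The last step is to identify the two limits. Since the series converges almost everywhere to $f$ (from Theorem \ref{t3.1}) and also converges in $T^2_2(\cx\times(0,\fz))$ to some $g$, a subsequence of partial sums converges to $g$ almost everywhere, whence $g=f$ $\mu\times(dt/t)$-almost everywhere; similarly the $T_\fai$-limit agrees with $f$. Thus \eqref{3.1} holds in both spaces simultaneously with the same decomposition.

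The main obstacle I anticipate is the bookkeeping in the $T^2_2$ tail estimate: one must be careful that the passage from $\sum_j\mu(B_{k,\,j})$ to $\mu(O^\ast_k)$ uses the bounded-overlap property (Lemma \ref{l3.2}(iii)) correctly and that the enlargement $B_{k,\,j}\mapsto C(\ez)B_{k,\,j}$ does not spoil the summability — this is handled by the doubling property \eqref{2.2} — and, most delicately, that the reverse Hölder hypothesis is invoked at exactly the right place to keep the $T_\fai$-atoms and $T^2_2$-atoms in lockstep. Once one is convinced that a single sequence $\{\lz_{k,\,j}a_{k,\,j}\}$ simultaneously witnesses both decompositions, the rest is the two soft limit-identification arguments above.
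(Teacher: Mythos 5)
Your overall plan --- reuse the specific decomposition constructed in the proof of Theorem \ref{t3.1} and show that symmetric tails are small in each norm --- is correct, and your treatment of the $T^2_2(\cx\times(0,\fz))$ tail is essentially right: the supports $\widehat{C(\ez)B_{k,j}}\cap\bigl(T_{1-\ez}(O^\ast_k)\setminus T_{1-\ez}(O^\ast_{k+1})\bigr)$ are exactly disjoint across $k$ and have overlap at most $M$ within each $k$; the factors $\|\chi_{B_{k,j}}\|_{L^\fai(\cx)}$ in the $T_\fai(\cx\times(0,\fz))$-atom normalization and in $\lz_{k,j}=2^k\|\chi_{B_{k,j}}\|_{L^\fai(\cx)}$ cancel to give $\|\lz_{k,j}a_{k,j}\|^2_{T^2_2(\cx\times(0,\fz))}\ls 2^{2k}\mu(B_{k,j})$; and $\sum_{k\in\zz}2^{2k}\mu(O_k)\sim\|\ca(f)\|^2_{L^2(\cx)}<\fz$. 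The limit identification via pointwise a.e.\ convergence is also fine.

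Two things are wrong, however. First, the role you assign to $\fai\in\rh_{2/[2-I(\fai)]}(\cx)$ is incorrect both in location and in substance. The reverse H\"older condition does not yield $\|\chi_B\|_{L^\fai(\cx)}\sim\mu(B)$ (already false for $\fai(x,t)=t^p$, $p\in(0,1)$, which belongs to every $\rh_q(\cx)$); $\rh_q$ controls the weight $\fai(\cdot,t)$, not the size of $\|\chi_B\|_{L^\fai(\cx)}$. Moreover, as the cancellation just recorded shows, the $T^2_2$ tail estimate needs no comparison between $\|\chi_B\|_{L^\fai(\cx)}$ and $\mu(B)$ at all; only $\fai\in\aa_\fz(\cx)$ is used there.

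Second, the $T_\fai(\cx\times(0,\fz))$-convergence is not ``essentially free,'' and this is exactly where $\rh_{2/[2-I(\fai)]}(\cx)$ and the uniformly upper type $p_1$ property are actually needed. Smallness of the $\blz$-quantity of the tail (which your appeal to Lemma \ref{l2.5}(ii) provides) is not the same as smallness of the $T_\fai(\cx\times(0,\fz))$-norm of the tail, and no step in your outline bridges that gap. To do so one needs the atom-wise estimate
$$
\int_{\cx}\fai\bigl(x,|\lz|\ca(a)(x)\bigr)\,d\mu(x)\ls
\fai\bigl(B,|\lz|\|\chi_B\|_{L^\fai(\cx)}^{-1}\bigr)
$$
for all $\lz\in\cc$ and $T_\fai(\cx\times(0,\fz))$-atoms $a$ associated with $B$, the direct analogue of \eqref{4.7}: since $\ca(a)$ is supported in $B$ and $\|\ca(a)\|_{L^2(\cx)}\ls[\mu(B)]^{1/2}\|\chi_B\|_{L^\fai(\cx)}^{-1}$, this follows from the uniformly upper type $p_1$ property, H\"older's inequality with exponents $\tfrac{2}{2-p_1}$ and $\tfrac{2}{p_1}$, and $\fai\in\rh_{2/(2-p_1)}(\cx)$, the last being a consequence of $\fai\in\rh_{2/[2-I(\fai)]}(\cx)$ and Lemma \ref{l2.6}(iv). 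So the roles of the two hypotheses are the reverse of what you proposed. (For this particular decomposition one can alternatively observe the telescoping identity $\sum_{j\in I_k}\lz_{k,j}a_{k,j}=f\cdot(\chi_{T_{1-\ez}(O^\ast_k)}-\chi_{T_{1-\ez}(O^\ast_{k+1})})$, so symmetric partial sums equal $f$ times an increasing exhaustion of $\supp f$ and dominated convergence yields both modes of convergence directly; but the atom-wise estimate is the more robust route the paper intends, via \cite[Corollary 3.4]{hyy}.)
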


By the uniformly upper type $p_1$ property of $\fai$ with some $p_1\in[I(\fai),1]$,
Theorem \ref{t3.1} and its proof, similar to the proof of \cite[Corollary 3.4]{hyy},
we can show Corollary \ref{c3.1} and omit the details here.

In what follows, let $T^b_{\fai}(\cx\times(0,\fz))$ and $T^{p,\,b}_2(\cx\times(0,\fz))$ with
$p\in(0,\fz)$ denote, respectively, the \emph{set of all functions
in $T_\fai(\cx\times(0,\fz))$ and $T^p_2(\cx\times(0,\fz))$ with bounded support}. Here
and in what follows, a function $f$ on $\cx\times(0,\fz)$ is said to have
\emph{bounded support} means that there exist a ball $B\subset\cx$ and
$0<c_1<c_2<\fz$ such that $\supp f\subset B\times(c_1,c_2)$.

\begin{proposition}\label{p3.1}
Let $\fai$ be as in Definition \ref{d2.3}. Then $T^b_{\fai}(\cx\times(0,\fz))
\subset T^{2,\,b}_2(\cx\times(0,\fz))$ as sets.
\end{proposition}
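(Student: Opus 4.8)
The plan is as follows. Fix $f\in T^b_\fai(\cx\times(0,\fz))$; by definition $\ca(f)\in L^\fai(\cx)$, and there exist a ball $B:=B(x_B,r_B)$ and $0<c_1<c_2<\fz$ such that $\supp f\subset B\times(c_1,c_2)$. Since $f$ already has bounded support, it suffices to prove $\ca(f)\in L^2(\cx)$. The first observation is that $\ca(f)$ is itself supported in the \emph{fixed} ball $\wz B:=B(x_B,r_B+c_2)$: if $d(x,y)<t$ for some $(y,t)\in\supp f$, then $d(x,x_B)<t+r_B<c_2+r_B$. The second is a crude pointwise bound: for $x\in\wz B$, since $t\in(c_1,c_2)$ on $\supp f$ one has $V(x,t)\,t\ge V(x,c_1)\,c_1$ there, so discarding the cone constraint $d(x,y)<t$ gives
$$[\ca(f)(x)]^2\le\frac{1}{c_1\,V(x,c_1)}\int_{B\times(c_1,c_2)}|f(y,t)|^2\,d\mu(y)\,dt.$$
By \eqref{2.3}, $V(x,c_1)\gs V(x_B,c_1)>0$ uniformly for $x\in\wz B$, and $\mu(\wz B)<\fz$ by \eqref{2.1}; hence, integrating the above over $\wz B$, the whole problem reduces to showing that $\int_{B\times(c_1,c_2)}|f(y,t)|^2\,d\mu(y)\,dt<\fz$.

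This last finiteness is the main step, and here the hypothesis $\ca(f)\in L^\fai(\cx)$ is used only through the fact that it forces $\ca(f)<\fz$ $\mu$-a.e. on $\cx$ (since $\fai(x,\cdot)$ is an Orlicz function). Using the geometric doubling consequence of \eqref{2.1}, we cover $B$ by finitely many balls $\{B(y_i,c_1/3)\}_{i=1}^N$ with $y_i\in B$, and disjointify in the spatial variable to write $f=\sum_{i=1}^N f_i$, where each $f_i$ vanishes $\mu\otimes dt$-a.e. outside $B(y_i,c_1/3)\times(c_1,c_2)$. For each $i$, $\ca(f_i)\le\ca(f)<\fz$ a.e. and balls have positive measure, so we may choose $x_i\in B(y_i,c_1/3)$ with $\ca(f_i)(x_i)<\fz$. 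The point of the radius $c_1/3$ is that every $(y,t)$ in the support of $f_i$ satisfies $d(x_i,y)<2c_1/3<c_1<t$, whence $\supp f_i\subset\bgz(x_i)$ and therefore
$$[\ca(f_i)(x_i)]^2=\int_{\cx\times(0,\fz)}|f_i(y,t)|^2\,\frac{d\mu(y)}{V(x_i,t)}\,\frac{dt}{t}\ge\frac{1}{c_2\,V(x_i,c_2)}\int_{\cx\times(0,\fz)}|f_i(y,t)|^2\,d\mu(y)\,dt .$$
Consequently $\int|f_i|^2\,d\mu\,dt\le c_2\,V(x_i,c_2)[\ca(f_i)(x_i)]^2<\fz$, and summing over the finitely many indices $i$ yields the required finiteness, completing the proof that $\ca(f)\in L^2(\cx)$, i.e. $f\in T^{2,\,b}_2(\cx\times(0,\fz))$.

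The step I expect to be the main obstacle is exactly the covering/decomposition used above: when $c_1<r_B$, no single cone $\bgz(x_0)$ contains $\supp f$, so one cannot deduce $\int_{\supp f}|f|^2\,d\mu\,dt<\fz$ directly from the a.e. finiteness of $\ca(f)$ at one point; breaking $f$ into finitely many pieces, each fitting inside one cone with a suitably placed vertex, is precisely what makes the a.e. finiteness usable, and it is here that the finiteness of $N$ (hence geometric doubling) is essential. The remaining ingredients — the support localization of $\ca(f)$, the comparison $V(x,c_1)\sim V(x_B,c_1)$ on $\wz B$ via \eqref{2.2} and \eqref{2.3}, and $\mu(\wz B)<\fz$ via \eqref{2.1} — are routine. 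Finally, we stress that the inclusion is asserted only as sets: the $T^2_2$-quasi-norm of $f$ obtained this way depends on $B$, $c_1$ and $c_2$, so no uniform norm comparison is (or can be) claimed.
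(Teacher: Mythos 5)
Your proof is correct. The key steps are exactly the ones that matter: (a) $\ca(f)$ is supported in a fixed ball $\wz B$, so the problem is to control a local $L^2$-integral; (b) the crude bound $[\ca(f)(x)]^2\ls [c_1 V(x,c_1)]^{-1}\int_{B\times(c_1,c_2)}|f|^2\,d\mu\,dt$ together with $V(x,c_1)\gs V(x_B,c_1)$ on $\wz B$ (via \eqref{2.3}) and $\mu(\wz B)<\fz$ reduces everything to finiteness of $\int_{B\times(c_1,c_2)}|f|^2\,d\mu\,dt$; and (c) the finite cover of $B$ by balls of radius $c_1/3$, disjointification $f=\sum_{i=1}^N f_i$ with $\supp f_i\subset\bgz(x_i)$, and the choice of $x_i$ with $\ca(f_i)(x_i)<\fz$ then gives $\int|f_i|^2\,d\mu\,dt\le c_2 V(x_i,c_2)[\ca(f_i)(x_i)]^2<\fz$ for each $i$. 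You are also right that only a set-theoretic inclusion can be claimed, since every constant above depends on $B,c_1,c_2$.

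One small remark on comparison with the paper: the paper states that its (omitted) proof is ``an application of the uniformly lower type $p_2$ property of $\fai$,'' but your argument nowhere uses lower type --- the only use you make of $\ca(f)\in L^\fai(\cx)$ is that $\fai(x,\cdot)$ is an Orlicz function (nondecreasing, tending to $\fz$), which forces $\ca(f)<\fz$ $\mu$-a.e. This is correct and strictly weaker than invoking lower type, so either the paper's referenced proof takes a slightly different (Orlicz-theoretic) route to the same a.e.-finiteness, or the remark about lower type is incidental. Either way, your geometric argument---finite cone cover plus a.e.~finiteness of $\ca(f)$---is sound, and its self-containment is a modest advantage. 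Two very minor points of care worth mentioning: one should note that balls have positive measure (which follows since a doubling measure with $V(x,r)=0$ for some $x,r$ would be trivial), and that the geometric-doubling (total boundedness of $B$) needed for the finite cover is indeed a consequence of \eqref{2.1}; you flag both, so the write-up is complete.
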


The proof of Proposition \ref{p3.1} is an application
of the uniformly lower type $p_2$ property of $\fai$ for some
$p_2\in(0,1]$, which is similar to that of
\cite[Proposition 3.5]{hyy}. We omit the details.

\section{Musielak-Orlicz-Hardy spaces $H_{\fai,\,L}(\cx)$ and
their duals\label{s4}}

\hskip\parindent In this section, we always assume that the
operator $L$ satisfies Assumptions (A) and (B), and the growth
function $\fai$ is as in Definition \ref{d2.3}. We introduce the
Musielak-Orlicz-Hardy space $H_{\fai,\,L}(\cx)$ associated
with $L$ via the Lusin-area function and give its dual space via
the atomic and molecular decomposition of $H_{\fai,\,L}(\cx)$. Let
us begin with some notions.

In order to introduce the Musielak-Orlicz-Hardy space
associated with $L$, we follow the ideas appeared in
\cite{adm,hlmmy} and first define
the $L^2(\cx)$ adapted Hardy space
\begin{equation}\label{4.1}
H^2(\cx):=H^2_L(\cx):=\overline{R(L)},
\end{equation}
where $\overline{R(L)}$ denotes the \emph{closure of the range of
$L$ in $L^2(\cx)$}. Then $L^2(\cx)$ is the orthogonal sum
of $H^2(\cx)$ and the null space
$N(L)$, namely, $L^2(\cx)=\overline{R(L)}\bigoplus N(L)$.

For all functions $f\in L^2(\cx)$, let the Lusin-area function
$S_L(f)$ be as in \eqref{1.4}. From \eqref{2.7}, it follows that
$S_L$ is bounded on $L^2(\cx)$. Hofmann et al. \cite{hlmmy}
introduced the Hardy space $H^1_L(\cx)$ associated with $L$ as the
completion of $\{f\in H^2(\cx):\ S_L(f)\in L^1(\cx)\}$ with
respect to the norm $\|f\|_{H^1_L(\cx)}:=\|f\|_{L^1(\cx)}$. The
Orlicz-Hardy space $H_{\bfai,\,L}(\cx)$ was introduced in \cite{jy11}
in a similar way.

Following \cite{adm,hlmmy,jy11}, we now introduce the
Musielak-Orlicz-Hardy space $H_{\fai,\,L}(\cx)$ associated with
$L$ as follows.

\begin{definition}\label{d4.1}
Let $L$ satisfy Assumptions (A) and (B) and $\fai$ be as in
Definition \ref{d2.3}. A function $f\in H^2(\cx)$ is said to be in
$\wz{H}_{\fai,\,L}(\cx)$ if $S_L (f)\in L^{\fai}(\cx)$; moreover,
define
$$\|f\|_{H_{\fai,\,L}(\cx)}:=\|S_L(f)\|_{L^\fai(\cx)}:=
\inf\lf\{\lz\in(0,\fz):\
\int_{\cx}\fai\lf(x,\frac{S_L(f)(x)}{\lz}\r)\,d\mu(x)\le1\r\}.
$$
The \emph{Musielak-Orlicz-Hardy space $H_{\fai,\,L}(\cx)$}
is defined to be the completion of $\wz{H}_{\fai,\,L}(\cx)$ in the
quasi-norm $\|\cdot\|_{H_{\fai,\,L}(\cx)}$.
\end{definition}

\begin{remark}\label{r4.1}
(i) Notice that for $0\neq f\in L^2(\cx)$,
$\|S_L(f)\|_{L^\fai(\cx)}=0$ holds true if and only if $f\in N(L)$.
Indeed, if $f\in N(L)$, then $t^2Le^{-t^2L}f=0$ almost everywhere
and hence $\|S_L(f)\|_{L^\fai(\cx)}=0$. Conversely, if
$\|S_L(f)\|_{L^\fai(\cx)}=0$, then $t^2Le^{-t^2L}f=0$ almost
everywhere on $\cx\times(0,\fz)$. Hence, for all $t\in (0,\fz)$,
$(e^{-t^2L}-I)f = \int_0^t -2sLe^{-s^2L}fds=0$, which further
implies that $Lf =Le^{-t^2L}f=0$ almost everywhere and $f\in
N(L)$. Thus, in Definition \ref{d4.1}, it is necessary to use
$\overline{R(L)}$ rather than $L^2(\cx)$ to guarantee
$\|\cdot\|_{H_{\fai,\,L}(\cx)}$ to be a quasi-norm (see also
\cite[Section 2]{hlmmy} and \cite[Remark 4.1(i)]{jy11}).

Moreover, we know that, if the kernels of the semigroup
$\{e^{-tL}\}_{t>0}$ satisfy the Gaussian upper bounded estimates,
then $N(L)=\{0\}$ and hence $H^2(\cx)=L^2(\cx)$ (see, for example,
\cite[Section 2]{hlmmy}).

(ii) It is easy to see that $\|\cdot\|_{H_{\fai,\,L}(\cx)}$ is a
quasi-norm.

(iii) From the Aoki-Rolewicz theorem in \cite{ao42,ro57}, it follows
that there exists a quasi-norm $\|\!|\cdot\|\!|$ on
$\wz{H}_{\fai,\,L}(\cx)$ and $\gz\in(0,1]$ such that, for all
$f\in\wz{H}_{\fai,\,L}(\cx)$,
$\|\!|f\|\!|\sim\|f\|_{H_{\fai,\,L}(\cx)}$ and, for any sequence
$\{f_j\}_j\subset\wz{H}_{\fai,\,L}(\cx)$,
$$\lf\|\!\lf|\sum_j f_j\r\|\!\r|^{\gz}\le\sum_j\|\!|f_j\|\!|^{\gz}.
$$
By the theorem of completion of Yosida \cite[p.\,56]{yo95}, it
follows that $(\wz{H}_{\fai,\,L}(\cx),\,\|\!|\cdot\|\!|)$ has a
completion space $(H_{\fai,\,L}(\cx),\|\!|\cdot\|\!|)$; namely,
for any $f\in (H_{\fai,\,L}(\cx),\|\!|\cdot\|\!|)$, there exists a
Cauchy sequence $\{f_k\}_{k=1}^{\fz} \subset\wz{H}_{\fai,\,L}(\cx)$ such that
$\lim_{k\to\fz}\|\!|f_k -f\|\!|=0$. Moreover, if
$\{f_k\}_{k=1}^{\fz}$ is a Cauchy sequence in
$(\wz{H}_{\fai,\,L}(\cx),\,\|\!|\cdot\|\!|)$, then there exists a
unique $f\in H_{\fai,\,L}(\cx)$ such that
$\lim_{k\to\fz}\|\!|f_k -f\|\!|=0$. Furthermore, by the fact that
$\|\!|f\|\!|\sim\|f\|_{H_{\fai,\,L}(\cx)}$ for all
$f\in\wz{H}_{\fai,\,L}(\cx)$, we know that the spaces
$(H_{\fai,\,L}(\cx), \|\cdot\|_{H_{\fai,\,L}(\cx)})$ and
$(H_{\fai,\,L}(\cx), \|\!|\cdot\|\!|)$ coincide with equivalent
quasi-norms.

(iv) If $\fai(x,t):=t$ for all $x\in\cx$ and $t\in(0,\fz)$, the
space $H_{\fai,\,L}(\cx)$ is just the space $H^1_L(\cx)$
introduced by Hofmann et al. \cite{hlmmy}. Moreover, if
$\fai$ is as in \eqref{1.2} with $\oz\equiv1$ and $\Phi$
concave on $(0,\fz)$, the space
$H_{\fai,\,L}(\cx)$ is just the Orlicz-Hardy space
$H_{\bfai,\,L}(\cx)$ introduced in \cite{jy11}.
\end{remark}

We now introduce $(\fai,M)$-atoms and
$(\fai,\,M,\,\epz)$-molecules as follows.

\begin{definition}\label{d4.2}
Let $M\in\nn$. A function $\az\in L^2(\cx)$ is called a
\emph{$(\fai,M)$-atom} associated with the operator $L$ if there exist a
function $b\in \cd(L^M)$ and a ball $B\subset\cx$ such that

(i) $\az=L^M b$;

(ii) $\supp (L^kb)\subset B$, $k\in\{0,\,\cdots,\,M\}$;

(iii) $\|(r^2_B L)^kb\|_{L^2(\cx)}\le
r^{2M}_B[\mu(B)]^{1/2}\|\chi_{B}\|_{L^\fai(\cx)}^{-1}$,
$k\in\{0,\,\cdots,\,M\}$.
\end{definition}

\begin{definition}\label{d4.3}
Let $M\in\nn$ and $\epz\in(0,\fz)$. A function $\bz\in L^2(\cx)$
is called a \emph{$(\fai,\,M,\,\epz)$-molecule} associated with the
operator $L$ if there exist a function $b\in \cd(L^M)$ and a ball
$B\subset\cx$ such that

(i) $\bz=L^M b$;

(ii) for each $k\in\{0,\,\cdots,\,M\}$ and $j\in\zz_+$, there holds true
$$\|(r^2_B L)^kb\|_{L^2(U_j(B))}\le
2^{-j\epz}r^{2M}_B[\mu(B)]^{1/2}\|\chi_{B}\|_{L^\fai(\cx)}^{-1},$$
where $U_j(B)$ with $j\in\zz_+$ is as in \eqref{2.4}.
\end{definition}

\begin{remark}\label{r4.2}
Let $\Phi$ be a concave Orlicz function on $(0,\fz)$ with
$p_\Phi\in(0,1]$. When $\fai(x,t)=\Phi(t)$ for all $x\in\cx$ and
$t\in[0,\fz)$, the $(\fai,\,M)$-atom is just the $(\Phi,\,M)$-atom
introduced in \cite{jy11}. However, the
$(\fai,\,M,\,\epz)$-molecule is different from the
$(\Phi,\,M,\,\epz)$-molecule in \cite{jy11} even when
$\fai(x,t)=\Phi(t)$ for all $x\in\cx$ and $t\in[0,\fz)$. More
precisely, recall that $\bz$ is called a
\emph{$(\Phi,\,M,\,\epz)$-molecule}, introduced in \cite{jy11},
if (ii) of Definition \ref{d4.3} is replaced by that,
for each $k\in\{0,\,\cdots,\,M\}$ and $j\in\zz_+$, there holds true
$$\lf\|(r^2_B L)^kb\r\|_{L^2(U_j(B))}\le
2^{-j\epz}r^{2M}_B[\mu(2^jB)]^{-1/2}[\rho(\mu(2^jB))]^{-1},$$ where
$U_j(B)$ with $j\in\zz_+$ is as in \eqref{2.4} and $\rho$ is given
by $\rho(t):=t^{-1}/\Phi^{-1}(t^{-1})$ for all
$t\in(0,\fz)$. Let $p_2$ be any lower type of $\Phi$. Then for any
$\epz\in(0,\fz)$, every $(\fai,\,M,\,\epz)$-molecule is a
$(\Phi,\,M,\,\epz-n(1/p_2-1/2))$-molecule when $\fai:=\Phi$. Indeed,
by \cite[Proposition 2.1]{vi87}, we know that $\rho$ is of upper type
$1/p_2-1$, which, together with \eqref{2.2}, implies that, for all
$j\in\nn$,
$[\rho(\mu(2^jB))]^{-1}\gs2^{-jn(1/p_2-1)}[\rho(\mu(B))]^{-1}$. From
this and \eqref{2.2}, we further deduce that, for all $j\in\nn$,
$[\mu(2^jB)]^{-1/2}[\rho(\mu(2^jB))]^{-1}\gs2^{-jn(1/p_2-1/2)}
[\mu(B)]^{-1/2}[\rho(\mu(B))]^{-1}$, which, together with the fact that
$\|\chi_B\|_{L^\fai(\cx)}=\mu(B)\rho(\mu(B))$, implies that the claim holds true.
We point out that the notion of $(\fai,\,M,\,\epz)$-molecules
is motivated by \cite{lyy11}, which is convenient in applications
(see, for example, \cite{lyy11} for more details).
\end{remark}

\subsection{Decompositions into atoms and molecules\label{s4.1}}

\hskip\parindent Recall that a function $f$ on $\cx\times(0,\fz)$
is said to have \emph{bounded support}, if there exist a ball $B\subset\cx$
and $0<c_1<c_2<\fz$ such that $\supp f\subset B\times(c_1,c_2)$.
In what follows, let $L^2_b(\cx\times(0,\fz))$ denote the
\emph{set of all functions $f\in L^2(\cx\times(0,\fz))$ with
bounded support}, $M\in\nn$ and
$M>\frac{n}{2}[\frac{q(\fai)}{i(\fai)}-\frac{1}2]$, where $n$,
$q(\fai)$ and $i(\fai)$ are respectively as in \eqref{2.2},
\eqref{2.12} and \eqref{2.11}. Let $\wz\Phi$ be as in Lemma
\ref{l2.2} and $\Psi(t):=t^{2(M+1)}\wz\bfai(t)$ for all $t\in(0,\fz)$.
For all $f\in L^2_b(\cx\times(0,\fz))$ and $x\in\cx$, define
\begin{equation}\label{4.2}
\pi_{\Psi,\,L}(f)(x):=C_{\Psi}\int_0^{\fz}\Psi(t\sqrt{L})
(f(\cdot,t))(x)\,\frac{dt}{t},
\end{equation}
where $C_{\Psi}$ is a positive constant such that
\begin{equation}\label{4.3}
C_{\Psi}\int_0^\fz\Psi(t)t^2e^{-t^2}\,\frac{dt}{t}=1.
\end{equation}
By \eqref{2.7} and H\"older's inequality, we easily see that,
if $f\in L^2_b(\cx\times(0,\fz))$, then $\pi_{\Psi,\,L}(f)\in L^2(\cx)$.
Moreover, we have the following boundedness of $\pi_{\Psi,\,L}$.

\begin{proposition}\label{p4.1}
Let $L$ satisfy Assumptions (A) and (B), $\pi_{\Psi,\,L}$ be as in
\eqref{4.2}, $\fai$ as in Definition
\ref{d2.3} with $\fai\in\rh_{2/[2-I(\fai)]}(\cx)$ and $I(\fai)$ being as in \eqref{2.10},
and $M\in\nn$ with $M>\frac{n}{2}[\frac{q(\fai)}{i(\fai)}-\frac{1}2]$, where $n$,
$q(\fai)$ and $i(\fai)$ are, respectively, as in \eqref{2.2},
\eqref{2.12} and \eqref{2.11}. Then

{\rm(i)} the operator $\pi_{\Psi,\,L}$, initially defined on the
space $T^{2,\,b}_2(\cx\times(0,\fz))$, extends to a bounded linear operator
from $T^2_2(\cx\times(0,\fz))$ to $L^2(\cx)$;

{\rm(ii)} the operator $\pi_{\Psi,\,L}$, initially defined on the
space $T^{b}_{\fai}(\cx)$, extends to a bounded linear operator
from $T_\fai(\cx\times(0,\fz))$ to $H_{\fai,\,L}(\cx)$.
\end{proposition}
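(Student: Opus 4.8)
The plan is to prove the two boundedness statements in the order (i) then (ii), deducing (ii) from (i) together with the atomic decomposition of the tent space $T_\fai(\cx\times(0,\fz))$ in Theorem \ref{t3.1} and the key estimate \eqref{1.5}. For part (i), I would argue by duality: for $f\in T^{2,\,b}_2(\cx\times(0,\fz))$ and $g\in L^2(\cx)$ with $\|g\|_{L^2(\cx)}\le1$, write
$$
\lf|\lf\langle\pi_{\Psi,\,L}(f),g\r\rangle\r|
=C_\Psi\lf|\int_0^\fz\int_\cx f(x,t)\,\overline{\Psi(t\sqrt L)g(x)}\,\frac{d\mu(x)\,dt}{t}\r|
\le C\,\|\ca(f)\|_{L^2(\cx)}\,\lf\|\cg_\Psi(g)\r\|_{L^2(\cx)},
$$
where $\cg_\Psi(g)(x):=\{\int_0^\fz|\Psi(t\sqrt L)g(x)|^2\,dt/t\}^{1/2}$ and the last inequality is the standard tent-space pairing $|\langle F,G\rangle|\ls\|\ca(F)\|_{L^2}\|\ca(G)\|_{L^2}$ (here one uses that $\Psi(t\sqrt L)g$, as a function of $(x,t)$, has its $\ca$-functional controlled by $\cg_\Psi(g)$ via Fubini, since $\cx$ is doubling). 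Because $\Psi(t):=t^{2(M+1)}\wz\bfai(t)$ satisfies $|\Psi(z)|\ls|z|^\dz/(1+|z|^{2\dz})$ for a suitable $\dz>0$ (it vanishes to high order at $0$ and decays rapidly at $\fz$ by the Schwartz property of $\wz\bfai$), the quadratic estimate \eqref{2.7} gives $\|\cg_\Psi(g)\|_{L^2(\cx)}\ls\|g\|_{L^2(\cx)}$. Hence $\|\pi_{\Psi,\,L}(f)\|_{L^2(\cx)}\ls\|f\|_{T^2_2(\cx\times(0,\fz))}$, and since $T^{2,\,b}_2(\cx\times(0,\fz))$ is dense in $T^2_2(\cx\times(0,\fz))$, $\pi_{\Psi,\,L}$ extends to the claimed bounded operator.

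For part (ii), let $f\in T^b_\fai(\cx\times(0,\fz))$. By Proposition \ref{p3.1}, $f\in T^{2,\,b}_2(\cx\times(0,\fz))$, so $\pi_{\Psi,\,L}(f)\in L^2(\cx)$ by (i); moreover one checks $\pi_{\Psi,\,L}(f)\in\overline{R(L)}=H^2(\cx)$ (the kernel of each $\Psi(t\sqrt L)$ lies in $R(L)$), so $S_L(\pi_{\Psi,\,L}(f))$ is meaningful. Apply Theorem \ref{t3.1} to get $f=\sum_j\lz_j a_j$ with $T_\fai$-atoms $a_j$ supported in $\widehat{B_j}$ and $\blz(\{\lz_ja_j\}_j)\ls\|f\|_{T_\fai(\cx\times(0,\fz))}$; by Corollary \ref{c3.1} (here the hypothesis $\fai\in\rh_{2/[2-I(\fai)]}(\cx)$ enters) this decomposition also converges in $T^2_2(\cx\times(0,\fz))$, so by (i) it converges after applying $\pi_{\Psi,\,L}$ in $L^2(\cx)$, giving $\pi_{\Psi,\,L}(f)=\sum_j\lz_j\pi_{\Psi,\,L}(a_j)$ in $L^2(\cx)$. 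The standard computation (as in \cite{hlmmy,jy11}) shows that, up to a harmless constant, $\az_j:=\pi_{\Psi,\,L}(a_j)/C$ is a $(\fai,M)$-atom associated with the ball $B_j$ — one writes $\pi_{\Psi,\,L}(a_j)=L^Mb_j$ with $b_j:=C_\Psi\int_0^\fz t^{2M}\Psi(t\sqrt L)(a_j(\cdot,t))(x)\,dt/t$, uses Lemma \ref{l2.2} (finite propagation speed, so $\supp K_{\Psi(t\sqrt L)}$ is contained in the $t$-neighborhood of the diagonal) to get the support condition $\supp(L^kb_j)\subset C_4 B_j$, and uses the $L^2$-bound on $a_j$ together with \eqref{2.7} and the condition (ii) in the definition of a $T_\fai$-atom, $\int_{\widehat{B_j}}|a_j|^2\,d\mu\,dt/t\le\mu(B_j)\|\chi_{B_j}\|_{L^\fai(\cx)}^{-2}$, to get the size condition $\|(r_{B_j}^2L)^kb_j\|_{L^2(\cx)}\le r_{B_j}^{2M}[\mu(B_j)]^{1/2}\|\chi_{B_j}\|_{L^\fai(\cx)}^{-1}$. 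One also needs $M>\frac n2[\frac{q(\fai)}{i(\fai)}-\frac12]$ at the point where powers of $t$ versus powers of $r_{B_j}$ are balanced on the tent $\widehat{B_j}$.

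It remains to turn the atomic decomposition $\pi_{\Psi,\,L}(f)=C\sum_j\lz_j\az_j$ into the quasi-norm bound $\|\pi_{\Psi,\,L}(f)\|_{H_{\fai,\,L}(\cx)}\ls\|f\|_{T_\fai(\cx\times(0,\fz))}$. Using the subadditivity (Lemma \ref{l2.3}(i)) and, crucially, estimate \eqref{1.5} applied to each atom $\lz_j\az_j$, one gets for any $\lz\in(0,\fz)$
$$
\int_\cx\fai\lf(x,\frac{S_L(\pi_{\Psi,\,L}(f))(x)}{\lz}\r)d\mu(x)
\ls\sum_j\int_\cx\fai\lf(x,\frac{S_L(\lz_j\az_j)(x)}{\lz}\r)d\mu(x)
\ls\sum_j\fai\lf(B_j,\frac{|\lz_j|}{\lz\|\chi_{B_j}\|_{L^\fai(\cx)}}\r),
$$
and taking $\lz\sim\blz(\{\lz_ja_j\}_j)$ makes the right-hand side $\ls1$, whence by Lemma \ref{l2.5}(i) $\|S_L(\pi_{\Psi,\,L}(f))\|_{L^\fai(\cx)}\ls\blz(\{\lz_ja_j\}_j)\ls\|f\|_{T_\fai(\cx\times(0,\fz))}$. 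One should be slightly careful with interchanging $S_L$ and the sum: the $L^2$-convergence of $\sum_j\lz_j\az_j$ plus boundedness of $S_L$ on $L^2(\cx)$ gives convergence $S_L(\pi_{\Psi,\,L}(f))\le\sum_j S_L(\lz_j\az_j)$ almost everywhere (after passing to a subsequence), which suffices. Finally, density of $T^b_\fai(\cx\times(0,\fz))$ in $T_\fai(\cx\times(0,\fz))$ extends $\pi_{\Psi,\,L}$ to the asserted bounded operator into $H_{\fai,\,L}(\cx)$. I expect the main obstacle to be the proof of \eqref{1.5} for a single atom — extracting $S_L(\lz\az)(x)$ from the time slot of $\fai(x,\cdot)$ when the $x$- and $t$-variables are intertwined — but that estimate is quoted as available (its proof via the uniformly upper/lower type $p_1,p_2\in(0,1]$ properties is indicated around \eqref{4.5}), so within this proposition the real work is the bookkeeping of the atom-size estimates and the convergence issues.
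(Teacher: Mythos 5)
Your overall architecture matches the paper's: establish $L^2$ boundedness for (i), then for (ii) use the tent-space atomic decomposition (Theorem \ref{t3.1}, Corollary \ref{c3.1}, Proposition \ref{p3.1}) together with (i), verify that each $\pi_{\Psi,\,L}(a_j)$ is a multiple of a $(\fai,\,M)$-atom, apply the single-atom estimate, and conclude via uniform $\sigma$-quasi-subadditivity and Lemma \ref{l2.5}(i). Your duality argument for (i) via \eqref{2.7} is correct and essentially what the cited \cite[Proposition 4.1(i)]{jy11} does; the paper itself just cites that result without reproving it.

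There is, however, a genuine gap. The inequality you label as \eqref{1.5} --- $\int_\cx\fai(x,S_L(\lz\az)(x))\,d\mu(x)\ls\fai(B,|\lz|\|\chi_B\|_{L^\fai(\cx)}^{-1})$ for a $(\fai,M)$-atom $\az$ --- is not a standing lemma you may invoke; it is stated in the introduction only as a forthcoming claim and is \emph{proved entirely inside} the paper's proof of Proposition \ref{p4.1} as the claim \eqref{4.5}, occupying roughly the second half of that proof. This is exactly the part of the argument that consumes the hypotheses you list: the Davies--Gaffney off-diagonal estimates (via Lemma \ref{l2.1}) are used to bound $\|S_L(\az)\|_{L^2(U_k(B))}\ls 2^{-2kM}[\mu(B)]^{1/2}\|\chi_B\|_{L^\fai(\cx)}^{-1}$ on the annuli $U_k(B)$; the condition $\fai\in\rh_{2/[2-I(\fai)]}(\cx)$ (upgraded via Lemma \ref{l2.6}(iv) to $\fai\in\rh_{2/(2-p_1)}(\cx)$ for some attainable uniform upper type $p_1$) is used in the H\"older-and-reverse-H\"older step that extracts the $S_L$-factor from inside $\fai(x,\cdot)$; and the hypothesis $M>\frac n2[\frac{q(\fai)}{i(\fai)}-\frac12]$ enters precisely to make the resulting geometric series in $k$ summable after $\fai\in\aa_{q_0}(\cx)$ and Lemma \ref{l2.6}(vii) produce a growth factor $[\mu(2^{k+1}B)/\mu(B)]^{q_0}$. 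Deferring this estimate means your proposal never actually uses those hypotheses and so cannot be a complete proof of the proposition; a reviewer would say the hard part was skipped. (You correctly identified it as "the main obstacle" --- but identifying an obstacle and clearing it are different things.)

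A few secondary corrections in the atom-verification step. Since $\Psi(t\sqrt L)=(t^2L)^{M+1}\wz\Phi(t\sqrt L)$, the correct primitive is $b_j=C_\Psi\int_0^\fz t^{2(M+1)}L\,\wz\Phi(t\sqrt L)(a_j(\cdot,t))\,\frac{dt}{t}$, not $C_\Psi\int_0^\fz t^{2M}\Psi(t\sqrt L)(a_j(\cdot,t))\,\frac{dt}{t}$; the latter would give $L^Mb_j=C_\Psi\int_0^\fz(t^2L)^M\Psi(t\sqrt L)a_j\,\frac{dt}{t}\ne\pi_{\Psi,\,L}(a_j)$. Also, by Lemma \ref{l2.2} the kernel of $(t^2L)^\kappa\wz\Phi(t\sqrt L)$ is supported in $\{d(x,y)\le t\}$ (the factor $C_4$ is already absorbed into $\supp\pz$), and since $a_j(\cdot,t)$ is supported in $\{y\in B_j:d(y,B_j^\complement)\ge t\}$ one gets $\supp(L^kb_j)\subset B_j$, not merely $\subset C_4B_j$. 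Finally, the hypothesis on $M$ is not used in the verification that $\az_j$ is an atom (which only needs Cauchy--Schwarz, \eqref{2.7}, and $t\le r_{B_j}$ on the tent); it is used, as noted above, in proving \eqref{4.5}.
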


\begin{proof}
The conclusion (i) is just \cite[Proposition 4.1(i)]{jy11}. We
only need to show (ii) of this proposition. Let $f\in
T^b_{\fai}(\cx\times(0,\fz))$. Then by Proposition \ref{p3.1},
Corollary \ref{c3.1} and (i), we know that
$$\pi_{\Psi,\,L}(f)=\sum_j\lz_j\pi_{\Psi,\,L}(a_j)=:\sum_j\lz_j\az_j$$
in $L^2(\cx)$, where $\{\lz_j\}_j$ and $\{a_j\}_j$ satisfy
\eqref{3.1} and \eqref{3.2}, respectively. Recall that, for each $j$, $\supp
a_j\subset\widehat{B_j}$ and $B_j$ is a ball of $\cx$. Moreover,
from \eqref{2.7}, we deduce that $S_L$ is bounded on $L^2(\cx)$,
which implies that, for all $x\in\cx$,
$S_L(\pi_{\Psi,\,L}(f))(x)\le\sum_j|\lz_j|S_L(\az_j)(x)$. This,
combined with Lemma \ref{l2.3}(i), yields that
\begin{equation}\label{4.4}
\int_{\cx}\fai(x,S_L(\pi_{\Psi,\,L}(f))(x))\,d\mu(x)\ls
\sum_j\int_{\cx}\fai(x,|\lz_j|S_L(\az_j)(x))\,d\mu(x).
\end{equation}
We now show that $\az_j=\pi_{\Psi,\,L}(a_j)$ is a multiple of a
$(\fai,M)$-atom for each $j$. Let
$$b_j:=C_{\Psi}\int_0^\fz t^{2(M+1)}
L\wz\Phi(t\sqrt{L})(a_j(\cdot,t))\,\frac{dt}{t},$$
where $C_\Psi$ is as in \eqref{4.3}. Then for each $j$, from the
definitions of $\az_j$ and $b_j$, it
follows that $\az_j=L^Mb_j$. Moreover, by Lemma \ref{l2.2}, we
know that, for each $k\in\{0,\,\cdots,\,M\}$, $\supp (L^k
b_j)\subset B_j$. Furthermore, for any $h\in L^2(B_j)$, from
H\"older's inequality and \eqref{2.7}, we infer that
\begin{eqnarray*}
&&\lf|\int_{\cx}(r^2_{B_j}L)^k b_j(x)h(x)\,d\mu(x)\r|\\
&&\hs=C_{\Psi}\lf|\int_{\cx}\int_0^\fz t^{2(M+1)}(r^2_{B_j}L)^k
L\wz\Phi(t\sqrt{L})(a_j(\cdot,t))(x)h(x)\,\frac{d\mu(x)\,dt}{t}\r|\\
&&\hs\ls
r^{2M}_{B_j}\int_{\cx}\int_0^{r_B}\lf|a_j(y,t)(t^2L)^{k+1}
\wz\Phi(t\sqrt{L})h(y)\r|\,\frac{d\mu(y)\,dt}{t}\\
&&\hs\ls
r^{2M}_{B_j}\|a_j\|_{T^2_2(\cx\times(0,\fz))}\lf\{\int_{\cx}\int_0^\fz|(t^2L)^{k+1}
\wz\Phi(t\sqrt{L})h(y)|^2\,\frac{d\mu(y)\,dt}{t}\r\}^{1/2}\\
&&\hs\ls r^{2M}_{B_j}\|a_j\|_{T^2_2(\cx\times(0,\fz))}\|h\|_{L^2(\cx)}\ls
r^{2M}_{B_j}[V(B_j)]^{1/2}\|\chi_{B_j}\|_{L^\fai(\cx)}^{-1}
\|h\|_{L^2(\cx)},
\end{eqnarray*}
which implies that $\|(r^2_{B_j}L)^k b_j\|_{L^2(\cx)}\ls
r^{2M}_{B_j}[V(B_j)]^{1/2} \|\chi_{B_j}\|_{L^\fai(\cx)}^{-1}$.
Therefore, $\az_j$ is a $(\fai,M)$-atom up to a harmless constant.

We claim that, for any $\lz\in\cc$ and $(\fai,M)$-atom $\az$
supported in a ball $B\subset\cx$,
\begin{equation}\label{4.5}
\int_{\cx}\fai(x,S_L(\lz
\az)(x))\,d\mu(x)\ls\fai\lf(B,\frac{|\lz|}{\|\chi_B\|_{L^\fai(\cx)}}\r).
\end{equation}

If \eqref{4.5} holds true, by \eqref{4.5}, the facts that, for all
$\lz\in(0,\fz)$,
$$S_L(\pi_{\Psi,\,L}(f/\lz))=S_L(\pi_{\Psi,\,L}(f))/\lz$$
and
$\pi_{\Psi,\,L}(f/\lz)=\sum_j\lz_j\az_j/\lz$, and
$S_L(\pi_{\Psi,\,L}(f))\le\sum_j|\lz_j|S_L(\az_j)$,
we see that, for
all $\lz\in(0,\fz)$,
$$\int_{\cx}\fai\lf(x,\frac{S_L(\pi_{\Psi,\,L}(f))(x)}
{\lz}\r)\,d\mu(x)\ls\sum_j
\fai\lf(B_j,\frac{|\lz_j|}{\lz\|\chi_{B_j}\|_{L^\fai(\cx)}}\r),
$$
which, together with \eqref{3.2}, implies that
$\|\pi_{\Psi,\,L}(f)\|_{H_{\fai,\,L}(\cx)}\ls\blz(\{\lz_j\az_j\}_j)
\ls\|f\|_{T_\fai(\cx\times(0,\fz))}$, and hence completes the proof of (ii).

Now we prove \eqref{4.5}. Write
\begin{equation}\label{4.6}
\int_{\cx}\fai(x,S_L(\lz\az)(x))\,d\mu(x)=\sum_{k=0}^{\fz}
\int_{U_k(B)}\fai(x,|\lz|S_L(\az)(x))\,d\mu(x).
\end{equation}
From the assumption $\fai\in\rh_{2/[2-I(\fai)]}(\cx)$, Lemma \ref{l2.6}(iv) and
the definition of $I(\fai)$, we infer that, there exists $p_1\in[I(\fai),1]$
such that $\fai$ is of uniformly upper type $p_1$ and $\fai\in\rh_{2/(2-p_1)}(\cx)$.
For $k\in\{0,\,\cdots,4\}$, by the uniformly upper type $p_1$ property
of $\fai$, H\"older's inequality, $\fai\in\rh_{2/(2-p_1)}(\cx)$, the
$L^2(\cx)$-boundedness of $S_L$ and \eqref{2.2}, we conclude that
\begin{eqnarray}\label{4.7}
\hs\hs&&\int_{U_k(B)}\fai\lf(x,|\lz|S_L(\az)(x)\r)\,d\mu(x)\\ \nonumber
&&\hs\ls\int_{U_k(B)}\fai\lf(x,|\lz|\|\chi_{B}\|_{L^\fai(\cx)}^{-1}\r)
\lf\{1+\lf[S_L(\az)(x)\|\chi_{B}\|_{L^\fai(\cx)}\r]^{p_1}\r\}\,d\mu(x)\\
\nonumber
&&\hs\ls\fai\lf(U_k(B),|\lz|\|\chi_{B}\|_{L^\fai(\cx)}^{-1}\r)
+\|\chi_{B}\|_{L^\fai(\cx)}^{p_1}\\
\nonumber &&\hs\hs\times \lf\{\int_{U_k(B)}
\lf[\fai\lf(x,|\lz|\|\chi_{B}\|_{L^\fai(\cx)}^{-1}
\r)\r]^{\frac{2}{2-p_1}}\,d\mu(x)\r\}
^{\frac{2-p_1}{2}}\lf\{\int_{U_k(B)} [S_L(\az)
(x)]^2\,d\mu(x)\r\}^{\frac{p_1}{2}}\\
\nonumber
&&\hs\ls\fai\lf(U_k(B),|\lz|\|\chi_{B}\|_{L^\fai(\cx)}^{-1}\r)
\ls\fai\lf(B,|\lz|\|\chi_{B}\|_{L^\fai(\cx)}^{-1}\r).
\end{eqnarray}

From the assumption that
$M>\frac{n}{2}[\frac{q(\fai)}{i(\fai)}-\frac12]$, it follows that,
there exist $p_2\in(0,i(\fai))$ and $q_0\in(q(\fai),\fz)$ such
that $M>\frac{n}{2}(\frac{q_0}{p_2}-\frac{1}{2})$. Moreover, by
the definitions of $i(\fai)$ and $q(\fai)$, we know that $\fai$ is
of uniformly lower type $p_2$ and $\fai\in\aa_{q_0}(\cx)$. When
$k\in\nn$ with $k\ge5$, from the uniformly upper type $p_1$ and lower
type $p_2$ properties of $\fai$, it follows that
\begin{eqnarray}\label{4.8}
&&\int_{U_k(B)}\fai\lf(x,|\lz|S_L(\az)(x)\r)\,d\mu(x)\\ \nonumber
&&\hs\ls
\int_{U_k(B)}\fai\lf(x,|\lz|\|\chi_{B}\|_{L^\fai(\cx)}^{-1}\r)
\lf[S_L(\az)(x)\|\chi_{B}\|_{L^\fai(\cx)}\r]^{p_1}\,d\mu(x)\\ \nonumber
&&\hs\hs+\int_{U_k(B)}\fai\lf(x,|\lz|\|\chi_{B}\|_{L^\fai(\cx)}^{-1}\r)
\lf[S_L(\az)(x)\|\chi_{B}\|_{L^\fai(\cx)}\r]^{p_2}\,d\mu(x)=:\mathrm{E}_k+\mathrm{F}_k.
\end{eqnarray}

To estimate $\mathrm{E}_k$ and $\mathrm{F}_k$, we first estimate
$\int_{U_k(B)}[S_L(\az)(x)]^2\,d\mu(x)$. Write
\begin{eqnarray}\label{4.9}
&&\int_{U_k(B)}[S_L(\az)(x)]^2\,d\mu(x)\\ \nonumber
&&\hs=\int_{U_k(B)}\int_0^{\frac{d(x,x_B)}{4}}\int_{d(x,y)<t}
\lf|(t^2L)^{M+1}e^{-t^2L}b(y)\r|^2\frac{d\mu(y)}{V(x,t)}
\frac{dt}{t^{4M+1
}}\,d\mu(x)\\
\nonumber&&\hs\hs+
\int_{U_k(B)}\int_{\frac{d(x,x_B)}{4}}^\fz\int_{d(x,y)<t}\cdots
=:\mathrm{H}_k+\mathrm{I}_k.
\end{eqnarray}

We first estimate the term $\mathrm{H}_k$. Let
$$G_k(B):=\{y\in\cx:\
\text{there exists}\ x\in U_k(B)\ \text{such that}\
d(x,y)<d(x,x_B)/4\}.$$ From $x\in U_k(B)$, it follows that
$d(x,x_B)\in[2^{k-1}r_B,2^k r_B)$. Let $z\in B$ and $y\in G_k(B)$.
Then $d(y,z)\ge d(x,x_B)-d(y,x)-d(z,x_B)\ge
3d(x,x_B)/4-r_B\ge2^{k-2}r_B,$ which implies that
$\dist(G_k(B),B)\ge2^{k-2}r_B$. By this, Fubini's theorem,
\eqref{2.5} and \eqref{2.3}, we know that
\begin{eqnarray}\label{4.10}
\qquad\ \mathrm{H}_k
&&\ls\int_0^{2^{k+1}r_B}\int_{G_k(B)}\lf|(t^2L)^{M+1}e^{-t^2L}b(y)\r|^2
\,d\mu(y)\,\frac{dt}{t^{4M+1}}\\ \nonumber
&&\ls\|b\|^2_{L^2(B)}\int_0^{2^{k+1}r_B}
\exp\lf\{-\frac{[\dist(G_k(B),B)]^2}{C_3t^2}\r\}\,\frac{dt}{t^{4M+1}}\\
\nonumber &&\ls r^{4M}_B
\mu(B)\|\chi_B\|_{L^\fai(\cx)}^{-2}\int_0^{2^{k+1}r_B}
\lf[\frac{t}{2^k r_B}\r]^{4M+1}\,\frac{dt}{t^{4M+1}}
\ls2^{-4kM}\mu(B)\|\chi_B\|_{L^\fai(\cx)}^{-2}.
\end{eqnarray}
For $\mathrm{I}_k$, from Lemma \ref{l2.1}, it follows that
\begin{eqnarray*}
\mathrm{I}_k&&\ls\int_{2^{k-2}r_B}^\fz
\int_{\cx}\lf|(t^2L)^{M+1}e^{-t^2L}b(y)\r|^2
\,d\mu(y)\,\frac{dt}{t^{4M+1}}\\ \nonumber
&&\ls\int_{2^{k-2}r_B}^\fz\|b\|^2_{L^2(B)}
\,\frac{dt}{t^{4M+1}}\ls2^{-4kM}\mu(B)\|\chi_B\|_{L^\fai(\cx)}^{-2},
\end{eqnarray*}
which, together with \eqref{4.9} and \eqref{4.10}, implies that,
for all $k\in\nn$ with $k\ge5$,
\begin{eqnarray}\label{4.11}
\|S_L(\az)\|_{L^2(U_k(B))}\ls2^{-2kM}[\mu(B)]^{1/2}
\|\chi_B\|_{L^\fai(\cx)}^{-1}.
\end{eqnarray}

Now we estimate $\mathrm{E}_k$. By H\"older's inequality, $\fai\in\rh_{2/(2-p_1)}(\cx)$,
\eqref{4.11}, Lemma \ref{l2.6}(vii) and
\eqref{2.2}, we conclude that
\begin{eqnarray}\label{4.12}
\mathrm{E}_k&&\le\lf\{\int_{U_k(B)}
\lf[\fai\lf(x,|\lz|\|\chi_{B}\|_{L^\fai(\cx)}^{-1}
\r)\r]^{\frac{2}{2-p_1}}\,d\mu(x)\r\}^{\frac{2-p_1}{2}}\\
\nonumber&&\hs\times\|\chi_B\|^{p_1}_{L^\fai(\cx)}
\lf\{\int_{U_k(B)}[S_L(\az)(x)]^2\,d\mu(x)\r\}^{\frac{p_1}2}\\
\nonumber&&\ls2^{-2kMp_1}\frac{[\mu(B)]^{\frac{p_1}2}}{[\mu(2^{k+1}B)]^{\frac{p_1}2}}
\fai\lf(2^{k+1}B,|\lz|\|\chi_B\|_{L^\fai(\cx)}^{-1}\r)\\ \nonumber
&&\ls2^{-2kMp_1}\frac{[\mu(B)]^{\frac{p_1}2}}{[\mu(2^{k+1}B)]^{\frac{p_1}2}}
\lf[\frac{\mu(2^{k+1}B)}{\mu(B)}\r]^{q_0}
\fai\lf(B,|\lz|\|\chi_B\|_{L^\fai(\cx)}^{-1}\r)\\ \nonumber
&&\ls2^{-2kMp_1}\fai\lf(B,|\lz|\|\chi_B\|_{L^\fai(\cx)}^{-1}\r)
[\mu(B)]^{\frac{p_1}2-q_0}[\mu(2^{k+1}B)]^{q_0-\frac{p_1}2}\\ \nonumber
&&\ls2^{-k[2Mp_1-nq_0+\frac n2]}\fai\lf(B,|\lz|\|\chi_B\|_{L^\fai(\cx)}^{-1}\r).
\end{eqnarray}

Moreover, by Remark \ref{r2.1}, we know that $p_1\ge p_2$ and hence
$2/(2-p_1)\ge2/(2-p_2)$, which, together with $\fai\in\rh_{2/(2-p_1)}(\cx)$ and
Lemma \ref{l2.6}(ii), implies that
$\fai\in\rh_{2/(2-p_2)}(\cx)$. From this, H\"older's
inequality and \eqref{4.11}, it follows
that
\begin{eqnarray*}
\mathrm{F}_k&&\ls\lf\{\int_{2^{k+1}B}\lf[\fai
\lf(x,|\lz|\|\chi_B\|_{L^\fai(\cx)}^{-1}\r)
\r]^{\frac{2}{2-p_2}}\,d\mu(x)\r\}^{\frac{2-p_2}{2}}\\
\nonumber &&\hs\times\|\chi_B\|^{p_2}_{L^\fai(\cx)}
\lf(2^{-2kM}[\mu(B)]^{\frac12}\|\chi_B\|_{L^\fai(\cx)}^{-1}\r)^{p_2}\\
\nonumber &&\ls2^{-2kMp_2}\|\chi_B\|_{L^\fai(\cx)}^{-p_2}
\lf[\frac{\mu(B)}{\mu(2^{k+1}B)}\r]^{\frac{p_2}2}
\fai\lf(2^{k+1}B,|\lz|\|\chi_B\|_{L^\fai(\cx)}^{-1}\r)\\ \nonumber
&&\ls2^{-2kMp_2}[\mu(B)]^{\frac{p_2}2-q_0}[\mu(2^{k+1}B)]^{q_0-\frac{p_2}2}
\fai\lf(B,|\lz|\|\chi_B\|_{L^\fai(\cx)}^{-1}\r)\\ \nonumber
&&\ls2^{-k(2Mp_2+\frac{np_2}2-nq_0)}
\fai\lf(B,|\lz|\|\chi_B\|_{L^\fai(\cx)}^{-1}\r),
\end{eqnarray*}
which, together with \eqref{4.6}, \eqref{4.7}, \eqref{4.8},
\eqref{4.12} and $M>\frac{n}{2}(\frac{q_0}{p_2}-\frac{1}{2})\ge
\frac{n}{2}(\frac{q_0}{p_1}-\frac{1}{2})$,
implies that \eqref{4.5} holds true. This finishes the proof of (ii) and
hence Proposition \ref{p4.1}.
\end{proof}

\begin{proposition}\label{p4.2}
Let $\fai$ be as in Definition \ref{d2.3} with $\fai\in\rh_{2/[2-I(\fai)]}(\cx)$
and $I(\fai)$ being as in \eqref{2.10},
and $M\in\nn$ with
$M>\frac{n}{2}[\frac{q(\fai)}{i(\fai)}-\frac{1}2]$, where $n$,
$q(\fai)$ and $i(\fai)$ are, respectively, as in \eqref{2.2},
\eqref{2.12} and \eqref{2.11}. Then, for all $f\in
H_{\fai,\,L}(\cx)\cap L^2(\cx)$, there exist $\{\lz_j\}_j\subset\cc$
and a sequence $\{\az_j\}_j$ of $(\fai,M)$-atoms such that
\begin{equation}\label{4.13}
f=\sum_j\lz_j\az_j
\end{equation}
in both $H_{\fai,\,L}(\cx)$ and $L^2(\cx)$. Moreover, there exists
a positive constant $C$ such that, for all $f\in
H_{\fai,\,L}(\cx)\cap L^2(\cx)$,
$$\blz(\{\lz_j\az_j\}_j):=\inf\lf\{\lz\in(0,\fz):\ \sum_j
\fai\lf(B_j,\frac{|\lz_j|}{\lz\|\chi_{B_j}\|_{L^\fai(\cx)}}\r)\le1\r\}\le
C\|f\|_{H_{\fai,\,L}(\cx)},$$ where for each $j$,
$\supp\az_j\subset B_j$.
\end{proposition}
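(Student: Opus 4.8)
The plan is to lift $f$ to its square-function ``profile'' on $\cx\times(0,\fz)$, apply the tent-space atomic decomposition of Corollary \ref{c3.1} to that profile, and then push everything back down to $\cx$ through the operator $\pi_{\Psi,\,L}$ of \eqref{4.2}, using the $L$-adapted Calder\'on reproducing formula to recover $f$ itself. Throughout, fix $M\in\nn$ with $M>\frac n2[\frac{q(\fai)}{i(\fai)}-\frac12]$ and let $\pi_{\Psi,\,L}$ be the operator in \eqref{4.2} and \eqref{4.3}, so that Proposition \ref{p4.1} is available. Given $f\in H_{\fai,\,L}(\cx)\cap L^2(\cx)$ (so in particular $f\in H^2(\cx)=\overline{R(L)}$ and $S_L(f)\in L^\fai(\cx)$), set $g(x,t):=t^2Le^{-t^2L}f(x)$ for $(x,t)\in\cx\times(0,\fz)$.

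\emph{Step 1: the profile lies in $T_\fai\cap T^2_2$.} By \eqref{1.4} one has $\ca(g)=S_L(f)$, so $g\in T_\fai(\cx\times(0,\fz))$ with $\|g\|_{T_\fai(\cx\times(0,\fz))}=\|f\|_{H_{\fai,\,L}(\cx)}$. Applying \eqref{2.7} with $\phi(s):=s^2e^{-s^2}$ gives $\iint_{\cx\times(0,\fz)}|g(y,t)|^2\frac{d\mu(y)\,dt}{t}\le C\|f\|_{L^2(\cx)}^2$, and then Fubini's theorem together with the doubling property \eqref{2.1} (which makes $\int_{B(y,t)}V(x,t)^{-1}\,d\mu(x)$ bounded) yields $\|\ca(g)\|_{L^2(\cx)}^2\ls\iint_{\cx\times(0,\fz)}|g(y,t)|^2\frac{d\mu(y)\,dt}{t}<\fz$, i.e. $g\in T^2_2(\cx\times(0,\fz))$. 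Hence Corollary \ref{c3.1} (this is where the hypothesis $\fai\in\rh_{2/[2-I(\fai)]}(\cx)$ enters) provides $\{\lz_j\}_j\subset\cc$ and $T_\fai(\cx\times(0,\fz))$-atoms $a_j$, each with $\supp a_j\subset\widehat{B_j}$ for some ball $B_j$, such that $g=\sum_j\lz_j a_j$ converges in \emph{both} $T_\fai(\cx\times(0,\fz))$ and $T^2_2(\cx\times(0,\fz))$, and $\blz(\{\lz_j a_j\}_j)\le C\|g\|_{T_\fai(\cx\times(0,\fz))}=C\|f\|_{H_{\fai,\,L}(\cx)}$.

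\emph{Step 2: transport through $\pi_{\Psi,\,L}$ and identify the atoms.} Since $f\in H^2(\cx)$ and $C_\Psi\int_0^\fz\Psi(u)u^2e^{-u^2}\frac{du}{u}=1$, the substitution $u=t\sqrt\lz$ in the spectral representation of $L$ shows that $C_\Psi\int_0^\fz\Psi(t\sqrt L)\bigl(t^2Le^{-t^2L}\bigr)\frac{dt}{t}=\mathrm{Id}$ on $H^2(\cx)$, the integral taken in $L^2(\cx)$; hence $\pi_{\Psi,\,L}(g)=f$ in $L^2(\cx)$. By the boundedness of $\pi_{\Psi,\,L}$ from $T^2_2(\cx\times(0,\fz))$ to $L^2(\cx)$ and from $T_\fai(\cx\times(0,\fz))$ to $H_{\fai,\,L}(\cx)$ in Proposition \ref{p4.1}, both modes of convergence of $\sum_j\lz_j a_j$ pass through $\pi_{\Psi,\,L}$, giving
$$f=\pi_{\Psi,\,L}(g)=\sum_j\lz_j\pi_{\Psi,\,L}(a_j)=:\sum_j\lz_j\az_j$$
with convergence in both $H_{\fai,\,L}(\cx)$ and $L^2(\cx)$. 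Repeating verbatim the computation inside the proof of Proposition \ref{p4.1}(ii) --- with $b_j:=C_\Psi\int_0^\fz t^{2(M+1)}L\wz\Phi(t\sqrt L)(a_j(\cdot,t))\frac{dt}{t}$, using Lemma \ref{l2.2} for $\supp(L^kb_j)\subset B_j$ with $k\in\{0,\dots,M\}$, and H\"older's inequality together with \eqref{2.7} for the size bounds $\|(r_{B_j}^2L)^kb_j\|_{L^2(\cx)}\ls r_{B_j}^{2M}[\mu(B_j)]^{1/2}\|\chi_{B_j}\|_{L^\fai(\cx)}^{-1}$ --- one sees that each $\az_j=L^Mb_j$ is, up to a multiplicative constant uniform in $j$, a $(\fai,M)$-atom associated with $B_j$. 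Finally, absorbing that uniform constant into $\lz_j$ changes $\blz(\cdot)$ by at most a bounded factor (harmless, since $\fai$ is of uniformly lower type $p_2$ and uniformly upper type $p_1$, via Lemma \ref{l2.5}(ii)), so $\blz(\{\lz_j\az_j\}_j)\le C\blz(\{\lz_j a_j\}_j)\le C\|f\|_{H_{\fai,\,L}(\cx)}$, which is the asserted estimate.

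\emph{Main obstacle.} The delicate point is the identity $\pi_{\Psi,\,L}(g)=f$: the operator $\pi_{\Psi,\,L}$ was defined a priori only on functions with bounded support and then extended by density to $T^2_2(\cx\times(0,\fz))$, whereas $g(\cdot,t)=t^2Le^{-t^2L}f$ has no lower cutoff in $t$. Reconciling the value of the extended operator on $g$ with the Calder\'on reproducing formula requires approximating $g$ by its truncations $g\,\chi_{\cx\times(\epz,R)}\in T^{2,\,b}_2(\cx\times(0,\fz))$ and combining Proposition \ref{p4.1}(i) with the $L^2(\cx)$-convergence of $C_\Psi\int_\epz^R\Psi(t\sqrt L)(t^2Le^{-t^2L}f)\frac{dt}{t}$ to $f$ as $\epz\downarrow0$ and $R\uparrow\fz$, exactly as in \cite{hlmmy,jy11}. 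Everything else is bookkeeping built on Corollary \ref{c3.1} and Proposition \ref{p4.1}.
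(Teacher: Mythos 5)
Your proposal is correct and follows essentially the same route as the paper: lift $f$ to the tent-space profile $t^2Le^{-t^2L}f$, use Corollary \ref{c3.1} to get an atomic decomposition convergent in both $T_\fai$ and $T^2_2$, and transport it back to $H_{\fai,\,L}(\cx)\cap L^2(\cx)$ through $\pi_{\Psi,\,L}$ via the $H_\fz$-functional-calculus reproducing formula \eqref{4.x1} and Proposition \ref{p4.1}, identifying each $\pi_{\Psi,\,L}(a_j)$ as a multiple of a $(\fai,M)$-atom exactly as in the proof of Proposition \ref{p4.1}(ii). Your added remark about reconciling the density-extended $\pi_{\Psi,\,L}$ with the Calder\'on reproducing formula on the un-truncated profile $g$ is a correct refinement of the same argument, not a departure from it.
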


\begin{proof}
Let $f\in H_{\fai,\,L}(\cx)\cap L^2(\cx)$. Then by the
$H_{\fz}$-functional calculi for $L$ and \eqref{4.3}, we know that
\begin{equation}\label{4.x1}
f=C_{\Psi}\int_0^\fz\Psi(t\sqrt{L})t^2Le^{-t^2L}f\frac{dt}{t}
=\pi_{\Psi,\,L}(t^2Le^{-t^2L}f)
\end{equation}
in $L^2(\cx)$. Moreover, from Definition \ref{d4.1} and
\eqref{2.7}, we infer that $t^2Le^{-t^2L}f\in T_\fai(\cx\times(0,\fz))\cap
T^2_2(\cx\times(0,\fz))$. Applying Theorem \ref{t3.1}, Corollary \ref{c3.1} and
Proposition \ref{p4.1} to $t^2Le^{-t^2L}f$, we conclude that
$$f=\pi_{\Psi,\,L}(t^2Le^{-t^2L}f)=\sum_j\lz_j
\pi_{\Psi,\,L}(a_j)=:\sum_j\lz_j\az_j
$$
in both $L^2(\cx)$ and $H_{\fai,\,L}(\cx)$, and
$\blz(\{\lz_j\az_j\}_j)\ls\|t^2Le^{-t^2L}f\|_{T_\fai(\cx\times(0,\fz))}\sim
\|f\|_{H_{\fai,\,L}(\cx)}$. Furthermore, by the proof of
Proposition \ref{p4.1}, we know that, for each $j$, $\az_j$ is a
$(\fai,M)$-atom up to a harmless constant, which completes the
proof of Proposition \ref{p4.2}.
\end{proof}

\begin{corollary}\label{c4.1}
Let $L$ satisfy Assumptions (A) and (B),
$\fai$ be as in Definition \ref{d2.3} with $\fai\in\rh_{2/[2-I(\fai)]}(\cx)$ and
$I(\fai)$ being as in \eqref{2.10}, and $M\in\nn$ with
$M>\frac{n}{2}[\frac{q(\fai)}{i(\fai)}-\frac{1}2]$, where $n$,
$q(\fai)$ and $i(\fai)$ are, respectively, as in \eqref{2.2},
\eqref{2.12} and \eqref{2.11}. Then for all $f\in
H_{\fai,\,L}(\cx)$, there exist $\{\lz_j\}_j\subset\cc$ and a
sequence $\{\az_j\}_j$ of $(\fai,\,M)$-atoms such that
$f=\sum_j\lz_j\az_j$ in $H_{\fai,\,L}(\cx)$. Moreover, there exists
a positive constant $C$, independent of $f$, such that
$\blz(\{\lz_j\az_j\}_j)\le C\|f\|_{H_{\fai,\,L}(\cx)}$.
\end{corollary}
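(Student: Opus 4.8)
The plan is to derive the decomposition of a general $f\in H_{\fai,\,L}(\cx)$ from Proposition \ref{p4.2}, which already supplies it for $f\in H_{\fai,\,L}(\cx)\cap L^2(\cx)$, by a standard density argument; the only nonformal ingredient will be the use of the uniformly lower type of $\fai$ to control the resulting doubly indexed coefficients, since $\blz(\cdot)$ obeys no triangle inequality. We may assume $f\neq0$. By Remark \ref{r4.1}(iii), choose $\{f_k\}_{k\in\nn}\subset\wz{H}_{\fai,\,L}(\cx)$ with $\|f_k-f\|_{H_{\fai,\,L}(\cx)}\to0$; passing to a subsequence we may assume $\|f_1\|_{H_{\fai,\,L}(\cx)}\ls\|f\|_{H_{\fai,\,L}(\cx)}$ and $\|f_k-f_{k-1}\|_{H_{\fai,\,L}(\cx)}\le2^{-k}\|f\|_{H_{\fai,\,L}(\cx)}$ for $k\ge2$. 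Put $g_1:=f_1$ and $g_k:=f_k-f_{k-1}$ for $k\ge2$, so that $\sum_{k=1}^Ng_k=f_N\to f$ in $H_{\fai,\,L}(\cx)$. Since $H^2(\cx)=\overline{R(L)}$ is a subspace of $L^2(\cx)$, $S_L$ is sublinear, and $L^\fai(\cx)$ is closed under finite sums by the quasi-subadditivity of $\fai$ (Lemma \ref{l2.3}(i)), every $g_k\in\wz{H}_{\fai,\,L}(\cx)\subset H_{\fai,\,L}(\cx)\cap L^2(\cx)$, and $\sz_k:=\|g_k\|_{H_{\fai,\,L}(\cx)}$ satisfies $\sup_k2^k\sz_k\ls\|f\|_{H_{\fai,\,L}(\cx)}$.

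Apply Proposition \ref{p4.2} to each $g_k$ (discarding those $k$ with $g_k=0$): we get $g_k=\sum_j\lz_{k,j}\az_{k,j}$ in $H_{\fai,\,L}(\cx)$, where each $\az_{k,j}$ is a $(\fai,M)$-atom supported in a ball $B_{k,j}$ and $\blz(\{\lz_{k,j}\az_{k,j}\}_j)\le C\sz_k$; relabel $\{\lz_{k,j}\az_{k,j}\}_{k,j}$ as a single sequence $\{\lz_\ell\az_\ell\}_\ell$. Fix $\lz_0>C$, so that $\sum_j\fai(B_{k,j},|\lz_{k,j}|/(\lz_0\sz_k\|\chi_{B_{k,j}}\|_{L^\fai(\cx)}))\le1$ for every $k$, and set $\Lambda:=\lz_0\sup_k(2^k\sz_k)$, so that $\Lambda\ls\|f\|_{H_{\fai,\,L}(\cx)}$ and $\lz_0\sz_k/\Lambda\le2^{-k}\le1$. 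Writing
$$\frac{|\lz_{k,j}|}{\Lambda\|\chi_{B_{k,j}}\|_{L^\fai(\cx)}}
=\frac{\lz_0\sz_k}{\Lambda}\cdot\frac{|\lz_{k,j}|}{\lz_0\sz_k\|\chi_{B_{k,j}}\|_{L^\fai(\cx)}}$$
and using the monotonicity of $\fai(x,\cdot)$ together with the uniformly lower type $p_2\in(0,1]$ of $\fai$, one obtains
$$\sum_j\fai\lf(B_{k,j},\frac{|\lz_{k,j}|}{\Lambda\|\chi_{B_{k,j}}\|_{L^\fai(\cx)}}\r)
\ls2^{-kp_2}\sum_j\fai\lf(B_{k,j},\frac{|\lz_{k,j}|}{\lz_0\sz_k\|\chi_{B_{k,j}}\|_{L^\fai(\cx)}}\r)\ls2^{-kp_2};$$
summing over $k$ gives $\sum_{k,j}\fai(B_{k,j},|\lz_{k,j}|/(\Lambda\|\chi_{B_{k,j}}\|_{L^\fai(\cx)}))\ls\sum_k2^{-kp_2}<\fz$, whence, by Lemma \ref{l2.5}(ii), $\blz(\{\lz_\ell\az_\ell\}_\ell)\ls\Lambda\ls\|f\|_{H_{\fai,\,L}(\cx)}$.

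It remains to check that $f=\sum_\ell\lz_\ell\az_\ell$ in $H_{\fai,\,L}(\cx)$. Repeating the argument in the proof of Proposition \ref{p4.1}(ii) — combining \eqref{4.5}, the quasi-subadditivity of $\fai$ (Lemma \ref{l2.3}(i)) and Lemma \ref{l2.5}(i) — shows that any subfamily $\cg$ of $\{\lz_\ell\az_\ell\}_\ell$ is summable in $H_{\fai,\,L}(\cx)$ with $\|\sum_{\cg}\lz_\ell\az_\ell\|_{H_{\fai,\,L}(\cx)}\ls\blz(\cg)$. Applying this, together with the estimate of the previous paragraph, to the tails $\{\lz_{k,j}\az_{k,j}\}_{k>N,\,j}$ (whose $\blz$ is $\ls2^{-N}\|f\|_{H_{\fai,\,L}(\cx)}\to0$), and combining with $f_N=\sum_{k\le N}g_k\to f$, a diagonal enumeration $\ell\mapsto(k,j)$ shows that the partial sums of $\sum_\ell\lz_\ell\az_\ell$ converge to $f$ in $H_{\fai,\,L}(\cx)$. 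Finally, by the proof of Proposition \ref{p4.2} each $\az_\ell$ is a $(\fai,M)$-atom up to a harmless multiplicative constant, which completes the proof.

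I expect the main obstacle to be the middle step: transferring the per-level bound $\blz(\{\lz_{k,j}\az_{k,j}\}_j)\le C\sz_k$ through the double sum into a single bound $\blz(\{\lz_\ell\az_\ell\}_\ell)\ls\|f\|_{H_{\fai,\,L}(\cx)}$. Since $\blz(\cdot)$ is not subadditive, the $\ell^1$-type summability $\sum_k\sz_k\ls\|f\|_{H_{\fai,\,L}(\cx)}$ by itself does not suffice; one genuinely needs the rescaling afforded by the uniformly lower type $p_2$ of $\fai$ (as displayed above) to absorb the geometric weights $2^{-k}$, followed by Lemma \ref{l2.5}(ii). A secondary but necessary point is the verification that each difference $g_k$ lies in $H_{\fai,\,L}(\cx)\cap L^2(\cx)$, so that Proposition \ref{p4.2} is applicable; with that and the tail version of the $\blz$-estimate in hand, the convergence bookkeeping in the last step is routine.
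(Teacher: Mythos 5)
Your proof is correct and follows essentially the same approach as the paper's: reduce to the $L^2$ case via density, apply Proposition \ref{p4.2} level by level to a telescoping sequence $g_k$ with $\|g_k\|_{H_{\fai,\,L}(\cx)}\ls2^{-k}\|f\|_{H_{\fai,\,L}(\cx)}$, and then use the uniformly lower type of $\fai$ to absorb the geometric factors $2^{-k}$ into the $\blz$-estimate for the doubly indexed family. Your treatment of the final convergence of $\sum_\ell\lz_\ell\az_\ell$ to $f$ in $H_{\fai,\,L}(\cx)$ is actually more explicit than the paper's (which leaves that step implicit), though your blanket claim that ``any subfamily is summable with norm $\ls\blz$'' is slightly too strong as stated — summability requires the $\blz$ of the internal tails to vanish, which you do verify for the specific tails $\{k>N\}$ you need.
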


\begin{proof}
If $f\in H_{\fai,\,L}(\cx)\cap L^2(\cx)$, then it follows, from
Proposition \ref{p4.2}, that all conclusions hold true.

If $f\in H_{\fai,\,L}(\cx)$, since $H_{\fai,\,L}(\cx)\cap
L^2(\cx)$ is dense in $H_{\fai,\,L}(\cx)$, we then choose
$\{f_k\}_{k\in\zz_+}\subset(H_{\fai,\,L}(\cx)\cap L^2(\cx))$ such
that, for all $k\in\zz_+$,
$\|f_k\|_{H_{\fai,\,L}(\cx)}\le2^{-k}\|f\|_{H_{\fai,\,L}(\cx)}$
and $f=\sum_{k\in\zz_+}f_k$ in $H_{\fai,\,L}(\cx)$. By Proposition
\ref{p4.2}, we see that, for all $k\in\zz_+$, there exist
$\{\lz_j^k\}_j\subset\cc$ and $(\fai,M)$-atoms $\{\az^k_j\}_j$
such that $f_k=\sum_j\lz_j^k\az_j^k$ in $H_{\fai,\,L}(\cx)$ and
$\blz(\{\lz_j^k\az_j^k\}_j)\ls\|f_k\|_{H_{\fai,\,L}(\cx)}$. From
this, we deduce that, for each $k\in\zz_+$,
$$
\sum_j\fai\lf(B^k_j,\frac{|\lz^k_j|}{\|f_k
\|_{H_{\fai,\,L}(\cx)}\|\chi_{B^k_j}\|_{L^\fai(\cx)}}\r)\ls1,
$$
where, for each $j$, $\az^k_j$ is supported in the ball $B^k_j$,
which, together with the uniformly lower type $p_2$ property of
$\fai$ with $p_2\in(0,i(\fai))$, implies that
\begin{eqnarray*}
&&\sum_{k\in\zz_+}\sum_j\fai\lf(B^k_j,\frac{|\lz^k_j|}
{\|f\|_{H_{\fai,\,L}(\cx)}\|\chi_{B^k_j}\|_{L^\fai(\cx)}}\r)\\
&&\hs\ls \sum_{k\in\zz_+}\sum_j\fai\lf(B^k_j,\frac{|\lz^k_j|}
{2^k\|f_k\|_{H_{\fai,\,L}(\cx)}\|\chi_{B^k_j}
\|_{L^\fai(\cx)}}\r)\ls\sum_{k\in\zz_+}2^{-kp_2}\ls1.
\end{eqnarray*}
This further implies that
$\blz(\{\lz^k_j\az^k_j\}_{k\in\zz_+,j})\ls\|f\|_{H_{\fai,\,L}(\cx)}$
and hence finishes the proof of Corollary \ref{c4.1}.
\end{proof}

Let $H^{M}_{\fai,\,\at,\,\fin}(\cx)$ and
$H^{M,\,\epz}_{\fai,\,\mol,\,\fin}(\cx)$ denote the \emph{sets of all
finite combinations of $(\fai,\,M)$-atoms and
$(\fai,\,M,\,\epz)$-molecules}, respectively. Then we have the
following dense conclusions.

\begin{proposition}\label{p4.3}
Let $L$ satisfy Assumptions (A) and (B), $\fai$ be as in
Definition \ref{d2.3} with $\fai\in\rh_{2/[2-I(\fai)]}(\cx)$ and $I(\fai)$ being as in
\eqref{2.10}, $\epz\in(n[q(\fai)/i(\fai)-1/2],\fz)$
and $M\in\nn$ with
$M>\frac{n}{2}[\frac{q(\fai)}{i(\fai)}-\frac{1}2]$, where $n$,
$q(\fai)$ and $i(\fai)$ are, respectively, as in \eqref{2.2},
\eqref{2.12} and \eqref{2.11}. Then the spaces
$H^{M}_{\fai,\,\at,\,\fin}(\cx)$ and
$H^{M,\epz}_{\fai,\,\mol,\,\fin}(\cx)$ are both dense in the space
$H_{\fai,\,L}(\cx)$.
\end{proposition}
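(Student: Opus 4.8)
The plan is to deduce the proposition from the atomic decomposition of Proposition \ref{p4.2}, after two preliminary observations. First, every $(\fai,\,M)$-atom is a $(\fai,\,M,\,\epz)$-molecule for \emph{every} $\epz\in(0,\fz)$: if $\az=L^Mb$ is supported in a ball $B$ with $\supp(L^kb)\subset B$ for $k\in\{0,\,\cdots,\,M\}$, then $\|(r_B^2L)^kb\|_{L^2(U_j(B))}=0$ for $j\in\nn$ and is at most $r_B^{2M}[\mu(B)]^{1/2}\|\chi_B\|_{L^\fai(\cx)}^{-1}$ for $j=0$, so Definition \ref{d4.3}(ii) holds trivially with the decay factor $2^{-j\epz}$. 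Hence $H^M_{\fai,\,\at,\,\fin}(\cx)\subset H^{M,\,\epz}_{\fai,\,\mol,\,\fin}(\cx)$. Second, I will check that both of these spaces are contained in $H_{\fai,\,L}(\cx)$. Granting this, it suffices to prove that $H^M_{\fai,\,\at,\,\fin}(\cx)$ is dense in $H_{\fai,\,L}(\cx)$, since then the larger space $H^{M,\,\epz}_{\fai,\,\mol,\,\fin}(\cx)$ is dense as well.

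For the containment, note that any $(\fai,\,M)$-atom, and any $(\fai,\,M,\,\epz)$-molecule, is of the form $L^Mb$ with $b\in\cd(L^M)$, hence lies in $R(L)\subset\overline{R(L)}=H^2(\cx)$. For an atom $\az$ supported in $B$, the estimate \eqref{4.5} established inside the proof of Proposition \ref{p4.1} (taken with $\lz=1$) gives $\int_\cx\fai(x,S_L(\az)(x))\,d\mu(x)\ls\fai(B,\|\chi_B\|_{L^\fai(\cx)}^{-1})<\fz$, so $S_L(\az)\in L^\fai(\cx)$ and $\az\in\wz{H}_{\fai,\,L}(\cx)$; since $\wz{H}_{\fai,\,L}(\cx)$ is a linear space ($S_L$ being sublinear and $L^\fai(\cx)$ a normed space), every finite combination of atoms lies in it, and a fortiori in $H_{\fai,\,L}(\cx)$. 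For a molecule $\bz$ adapted to $B$ one needs the molecular analogue of \eqref{4.5}, namely $\int_\cx\fai(x,S_L(\bz)(x))\,d\mu(x)<\fz$; this is proved by adapting the argument for \eqref{4.5}, decomposing the defining function $b$ over the annuli $U_i(B)$ so that each piece behaves like (a multiple of) an object associated to the ball $2^iB$ whose relevant $L^2$-norms carry the extra decay $2^{-i\epz}$, and then re-running the Davies--Gaffney estimates of Lemma \ref{l2.1}, the reverse-H\"older property $\fai\in\rh_{2/[2-I(\fai)]}(\cx)$ and the uniformly upper/lower type inequalities exactly as in the estimates leading to \eqref{4.5}. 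The resulting double geometric series in the index of the annulus of integration and the index $i$ converges precisely because $M>\frac n2[\frac{q(\fai)}{i(\fai)}-\frac12]$ and $\epz>n[\frac{q(\fai)}{i(\fai)}-\frac12]$. Hence $S_L(\bz)\in L^\fai(\cx)$, so $\bz\in\wz{H}_{\fai,\,L}(\cx)$, and every finite combination of molecules lies in $H_{\fai,\,L}(\cx)$.

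It remains to prove that $H^M_{\fai,\,\at,\,\fin}(\cx)$ is dense in $H_{\fai,\,L}(\cx)$. By Definition \ref{d4.1}, $H_{\fai,\,L}(\cx)$ is the completion of $\wz{H}_{\fai,\,L}(\cx)$, and $\wz{H}_{\fai,\,L}(\cx)\subset H_{\fai,\,L}(\cx)\cap L^2(\cx)$; in particular $H_{\fai,\,L}(\cx)\cap L^2(\cx)$ is dense in $H_{\fai,\,L}(\cx)$. Let $f\in H_{\fai,\,L}(\cx)\cap L^2(\cx)$. By Proposition \ref{p4.2}, $f=\sum_j\lz_j\az_j$ with convergence in $H_{\fai,\,L}(\cx)$, where each $\az_j$ is a $(\fai,\,M)$-atom; therefore the partial sums $\sum_{j=1}^N\lz_j\az_j\in H^M_{\fai,\,\at,\,\fin}(\cx)$ converge to $f$ in $H_{\fai,\,L}(\cx)$ as $N\to\fz$. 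For a general $f\in H_{\fai,\,L}(\cx)$, first pick $g\in H_{\fai,\,L}(\cx)\cap L^2(\cx)$ close to $f$ in $H_{\fai,\,L}(\cx)$, then a finite atomic combination close to $g$, and combine the two approximations using the $\gz$-triangle inequality for the equivalent quasi-norm $\|\!|\cdot\|\!|$ of Remark \ref{r4.1}(iii). This yields the density of $H^M_{\fai,\,\at,\,\fin}(\cx)$, and hence, by the inclusion and the containment established above, that of $H^{M,\,\epz}_{\fai,\,\mol,\,\fin}(\cx)$.

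The only substantial step is the molecular analogue of \eqref{4.5} used in the containment; once it is in hand, everything else is a routine consequence of Proposition \ref{p4.2} and the completeness of $H_{\fai,\,L}(\cx)$. The delicate point in that step is the bookkeeping of the two annular indices, and the verification that both geometric series close — which is exactly where the hypotheses $M>\frac n2[\frac{q(\fai)}{i(\fai)}-\frac12]$ and $\epz\in(n[q(\fai)/i(\fai)-1/2],\fz)$ are needed.
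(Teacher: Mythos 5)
Your proposal is correct and follows essentially the same route as the paper: use Proposition \ref{p4.2} (equivalently Corollary \ref{c4.1}) for the density of finite atomic combinations, observe that every $(\fai,\,M)$-atom is trivially a $(\fai,\,M,\,\epz)$-molecule, and reduce everything to the molecular inclusion $H^{M,\,\epz}_{\fai,\,\mol,\,\fin}(\cx)\subset H_{\fai,\,L}(\cx)$ via an annular estimate $\int_{\cx}\fai(x,S_L(\lz\bz)(x))\,d\mu(x)\ls\fai(B,|\lz|\|\chi_B\|^{-1}_{L^\fai(\cx)})$, which is exactly the technical core of the paper's argument. The only cosmetic difference from the paper's write-up of that estimate is that the paper splits the time integral at $r_B$ and, for $t<r_B$, decomposes the molecule $\bz$ itself (rather than the generator $b$) over the annuli $U_j(B)$, decomposing $b$ only for $t>r_B$ where one gains powers of $r_B/t$; but the double geometric series you describe, controlled by $M>\frac n2[\frac{q(\fai)}{i(\fai)}-\frac12]$ and $\epz>n[\frac{q(\fai)}{i(\fai)}-\frac12]$, is exactly what closes the argument in either bookkeeping.
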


\begin{proof}
From Corollary \ref{c4.1}, it follows that
$H^{M}_{\fai,\,\at,\,\fin}(\cx)$ is dense in $H_{\fai,\,L}(\cx)$.

To prove that $H^{M,\,\epz}_{\fai,\,\mol,\,\fin}(\cx)$ is dense in
$H_{\fai,\,L}(\cx)$, noticing that each $(\fai,\,M)$-atom is a
$(\fai,\,M,\,\epz)$-molecule, hence we know that
$H^{M}_{\fai,\,\at,\,\fin}(\cx)\subset
H^{M,\,\epz}_{\fai,\,\mol,\,\fin}(\cx)$ and we only need to show
that $H^{M,\,\epz}_{\fai,\,\mol,\,\fin}(\cx)\subset
H_{\fai,\,L}(\cx)$. Let $\lz\in\cc$ and $\bz$ be a
$(\fai,\,M,\,\epz)$-molecule associated with a ball
$B:=B(x_B,r_B)$. Then there exists $b\in L^2(\cx)$ such that $\bz=L^Mb$ and $b$
satisfies Definition \ref{d4.3}. Write
\begin{eqnarray}\label{4.14}
\hs\hs\hs&&\int_{\cx}\fai(x,S_L(\lz\bz)(x))\,d\mu(x)\\ \nonumber
&&\hs\ls\sum_{j=0}^{\fz}\int_{\cx}\fai\lf(x,|\lz|
\lf\{\int_0^{r_B}\int_{d(x,y)<t}
\lf|t^2Le^{-t^2L}(\chi_{U_j(B)}\bz)(y)\r|^2
\frac{d\mu(y)\,dt}{V(x,t)t}\r\}^{1/2}\r)\,d\mu(x)\\ \nonumber
&&\hs\hs+\sum_{j=0}^{\fz} \int_{\cx}\fai\lf(x,|\lz|
\lf\{\int_{r_B}^{\fz}\int_{d(x,y)<t}
\cdots\r\}^{1/2}\r)\,d\mu(x)=:\sum_{j=0}^\fz\mathrm{E}_j
+\sum_{j=0}^\fz\mathrm{F}_j.
\end{eqnarray}

For each $j\in\zz_+$, let $B_j:=2^j B$. Then
\begin{eqnarray}\label{4.15}
\hs\hs\mathrm{E}_j&&=\sum_{k=0}^{\fz}\int_{U_k(B_j)}\fai\lf(x,|\lz|
\lf\{\int_0^{r_B}\int_{d(x,y)<t}
\lf|t^2Le^{-t^2L}(\chi_{U_j(B)}\bz)(y)\r|^2\r.\r.\\ \nonumber
&&\hs\times\lf.\lf.\frac{d\mu(y)\,dt}{V(x,t)t}\r\}^{1/2}\r)\,d\mu(x)
=:\sum_{k=0}^{\fz}\mathrm{E}_{k,\,j}.
\end{eqnarray}

From the assumption $\fai\in\rh_{2/[2-I(\fai)]}(\cx)$, Lemma \ref{l2.6}(iv) and
the definition of $I(\fai)$, we deduce that, there exists $p_1\in[I(\fai),1]$
such that $\fai$ is of uniformly upper type $p_1$ and $\fai\in\rh_{2/(2-p_1)}(\cx)$.
Furthermore, by  $\epz>n[\frac{q(\fai)}{i(\fai)}-\frac12]$ and
$M>\frac{n}{2}[\frac{q(\fai)}{i(\fai)}-\frac{1}2]$, we know that,
there exist $p_2\in(0,i(\fai))$ and $q_0\in(q(\fai),\fz)$ such
that $\epz>n(\frac{q_0}{p_2}-\frac12)$ and
$M>\frac{n}{2}(\frac{q_0}{p_2}-\frac{1}{2})$. Moreover, from the
definitions of $i(\fai)$ and $q(\fai)$, we infer that $\fai$ is of
uniformly lower type $p_2$ and $\fai\in\aa_{q_0}(\cx)$.

When $k\in\{0,\,\cdots,\,4\}$, by the uniformly upper type $p_1$ and
lower type $p_2$ properties of $\fai$, we see that
\begin{eqnarray}\label{4.16}
\mathrm{E}_{k,\,j}&&\ls
\|\chi_B\|_{L^\fai(\cx)}^{p_1}\int_{U_k(B_j)}\fai\lf(x,|\lz|
\|\chi_B\|_{L^\fai(\cx)}^{-1}\r)\lf[S_L\lf(\chi_{U_j(B)}\bz\r)
(x)\r]^{p_1}\,d\mu(x)\\
\nonumber &&\hs+
\|\chi_B\|_{L^\fai(\cx)}^{p_2}\int_{U_k(B_j)}\fai\lf(x,|\lz|
\|\chi_B\|_{L^\fai(\cx)}^{-1}\r)\lf[S_L\lf(\chi_{U_j(B)}\bz\r)
(x)\r]^{p_2}\,d\mu(x)\\
\nonumber &&=: \mathrm{G}_{k,\,j}+\mathrm{H}_{k,\,j}.
\end{eqnarray}

Now we estimate $\mathrm{G}_{k,\,j}$. By H\"older's inequality,
the $L^2(\cx)$-boundedness of $S_L$, $\fai\in\rh_{2/(2-p_1)}(\cx)$ and Lemma
\ref{l2.6}(vii), we conclude that
\begin{eqnarray}\label{4.17}
\mathrm{G}_{k,\,j}&&\ls\|\chi_B\|^{p_1}_{L^\fai(\cx)}
\lf\{\int_{U_k(B_j)}\lf[S_L\lf(\chi_{U_j(B)}\bz\r)(x)\r]^2\,
d\mu(x)\r\}^{\frac{p_1}2}\\
\nonumber&&\hs\times\lf\{\int_{U_k(B_j)}\lf[\fai\lf(x,|\lz|
\|\chi_B\|_{L^\fai(\cx)}^{-1}\r)\r]^{\frac{2}{2-p_1}}\,d\mu(x)\r\}
^{\frac{2-p_1}{2}}\\
\nonumber &&\ls\|\chi_B\|^{p_1}_{L^\fai(\cx)}
\|\bz\|^{p_1}_{L^2(\cx)}[\mu(2^{k+j}B)]^{-\frac{p_1}2}\fai\lf(2^{k+j}B,|\lz|
\|\chi_B\|_{L^\fai(\cx)}^{-1}\r)\\ \nonumber
&&\ls2^{-jp_1\epz}2^{(k+j)n(q_0-\frac{p_1}2)}\fai\lf(B,|\lz|
\|\chi_B\|_{L^\fai(\cx)}^{-1}\r)\\ \nonumber
&&\sim2^{-jp_1[\epz-n(\frac{q_0}{p_1}-\frac12)]}\fai\lf(B,|\lz|
\|\chi_B\|_{L^\fai(\cx)}^{-1}\r).
\end{eqnarray}

For $\mathrm{H}_{k,\,j}$, similarly, we see that
$\mathrm{H}_{k,\,j}\ls2^{-jp_2[\epz-n(q_0/p_2-1/2)]}\fai(B,|\lz|
\|\chi_B\|_{L^\fai(\cx)}^{-1})$,
which, together with \eqref{4.16}, \eqref{4.17} and $p_1\ge p_2$, implies that, for
each $j\in\zz_+$ and $k\in\{0,\,\cdots,\,4\}$,
\begin{eqnarray}\label{4.18}
\mathrm{E}_{k,\,j}\ls 2^{-jp_2[\epz-n(\frac{q_0}{p_2}-\frac12)]}\fai\lf(B,|\lz|
\|\chi_B\|_{L^\fai(\cx)}^{-1}\r).
\end{eqnarray}

When $k\in\nn$ with $k\ge5$, to estimate $\mathrm{E}_{k,\,j}$, for
$x\in\cx$, let
$$S_{L,r_B}(x):=\lf\{\int_0^{r_B}\int_{d(x,y)<t}\lf|t^2Le^{-t^2L}
\lf(\chi_{U_j(B)}\bz\r)(y)\r|^2\frac{d\mu(y)\,dt}{V(x,t)t}\r\}^{1/2}.$$
Then from the uniformly upper type $p_1$ and lower type $p_2$
properties of $\fai$, it follows that
\begin{eqnarray}\label{4.19}
\mathrm{E}_{k,\,j}&&\ls
\|\chi_B\|^{p_1}_{L^\fai(\cx)}\int_{U_k(B_j)}\fai\lf(x,|\lz|
\|\chi_B\|_{L^\fai(\cx)}^{-1}\r)\lf[S_{L,r_B}(x)\r]^{p_1}\,d\mu(x)\\
\nonumber&&\hs+
\|\chi_B\|_{L^\fai(\cx)}^{p_2}\int_{U_k(B_j)}\fai\lf(x,|\lz|
\|\chi_B\|_{L^\fai(\cx)}^{-1}\r)\lf[S_{L,r_B}(x)\r]^{p_2}\,d\mu(x)\\
\nonumber &&=: \mathrm{I}_{k,\,j}+\mathrm{K}_{k,\,j}.
\end{eqnarray}

For each $k,\,j\in\zz_+$, let $\wz{U}_k(B_j):=\{y\in\cx:\ 2^{j-2}2^k
r_B\le d(y,x_B)<2^{j+1}2^k r_B\}$. It is easy to see that, when
$k\ge5$, $\dist(U_j(B),\wz{U}_k(B_j))\gs2^{k+j}r_B$. Take
$s\in(0,\fz)$ such that $s\in(n[\frac{q_0}{p_2}-\frac12],2M)$. Now
we deal with the term $\mathrm{I}_{k,\,j}$. To this end, by
\eqref{2.5}, we see that
\begin{eqnarray}\label{4.20}
\qquad&&\int_{U_k(B_j)}[S_{L,\,r_B}(x)]^2\,d\mu(x)\\ \nonumber
&&\hs=\int_{U_k(B_j)}\int_0^{r_B}\int_{d(x,y)<t}\lf|t^2Le^{-t^2L}
\lf(\chi_{U_j(B)}\bz\r)(y)\r|^2\frac{d\mu(y)\,dt}{V(x,t)t}\,d\mu(x)\\
\nonumber
&&\hs\ls\int_0^{r_B}\int_{\wz{U}_k(B_j)}\lf|t^2Le^{-t^2L}
\lf(\chi_{U_j(B)}\bz\r)(y)\r|^2\frac{d\mu(y)\,dt}{t}\\
\nonumber
&&\hs\ls\int_0^{r_B}\exp\lf\{-\frac{[\dist(U_j(B),\wz{U}_k(B_j))]^2}
{C_3t^2}\r\}\|\bz\|^2_{L^2(U_j(B))}\,\frac{dt}{t}
\ls2^{-2(k+j)s}\|\bz\|^2_{L^2(U_j(B))},
\end{eqnarray}
which, together with H\"older's inequality, $\fai\in\rh_{2/(2-p_1)}(\cx)$
and Lemma \ref{l2.6}(vii), implies that
\begin{eqnarray}\label{4.21}
\hs \hs \mathrm{I}_{k,\,j}&&\ls2^{-(k+j)p_1s}\|\bz\|^{p_1}_{L^2(U_j(B))}
\|\chi_B\|^{p_1}_{L^\fai(\cx)}
[\mu(2^{k+j}B)]^{-\frac{p_1}2}\fai\lf(2^{k+j}B,|\lz|
\|\chi_B\|_{L^\fai(\cx)}^{-1}\r)\\ \nonumber
&&\ls2^{-jp_1[\epz+s-n(\frac{q_0}{p_1}-\frac12)]}2^{-kp_1[s-n(\frac{q_0}{p_1}
-\frac12)]}\fai\lf(B,|\lz|
\|\chi_B\|_{L^\fai(\cx)}^{-1}\r).
\end{eqnarray}

Now we estimate $\mathrm{K}_{k,\,j}$. From H\"older's inequality,
\eqref{4.20}, $\fai\in\rh_{2/(2-p_2)}(\cx)$
and Lemma \ref{l2.6}(vii), it follows that
\begin{eqnarray}\label{4.22}
\hs\hs \mathrm{K}_{k,\,j}&&\ls\lf\{\int_{U_k(B_j)}\lf[\fai
\lf(x,|\lz|\|\chi_B\|_{L^\fai(\cx)}^{-1}\r)
\r]^{\frac{2}{2-p_2}}\,d\mu(x)\r\}^{\frac{2-p_2}{2}}\\
\nonumber &&\hs\times\|\chi_B\|^{p_2}_{L^\fai(\cx)}
\lf\{\int_{U_k(B_j)}\lf[S_{L,\,r_B}(x)\r]^2\,d\mu(x)\r\}^{\frac{p_2}2}\\
\nonumber &&\ls2^{-(k+j)sp_2}\|\chi_B\|^{p_2}_{L^\fai(\cx)}
\|\bz\|^{p_2}_{L^2(\cx)}[\mu(2^{k+j}B)]^{-\frac{p_2}2}
\fai\lf(2^{k+j}B,|\lz| \|\chi_B\|_{L^\fai(\cx)}^{-1}\r)\\
\nonumber &&\ls2^{-jp_2[\epz+s-n(\frac{q_0}{p_2}-\frac{1}2)]}
2^{-kp_2[s-n(\frac{q_0}{p_2}-\frac12)]}\fai\lf(B,|\lz|
\|\chi_B\|_{L^\fai(\cx)}^{-1}\r).
\end{eqnarray}

By \eqref{4.19},
\eqref{4.21}, \eqref{4.22} and $p_1\ge p_2$, we know that, when $k\in\nn$ with
$k\ge5$ and $j\in\zz_+$,
\begin{eqnarray}\label{4.23}
\mathrm{E}_{k,\,j}\ls2^{-jp_2[\epz+s-n(\frac{q_0}{p_2}-\frac12)]}
2^{-kp_2[s-n(\frac{q_0}{p_2}-\frac12)]}\fai\lf(B,|\lz|
\|\chi_B\|_{L^\fai(\cx)}^{-1}\r).
\end{eqnarray}

Now we deal with $\mathrm{F}_j$. Write
\begin{eqnarray}\label{4.24}
\mathrm{F}_j&&=\sum_{k=0}^{\fz}\int_{U_k(B_j)}\fai\lf(x,|\lz|
\lf\{\int_{r_B}^\fz\int_{d(x,y)<t}
\lf|t^2Le^{-t^2L}\lf(\chi_{U_j(B)}\bz\r)(y)\r|^2\r.\r.\\ \nonumber
&&\hs\lf.\lf.\times
\frac{d\mu(y)\,dt}{V(x,t)t}\r\}^{1/2}\r)\,d\mu(x)=:\sum_{k=0}^\fz
\mathrm{F}_{k,\,j}.
\end{eqnarray}

When $k\in\{0,\,\cdots,\,4\}$, by the uniformly upper type $p_1$ and
lower type $p_2$ properties of $\fai$, H\"older's inequality, the $L^2(\cx)$-boundedness
of $S_L$ and $\fai\in\rh_{2/(2-p_1)}(\cx)$, similar to the proof of \eqref{4.18},
we see that
\begin{eqnarray}\label{4.25}
\mathrm{F}_{k,\,j}\ls 2^{-jp_2[\epz-n(\frac{q_0}{p_2}-\frac12)]}\fai\lf(B,|\lz|
\|\chi_B\|_{L^\fai(\cx)}^{-1}\r).
\end{eqnarray}

When $k\in\nn$ with $k\ge5$, for any $x\in\cx$, let
$$H_{L,\,r_B}(x):=\lf\{\int_{r_B}^\fz\int_{d(x,y)<t}
\lf|(t^2L)^{M+1}e^{-t^2L}
(\chi_{U_j(B)}b)(y)\r|^2\frac{d\mu(y)\,dt}{V(x,t)t^{4M+1}}\r\}^{1/2}.$$
Then from the uniformly upper type $p_1$ and lower type $p_2$
properties of $\fai$, it follows that
\begin{eqnarray*}
\hs\hs\hs\mathrm{F}_{k,\,j}&&\ls \|\chi_B\|^{p_1}_{L^\fai(\cx)}
\int_{U_k(B_j)}\fai\lf(x,|\lz|\|\chi_B\|_{L^\fai(\cx)}^{-1}\r)
\lf[H_{L,\,r_B}(x)\r]^{p_1}\,d\mu(x)\\
\nonumber&&\hs+\|\chi_B\|^{p_2}_{L^\fai(\cx)}
\int_{U_k(B_j)}\fai\lf(x,|\lz|\|\chi_B\|_{L^\fai(\cx)}^{-1}\r)
\lf[H_{L,\,r_B}(x)\r]^{p_2}\,d\mu(x).
\end{eqnarray*}
Similar to \eqref{4.20}, we know that
\begin{eqnarray*}
&&\int_{U_k(B_j)}[H_{L,\,r_B}(x)]^2\,d\mu(x)
\ls2^{-2j\epz}2^{-2(k+j)s}\mu(B)\|\chi_B\|^{-2}_{L^\fai(\cx)}.
\end{eqnarray*}
Thus, similar to \eqref{4.23}, we conclude that, when $k\in\nn$ with
$k\ge5$ and $j\in\zz_+$,
\begin{eqnarray}\label{4.26}
\mathrm{F}_{k,\,j}\ls2^{-jp_2[\epz+s-n(\frac{q_0}{p_2}-\frac12)]}
2^{-kp_2[s-n(\frac{q_0}{p_2}-\frac12)]}\fai\lf(B,|\lz|
\|\chi_B\|_{L^\fai(\cx)}^{-1}\r).
\end{eqnarray}
Then from \eqref{4.14}, \eqref{4.15}, \eqref{4.18}, \eqref{4.23},
\eqref{4.24}, \eqref{4.25} and \eqref{4.26}, we infer that
$$\int_{\cx}\fai\lf(x,|\lz|S_L(\bz)(x)\r)
\,d\mu(x)\ls\fai\lf(B,|\lz|\|\chi_B\|_{L^\fai(\cx)}^{-1}\r),
$$
which implies that $\|\bz\|_{H_{\fai,\,L}(\cx)}\ls1$, and hence
completes the proof of Proposition \ref{p4.3}.
\end{proof}

\subsection{Dual spaces of $H_{\fai,\,L}(\cx)$ \label{s4.2}}

\hskip\parindent In this subsection, we study the dual spaces of
Musielak-Orlicz-Hardy spaces $H_{\fai,\,L}(\cx)$. We
begin with some notions.

Let $M\in\nn$ and $\phi=L^M\nu$ be a function in $L^2(\cx)$,
where $\nu\in\cd(L^M)$. Following \cite{hlmmy,hm09,jy11}, for
$\epz\in(0,\fz)$, $M\in\nn$ and fixed $x_0\in\cx$, we introduce
the space
$$\cm_{\fai}^{M,\,\epz}(L):=\lf\{\phi=L^M\nu\in L^2(\cx):\
\|\phi\|_{\cm_{\fai}^{M,\,\epz}(L)}<\fz\r\},
$$
where
$$\|\phi\|_{\cm_{\fai}^{M,\,\epz}(L)}:=\sup_{j\in\zz_+}
\lf\{2^{j\epz}[V(x_0,1)]^{-1/2}
\|\chi_{B(x_0,1)}\|_{L^\fai(\cx)}\sum_{k=0}^{M}
\|L^k\nu\|_{L^2(U_j(B(x_0,1)))}\r\}.
$$
Notice that, if $\phi\in\cm_{\fai}^{M,\,\epz}(L)$ with norm 1 and
some $\epz\in(0,\fz)$, then $\phi$ is a
$(\fai,\,M,\,\epz)$-molecule adapted to the ball $B(x_0,1)$.
Conversely, if $\bz$ is a $(\fai,\,M,\,\epz)$-molecule adapted to
any ball, then $\bz\in\cm_{\fai}^{M,\,\epz}(L)$.

Let $A_t$ denote either $(I+t^2L)^{-1}$ or $e^{-t^2L}$ and
$f$ belong to the dual space of
$\cm_{\fai}^{M,\,\epz}(L)$, $(\cm_{\fai}^{M,\,\epz}(L))^{\ast}$.
We claim that $(I-A_t)^M f\in
L^2_{\loc}(\cx)$ in the sense of distributions. Indeed, for any
ball $B$, if $\pz\in L^2(B)$, then it follows, from the
Davies-Gaffney estimates \eqref{2.5}, that $(I-A_t)^M \pz\in
\cm_{\fai}^{M,\,\epz}(L)$ for every $\epz\in(0,\fz)$. Thus, there
exists a positive constant $C(t,r_B,\dist(B,x_0))$, depending on
$t,\,r_B$ and $\dist(B,x_0)$, such that
$$|\langle (I-A_t)^M f,\pz\rangle|:=|\langle
f,(I-A_t)^M\pz\rangle|\le
C(t,r_B,\dist(B,x_0))\|f\|_{(\cm_{\fai}^{M,\,\epz}(L))^{\ast}}
\|\pz\|_{L^2(B)},
$$
which implies that $(I-A_t)^M f\in L^2_{\loc}(\cx)$ in the sense of
distributions.

Finally, for any $M\in\nn$, define
$$\cm_{\fai}^M(\cx):=\bigcap_{\epz>n[q(\fai)/i(\fai)-1/2]}
(\cm_{\fai}^{M,\,\epz}(L))^{\ast},$$
where $n$, $q(\fai)$ and $i(\fai)$ are,
respectively, as in \eqref{2.2},
\eqref{2.12} and \eqref{2.11}.
\begin{definition}\label{d4.4}
Let $\fai$ be as in Definition \ref{d2.3}, $L$ satisfy Assumptions
(A) and (B), and $M\in\nn$ with
$M>\frac{n}{2}[\frac{q(\fai)}{i(\fai)}-\frac{1}2]$, where $n$,
$q(\fai)$ and $i(\fai)$ are, respectively, as in \eqref{2.2},
\eqref{2.12} and \eqref{2.11}. A functional $f\in \cm_{\fai}^M(\cx)$
is said to be in the \emph{space} $\bbmo^M_{\fai,\,L}(\cx)$ if
$$\|f\|_{\bbmo^M_{\fai,\,L}(\cx)}:=\sup_{B\subset\cx}\frac{[\mu(B)]^{1/2}}
{\|\chi_{B}\|_{L^\fai(\cx)}}\lf\{\int_B \lf|(I-e^{-r^2_B L})^M
f(x)\r|^2\,d\mu(x)\r\}^{1/2}<\fz,
$$
where the supremum is taken over all balls $B$ of $\cx$.
\end{definition}

By using Davies-Gaffney estimates \eqref{2.5} and the uniformly upper type and
lower type properties of $\fai$, similar to proofs of \cite[Lemmas 8.1 and 8.3]{hm09} or
\cite[Propositions 4.4 and 4.5]{jy11}, we obtain the following Propositions \ref{p4.4} and
\ref{p4.5}. Here, we omit the details.

\begin{proposition}\label{p4.4}
Let $\fai$, $L$ and $M$ be as in Definition \ref{d4.4}. Then
$f\in\bbmo^M_{\fai,\,L}(\cx)$ if and only if $f\in\cm^M_\fai(\cx)$
and
$$\sup_{B\subset\cx}\frac{[\mu(B)]^{1/2}}
{\|\chi_{B}\|_{L^\fai(\cx)}}\lf\{\int_B \lf|(I-(I+r^2_BL)^{-1})^M
f(x)\r|^2\,d\mu(x)\r\}^{1/2}<\fz,
$$
where the supremum is taken over all balls $B$ of $\cx$. Moreover,
the quantity appeared in the left-hand side of the above formula is
equivalent to $\|f\|_{\bbmo^M_{\fai,\,L}(\cx)}$.
\end{proposition}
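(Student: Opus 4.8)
The plan is to reduce the proposition to the single comparison: for every $f\in\cm^M_\fai(\cx)$, the resolvent Carleson functional appearing in the statement is equivalent to $\|f\|_{\bbmo^M_{\fai,\,L}(\cx)}$ (the inclusion $\bbmo^M_{\fai,\,L}(\cx)\subset\cm^M_\fai(\cx)$ is built into the definitions). The two halves of this equivalence are proved by symmetric functional-calculus arguments, so I would carry out in detail only the estimate of the heat quantity by the resolvent one. The two identities I would lean on, valid for all $\lz\ge0$, are $1-(1+\lz)^{-1}=\int_0^\fz e^{-s}(1-e^{-s\lz})\,ds$ and the factorization $(1-e^{-\lz})^M=m_M(\lz)\,(\lz/(1+\lz))^M$, where $m_M(\lz):=[(1-e^{-\lz})(1+\lz)/\lz]^M$ extends to a bounded holomorphic function on a sector about $[0,\fz)$; hence $m_M(r_B^2L)$ is bounded on $L^2(\cx)$, and since $m_M(\lz)-1=O(\lz)$ as $\lz\to0$ and $O(\lz^{-1})$ as $\lz\to\fz$, the symbol $\wz m_M:=m_M-1$ satisfies $|\wz m_M(\lz)|\ls\lz/(1+\lz^2)$, so $\wz m_M(r_B^2L)$ enjoys Davies-Gaffney-type off-diagonal $L^2\to L^2$ bounds inherited from Lemma \ref{l2.1} (this is the decay class appearing around \eqref{2.7}). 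The first identity, raised to the $M$-th power, is what I would use for the reverse inequality; the factorization is for the direction described here.

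First I would fix a ball $B:=B(x_B,r_B)$ and express the restriction to $B$ through duality, writing $\{\int_B|(I-e^{-r_B^2L})^Mf|^2\,d\mu\}^{1/2}$ as the supremum of $|\langle f,(I-e^{-r_B^2L})^M\pz\rangle|$ over $\pz\in L^2(B)$ with $\|\pz\|_{L^2(\cx)}\le1$ (the pairing makes sense because $(I-e^{-r_B^2L})^M\pz\in\cm^{M,\,\epz}_\fai(L)$ for every $\epz$, as noted in the excerpt). Inserting the factorization and $m_M(r_B^2L)=I+\wz m_M(r_B^2L)$, and using that $(I-(I+r_B^2L)^{-1})^M\pz$ is a genuine $L^2(\cx)$ function, I would split this function over the annuli $U_k(B)$, $k\in\zz_+$. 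For $k\in\{0,\dots,4\}$ the central contribution is controlled by the $L^2(\cx)$-boundedness of $(I-(I+r_B^2L)^{-1})^M$ and of $m_M(r_B^2L)$; for $k\ge5$ the identity part of $m_M(r_B^2L)$ contributes nothing on $B$ (since $U_k(B)\cap B=\emptyset$), while $\wz m_M(r_B^2L)$ supplies off-diagonal decay in $2^kr_B$, to be combined with resolvent Davies-Gaffney bounds for $(I-(I+r_B^2L)^{-1})^M$ that follow from the first displayed identity by integrating the Gaussian bounds of Lemma \ref{l2.1} against $e^{-s}\,ds$. Whenever the $L^2$-norm of $(I-(I+r_B^2L)^{-1})^Mf$ over a dilate $2^kB$ surfaces, I would feed it back into the resolvent functional at the scale $r_{2^kB}$ by one further bounded-holomorphic-multiplier identity comparing $(\lz/(1+\lz))^M$ with $(2^{2k}\lz/(1+2^{2k}\lz))^M$, absorbing the extra constant into the decay already gained.

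The last step is the summation in $k$ (and in the auxiliary chain of rescaled balls), which I would run exactly as in the proof of Proposition \ref{p4.3}: choose $p_1\in[I(\fai),1]$, $p_2\in(0,i(\fai))$ and $q_0\in(q(\fai),\fz)$ with $M>\frac n2(\frac{q_0}{p_2}-\frac12)$, exploit that $\fai$ is of uniformly upper type $p_1$ and lower type $p_2$ with $\fai\in\aa_{q_0}(\cx)$, and invoke Lemma \ref{l2.6} to control the ratios $\fai(2^kB,t)/\fai(B,t)$ and $\mu(2^kB)/\mu(B)$, thereby converting $\fai(2^kB,\cdot)$ and $\|\chi_{2^kB}\|_{L^\fai(\cx)}$ back to the corresponding quantities for $B$ with geometrically decaying factors; the fact that $\cm^M_\fai(\cx)$ is defined by intersecting $(\cm^{M,\,\epz}_\fai(L))^\ast$ over all $\epz>n[q(\fai)/i(\fai)-1/2]$ guarantees that the pairings of $f$ against the distant annular pieces are finite and small. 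Running the same scheme with the roles of $e^{-r_B^2L}$ and $(I+r_B^2L)^{-1}$ interchanged (now via the first identity) gives the reverse inequality, and hence the claimed equivalence; the remaining computations are routine variants of those in \cite[Lemmas 8.1 and 8.3]{hm09} and \cite[Propositions 4.4 and 4.5]{jy11}.

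I expect the main obstacle to be the $k\ge5$ regime: because $m_M$ tends to $1$ rather than $0$ at infinity, one cannot treat $m_M(r_B^2L)$ as a single off-diagonally decaying operator, and the right device is to peel off its identity part and control only $\wz m_M(r_B^2L)$ --- and then to arrange the bookkeeping so that the resulting double sum, over annuli and over the chain of rescaled balls, genuinely converges. This is precisely where the hypothesis $M>\frac n2[\frac{q(\fai)}{i(\fai)}-\frac12]$ and the latitude in $\epz$ are consumed.
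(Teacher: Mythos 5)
Your outline has the right shape and lands in the same territory as the paper's references ([hm09, Lemmas 8.1 and 8.3], [jy11, Propositions 4.4 and 4.5]): use a bounded holomorphic multiplier to pass between $e^{-r_B^2L}$ and $(I+r_B^2L)^{-1}$, then control the far field by off-diagonal estimates and the $\fai$-weighted bookkeeping of Section 2.4. The factorization $(1-e^{-\lz})^M=m_M(\lz)\,(\lz/(1+\lz))^M$ with $m_M(\lz)=[(1-e^{-\lz})(1+\lz)/\lz]^M$ is correct, $m_M$ is bounded holomorphic, and the identity $1-(1+\lz)^{-1}=\int_0^\fz e^{-s}(1-e^{-s\lz})\,ds$ is the right device for the reverse inequality.

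The weak link is the justification of off-diagonal decay for $\wz m_M(r_B^2L)$. You derive $|\wz m_M(\lz)|\ls\lz/(1+\lz^2)$ and appeal to the decay class around \eqref{2.7}, concluding ``Davies-Gaffney-type'' bounds ``inherited from Lemma \ref{l2.1}.'' That inference is not licensed: Lemma \ref{l2.1} is about $(t^2L)^ke^{-t^2L}$ and \eqref{2.7} gives a quadratic estimate, not off-diagonal bounds; and a generic $\psi$ with $|\psi(\lz)|\ls\lz/(1+\lz^2)$ only gives, via the resolvent Cauchy formula or the Mellin calculus, a polynomial $L^2\to L^2$ off-diagonal bound of a \emph{fixed} order $(r_B/d)^{2}$ — independent of $M$, since $\wz m_M=g^M-1$ vanishes to first order at $0$ and $\fz$ regardless of $M$ (here $g(\lz)=(1-e^{-\lz})(1+\lz)/\lz$). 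Such a fixed rate is not, in general, large enough to dominate the $\fai$-weight growth over dilated annuli, precisely because $n[q(\fai)/i(\fai)-1/2]$ can be arbitrarily large. Fortunately the conclusion you want is true, but for a sharper reason that should replace the appeal to the abstract decay class: since $(1-e^{-\lz})/\lz=\int_0^1 e^{-t\lz}\,dt$, one has the \emph{explicit} semigroup representation $g(r_B^2L)=I+\wz m_1(r_B^2L)$ with $\wz m_1(r_B^2L)=\int_0^1 e^{-tr_B^2L}\,dt-e^{-r_B^2L}$; this is a genuine Bochner combination of the heat semigroup, so $\wz m_1(r_B^2L)$ inherits Gaussian Davies-Gaffney bounds from \eqref{2.5} directly, and the binomial identity $\wz m_M(r_B^2L)=\sum_{k=1}^M\binom{M}{k}\wz m_1(r_B^2L)^k$ (with the standard intermediate-set composition argument) propagates Gaussian decay to $\wz m_M(r_B^2L)$. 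With that in hand the annulus sum is overwhelmingly convergent and the hypothesis on $M$ is used, as it should be, only in the $\fai$-weight bookkeeping. Finally, the rescaling step you propose — comparing $(\lz/(1+\lz))^M$ with $(2^{2k}\lz/(1+2^{2k}\lz))^M$ through the multiplier $h_k$ — is more delicate than it looks, since $h_k$ does not vanish at $0$ and so has no off-diagonal decay of its own; it is cleaner (and sufficient, given the Gaussian decay above) to cover each $U_k(B)$ by roughly $2^{kn}$ balls of radius $r_B$ and apply the resolvent Carleson functional at the native scale, absorbing the resulting polynomial growth into the Gaussian factor.
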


\begin{proposition}\label{p4.5}
Let $\fai$, $L$ and $M$ be as in Definition \ref{d4.4}. Then there
exists a positive constant $C$ such that, for all
$f\in\bbmo^M_{\fai,\,L}(\cx)$,
$$\sup_{B\subset\cx}\frac{[\mu(B)]^{1/2}}
{\|\chi_{B}\|_{L^\fai(\cx)}}\lf\{\int_{\widehat{B}}
\lf|(t^2L)^Me^{-t^2L}
f(x)\r|^2\,\frac{d\mu(x)\,dt}{t}\r\}^{1/2}\le
C\|f\|_{\bbmo^M_{\fai,\,L}(\cx)},
$$
where the supremum is taken over all balls $B$ of $\cx$.
\end{proposition}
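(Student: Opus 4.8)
First I would reduce the claim to the per-ball estimate
$$\int_{\widehat B}\lf|(t^2L)^Me^{-t^2L}f(x)\r|^2\,\frac{d\mu(x)\,dt}{t}\ls
\frac{\|\chi_B\|^2_{L^\fai(\cx)}}{\mu(B)}\,\|f\|^2_{\bbmo^M_{\fai,\,L}(\cx)}$$
for every ball $B:=B(x_B,r_B)$ of $\cx$; since $\widehat B\subset B\times(0,2r_B)$, only the region $B\times(0,2r_B)$ is relevant. Fixing such a $B$, the plan, parallel to \cite[Lemmas 8.1 and 8.3]{hm09} and \cite[Proposition 4.5]{jy11}, is to write $I=(I-e^{-r_B^2L})^M+\big(I-(I-e^{-r_B^2L})^M\big)$ on $L^2(\cx)$ and to expand the second summand by the binomial theorem, so that
\begin{align*}
(t^2L)^Me^{-t^2L}f&=(t^2L)^Me^{-t^2L}\lf[(I-e^{-r_B^2L})^Mf\r]\\
&\hs+\sum_{k=1}^M\binom{M}{k}(-1)^{k+1}(t^2L)^Me^{-(t^2+kr_B^2)L}f,
\end{align*}
and then to estimate the two pieces separately. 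Here $g:=(I-e^{-r_B^2L})^Mf\in L^2_{\loc}(\cx)$ by the discussion preceding Definition \ref{d4.4}, and every operator occurring below, being of the form $\phi(L)$ with $\phi$ a bounded Borel function whose $\phi(L)$ obeys the Davies-Gaffney estimates, acts on $g$ and on $f$ through the off-diagonal decay of its kernel.

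For the first piece I would decompose $g=\sum_{l\in\zz_+}g\chi_{U_l(B)}$, with $U_l(B)$ as in \eqref{2.4}. For $l\in\{0,1\}$, since $(t^2L)^Me^{-t^2L}=\phi(t\sqrt L)$ with $\phi(\lz):=\lz^{2M}e^{-\lz^2}$ and $\int_0^\fz|\phi(t)|^2\,dt/t<\fz$, the quadratic estimate \eqref{2.7} gives $\int_0^{2r_B}\|(t^2L)^Me^{-t^2L}(g\chi_{2B})\|^2_{L^2(B)}\,dt/t\ls\|g\|^2_{L^2(2B)}$. For $l\ge2$ one has $\dist(B,U_l(B))\gs2^lr_B$, so Lemma \ref{l2.1} yields, for $t\in(0,2r_B)$, $\|(t^2L)^Me^{-t^2L}(g\chi_{U_l(B)})\|_{L^2(B)}\ls\exp\{-c4^lr_B^2/t^2\}\|g\|_{L^2(U_l(B))}$, whence $\int_0^{2r_B}\exp\{-2c4^lr_B^2/t^2\}\,dt/t\ls e^{-c'4^l}$ after the substitution $t\mapsto r_B^2/t^2$. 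For the second piece, $t\le2r_B$ and $k\ge1$ force $w_k:=(t^2+kr_B^2)^{1/2}\sim r_B$, and writing $(t^2L)^Me^{-(t^2+kr_B^2)L}=(t/w_k)^{2M}(w_k^2L)^Me^{-w_k^2L}$ extracts a factor $(t/r_B)^{2M}$ whose square is integrable against $dt/t$ on $(0,2r_B)$; this reduces matters to bounding $\sup_{w\sim r_B}\int_B|(w^2L)^Me^{-w^2L}f|^2\,d\mu$, for which I would use the factorization $(w^2L)^Me^{-w^2L}=\psi_w(L)(I-e^{-w^2L})^M$ with $\psi_w(\lz):=(w^2\lz)^Me^{-w^2\lz}/(1-e^{-w^2\lz})^M$ bounded and $\psi_w(L)$ obeying the Davies-Gaffney estimates at scale $w$ (by the holomorphic functional calculus for $L$; cf.\ \cite[Section 3]{hlmmy}), and then decompose $(I-e^{-w^2L})^Mf$ into annuli about $B(x_B,w)$ as above. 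In every case one is reduced to controlling $\|(I-e^{-s^2L})^Mf\|_{L^2(2^lB)}$ for $l\in\zz_+$ and $0<s\le Cr_B$.

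Thus the whole proof reduces to the key estimate
\begin{align*}
\lf\|(I-e^{-s^2L})^Mf\r\|_{L^2(2^lB)}&\ls 2^{l\az}\,
\frac{\|\chi_B\|_{L^\fai(\cx)}}{[\mu(B)]^{1/2}}\,\|f\|_{\bbmo^M_{\fai,\,L}(\cx)}\\
&\quad\text{for all }l\in\zz_+\text{ and }0<s\le Cr_B,
\end{align*}
for some $\az\in(0,\fz)$ depending only on $n$, $q(\fai)$, $r(\fai)$ and $M$; substituting it into the estimates above and summing over $l$ (the factors $e^{-c'4^l}$ absorbing the loss $2^{l\az}$) would give the per-ball bound, and I expect this key estimate to be the main obstacle. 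To prove it I would run the telescoping argument of \cite[Lemma 8.1]{hm09}: the norm of $\bbmo^M_{\fai,\,L}(\cx)$ controls $\|(I-e^{-(2^lr_B)^2L})^Mf\|_{L^2(2^lB)}$ directly by $\frac{\|\chi_{2^lB}\|_{L^\fai(\cx)}}{[\mu(2^lB)]^{1/2}}\|f\|_{\bbmo^M_{\fai,\,L}(\cx)}$, while, setting $\wz\psi_v(\lz):=v^2\lz e^{-v^2\lz}/(1-e^{-v^2\lz})$ (so that $|\wz\psi_v|\le1$ and $\wz\psi_v(L)$ obeys the Davies-Gaffney estimates at scale $v$), differentiation in $v$ gives
\begin{align*}
&(I-e^{-s^2L})^Mf-(I-e^{-(2^lr_B)^2L})^Mf\\
&\hs=-\int_s^{2^lr_B}\frac{2M}v\,\wz\psi_v(L)\lf[(I-e^{-v^2L})^Mf\r]\,dv;
\end{align*}
restricting to $2^lB$, decomposing $(I-e^{-v^2L})^Mf$ into annuli about $B(x_B,v)$, and iterating then yields a Gr\"onwall-type inequality for $\sup_{0<s\le2^lr_B}\|(I-e^{-s^2L})^Mf\|_{L^2(2^lB)}$ that closes up to the stated polynomial loss. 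Every ratio $\mu(2^lB)/\mu(B)$ arising here is handled by the doubling property \eqref{2.2}, and every ratio $\|\chi_{2^lB}\|_{L^\fai(\cx)}/\|\chi_B\|_{L^\fai(\cx)}$ by Lemma \ref{l2.6}(vii) and (viii) together with the uniformly upper and lower type of $\fai$; this is exactly where the weighted character of the problem produces the exponent $\az$ (rather than the merely logarithmic loss of the classical case) and where the hypotheses $\fai\in\aa_\fz(\cx)$ and $\fai\in\rh_{2/[2-I(\fai)]}(\cx)$ enter. If preferred, Proposition \ref{p4.4} can be used throughout to replace $e^{-r_B^2L}$ by $(I+r_B^2L)^{-1}$; and the remaining verifications --- the Davies-Gaffney bounds for $\psi_w(L)$, $\wz\psi_v(L)$ and the operators above, and the legitimacy of the functional-calculus manipulations on $\cm^M_\fai(\cx)$ --- are routine, following \cite{hm09,hlmmy,jy11} verbatim.
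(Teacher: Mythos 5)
Your proposal is correct and follows essentially the same route the paper itself indicates — namely the decomposition $I=(I-e^{-r_B^2L})^M+\bigl(I-(I-e^{-r_B^2L})^M\bigr)$, the quadratic estimate \eqref{2.7} plus Davies-Gaffney off-diagonal decay for the first piece, and the telescoping (Gr\"onwall-type) $L^2(2^lB)$ bound with polynomial loss in $l$ (controlled by Lemma \ref{l2.6}(vii)--(viii) and the type conditions on $\fai$) as the key ingredient, exactly as in \cite[Lemmas 8.1 and 8.3]{hm09} and \cite[Propositions 4.4 and 4.5]{jy11}, which is precisely what the paper cites and then omits. The one place where you are brief and would need to be careful in a full write-up is that the telescoping inequality couples the quantities $\|(I-e^{-s^2L})^Mf\|_{L^2(2^{l+m}B)}$ over all $m\ge0$, so one must either run the iteration together with an a priori crude polynomial bound or sum the system explicitly; the super-exponential factors $e^{-c4^m}$ make this close, as you indicate.
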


The following Proposition \ref{p4.6} and Corollary \ref{c4.2} are a
kind of Calder\'on reproducing formulae.

\begin{proposition}\label{p4.6}
Let $\fai$, $L$ and $M$ be as in Definition \ref{d4.4},
$\epz\in(0,\fz)$ and $\wz{M}\in\nn$ with
$\wz{M}>M+\epz+\frac{N}{4}+\frac{nq(\fai)}{2i(\fai)}$, where $N$,
$n$, $q(\fai)$ and $i(\fai)$ are, respectively, as in \eqref{2.3},
\eqref{2.2}, \eqref{2.12} and \eqref{2.11}. Fix $x_0\in\cx$. Assume
that $f\in\cm^M_{\fai}(\cx)$ satisfies
\begin{equation}\label{4.27}
\int_{\cx}\frac{|(I-(I+L)^{-1})^M
f(x)|^2}{1+[d(x,x_0)]^{N+\epz+2nq_0/p_2}}\,d\mu(x)<\fz
\end{equation}
for some $q_0\in(q(\fai),\fz)$ and $p_2\in(0,i(\fai))$. Then for
all $(\fai,\,\wz{M})$-atoms $\az$,
$$\langle
f,\az\rangle=\wz{C}_M\int_{\cx\times(0,\fz)}(t^2L)^Me^{-t^2L}f(x)
\overline{t^2Le^{-t^2L}\az(x)}\,\frac{d\mu(x)\,dt}{t},
$$
where $\wz{C}_M$ is a positive constant satisfying
$\wz{C}_M\int_0^\fz t^{2(M+1)}e^{-2t^2}\,\frac{dt}{t}=1$.
\end{proposition}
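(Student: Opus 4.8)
The plan is to reduce the identity to the $L^2(\cx)$-Calder\'on reproducing formula
$\wz{C}_M\int_0^\fz (t^2L)^{M+1}e^{-2t^2L}\,\frac{dt}{t}=\mathrm{Id}$ on $\overline{R(L)}$,
which follows from the spectral theorem and the normalization
$\wz{C}_M\int_0^\fz t^{2(M+1)}e^{-2t^2}\,\frac{dt}{t}=1$, together with a justification
that the pairing $\langle f,\az\rangle$ makes sense and that one may interchange the
$t$-integral with the action of $f$. Since $\az$ is a $(\fai,\wz M)$-atom, we may write
$\az=L^{\wz M}b$ with $b$ supported in a ball $B$ and satisfying the size estimates of
Definition \ref{d4.2}; in particular $\az\in\overline{R(L)}\cap L^2(\cx)$ and
$t^2Le^{-t^2L}\az\in T^2_2(\cx\times(0,\fz))$ by \eqref{2.7}. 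First I would show that the
right-hand integral converges absolutely: split $\cx\times(0,\fz)$ into the tent
$\widehat{2B}$ and its complement, use Proposition \ref{p4.5} (applied with $M$, noting
$f\in\cm^M_\fai(\cx)$ and the hypothesis \eqref{4.27}) together with the Davies-Gaffney
decay of $t^2Le^{-t^2L}\az$ off $B$ coming from Lemma \ref{l2.1} and the factorization
$\az=L^{\wz M}b$, which gains the factor $(t/2^k r_B)^{2\wz M}$ on $U_k(B)$; the
condition $\wz M>M+\epz+\frac N4+\frac{nq(\fai)}{2i(\fai)}$ is exactly what is needed to
sum the resulting geometric series in $k$ after invoking \eqref{2.3} to compare volumes
and Lemma \ref{l2.6} to pass the weight between balls.

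Next I would establish the formula first for a truncated version. For $0<\dz<\sz<\fz$
set $f_{\dz,\sz}:=\wz C_M\int_\dz^\sz (t^2L)^{M}e^{-t^2L}\big[t^2Le^{-t^2L}f\big]\,\frac{dt}{t}$,
interpreted appropriately. On the dense class of $(\fai,\wz M)$-atoms the quantity
$\langle f,(t^2L)^{M}e^{-t^2L}(t^2Le^{-t^2L}\az)\rangle$ is, by self-adjointness of $L$
and of the heat semigroup, equal to $\langle (t^2L)^{2M+1}e^{-2t^2L}f,\,\wz b\,\rangle$-type
expressions against $b$; here one uses that $(I-A_t)^Mf\in L^2_\loc$ (established in the
excerpt) and that $b\in\cd(L^{\wz M})$ with $\wz M$ large to move all the operators onto
$b$, turning the pairing into an honest $L^2$ inner product to which Fubini applies. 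Thus
$\langle f_{\dz,\sz},\az\rangle=\wz C_M\int_{\dz<t<\sz}\int_\cx (t^2L)^Me^{-t^2L}f(x)\,
\overline{t^2Le^{-t^2L}\az(x)}\,\frac{d\mu(x)\,dt}{t}$. Letting $\dz\to0$ and $\sz\to\fz$,
the right-hand side converges to the desired integral by the absolute convergence proved
above (dominated convergence), and the left-hand side converges to $\langle f,\az\rangle$
by the spectral-calculus identity $\int_0^\fz(t^2L)^{M+1}e^{-2t^2L}\frac{dt}{t}=
\mathrm{Id}/\wz C_M$ applied to $\az\in\overline{R(L)}$, noting $S_L$-type square-function
bounds control the tails uniformly.

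The main obstacle I expect is the rigorous justification of moving $f$ (a distribution in
$\cm^M_\fai(\cx)$, not an $L^2$ function) inside the $t$-integral and onto $b$: one must
check that for each fixed $t$ the function $(t^2L)^Me^{-t^2L}f$ is locally $L^2$, that the
pairing against the rapidly-decaying $t^2Le^{-t^2L}\az$ is legitimate, and that the
$t$-integral of these pairings converges and equals the pairing of $f$ against the
$t$-integral. This is precisely where hypothesis \eqref{4.27} enters — it guarantees the
weighted $L^2$ control of $(I-(I+L)^{-1})^Mf$ needed to make sense of $\langle f,\cdot\rangle$
on molecules with the relevant decay — and where the size condition on $\wz M$ enters,
ensuring the off-diagonal decay of $t^2Le^{-t^2L}\az=t^2L^{\wz M+1}e^{-t^2L}b$ beats the
growth of the weight $1+[d(x,x_0)]^{N+\epz+2nq_0/p_2}$ in \eqref{4.27}. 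Once these
analytic points are secured, the argument is a routine limiting procedure from the
$L^2$-Calder\'on reproducing formula, exactly as in \cite[Proposition 4.6]{jy11} and
\cite{hlmmy,hm09}.
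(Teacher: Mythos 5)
The paper itself omits the proof of Proposition~\ref{p4.6}, stating only that it is ``a skillful application of the Davies--Gaffney estimates~\eqref{2.5}, the $H_\fz$-functional calculi for $L$ and the uniformly upper type and lower type properties of $\fai$, which is similar to that of [Jiang--Yang, Commun.\ Contemp.\ Math.\ 13 (2011), Proposition~4.6].'' Your overall skeleton---reduce to the $L^2$ Calder\'on reproducing formula on $\overline{R(L)}$, truncate in $t$, use self-adjointness and Davies--Gaffney decay, then pass to the limit---is in the right spirit and does match that reference.

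However, there is a genuine gap in the absolute-convergence step. You propose to invoke Proposition~\ref{p4.5}, but that proposition is stated for $f\in\bbmo^M_{\fai,\,L}(\cx)$ and its conclusion is expressed in terms of $\|f\|_{\bbmo^M_{\fai,\,L}(\cx)}$. In Proposition~\ref{p4.6} the hypothesis on $f$ is strictly weaker: only $f\in\cm^M_\fai(\cx)$ together with the weighted $L^2$ bound~\eqref{4.27}. It is Corollary~\ref{c4.2} (proved \emph{after} Proposition~\ref{p4.6}) that deduces~\eqref{4.27} from membership in $\bbmo^M_{\fai,\,L}(\cx)$; you cannot run the implication backwards. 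The absolute convergence of the double integral must instead be extracted directly from~\eqref{4.27}: first use the $H_\fz$-functional calculus and the Davies--Gaffney estimates to pass from $(I-(I+L)^{-1})^M f$ (which~\eqref{4.27} controls) to $(t^2L)^M e^{-t^2L} f$ on annuli $U_k(B(x_0,1))$, then pair against the decay of $t^2Le^{-t^2L}\az$ that your factorization $\az=L^{\wz M}b$ and Lemma~\ref{l2.1} supply; the polynomial growth of the weight $1+[d(x,x_0)]^{N+\epz+2nq_0/p_2}$ in~\eqref{4.27} is precisely what has to be beaten, and the hypothesis $\wz M>M+\epz+\frac N4+\frac{nq(\fai)}{2i(\fai)}$ is calibrated for this.

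Your limiting argument is also underdeveloped at the critical point. You write that ``the left-hand side converges to $\langle f,\az\rangle$ by the spectral-calculus identity\ldots noting $S_L$-type square-function bounds control the tails uniformly,'' but $\langle f,\cdot\rangle$ is only known to be continuous on the molecule spaces $\cm^{M,\epz}_\fai(L)$, and what must be shown is either that $\az_{\dz,\sz}:=\wz C_M\int_\dz^\sz(t^2L)^{M+1}e^{-2t^2L}\az\,\frac{dt}{t}\to\az$ in $\cm^{M,\epz}_\fai(L)$ for the appropriate $\epz$, or---more directly and as in the referenced proof---that $|\langle f,\az-\az_{\dz,\sz}\rangle|\to0$ by estimating this pairing annulus by annulus against the weighted bound~\eqref{4.27}, using the off-diagonal decay of $\az-\az_{\dz,\sz}$ coming from $\az=L^{\wz M}b$ and the Davies--Gaffney estimates. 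Merely invoking the $L^2$ reproducing identity for $\az\in\overline{R(L)}$ does not establish this, since $f$ is not an $L^2$ function and the $\cm^{M,\epz}_\fai(L)$-topology is not comparable to the $L^2$-topology.
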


The proof of Proposition \ref{p4.2} is a skillful application
of the Davies-Gaffney estimates \eqref{2.5}, the $H_\fz$-functional calculi for $L$ and
the uniformly upper type and lower type properties of $\fai$, which is similar to
that of \cite[Prposition 4.6]{jy11}. We omit the details here.

To prove that Proposition \ref{p4.6} also holds true for all
$f\in\bbmo^M_{\fai,\,L}(\cx)$, we need the following dyadic cubes
on spaces of homogeneous type constructed by Christ \cite[Theorem
11]{c90}.

\begin{lemma}\label{l4.1}
There exists a collection of open subsets, $\{Q^k_{\az}\subset\cx:\
k\in\zz,\,\az\in I_k\}$, where $I_k$ denotes some (possibly
finite) index set depending on $k$, and constants $\dz\in(0,1)$,
$a_0\in(0,1)$ and $C_6\in(0,\fz)$ such that

{\rm(i)} $\mu(\cx\setminus\cup_{\az}Q^k_{\az})=0$ for all
$k\in\zz$;

{\rm(ii)} if $i\ge k$, then either $Q^i_{\az}\subset Q^k_{\bz}$ or
$Q^i_{\az}\cap Q^k_{\bz}=\emptyset$;

{\rm(iii)} for each $(k,\az)$ and each $i<k$, there exists a
unique $\bz$ such that $Q^k_{\az}\subset Q^i_{\bz}$;

{\rm(iv)} the diameter of $Q^k_{\az}\le C_6 \dz^k$;

{\rm(v)} each $Q^k_{\az}$ contain some ball
$B(z^k_{\az},a_0\dz^k)$.
\end{lemma}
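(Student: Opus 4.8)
The statement is precisely Christ's construction of a dyadic grid on a space of homogeneous type, so the plan is to reproduce the argument of \cite[Theorem 11]{c90}. First I would fix $\dz\in(0,1)$ to be a sufficiently small constant, depending only on the doubling constant $C_1$ in \eqref{2.1}, whose precise smallness is dictated by the geometric estimates below; I would also fix a small auxiliary constant $a_1\in(0,1)$. For each $k\in\zz$, using Zorn's lemma (or, since \eqref{2.1} forces $\cx$ to be separable, a countable greedy selection), I would choose a set of points $\{z^k_\az\}_{\az\in I_k}$ maximal with respect to the property that $d(z^k_\az,z^k_\bz)\ge\dz^k$ whenever $\az\ne\bz$. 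Maximality yields $\cx=\bigcup_{\az\in I_k}B(z^k_\az,\dz^k)$ while the balls $\{B(z^k_\az,\dz^k/2)\}_{\az}$ are pairwise disjoint; combined with \eqref{2.1} this shows that each ball meets only a controlled (finite) number of the $z^k_\az$, which underlies all the local estimates to follow.

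Next I would organize the centers into a tree. By the covering property at scale $k-1$, for every $(k,\az)$ there is at least one $\bz\in I_{k-1}$ with $d(z^k_\az,z^{k-1}_\bz)<\dz^{k-1}$; I would select one such $\bz$ and declare $(k-1,\bz)$ the parent of $(k,\az)$. Iterating the parent map gives, for every $(k,\az)$ and every $i<k$, a unique ancestor of $(k,\az)$ at level $i$, which is exactly (iii). Then I would define $Q^k_\az$ to be the interior of the closure of the set $\bigcup_{l\ge k}\bigcup_{\gz}B(z^l_\gz,a_1\dz^l)$, where the inner union runs over all $(l,\gz)$ whose level-$k$ ancestor is $(k,\az)$; each $Q^k_\az$ is then open. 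Property (ii) follows because the descendant sets of two cells of the same level are either nested or disjoint, hence so are (the closures of) the corresponding cubes; property (i) follows from the scale-$k$ covering property together with the fact that $\bigcup_{\az\in I_k}\partial Q^k_\az$ is $\mu$-null, the only point requiring a short separate argument (a slight enlargement of the generating balls followed by a limiting argument). Property (iv) is a geometric-series estimate: every descendant of $(k,\az)$ at level $l\ge k$ has its center within distance $\dz^{k}+\dz^{k+1}+\cdots\ls\dz^k$ of $z^k_\az$, so $\diam Q^k_\az\le C_6\dz^k$; and (v) holds because $(k,\az)$ is a descendant of itself, so $B(z^k_\az,a_0\dz^k)\subset Q^k_\az$ with $a_0:=a_1$.

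The main obstacle is calibrating $\dz$, $a_1$ and $a_0$ so that nesting (ii), the almost-covering (i) and the inner-ball property (v) all hold simultaneously: the estimate showing that a level-$l$ descendant cell of $(k,\az)$ is contained in a ball about $z^k_\az$ of radius $\ls\dz^k$ (needed for (ii) and (iv)) forces $\dz$ and $a_1$ to be small relative to the doubling constant, while (v) requires $a_1$ not too small compared with $\dz$; reconciling these inequalities, together with the measure-theoretic argument that the cube boundaries are null, is the technical core of \cite[Theorem 11]{c90}, to which I would refer for the remaining routine verifications.
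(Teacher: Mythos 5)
Your proposal is correct and is precisely the approach the paper takes: Lemma~\ref{l4.1} is stated in the paper as a direct citation of Christ's dyadic-grid construction \cite[Theorem~11]{c90}, with no proof given, and your sketch faithfully reproduces the standard outline of Christ's argument (maximal $\dz^k$-separated nets, parent map, descendant-union definition of the cubes, and the calibration of $\dz$, $a_1$, $a_0$), correctly flagging the measure-zero boundary estimate and the nesting calibration as the delicate points that the cited reference handles.
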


From Proposition \ref{p4.6} and Lemma \ref{l4.1}, we deduce the
following weighted version of \cite[Corollary 4.3]{jy11}.

\begin{corollary}\label{c4.2}
Let $\fai$, $L$ and $M$ be as in Definition \ref{d4.4},
$\epz\in(0,\fz)$ and $\wz{M}\in\nn$ with
$\wz{M}>M+\epz+\frac{N}{4}+\frac{nq(\fai)}{2i(\fai)}$, where $N$,
$n$, $q(\fai)$ and $i(\fai)$ are, respectively, as in \eqref{2.3},
\eqref{2.2}, \eqref{2.12} and \eqref{2.11}.

Then for all $(\fai,\,\wz{M})$-atoms $\az$ and $f\in
\bbmo^M_{\fai,\,L}(\cx)$,
$$\langle
f,\az\rangle=\wz{C}_M\int_{\cx\times(0,\fz)}(t^2L)^Me^{-t^2L}f(x)
\overline{t^2Le^{-t^2L}\az(x)}\,\frac{d\mu(x)\,dt}{t},
$$
where $\wz{C}_M$ is as in Proposition \ref{p4.6}.
\end{corollary}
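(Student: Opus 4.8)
The plan is to reduce the assertion to Proposition \ref{p4.6}, whose standing hypothesis on $\wz M$ is exactly the one assumed in the corollary; what has to be supplied is the integrability condition \eqref{4.27}. Before that, note that the pairing $\langle f,\az\rangle$ is meaningful: by Definition \ref{d4.4}, $\bbmo^M_{\fai,\,L}(\cx)\subset\cm^M_\fai(\cx)\subset(\cm^{M,\,\epz}_\fai(L))^\ast$, while a $(\fai,\wz M)$-atom $\az=L^{\wz M}b$ may be written $\az=L^M\nu$ with $\nu:=L^{\wz M-M}b\in\cd(L^M)$, and Definition \ref{d4.2}(ii)--(iii) together with $\wz M>M$ show $\az\in\cm^{M,\,\epz}_\fai(L)$ for every $\epz\in(0,\fz)$. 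The right-hand side of the claimed identity also converges absolutely: $t^2Le^{-t^2L}\az\in T^2_2(\cx\times(0,\fz))$ with the norm and off-diagonal decay already recorded in the proof of Proposition \ref{p4.1} (cf. \eqref{4.11}), while Proposition \ref{p4.5} gives the companion $\fai$-Carleson bound on $(t^2L)^Me^{-t^2L}f$, so the integral is a convergent tent-space pairing. Hence it remains only to prove that every $f\in\bbmo^M_{\fai,\,L}(\cx)$ satisfies \eqref{4.27} for some $q_0\in(q(\fai),\fz)$ and $p_2\in(0,i(\fai))$, after which Proposition \ref{p4.6} yields the formula for all $(\fai,\wz M)$-atoms.

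To establish \eqref{4.27}, first fix $q_0\in(q(\fai),\fz)$ with $\fai\in\aa_{q_0}(\cx)$ and $p_2\in(0,i(\fai))$ for which $\fai$ is of uniformly lower type $p_2$; such a pair exists by the definitions of $q(\fai)$ and $i(\fai)$ and Lemma \ref{l2.6}(i). Then decompose $\cx$ into $B(x_0,1)$ and the annuli $R_j:=B(x_0,2^j)\setminus B(x_0,2^{j-1})$, $j\ge1$, on which $1+[d(\cdot,x_0)]^{N+\epz+2nq_0/p_2}\sim2^{j(N+\epz+2nq_0/p_2)}$, so that it suffices to show
$$\sum_{j\ge0}2^{-j(N+\epz+2nq_0/p_2)}\int_{B(x_0,2^j)}\lf|(I-(I+L)^{-1})^Mf(x)\r|^2\,d\mu(x)<\fz.$$
I would partition $B(x_0,2^j)$, using the dyadic cubes of Lemma \ref{l4.1} of a fixed generation $k_0$ with $C_6\dz^{k_0}\sim1$, into $\ls2^{j(n+N)}$ pieces (by \eqref{2.2}--\eqref{2.3}), each contained in a ball $B_Q$ of radius comparable to $1$. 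On each $B_Q$, Proposition \ref{p4.4} (the resolvent form of the $\bbmo^M_{\fai,\,L}$-norm), combined with a telescoping of the $M$-th power and the off-diagonal bounds for $(I+t^2L)^{-1}$ that localize $L^M(I+L)^{-M}=(I-(I+L)^{-1})^M$ to a scale-$1$ neighbourhood of $B_Q$ and let one replace $(I-(I+r_{B_Q}^2L)^{-1})^M$ (with $r_{B_Q}\sim1$) by $(I-(I+L)^{-1})^M$, gives $\int_{B_Q}|(I-(I+L)^{-1})^Mf|^2\,d\mu\ls[\mu(B_Q)]^{-1}\|\chi_{B_Q}\|_{L^\fai(\cx)}^2\|f\|_{\bbmo^M_{\fai,\,L}(\cx)}^2$. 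Summing over $Q$ --- and, crucially, using the off-diagonal decay to replace the crude count of cubes by an honest integration over $R_j$ --- together with the comparison of $\|\chi_{B_Q}\|_{L^\fai(\cx)}$, $\mu(B_Q)$ with $\|\chi_{B(x_0,2^j)}\|_{L^\fai(\cx)}$, $\mu(B(x_0,2^j))$ furnished by Lemma \ref{l2.6}(vii)--(viii), $\fai\in\aa_{q_0}(\cx)$, the uniformly lower type $p_2$, and \eqref{2.2}--\eqref{2.3}, I expect a bound of the shape $\int_{B(x_0,2^j)}|(I-(I+L)^{-1})^Mf|^2\ls2^{j(N+2nq_0/p_2)}\|f\|_{\bbmo^M_{\fai,\,L}(\cx)}^2$ (up to a constant depending on $x_0$), and then the series converges thanks to the extra factor $2^{-j\epz}$. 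With \eqref{4.27} verified, Proposition \ref{p4.6} completes the proof.

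The hard part will be this cross-scale bound: one must show that the growth in $j$ of $\int_{B(x_0,2^j)}|(I-(I+L)^{-1})^Mf|^2$ is strictly smaller than the exponent $N+\epz+2nq_0/p_2$ built into the denominator of \eqref{4.27}. A crude argument --- merely summing the unit-scale $\bbmo^M_{\fai,\,L}$-estimates over the $\ls2^{j(n+N)}$ cubes covering $B(x_0,2^j)$ --- is too lossy, so one genuinely has to exploit the off-diagonal localization of $L^M(I+L)^{-M}$ at the unit scale, together with the precise dependence of $\|\chi_B\|_{L^\fai(\cx)}$ on $\mu(B)$ across scales encoded in Lemma \ref{l2.6}(vii)--(viii) (governed by $q_0$, the index $r(\fai)$ of \eqref{2.13}, and the uniformly lower type $p_2$); it is precisely to absorb this growth that the weight in Proposition \ref{p4.6} carries the exponent $2nq_0/p_2$ and the slack $\epz>0$. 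A secondary technical point is the scale comparison between $(I-(I+L)^{-1})^M$ and $(I-(I+r^2L)^{-1})^M$ for $r$ in a bounded range away from $0$ and $\fz$, which one handles via the $L^2(\cx)$-boundedness and off-diagonal decay of the resolvents of $L$ guaranteed by Assumptions (A) and (B).
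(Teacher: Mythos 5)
Your high-level strategy is right and matches the paper: reduce to Proposition~\ref{p4.6} by verifying~\eqref{4.27}, fix $q_0\in(q(\fai),\fz)$ and $p_2\in(0,i(\fai))$, decompose $\cx$ into dyadic annuli around $x_0$, cover each annulus by Christ cubes of a fixed generation $k_0$ with diameter $\sim 1$, and apply Proposition~\ref{p4.4} on unit-scale balls. Your preliminary remarks on the meaning of $\langle f,\az\rangle$ and the absolute convergence of the integral are also fine.

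However, you misidentify the key mechanism in the summation over cubes, and one of your technical worries is a red herring. You claim that "a crude argument $\ldots$ is too lossy, so one genuinely has to exploit the off-diagonal localization of $L^M(I+L)^{-M}$" and list this as the crucial tightening. No such off-diagonal estimate appears in the paper's proof, and none is needed. What actually replaces the lossy cube count is the quasi-norm estimate
$\sum_{\bz\in M_j}\|\chi_{B(z^{k_0}_\bz,1)}\|_{L^\fai(\cx)}\sim\sum_{\bz\in M_j}\|\chi_{Q^{k_0}_\bz}\|_{L^\fai(\cx)}\ls\|\chi_{\bigcup_{\bz\in M_j}Q^{k_0}_\bz}\|_{L^\fai(\cx)}\ls\|\chi_{2^jB}\|_{L^\fai(\cx)}$,
which exploits the \emph{disjointness} of the Christ cubes $\{Q^{k_0}_\bz\}_\bz$ from Lemma~\ref{l4.1} together with the uniformly upper type $p_1\le1$ property of $\fai$. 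Combined with $\|\chi_{2^jB}\|_{L^\fai(\cx)}\ls2^{jnq_0/p_2}\|\chi_B\|_{L^\fai(\cx)}$ (Lemma~\ref{l2.6}(vii) and uniformly lower type $p_2$) and the factor $2^{jN/2}$ from comparing $[\mu(B(z^{k_0}_\bz,1))]^{-1/2}$ with $[\mu(B(x_0,1))]^{-1/2}$ via~\eqref{2.3}, the total exponent per annulus is exactly $-j\epz/2$, and the sum converges. Without the disjointness estimate above, a bare count of $\ls2^{j(n+N)}$ cubes cannot close the argument, and off-diagonal decay of $L^M(I+L)^{-M}$ does not substitute for it; you would be proving something about the local size of the \emph{function} $(I-(I+L)^{-1})^Mf$, not producing the missing gain in the sum of $L^\fai$-quasi-norms.

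Your "secondary technical point" about comparing $(I-(I+L)^{-1})^M$ with $(I-(I+r^2L)^{-1})^M$ for $r\sim1$ is also unnecessary: the paper works with the unit-radius balls $B(z^{k_0}_\bz,1)\supset Q^{k_0}_\bz$, so that Proposition~\ref{p4.4} applied to $B(z^{k_0}_\bz,1)$ already gives the bound with the operator $(I-(I+L)^{-1})^M$ and no rescaling or telescoping is needed. As written, your proposal declares the needed final estimate ("I expect a bound of the shape $\ldots$") without establishing the one inequality that makes it true, so the proof as proposed has a genuine gap at its central step.
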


\begin{proof}
From $\wz{M}>M+\epz+\frac{N}{4}+\frac{nq(\fai)}{2i(\fai)}$, we
deduce that there exist $q_0\in(q(\fai),\fz)$ and
$p_2\in(0,i(\fai))$ such that
$\wz{M}>M+\epz+\frac{N}{4}+\frac{nq_0}{2p_2}$. Let
$\epz\in(0,\wz{M}-M-\frac{N}{4}-\frac{nq_0}{2p_2})$. By Proposition
\ref{p4.6}, we only need to show that \eqref{4.27} with such $\epz$
holds true for all $f\in \bbmo^M_{\fai,\,L}(\cx)$.

Let all the notation be the same as in Lemma \ref{l4.1}. For each
$j\in\zz$, choose $k_j\in\zz$ such that
$C_6\dz^{k_j}\le2^j<C_6\dz^{k_j-1}$. Let $B:=B(x_0,1)$, where $x_0$
is as in \eqref{4.27}, and
$$M_j:=\lf\{\bz\in I_{k_0}:\ Q^{k_0}_\bz\cap B(x_0,C_6\dz^{k_j-1})
\neq\emptyset\r\}.
$$
Then for each $j\in\zz_+$,
\begin{equation}\label{4.28}
U_j(B)\subset B(x_0,C_6\dz^{k_j-1})\subset\bigcup_{\bz\in
M_j}Q^{k_0}_\bz\subset B(x_0,2C_6\dz^{k_j-1}).
\end{equation}
From Lemma \ref{l4.1}, it follows that the sets
$\{Q^{k_0}_\bz\}_{\bz\in M_j}$ are disjoint. Moreover, by (iv) and
(v) of Lemma \ref{l4.1}, we know that there exists $z^{k_0}_\bz\in
Q^{k_0}_\bz$ such that
\begin{equation}\label{4.29}
B(z^{k_0}_\bz,a_0\dz^{k_0})\subset Q^{k_0}_\bz\subset
B(z^{k_0}_\bz,C_6\dz^{k_0})\subset B(z^{k_0}_\bz,1).
\end{equation}
Then by Proposition \ref{p4.4}, we know that
\begin{eqnarray}\label{4.30}
\hs\hs\mathrm{H}:=&&\lf\{\int_{\cx}\frac{|(I-(I+L)^{-1})^M
f(x)|^2}{1+[d(x,x_0)]^{N+\epz+2nq_0/p_2}}\,d\mu(x)\r\}^{1/2}\\
\nonumber
=&&\lf\{\sum_{j\in\zz_+}\int_{U_j(B)}\frac{|(I-(I+L)^{-1})^M
f(x)|^2}{1+[d(x,x_0)]^{N+\epz+2nq_0/p_2}}\,d\mu(x)\r\}^{1/2}\\
\nonumber \le&&\sum_{j\in\zz_+}2^{-j[(N+\epz)/2+nq_0/p_2]}
\lf\{\sum_{\bz\in M_j}\int_{Q^{k_0}_j}\lf|\lf[I-(I+L)^{-1}\r]^M
f(x)\r|^2\,d\mu(x)\r\}^{1/2}\\ \nonumber
\ls&&\sum_{j\in\zz_+}2^{-j[(N+\epz)/2+nq_0/p_2]} \lf\{\sum_{\bz\in
M_j}[\mu(B(z^{k_0}_\bz,1))]^{-1}\r.\\ \nonumber &&\times
\lf.\lf\|\chi_{B(z^{k_0}_\bz,1)}\r\|^2_{L^\fai(\cx)}
\|f\|^2_{\bbmo^M_{\fai,\,L}(\cx)}\r\}^{1/2}\\ \nonumber
\ls&&\sum_{j\in\zz_+}2^{-j[\epz/2+nq_0/p_2]}[\mu(B(x_0,1))]^{-1/2}
\sum_{\bz\in M_j} \lf\|\chi_{B(z^{k_0}_\bz,1)}\r\|_{L^\fai(\cx)}
\|f\|_{\bbmo^M_{\fai,\,L}(\cx)}.
\end{eqnarray}
It follows, from the choice of $k_0$, that $\dz^{k_0}\sim1$, which, together
with the definition of $\fai$, implies that
$\|\chi_{B(z^{k_0}_\bz,1)}\|_{L^\fai(\cx)}\sim
\|\chi_{B(z^{k_0}_\bz,a_0\dz^{k_0})}\|_{L^\fai(\cx)}$. By this and
\eqref{4.29}, we conclude that
\begin{eqnarray}\label{4.31}
\sum_{\bz\in
M_j}\lf\|\chi_{B(z^{k_0}_\bz,1)}\r\|_{L^\fai(\cx)}&&\sim
\sum_{\bz\in
M_j}\lf\|\chi_{B(z^{k_0}_\bz,a_0\dz^{k_0})}\r\|_{L^\fai(\cx)}\\
\nonumber &&\ls\sum_{\bz\in
M_j}\lf\|\chi_{Q^{k_0}_\bz}\r\|_{L^\fai(\cx)}\sim
\lf\|\chi_{\cup_{\bz\in M_j}Q^{k_0}_\bz}\r\|_{L^\fai(\cx)}\\
\nonumber
&&\ls\lf\|\chi_{B(x_0,2C_6\dz^{k_j-1})}\r\|_{L^\fai(\cx)}\ls
\lf\|\chi_{2^j B}\r\|_{L^\fai(\cx)}.
\end{eqnarray}

Moreover, by $q_0\in(q(\fai),\fz)$, the uniformly lower type $p_2$
property of $\fai$ and Lemma \ref{l2.6}(vii), we conclude that, for
all $j\in\zz_+$,
\begin{eqnarray*}
&&\int_{2^jB}\fai
\lf(x,\frac{1}{2^{jnq_0/p_2}\|\chi_B\|_{L^\fai(\cx)}}\r)\,d\mu(x)\\
&&\hs\ls2^{-jnq_0}\fai\lf(2^j B,\|\chi_B\|^{-1}_{L^\fai(\cx)}\r)
\ls2^{-jnq_0}\lf\{\frac{\mu(2^jB)}{\mu(B)}\r\}^{q_0}\fai\lf(
B,\|\chi_B\|^{-1}_{L^\fai(\cx)}\r)\sim1,
\end{eqnarray*}
which implies that $\|\chi_{2^jB}\|_{L^\fai(\cx)}\ls2^{jnq_0/p_2}
\|\chi_B\|_{L^\fai(\cx)}$. From this, \eqref{4.30} and \eqref{4.31},
we deduce that
$$\mathrm{H}\ls
[V(B(x_0,1))]^{-1/2}
\lf\|\chi_B\r\|_{L^\fai(\cx)}\|f\|_{\bbmo^M_{\fai,\,L}(\cx)}<\fz,$$
which completes the proof of Corollary \ref{c4.2}.
\end{proof}

Now we prove that $\bbmo^M_{\fai,\,L}(\cx)$ is just the dual space
of $H_{\fai,\,L}(\cx)$ by using Corollary \ref{c4.2}.

\begin{theorem}\label{t4.1}
Let $L$ satisfy Assumptions (A) and
(B), $\fai$ be as in \eqref{d2.3} with $\fai\in\rh_{2/[2-I(\fai)]}(\cx)$ and
$I(\fai)$ being as in \eqref{2.10},
$M\in\nn$ with $M>\frac{n}{2}[\frac{q(\fai)}{i(\fai)}-\frac12]$
and $\wz{M}\in\nn$ with
$\wz{M}>M+\frac{N}{4}+\frac{nq(\fai)}{2i(\fai)}$, where $n$, $N$,
$q(\fai)$ and $i(\fai)$ are, respectively, as in \eqref{2.2},
\eqref{2.3}, \eqref{2.12} and \eqref{2.11}. Then the dual space of
$H_{\fai,\,L}(\cx)$, $(H_{\fai,\,L}(\cx))^{\ast}$, coincides with
the space $\bbmo^M_{\fai,\,L}(\cx)$ in the following sense:

{\rm(i)} Let $g\in\bbmo^M_{\fai,\,L}(\cx)$. Then the linear
functional $\ell$, which is initially defined on
$H^{\wz{M}}_{\fai,\,\at,\,\fin}(\cx)$ by
\begin{equation}\label{4.32}
\ell(f):=\langle g,f\rangle,
\end{equation}
has a unique extension to $H_{\fai,\,L}(\cx)$ with
$\|\ell\|_{(H_{\fai,\,L}(\cx))^{\ast}}\le
C\|g\|_{\bbmo^M_{\fai,\,L}(\cx)}$, where $C$ is a positive
constant independent of $g$.

{\rm(ii)} Conversely, let $\epz\in(n[q(\fai)/i(\fai)-1/2],\fz)$.
Then for any $\ell\in(H_{\fai,\,L}(\cx))^{\ast}$, there exists
$g\in\bbmo^M_{\fai,\,L}(\cx)$ such that \eqref{4.32} holds true for all
$f\in H^{M,\,\epz}_{\fai,\,\mol,\,\fin}(\cx)$ and
$\|g\|_{\bbmo^M_{\fai,\,L}(\cx)}\le
C\|\ell\|_{(H_{\fai,\,L}(\cx))^{\ast}}$, where $C$ is a positive
constant independent of $\ell$.
\end{theorem}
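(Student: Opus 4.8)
The plan is to follow the now-standard duality argument for Hardy spaces associated with operators, as in \cite{hlmmy,jy11,hm09}, adapting each step to the Musielak--Orlicz setting by invoking the atomic decomposition of Corollary \ref{c4.1}, the Calder\'on reproducing formula of Corollary \ref{c4.2}, and the square-function estimate of Proposition \ref{p4.5}. For part (i), let $g\in\bbmo^M_{\fai,\,L}(\cx)$ and define $\ell$ on the dense subspace $H^{\wz M}_{\fai,\,\at,\,\fin}(\cx)$ by \eqref{4.32}. I would first show that, for any finite combination $f=\sum_j\lz_j\az_j$ of $(\fai,\wz M)$-atoms, one has $|\ell(f)|\ls\|g\|_{\bbmo^M_{\fai,\,L}(\cx)}\,\blz(\{\lz_j\az_j\}_j)$. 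The key tool here is Corollary \ref{c4.2}, which rewrites $\langle g,\az_j\rangle$ as a tent-space pairing $\wz C_M\int_{\cx\times(0,\fz)}(t^2L)^Me^{-t^2L}g(x)\,\overline{t^2Le^{-t^2L}\az_j(x)}\,\frac{d\mu(x)\,dt}{t}$. Summing in $j$ and applying the Cauchy--Schwarz inequality in $T^2_2(\cx\times(0,\fz))$, the factor $(t^2L)^Me^{-t^2L}g$ is controlled, on each tent $\widehat{B_j}$, by $\|g\|_{\bbmo^M_{\fai,\,L}(\cx)}\,\|\chi_{B_j}\|_{L^\fai(\cx)}[\mu(B_j)]^{-1/2}$ via Proposition \ref{p4.5}, while the factor $t^2Le^{-t^2L}\az_j$ is handled by the $L^2$-boundedness of $S_L$ together with the atom size condition (iii) in Definition \ref{d4.2}. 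The bound $|\ell(f)|\ls\|g\|_{\bbmo^M_{\fai,\,L}(\cx)}\|f\|_{H_{\fai,\,L}(\cx)}$ then follows from Proposition \ref{p4.3} (which gives density of $H^{\wz M}_{\fai,\,\at,\,\fin}(\cx)$ in $H_{\fai,\,L}(\cx)$ and $\|f\|_{H_{\fai,\,L}(\cx)}\ls\blz(\{\lz_j\az_j\}_j)$); this permits the unique continuous extension of $\ell$.

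For part (ii), let $\ell\in(H_{\fai,\,L}(\cx))^\ast$. I would realize $g$ by testing $\ell$ against the ``building blocks'' coming from the tent-space machinery. Concretely, for a fixed ball $B$, consider the map $h\mapsto\ell(\pi_{\Psi,\,L}(h))$ for $h\in T^{2,b}_2(\cx\times(0,\fz))$ supported in $\widehat B$; by Proposition \ref{p4.1}(ii) and Proposition \ref{p4.3}, $\pi_{\Psi,\,L}$ maps such $h$ (suitably normalized) into a multiple of a $(\fai,M)$-atom, so this is a bounded linear functional on the relevant $L^2$-type space, whence it is represented by some $G_B\in L^2(\widehat B,\frac{d\mu\,dt}{t})$; a standard consistency/gluing argument across balls produces a single $g\in\cm^M_\fai(\cx)$ with $(t^2L)^Me^{-t^2L}g$ locally in $T^2_2$ and $\langle g,\az\rangle=\wz C_M\int(t^2L)^Me^{-t^2L}g\,\overline{t^2Le^{-t^2L}\az}\,\frac{d\mu\,dt}{t}$ for all $(\fai,\wz M)$-atoms $\az$; identifying this with \eqref{4.32} on $H^{M,\epz}_{\fai,\,\mol,\,\fin}(\cx)$ uses Corollary \ref{c4.2} in reverse. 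Finally, to estimate $\|g\|_{\bbmo^M_{\fai,\,L}(\cx)}$, fix a ball $B$ and test against $\az:=(I-e^{-r_B^2L})^M g\cdot\chi_B$ normalized appropriately; the point is that $(I-e^{-r_B^2L})^M$ is, up to constants, a superposition of the operators $(t^2L)^Me^{-t^2L}$ and the Davies--Gaffney estimates (Lemma \ref{l2.1}) localize the relevant integrals to a fixed dilate of $B$, so that $\int_B|(I-e^{-r_B^2L})^Mg|^2\,d\mu\ls\|\ell\|_{(H_{\fai,\,L}(\cx))^\ast}^2\,\|\chi_B\|_{L^\fai(\cx)}^2[\mu(B)]^{-1}$ after an argument dual to the atom estimate \eqref{1.5}.

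The main obstacle I expect is twofold. First, in part (ii), the construction of $g$ as an element of $\cm^M_\fai(\cx)$ (rather than merely a functional on atoms) requires carefully showing that the locally-defined $L^2$ representatives patch together consistently and that the resulting distribution satisfies the growth condition \eqref{4.27} needed to legitimately apply Corollary \ref{c4.2}; this is where the weights $\fai\in\aa_\fz(\cx)$ enter through Lemma \ref{l2.6}(vii) and the $\fai\in\rh_{2/[2-I(\fai)]}(\cx)$ hypothesis (needed so that Proposition \ref{p4.1} and hence Proposition \ref{p4.3} are available). Second, throughout both parts, every place where one would classically ``pull a scalar out of an integral'' must instead be handled by the uniformly upper type $p_1$ and lower type $p_2$ properties of $\fai$, exactly as in the proof of \eqref{4.5}; keeping the summations over annuli $U_k(B)$ convergent forces the same constraints $M>\frac n2[\frac{q(\fai)}{i(\fai)}-\frac12]$ and $\wz M>M+\frac N4+\frac{nq(\fai)}{2i(\fai)}$ that appear in the statement. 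Apart from these points, the argument is a faithful transcription of \cite[Theorem 4.2]{jy11}.
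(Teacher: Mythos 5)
Your plan for part (i) has the key inequality running in the wrong direction. You propose to bound $|\ell(f)|\ls\|g\|_{\bbmo^M_{\fai,\,L}(\cx)}\,\blz(\{\lz_j\az_j\}_j)$ for a given \emph{finite} Hardy-space atomic decomposition $f=\sum_j\lz_j\az_j$ and then quote $\|f\|_{H_{\fai,\,L}(\cx)}\ls\blz(\{\lz_j\az_j\}_j)$; but both estimates have $\blz$ on the larger side, so they do not combine to give $|\ell(f)|\ls\|g\|_{\bbmo^M_{\fai,\,L}(\cx)}\|f\|_{H_{\fai,\,L}(\cx)}$. You would need $\blz\ls\|f\|_{H_{\fai,\,L}(\cx)}$, which is false for a generic finite representation (the same $f$ can admit finite atomic expansions of arbitrarily large $\blz$). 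The paper evades this by decomposing not $f$, but the tent-space function $t^2Le^{-t^2L}f$: Proposition \ref{p4.3} gives $t^2Le^{-t^2L}f\in T_\fai(\cx\times(0,\fz))$, and Theorem \ref{t3.1} then produces $T_\fai$-atoms $\{a_j\}_j$ supported in $\{\widehat{B_j}\}_j$ with $\blz(\{\lz_j a_j\}_j)\ls\|t^2Le^{-t^2L}f\|_{T_\fai(\cx\times(0,\fz))}=\|f\|_{H_{\fai,\,L}(\cx)}$, which is precisely the inequality in the needed direction. Corollary \ref{c4.2} is then applied once to $f$ as a whole (not term by term through $\az_j$), and the tent-space pairing is estimated by Cauchy--Schwarz on $\widehat{B_j}$, the size condition $\|a_j\|_{T^2_2(\cx\times(0,\fz))}\le[\mu(B_j)]^{1/2}\|\chi_{B_j}\|^{-1}_{L^\fai(\cx)}$, and Proposition \ref{p4.5}, yielding $|\langle g,f\rangle|\ls\sum_j|\lz_j|\,\|g\|_{\bbmo^M_{\fai,\,L}(\cx)}$; the final step $\sum_j|\lz_j|\ls\blz(\{\lz_j a_j\}_j)$ uses the normalization of the $T_\fai$-atoms together with the uniformly lower/upper type $\le1$ of $\fai$.

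In part (ii), the local $L^2$-representation and gluing construction you describe is not needed and would be hard to make rigorous without first knowing that the candidate $g$ obeys the growth condition \eqref{4.27}. The paper's route is more direct: since $\epz>n[q(\fai)/i(\fai)-1/2]$, Proposition \ref{p4.3} shows every $(\fai,M,\epz)$-molecule lies in $H_{\fai,\,L}(\cx)$ with uniformly bounded quasi-norm, so $\cm^{M,\,\epz}_\fai(L)\hookrightarrow H_{\fai,\,L}(\cx)$ boundedly and therefore $\ell$ itself is already the sought element $g\in\cm^M_\fai(\cx)$, with \eqref{4.32} holding by definition of the pairing. The remaining work is the BMO-norm bound, for which the paper passes via Proposition \ref{p4.4} to the resolvent form of the norm and tests against $\wz\bz:=(I-(I+r_B^2L)^{-1})^M\phi$ with $\phi\in L^2(B)$, $\|\phi\|_{L^2(B)}\le[\mu(B)]^{1/2}\|\chi_B\|^{-1}_{L^\fai(\cx)}$: the resolvent Davies--Gaffney decay makes $\wz\bz$ a multiple of a $(\fai,M,\epz)$-molecule by inspection, and then $|\langle\ell,\wz\bz\rangle|\ls\|\ell\|_{(H_{\fai,\,L}(\cx))^\ast}$ gives the claim. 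Your proposal to test against $(I-e^{-r_B^2L})^Mg\cdot\chi_B$ is circular as written (it involves the unknown $g$ under the pairing rather than an external $\phi\in L^2(B)$), and even after correcting it a sharp cutoff of a molecule is not obviously a molecule; the resolvent test function sidesteps both issues.
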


\begin{proof}
Let $g\in\bbmo^M_{\fai,\,L}(\cx)$. For any $f\in
H_{\fai,\,\at,\,\fin}^{\wz{M}}(\cx)$, by Proposition \ref{p4.3},
we know that $t^2Le^{-t^2L}f\in T_\fai(\cx\times(0,\fz))$. From this and
Theorem \ref{t3.1}, it follows that there exist
$\{\lz_j\}_j\subset\cc$ and $T_\fai(\cx\times(0,\fz))$-atoms $\{a_j\}_j$
supported in $\{\widehat{B}_j\}_j$ such that \eqref{3.2} holds true.
Moreover, by the uniformly upper type $p_1$ property of $\fai$, we
know that $\sum_j|\lz_j|\ls\blz(\{\lz_ja_j\}_j)$, where
$\blz(\{\lz_ja_j\}_j)$ is as in \eqref{3.2}. This, together with
Corollary \ref{c4.2}, H\"older's inequality, Proposition
\ref{p4.5}, yields that
\begin{eqnarray*}
|\langle
g,f\rangle|&&=\lf|\wz{C}_M\int_0^\fz\int_{\cx}(t^2L)^Me^{-t^2L}g(x)
\overline{t^2Le^{-t^2L}f(x)}\,\frac{d\mu(x)\,dt}{t}\r|\\
\nonumber
&&\ls\sum_j|\lz_j|\int_0^\fz\int_{\cx}\lf|(t^2L)^Me^{-t^2L}g(x)a_j(x,t)\r|
\,\frac{d\mu(x)\,dt}{t}\\ \nonumber
&&\ls\sum_j|\lz_j|\|a_j\|_{T^2_2(\cx\times(0,\fz))}\lf\{
\int_{\widehat{B}_j}\lf|(t^2L)^Me^{-t^2L}g(x)\r|^2
\,\frac{d\mu(x)\,dt}{t}\r\}^{1/2}\\ \nonumber &&\ls
\sum_j|\lz_j|\|g\|_{\bbmo^M_{\fai,\,L}(\cx)}
\ls\blz\lf(\lf\{\lz_j\az_j\r\}_j\r) \|g\|_{\bbmo^M_{\fai,\,L}(\cx)}\\
\nonumber &&\ls\lf\|t^2Le^{-t^2L}f\r\|_{T_\fai(\cx\times(0,\fz))}
\|g\|_{\bbmo^M_{\fai,\,L}(\cx)}\ls\|f\|_{H_{\fai,\,L}(\cx)}
\|g\|_{\bbmo^M_{\fai,\,L}(\cx)},
\end{eqnarray*}
which, together with Proposition \ref{p4.3}, implies
that (i) holds true.

Conversely, let $\ell\in(H_{\fai,\,L}(\cx))^{\ast}$. If $g\in
\cm^{M,\,\epz}_{\fai}(L)$, then $g$ is a multiple of a
$(\fai,\,M,\,\epz)$-molecule. Moreover, if
$\epz>n[q(\fai)/i(\fai)-1/2]$, then by Proposition \ref{p4.3}, we
see that $g\in H_{\fai,\,L}(\cx)$, and hence
$\cm^{M,\,\epz}_{\fai}(L)\subset H_{\fai,\,L}(\cx)$. Therefore,
$\ell\in \cm^M_{\fai}(\cx)$.

Moreover, for any ball $B\subset\cx$, let $\phi\in L^2(B)$ with
$$\|\phi\|_{L^2(B)}\le[\mu(B)]^{1/2}\|\chi_B\|^{-1}_{L^\fai(\cx)}$$
and $\wz{\bz}:=(I-(I+r^2_BL)^{-1})^M\phi$. Obviously,
$\wz{\bz}=(r^2_BL)^M(I+r^2_BL)^{-M}\phi=:L^{M}\wz{b}$. Then from
the fact that $(I+r^2_BL)^{-1}$ satisfies the Davies-Gaffney estimates
\eqref{2.5} with $[\dist(E,F)]^2$ and $t^2$, respectively, replaced by
$\dist(E,F)$ and $t$, we infer that, for each $j\in\zz_+$ and
$k\in\{0,\,\cdots,\,M\}$,
\begin{eqnarray*}
\lf\|(r^2_BL)^k\wz{b}\r\|_{L^2(U_j(B))}&&=r_B^{2M}
\lf\|(I-(I+r^2_BL)^{-1})^k(I+r^2_BL)^{-(M-k)}\phi\r\|_{L^2(U_j(B))}\\
&&\ls r_B^{2M}\exp\lf\{-\frac{\dist(B,U_j(B))}{C_3r_B}\r\}\|\phi\|_{L^2(B)}\\
&&\ls 2^{-j\epz}r_B^{2M}[\mu(B)]^{1/2}\|\chi_B\|^{-1}_{L^\fai(\cx)},
\end{eqnarray*}
where $M\in\nn$ and $2M>n[q(\fai)/i(\fai)-1/2]$. Thus, $\wz{\bz}$ is a multiple
of a $(\fai,\,M,\,\epz)$-molecule. Since $(I-(I+r^2_BL)^{-1})^M
\ell$ is well defined and belongs to $L^2_{\loc}(\cx)$ for every
$t\in(0,\fz)$, by $\|\wz{\bz}\|_{H_{\fai,\,L}(\cx)}\ls1$, we know
that
$$|\langle(I-(I+r^2_BL)^{-1})^M \ell,\phi\rangle|=
|\langle \ell,(I-(I+r^2_BL)^{-1})^M \phi\rangle|=|\langle
\ell,\wz{\bz}\rangle|\ls\|\ell\|_{(H_{\fai,\,L}(\cx))^{\ast}},$$
which further implies that
\begin{eqnarray*}
&&\frac{[\mu(B)]^{1/2}}{\|\chi_B\|_{L^\fai(\cx)}}
\lf\{\int_B\lf|(I-(I+r^2_BL)^{-1})^M \ell(x)\r|^2\,d\mu(x)\r\}^{1/2}\\
&&\hs\ls\sup_{\|\phi\|_{L^2(B)}\le1}\lf|\lf\langle
\ell,(I-(I+r^2_BL)^{-1})^M\frac{[\mu(B)]^{1/2}\phi}{
\|\chi_B\|_{L^\fai(\cx)}}\r\rangle\r|
\ls\|\ell\|_{(H_{\fai,\,L}(\cx))^{\ast}}.
\end{eqnarray*}
From this and Proposition \ref{p4.4}, it follows that
$\ell\in\bbmo^M_{\fai,\,L}(\cx)$, which completes the proof of
Theorem \ref{t4.1}.
\end{proof}

\begin{remark}\label{r4.3}
By Theorem \ref{t4.1}, we know that, for all $M\in\nn$ with $M >
\frac{n}{2}[\frac{q(\fai)}{i(\fai)}-\frac12]$, the spaces
$\bbmo^M_{\fai,\,L}(\cx)$ coincide with equivalent norms; thus, in
what follows, we denote $\bbmo^M_{\fai,\,L}(\cx)$ simply by
$\bbmo_{\fai,\,L}(\cx)$.
\end{remark}

\begin{definition}\label{d4.5}
A measure $d\mu$ on $\cx\times(0,\fz)$ is called a \emph{$\fai$-Carleson
measure} if
$$\|d\mu\|_{\fai}:=\sup_{B\subset\cx}\frac{[\mu(B)]^{1/2}}
{\|\chi_{B}\|_{L^\fai(\cx)}}\lf\{\int_{\widehat{B}}
\lf|d\mu(x,t)\r|\r\}^{1/2}<\fz,
$$
where the supremum is taken over all balls $B\subset\cx$ and
$\widehat{B}$ denotes the tent over $B$.
\end{definition}

Using Theorem \ref{t4.1} and Proposition \ref{p4.5}, we obtain the
following $\fai$-Carleson measure characterization of
$\bbmo_{\fai,\,L}(\cx)$,  whose proof is similar to that of \cite[Theorem
4.2]{jy11}. We omit the details.

\begin{theorem}\label{t4.2}
Let $L$ satisfy Assumptions (A) and (B), $\fai$ be as in Definition
\ref{d2.3} with $\fai\in\rh_{2/[2-I(\fai)]}(\cx)$ and $I(\fai)$ as in
\eqref{2.10}, and $M\in\nn$ with $M >\frac{n}{2}[\frac{q(\fai)}{i(\fai)}-\frac12]$,
where $n$, $q(\fai)$ and $i(\fai)$ are, respectively, as in \eqref{2.2}, \eqref{2.12} and
\eqref{2.11}. Then the following conditions are equivalent:

{\rm (i)} $f\in \bbmo_{\fai,\,L}(\cx)$;

{\rm (ii)} $f\in\cm^M_\fai(\cx)$ satisfies \eqref{4.27} for some
$q_0\in(q(\fai),\fz)$, $p_2\in(0,i(\fai))$ and $\epz\in(0,\fz)$, and
$d\mu_f$ is a $\fai$-Carleson measure, where $d\mu_f$ is defined by
$$d\mu_f:=\lf|(t^2L)^Me^{-t^2L}f(x)\r|^2\frac{d\mu(x)\,dt}{t}.$$

Moreover, $\|f\|_{\bbmo_{\fai,\,L}(\cx)}$ and $\|d\mu_f\|_\fai$ are
comparable.
\end{theorem}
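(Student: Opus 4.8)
The plan is to deduce Theorem \ref{t4.2} from the duality of Theorem \ref{t4.1}, the Calder\'on reproducing formula of Corollary \ref{c4.2} (equivalently Proposition \ref{p4.6}), the tent-space atomic decomposition of Theorem \ref{t3.1} together with Corollary \ref{c3.1}, and the Carleson-type bound of Proposition \ref{p4.5}, following the scheme of \cite[Theorem 4.2]{jy11}. The implication (i) $\Rightarrow$ (ii) is essentially already in place: by Remark \ref{r4.3}, $\bbmo_{\fai,\,L}(\cx)=\bbmo^M_{\fai,\,L}(\cx)$, so $f\in\cm^M_\fai(\cx)$ by the very definition of $\bbmo^M_{\fai,\,L}(\cx)$; the estimate carried out inside the proof of Corollary \ref{c4.2} (the chain ending at \eqref{4.30}) shows that, for a suitable choice of $q_0\in(q(\fai),\fz)$, $p_2\in(0,i(\fai))$ and $\epz\in(0,\fz)$, the integral in \eqref{4.27} is finite and bounded by $C\|f\|_{\bbmo^M_{\fai,\,L}(\cx)}$, which is exactly \eqref{4.27}; and Proposition \ref{p4.5} says precisely that $d\mu_f$ is a $\fai$-Carleson measure with $\|d\mu_f\|_\fai\le C\|f\|_{\bbmo_{\fai,\,L}(\cx)}$.

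For (ii) $\Rightarrow$ (i) I would argue by duality. Choose $\wz M\in\nn$ with $\wz M>M+\epz+\frac N4+\frac{nq(\fai)}{2i(\fai)}$, where $\epz$ is the exponent furnished by \eqref{4.27}, and note that every $(\fai,\wz M)$-atom is a constant multiple of a $(\fai,M,\epz')$-molecule for each $\epz'\in(0,\fz)$. For a $(\fai,\wz M)$-atom $\az$ supported in a ball $B$, Proposition \ref{p4.6} applies to our $f$ and yields
$$\langle f,\az\rangle=\wz{C}_M\int_{\cx\times(0,\fz)}(t^2L)^Me^{-t^2L}f(x)\,\overline{t^2Le^{-t^2L}\az(x)}\,\frac{d\mu(x)\,dt}{t}.$$
Since $\az\in H^2(\cx)$ with $\|\az\|_{H_{\fai,\,L}(\cx)}\ls1$ (Proposition \ref{p4.3}), we have $t^2Le^{-t^2L}\az\in T_\fai(\cx\times(0,\fz))\cap T^2_2(\cx\times(0,\fz))$, and Theorem \ref{t3.1} with Corollary \ref{c3.1} gives a decomposition $t^2Le^{-t^2L}\az=\sum_j\lz_ja_j$, converging in both spaces, into $T_\fai(\cx\times(0,\fz))$-atoms $a_j$ supported in tents $\widehat{B_j}$ with $\blz(\{\lz_ja_j\}_j)\ls\|\az\|_{H_{\fai,\,L}(\cx)}\ls1$. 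Substituting this decomposition, observing that each $a_j$ has bounded support so that the $j$-th term is an absolutely convergent integral over $\widehat{B_j}$, and using H\"older's inequality, the size condition defining $T_\fai(\cx\times(0,\fz))$-atoms and the definition of $\|d\mu_f\|_\fai$, I obtain $|\langle f,\az\rangle|\ls\|d\mu_f\|_\fai\sum_j|\lz_j|$; the uniformly upper type $p_1$ and lower type $p_2$ of $\fai$ (both in $(0,1]$), via the Luxembourg-norm characterisation of $\|\chi_{B_j}\|_{L^\fai(\cx)}$, then force $\sum_j|\lz_j|\ls\blz(\{\lz_ja_j\}_j)\ls1$, so $|\langle f,\az\rangle|\ls\|d\mu_f\|_\fai$. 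Running the same estimate on finite combinations $h$ of $(\fai,\wz M)$-atoms gives $|\langle f,h\rangle|\ls\|d\mu_f\|_\fai\|h\|_{H_{\fai,\,L}(\cx)}$, so by density (Proposition \ref{p4.3}) the functional $\ell:=\langle f,\cdot\rangle$ extends to $(H_{\fai,\,L}(\cx))^\ast$ with $\|\ell\|_{(H_{\fai,\,L}(\cx))^\ast}\ls\|d\mu_f\|_\fai$. Theorem \ref{t4.1}(ii) then produces $g\in\bbmo^M_{\fai,\,L}(\cx)$ representing $\ell$ on $H^{M,\,\epz'}_{\fai,\,\mol,\,\fin}(\cx)$ with $\|g\|_{\bbmo^M_{\fai,\,L}(\cx)}\ls\|d\mu_f\|_\fai$; since $(\fai,\wz M)$-atoms are $(\fai,M,\epz')$-molecules, $\langle f-g,\az\rangle=0$ for all such atoms, and testing $(I-e^{-r_B^2L})^M(f-g)$ against functions in $L^2(B)$ — each of which, suitably normalised by $\|\chi_B\|_{L^\fai(\cx)}[\mu(B)]^{-1/2}$, yields a multiple of a $(\fai,M,\epz')$-molecule exactly as in the proof of Theorem \ref{t4.1}(ii) — shows that $(I-e^{-r_B^2L})^Mf=(I-e^{-r_B^2L})^Mg$ in $L^2(B)$ for every ball $B$. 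Hence $f\in\bbmo_{\fai,\,L}(\cx)$ with $\|f\|_{\bbmo_{\fai,\,L}(\cx)}\ls\|d\mu_f\|_\fai$, which together with the first direction gives the asserted comparability.

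The main obstacles, handled as in \cite{jy11}, are: justifying rigorously the reproducing-formula pairing and the termwise expansion $\langle f,\az\rangle=\sum_j\lz_j\int_{\widehat{B_j}}(t^2L)^Me^{-t^2L}f\,\overline{a_j}\,\frac{d\mu(x)\,dt}{t}$, which relies on the bounded supports of the $a_j$ and on the $T^2_2(\cx\times(0,\fz))$-convergence from Corollary \ref{c3.1}; the passage $\sum_j|\lz_j|\ls\blz(\{\lz_ja_j\}_j)$, which is precisely where the hypothesis $I(\fai)\le1$ (so that $p_1\le1$) enters; and the final identification of $f$ with $g$. None of these requires ideas beyond those already developed, which is why only a sketch is needed here.
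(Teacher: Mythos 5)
Your proposal is correct and follows exactly the route the paper indicates (Theorem \ref{t4.1}, Proposition \ref{p4.5}, Proposition \ref{p4.6}/Corollary \ref{c4.2}, and the tent-space machinery of Theorem \ref{t3.1} and Corollary \ref{c3.1}, in the spirit of \cite[Theorem 4.2]{jy11}). The implication (i) $\Rightarrow$ (ii) is precisely Proposition \ref{p4.5} plus the finiteness of \eqref{4.27} already established inside the proof of Corollary \ref{c4.2}, and your duality argument for (ii) $\Rightarrow$ (i) — reproducing formula, tent-atom decomposition of $t^2Le^{-t^2L}\az$, the Carleson bound giving $|\langle f,\az\rangle|\lesssim\|d\mu_f\|_\fai$, extension to $(H_{\fai,\,L}(\cx))^\ast$, representation by some $g\in\bbmo^M_{\fai,\,L}(\cx)$, and identification of $f$ with $g$ — is the intended one. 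One small cosmetic point at the very end: the proof of Theorem \ref{t4.1}(ii) builds its molecules from the resolvent, i.e.\ $(I-(I+r_B^2L)^{-1})^M\phi$, not from $(I-e^{-r_B^2L})^M\phi$, so the cleanest conclusion of your final step is $(I-(I+r_B^2L)^{-1})^Mf=(I-(I+r_B^2L)^{-1})^Mg$ on $L^2(B)$, after which Proposition \ref{p4.4} gives $f\in\bbmo^M_{\fai,\,L}(\cx)$; alternatively, one can check directly via Lemma \ref{l2.1} and the Davies–Gaffney estimates that suitably normalised $(I-e^{-r_B^2L})^M\phi$ is also a multiple of a $(\fai,M,\epz')$-molecule, in which case your statement stands as written.
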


\section{Equivalent characterizations of $H_{\fai,\,L}(\cx)$ \label{s5}}

\hskip\parindent In this section, we establish several equivalent
characterizations of the Musielak-Orlicz-Hardy space $H_{\fai,\,L}(\cx)$ in
terms of the atom, the molecule and the Lusin-area function associated
with the Poisson semigroup generated by $L$. We begin with some
notions.

\begin{definition}\label{d5.1}
Let $L$ satisfy Assumptions
(A) and (B), $\fai$ be as in Definition \ref{d2.3} and $M\in\nn$ with
$M>\frac{n}{2}[\frac{q(\fai)}{i(\fai)}-\frac12]$, where $n$,
$q(\fai)$ and $i(\fai)$ are, respectively, as in \eqref{2.2},
\eqref{2.12} and \eqref{2.11}. A distribution
$f\in(\bbmo_{\fai,\,L}(\cx))^{\ast}$ is said to be in the \emph{space
$H^M_{\fai,\,\at}(\cx)$} if there exist $\{\lz_j\}_j\subset\cc$ and
a sequence $\{\az_j\}_j$ of $(\fai,\,M)$-atoms such that
$f=\sum_j\lz_j\az_j$ in $(\bbmo_{\fai,\,L}(\cx))^{\ast}$ and
$$\sum_j\fai\lf(B_j,\frac{|\lz_j|}
{\|\chi_{B_j}\|_{L^\fai(\cx)}}\r)<\fz,$$
where, for each $j$, $\supp\az_j\subset B_j$. Moreover, for any $f\in
H^M_{\fai,\,\at}(\cx)$, its \emph{quasi-norm} is defined by
$\|f\|_{H^M_{\fai,\,\at}(\cx)}:=\inf\{\blz(\{\lz_j\az_j\}_j)\},$
where $\blz(\{\lz_j\az_j\}_j)$ is the same as in Proposition
\ref{p4.2} and the infimum is taken over all possible
decompositions of $f$ as above.
\end{definition}

\begin{definition}\label{d5.2}
Let $L$ satisfy Assumptions
(A) and (B), $\fai$ be as in Definition \ref{d2.3}, $M\in\nn$ with
$M>\frac{n}{2}[\frac{q(\fai)}{i(\fai)}-\frac12]$ and
$\epz\in(n[\frac{q(\fai)}{i(\fai)}-\frac12],\fz)$, where $n$,
$q(\fai)$ and $i(\fai)$ are, respectively, as in \eqref{2.2},
\eqref{2.12} and \eqref{2.11}. A distribution
$f\in(\bbmo_{\fai,\,L}(\cx))^{\ast}$ is said to be in the \emph{space
$H^{M,\,\epz}_{\fai,\,\mol}(\cx)$} if there exist
$\{\lz_j\}_j\subset\cc$ and a sequence $\{\bz_j\}_j$ of
$(\fai,\,M,\,\epz)$-molecules  such that $f=\sum_j\lz_j\bz_j$ in
$(\bbmo_{\fai,\,L}(\cx))^{\ast}$ and
$$\sum_j\fai\lf(B_j,\frac{|\lz_j|}
{\|\chi_{B_j}\|_{L^\fai(\cx)}}\r)<\fz,$$
where, for each $j$,
$\bz_j$ is associated with the ball $B_j$. Moreover, for any $f\in
H^{M,\,\epz}_{\fai,\,\mol}(\cx)$, its \emph{quasi-norm} is defined by
$\|f\|_{H^{M,\,\epz}_{\fai,\,\mol}(\cx)}:=\inf\{\blz(\{\lz_j\bz_j\}_j)\},$
where $\blz(\{\lz_j\bz_j\}_j)$ is the same as in Proposition
\ref{p4.2} and the infimum is taken over all possible
decompositions of $f$ as above.
\end{definition}

For all $f\in L^2(\cx)$ and $x\in\cx$, define the Lusin area
function associated with the Poisson semigroup of $L$ by
\begin{equation}\label{5.1}
S_P f(x):=\lf\{\int_{\bgz(x)}\lf|t\sqrt{L}e^{-t\sqrt{L}}f(y)\r|^2
\,\frac{d\mu(y)\,dt}{V(x,t)t}\r\}^{1/2}.
\end{equation}

Similar to Definition \ref{d4.1}, we introduce the space
$H_{\fai,\,S_P}(\cx)$ as follows.

\begin{definition}\label{d5.3}
Let $L$ satisfy Assumptions (A) and (B) and $\fai$ be as in
Definition \ref{d2.3}. A function $f\in H^2(\cx)$ is said to be in
$\wz{H}_{\fai,\,S_P}(\cx)$ if $S_P(f)\in L^{\fai}(\cx)$; moreover,
define
$$\|f\|_{H_{\fai,\,S_P}(\cx)}:=\|S_P(f)\|_{L^\fai(\cx)}:=
\inf\lf\{\lz\in(0,\fz):\
\int_{\cx}\fai\lf(x,\frac{S_P(f)(x)}{\lz}\r)\,d\mu(x)\le1\r\}.$$
The \emph{Musielak-Orlicz-Hardy space $H_{\fai,\,S_P}(\cx)$}
is defined to be the completion of
$\wz{H}_{\fai,\,S_P}(\cx)$ in the quasi-norm
$\|\cdot\|_{H_{\fai,\,S_P}(\cx)}$.
\end{definition}

We now show that the spaces $H_{\fai,\,L}(\cx)$,
$H_{\fai,\,\at}^M(\cx)$, $H_{\fai,\,\mol}^{M\,\epz}(\cx)$ and
$H_{\fai,\,S_P}(\cx)$ coincide with equivalent quasi-norms.

\subsection{Atomic and molecular characterizations \label{s5.1}}

\hskip\parindent In this subsection, we establish the atomic and
the molecular characterizations of the Musielak-Orlicz-Hardy space
$H_{\fai,\,L}(\cx)$. First we need the following Proposition
\ref{l5.1} whose proof is similar to that of \cite[Proposition 5.1]{jy11}.
We omit the details.

\begin{lemma}\label{l5.1}
Let $L$ satisfy Assumptions (A) and (B) and $\fai$ be as in Definition \ref{d2.3}.
Fix $t\in(0,\fz)$ and $\wz{B}:=B(x_0,R)$.
Then there exists a positive constant $C(t,R,\wz{B})$, depending on $t,\,R$ and $\wz{B}$,
such that, for all $\phi\in L^2(\wz{B})$,
$t^2Le^{-t^2L}\phi\in\bbmo_{\fai,\,L}(\cx)$ and
$$\lf\|t^2Le^{-t^2L}\phi\r\|_{\bbmo_{\fai,\,L}(\cx)}\le C(t,R,\wz{B})
\|\phi\|_{L^2(\wz{B})}.$$
\end{lemma}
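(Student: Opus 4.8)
The plan is to verify, directly from Definition \ref{d4.4}, that $f:=t^2Le^{-t^2L}\phi$ lies in $\cm^M_\fai(\cx)$ and that the supremum defining $\|f\|_{\bbmo^M_{\fai,\,L}(\cx)}$ is dominated by $C(t,R,\wz B)\|\phi\|_{L^2(\wz B)}$; by Remark \ref{r4.3} this yields the lemma. For the membership $f\in\cm^M_\fai(\cx)$, first note that, $L$ being nonnegative self-adjoint, $\|t^2Le^{-t^2L}\|_{L^2(\cx)\to L^2(\cx)}=\sup_{u\ge0}ue^{-u}=e^{-1}$, so $f\in L^2(\cx)$ with $\|f\|_{L^2(\cx)}\le e^{-1}\|\phi\|_{L^2(\wz B)}$. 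If $\psi\in\cm^{M,\,\epz}_\fai(L)$ has norm one, then $\psi=L^M\nu$ and the defining sum forces $\|\psi\|_{L^2(U_j(B(x_0,1)))}\le2^{-j\epz}[V(x_0,1)]^{1/2}\|\chi_{B(x_0,1)}\|_{L^\fai(\cx)}^{-1}$ for all $j\in\zz_+$, whence $\|\psi\|_{L^2(\cx)}\le C[V(x_0,1)]^{1/2}\|\chi_{B(x_0,1)}\|_{L^\fai(\cx)}^{-1}=:C(\wz B)$; thus $|\langle f,\psi\rangle|\le\|f\|_{L^2(\cx)}\|\psi\|_{L^2(\cx)}\le C(t,R,\wz B)\|\phi\|_{L^2(\wz B)}\|\psi\|_{\cm^{M,\,\epz}_\fai(L)}$, so $f\in(\cm^{M,\,\epz}_\fai(L))^{\ast}$ for every admissible $\epz$, i.e.\ $f\in\cm^M_\fai(\cx)$.

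For an arbitrary ball $B:=B(x_B,r_B)$ the plan is to combine two operator estimates for $(I-e^{-r_B^2L})^Mf$ with the geometry of $B$ relative to $\wz B$. Using $(I-e^{-r_B^2L})^M=\int_{[0,r_B^2]^M}L^Me^{-(s_1+\cdots+s_M)L}\,ds_1\cdots ds_M$ and composing with $t^2Le^{-t^2L}$, one gets, with $u:=t^2+s_1+\cdots+s_M\in[t^2,t^2+Mr_B^2]$ and $\|L^{M+1}e^{-uL}\|_{L^2(\cx)\to L^2(\cx)}\ls u^{-(M+1)}\le t^{-2(M+1)}$, the bound $\|(I-e^{-r_B^2L})^Mf\|_{L^2(\cx)}\ls(r_B/t)^{2M}\|\phi\|_{L^2(\wz B)}$, which together with the trivial bound $\|(I-e^{-r_B^2L})^Mf\|_{L^2(\cx)}\le\|f\|_{L^2(\cx)}\ls\|\phi\|_{L^2(\wz B)}$ yields $\|(I-e^{-r_B^2L})^Mf\|_{L^2(\cx)}\ls\min\{(r_B/t)^{2M},1\}\|\phi\|_{L^2(\wz B)}$. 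When in addition $B\cap\wz B=\emptyset$, writing $L^{M+1}e^{-uL}=u^{-(M+1)}(uL)^{M+1}e^{-uL}$ and invoking the Davies-Gaffney estimates for $\{(uL)^{M+1}e^{-uL}\}$ (Lemma \ref{l2.1}) gives the sharper bound
\begin{equation*}
\lf\|(I-e^{-r_B^2L})^Mf\r\|_{L^2(B)}\ls(r_B/t)^{2M}\exp\lf\{-\frac{[\dist(B,\wz B)]^2}{C(t^2+Mr_B^2)}\r\}\|\phi\|_{L^2(\wz B)}.
\end{equation*}

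It then remains to show that $\frac{[\mu(B)]^{1/2}}{\|\chi_B\|_{L^\fai(\cx)}}\|(I-e^{-r_B^2L})^Mf\|_{L^2(B)}\ls C(t,R,\wz B)\|\phi\|_{L^2(\wz B)}$ uniformly in $B$, which I would obtain by splitting into cases according to the size of $r_B$ (relative to $t$) and the location of $x_B$ (relative to $\wz B$). Using $\fai\in\rh_{2/[2-I(\fai)]}(\cx)$ and Lemma \ref{l2.6}(iv), fix $p_1\in(I(\fai),1]$ with $\fai$ of uniform upper type $p_1$ and $\fai\in\rh_{2/(2-p_1)}(\cx)$, and fix $q_0\in(q(\fai),\fz)$ with $\fai\in\aa_{q_0}(\cx)$; since $I(\fai)\ge i(\fai)$ (Remark \ref{r2.1}), the hypothesis $M>\frac n2[\frac{q(\fai)}{i(\fai)}-\frac12]$ allows these to be chosen so that also $M\ge\frac n2(\frac{q_0}{p_1}-\frac12)$. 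For $x_B$ in a bounded neighbourhood of $\wz B$ (whose size depends on $t,R$) one has $\mu(B)\gs r_B^n$ by \eqref{2.2}--\eqref{2.3} applied at a fixed scale, and combining this with Lemma \ref{l2.6}(vii)--(viii) and the uniform upper type $p_1$ bounds $[\mu(B)]^{1/2}/\|\chi_B\|_{L^\fai(\cx)}$ by a constant times a nonpositive power of $r_B$ that is absorbed by the factor $(r_B/t)^{2M}$, precisely because $M\ge\frac n2(\frac{q_0}{p_1}-\frac12)$. For balls $B$ containing a fixed dilate of $\wz B$, Lemma \ref{l2.6}(viii) (whose exponent $(q-1)/q$ with $q=2/(2-p_1)$ equals $p_1/2$) together with Lemma \ref{l2.6}(vii) gives $\|\chi_B\|_{L^\fai(\cx)}\ge C(\wz B)[\mu(B)]^{1/2}$, so the ratio is bounded; these configurations are paired with the trivial $L^2(\cx)$-bound. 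Finally, for balls $B$ of bounded radius lying far from $\wz B$, the ratio grows at most polynomially in $d(x_B,x_0)$ (again by the doubling and $\aa_{q_0}$ structure), which is killed by the Gaussian factor $\exp\{-[\dist(B,\wz B)]^2/C(t)\}$ in the displayed Davies-Gaffney estimate. The main obstacle is exactly this last step --- getting uniform control of $[\mu(B)]^{1/2}/\|\chi_B\|_{L^\fai(\cx)}$ over all balls $B$ for a general Musielak-Orlicz weight $\fai$, where the reverse-H\"older assumption $\fai\in\rh_{2/[2-I(\fai)]}(\cx)$ is indispensable; the remaining bookkeeping parallels the proof of \cite[Proposition 5.1]{jy11}.
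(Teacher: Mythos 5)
Your overall plan is the natural one, and as far as one can tell it is also the paper's (the proof is omitted, with a pointer to the analogous \cite[Proposition~5.1]{jy11}): verify $t^2Le^{-t^2L}\phi\in\cm^M_\fai(\cx)$, prove the two operator estimates
$$\lf\|(I-e^{-r^2_B L})^M t^2Le^{-t^2L}\phi\r\|_{L^2(\cx)}\ls\min\lf\{(r_B/t)^{2M},1\r\}\|\phi\|_{L^2(\wz B)}$$
(via the integral representation of $(I-e^{-aL})^M$ and the spectral bound $\|L^{M+1}e^{-uL}\|_{L^2\to L^2}\ls u^{-(M+1)}$) together with the Gaussian off-diagonal refinement from Lemma~\ref{l2.1}, and then pay for the factor $[\mu(B)]^{1/2}/\|\chi_B\|_{L^\fai(\cx)}$ by a case analysis in $r_B$ and $d(x_B,x_0)$. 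Both operator estimates and the membership $t^2Le^{-t^2L}\phi\in\cm^M_\fai(\cx)$ are correctly derived. Two points, however, are not right as written.

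The control of the weight ratio must use the uniformly \emph{lower} type $p_2\in(0,i(\fai))$, not the upper type $p_1$. To bound $[\mu(B)]^{1/2}/\|\chi_B\|_{L^\fai(\cx)}$ from above for a small ball $B$ you need a \emph{lower} bound on $\|\chi_B\|_{L^\fai(\cx)}$; since then $\|\chi_B\|_{L^\fai(\cx)}\le\|\chi_{\wz B}\|_{L^\fai(\cx)}$, this comes from
$1\sim\fai\bigl(B,\|\chi_B\|_{L^\fai(\cx)}^{-1}\bigr)\gs\bigl(\|\chi_{\wz B}\|_{L^\fai(\cx)}/\|\chi_B\|_{L^\fai(\cx)}\bigr)^{p_2}\fai\bigl(B,\|\chi_{\wz B}\|_{L^\fai(\cx)}^{-1}\bigr)$,
and uniform upper type $p_1$ gives the inequality in the wrong direction (it only bounds $\|\chi_B\|_{L^\fai(\cx)}$ from above). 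Combined with Lemma~\ref{l2.6}(vii) this yields $[\mu(B)]^{1/2}/\|\chi_B\|_{L^\fai(\cx)}\ls C(\wz B)\,r_B^{-n(q_0/p_2-1/2)}$, so the factor $(r_B/t)^{2M}$ absorbs the blow-up iff $2M>n(\tfrac{q_0}{p_2}-\tfrac12)$. Your condition ``$M\ge\frac n2(\frac{q_0}{p_1}-\frac12)$'' is strictly weaker (as $p_1\ge p_2$) and would not suffice. Fortunately the blanket hypothesis $M>\frac n2[\frac{q(\fai)}{i(\fai)}-\frac12]$, together with the freedom to take $p_2$ close to $i(\fai)$ and $q_0$ close to $q(\fai)$, does give $M>\frac n2(\frac{q_0}{p_2}-\frac12)$, so the slip is correctable, but the exponent you wrote is not the one the argument actually needs.

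The treatment of balls far from $\wz B$ is also too coarse. The Gaussian factor $\exp\{-[\dist(B,\wz B)]^2/(C(t^2+Mr_B^2))\}$ is \emph{not} small when $r_B\gs\dist(B,\wz B)$, so the dependence of $[\mu(B)]^{1/2}/\|\chi_B\|_{L^\fai(\cx)}$ on $r_B$ and on $d(x_B,x_0)$ must be tracked jointly, not dismissed as ``polynomial in $d(x_B,x_0)$, killed by the Gaussian.'' The clean way is to compare $B$ and $\wz B$ through the intermediate ball $B^*:=B(x_B,\,d(x_B,x_0)+R)\supset B\cup\wz B$: apply Lemma~\ref{l2.6}(vii) to the concentric pair $B\subset B^*$ (so $\mu(B)/\mu(B^*)\gs[r_B/(d(x_B,x_0)+R)]^n$ with no $N$ from \eqref{2.3}), and Lemma~\ref{l2.6}(viii) to $\wz B\subset B^*$. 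If instead one compares through a ball centered at $x_0$, the exponent on $r_B$ picks up an extra $N(q_0/p_2-1/2)$ (from applying \eqref{2.3} at scale $r_B$), which overspends the hypothesis on $M$. This is precisely the kind of bookkeeping your closing sentence defers to \cite{jy11}; without it the proof as written is incomplete.
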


From Lemma \ref{l5.1}, it follows that, for each
$f\in(\bbmo_{\fai,\,L}(\cx))^{\ast}$, $t^2Le^{-t^2L}f$ is well
defined. Indeed, for any ball $B:=B(x_B,r_B)$ and $\phi\in
L^2(B)$, by Lemma \ref{l5.1}, we know that there exists a positive
constant $C(t,B)$, depending on $t$ and $B$, such that
$$|\langle t^2Le^{-t^2L}f,\phi\rangle|:=|\langle f,
t^2Le^{-t^2L}\phi\rangle|\le
C(t,B)\|\phi\|_{L^2(B)}\|f\|_{(\bbmo_{\fai,\,L}(\cx))^{\ast}},
$$
which implies that $t^2Le^{-t^2L}f\in L^2_{\loc}(\cx)$ in the
sense of distributions.

\begin{theorem}\label{t5.1}
Let $L$ satisfy Assumptions (A) and (B), $\fai$ be as in Definition
\ref{d2.3} with $\fai\in\rh_{2/[2-I(\fai)]}(\cx)$ and $I(\fai)$ as in \eqref{2.10},
$M\in\nn$ with $M>\frac{n}{2}[\frac{q(\fai)}{i(\fai)}-\frac{1}2]$ and
$\epz\in(n[\frac{q(\fai)}{i(\fai)}-\frac{1}2],\fz)$,  where $n$,
$q(\fai)$ and $i(\fai)$ are, respectively, as in \eqref{2.2},
\eqref{2.12} and \eqref{2.11}. Then the spaces $H_{\fai,\,L}(\cx)$,
$H^M_{\fai,\,\at}(\cx)$ and $H^{M,\,\epz}_{\fai,\,\mol}(\cx)$
coincide with equivalent quasi-norms.
\end{theorem}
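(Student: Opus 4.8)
The plan is to prove the coincidence of the three spaces by establishing the chain of continuous inclusions
\[
H^M_{\fai,\,\at}(\cx)\subset H^{M,\,\epz}_{\fai,\,\mol}(\cx)\subset H_{\fai,\,L}(\cx)\subset H^M_{\fai,\,\at}(\cx),
\]
each carrying the corresponding quasi-norm domination; chaining these will yield equivalence of the three quasi-norms. The first inclusion is immediate from Definitions \ref{d4.2} and \ref{d4.3}, since every $(\fai,\,M)$-atom is a $(\fai,\,M,\,\epz)$-molecule, so that $\|f\|_{H^{M,\,\epz}_{\fai,\,\mol}(\cx)}\le\|f\|_{H^M_{\fai,\,\at}(\cx)}$ for every $f\in H^M_{\fai,\,\at}(\cx)$.

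Before the remaining two inclusions, I would record the continuous embedding $\iota\colon H_{\fai,\,L}(\cx)\hookrightarrow(\bbmo_{\fai,\,L}(\cx))^{\ast}$ furnished by the duality of Theorem \ref{t4.1}: for $h\in H_{\fai,\,L}(\cx)$ set $\iota(h)(g):=\langle g,h\rangle$ for $g\in\bbmo_{\fai,\,L}(\cx)$; since $\bbmo_{\fai,\,L}(\cx)=(H_{\fai,\,L}(\cx))^{\ast}$, one has $\|\iota(h)\|_{(\bbmo_{\fai,\,L}(\cx))^{\ast}}\le C\|h\|_{H_{\fai,\,L}(\cx)}$, and $\iota$ is injective by the Hahn--Banach theorem. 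Thus convergence of a series in the quasi-norm of $H_{\fai,\,L}(\cx)$ will force convergence of that same series in $(\bbmo_{\fai,\,L}(\cx))^{\ast}$ to the same limit, and on atoms and molecules the pairing used by $\iota$ agrees (via Corollary \ref{c4.2}) with the one appearing in Definitions \ref{d5.1} and \ref{d5.2}. Matching these two a priori different modes of convergence will be the main technical point of the argument.

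For $H^{M,\,\epz}_{\fai,\,\mol}(\cx)\subset H_{\fai,\,L}(\cx)$, I would take $f=\sum_j\lz_j\bz_j$ in $(\bbmo_{\fai,\,L}(\cx))^{\ast}$, where the $\bz_j=L^Mb_j$ are $(\fai,\,M,\,\epz)$-molecules adapted to balls $B_j$ with $\sum_j\fai(B_j,|\lz_j|\|\chi_{B_j}\|_{L^\fai(\cx)}^{-1})<\fz$. Each $\bz_j\in L^2(\cx)\cap H^2(\cx)$, and the estimate carried out inside the proof of Proposition \ref{p4.3}, combined with Lemma \ref{l2.5}(i), gives $\bz_j\in\wz{H}_{\fai,\,L}(\cx)$ with $\|\lz_j\bz_j\|_{H_{\fai,\,L}(\cx)}$ controlled by $\fai(B_j,|\lz_j|\|\chi_{B_j}\|_{L^\fai(\cx)}^{-1})$; hence every partial sum $\sum_{j\le N}\lz_j\bz_j$ lies in $\wz{H}_{\fai,\,L}(\cx)$. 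Using the $\gz$-quasi-subadditivity of the equivalent quasi-norm from Remark \ref{r4.1}(iii) together with Lemmas \ref{l2.3}(i) and \ref{l2.5}(ii), I would check that $\{\sum_{j\le N}\lz_j\bz_j\}_N$ is Cauchy in $H_{\fai,\,L}(\cx)$; its limit $g$ then satisfies $\|g\|_{H_{\fai,\,L}(\cx)}\ls\blz(\{\lz_j\bz_j\}_j)$. Applying $\iota$ shows $\iota(g)=\sum_j\lz_j\bz_j=f$ in $(\bbmo_{\fai,\,L}(\cx))^{\ast}$, so $f$ is identified with $g\in H_{\fai,\,L}(\cx)$ and $\|f\|_{H_{\fai,\,L}(\cx)}\ls\|f\|_{H^{M,\,\epz}_{\fai,\,\mol}(\cx)}$.

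Finally, for $H_{\fai,\,L}(\cx)\subset H^M_{\fai,\,\at}(\cx)$, given $f\in H_{\fai,\,L}(\cx)$ I would invoke Corollary \ref{c4.1} to write $f=\sum_j\lz_j\az_j$ in $H_{\fai,\,L}(\cx)$ with $(\fai,\,M)$-atoms $\az_j$ supported in balls $B_j$ and $\blz(\{\lz_j\az_j\}_j)\le C\|f\|_{H_{\fai,\,L}(\cx)}$; in particular $\sum_j\fai(B_j,|\lz_j|\|\chi_{B_j}\|_{L^\fai(\cx)}^{-1})<\fz$. By the embedding $\iota$ the same series converges to $f$ in $(\bbmo_{\fai,\,L}(\cx))^{\ast}$, so $f\in H^M_{\fai,\,\at}(\cx)$ with $\|f\|_{H^M_{\fai,\,\at}(\cx)}\le\blz(\{\lz_j\az_j\}_j)\ls\|f\|_{H_{\fai,\,L}(\cx)}$. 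The hard part, as flagged above, will be the Cauchy estimate for the molecular partial sums and the consequent identification of the $H_{\fai,\,L}(\cx)$- and $(\bbmo_{\fai,\,L}(\cx))^{\ast}$-limits; everything else is bookkeeping on top of Theorem \ref{t4.1}, Corollary \ref{c4.1} and Proposition \ref{p4.3}.
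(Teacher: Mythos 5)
Your chain of inclusions $H^M_{\fai,\,\at}(\cx)\subset H^{M,\,\epz}_{\fai,\,\mol}(\cx)\subset H_{\fai,\,L}(\cx)\subset H^M_{\fai,\,\at}(\cx)$ is exactly the paper's structure, and your treatment of the two easy inclusions (atoms are molecules; Corollary \ref{c4.1} feeds the decomposition into $(\bbmo_{\fai,\,L}(\cx))^\ast$ through duality) matches the paper. The genuine difference is in the middle inclusion $H^{M,\,\epz}_{\fai,\,\mol}(\cx)\subset H_{\fai,\,L}(\cx)$. The paper does \emph{not} run a Cauchy-sequence argument at all: it uses Lemma \ref{l5.1} (and the remark following it) to give $t^2Le^{-t^2L}f$ a pointwise meaning for an arbitrary $f\in(\bbmo_{\fai,\,L}(\cx))^\ast$, and then derives, directly from the duality formula $\|t^2Le^{-t^2L}f\|_{L^2(B(x,t))}=\sup_{\|\phi\|_{L^2(B(x,t))}\le1}|\langle f,t^2Le^{-t^2L}\phi\rangle|$, the pointwise inequality $S_L(f)(x)\le\sum_j S_L(\lz_j\bz_j)(x)$. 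Feeding this into the estimate from the proof of Proposition \ref{p4.3} and the quasi-subadditivity of $\fai$ gives the quasi-norm bound in one stroke. Your Cauchy-and-identify scheme is a workable alternative, but it is longer and leaves more to verify; the pointwise domination via Lemma \ref{l5.1} is the key device you are missing.

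There is also a genuine gap in your write-up: you claim that $\iota\colon H_{\fai,\,L}(\cx)\to(\bbmo_{\fai,\,L}(\cx))^{\ast}$ is injective \emph{by the Hahn--Banach theorem}. This is false as stated. $H_{\fai,\,L}(\cx)$ is only a quasi-Banach space, and Hahn--Banach separation fails for quasi-Banach spaces in general: for instance $L^p(0,1)$ with $0<p<1$ has trivial topological dual, so its canonical map into the second dual is the zero map and is nowhere injective. The injectivity of $\iota$ is not automatic; it is a property one must verify for this specific pair of spaces (and indeed the paper's route avoids making this the crux of the argument, since it estimates $S_L(f)$ directly from the functional $f$ rather than producing a separate limit $g$ and then appealing to injectivity to identify it with $f$). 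Your Cauchy claim itself is repairable --- one shows, using the uniform upper type $p_1$ property and the summability $\sum_j\fai(B_j,|\lz_j|/(\lz_0\|\chi_{B_j}\|_{L^\fai(\cx)}))<\fz$, that $\int_\cx\fai(x,S_L(\sum_{M<j\le N}\lz_j\bz_j)(x)/\eps)\,d\mu(x)\le1$ for $M$ large, whence the partial sums are Cauchy --- but the identification of the limit cannot be disposed of by invoking Hahn--Banach, and this is exactly where the paper's Lemma \ref{l5.1} route is cleaner.
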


\begin{proof}
By Theorem \ref{t4.1}, we know that
$(H_{\fai,\,L}(\cx))^{\ast}=\bbmo_{\fai,\,L}(\cx)$, which, together
with Corollary \ref{c4.1}, further implies that, for any $f\in
H_{\fai,\,L}(\cx)$, its atomic decomposition \eqref{4.13} also holds true
in $(\bbmo_{\fai,\,L}(\cx))^{\ast}$. Thus, $H_{\fai,\,L}(\cx)\subset
H_{\fai,\,\at}^M(\cx)$. Moreover, since every $(\fai,\,M)$-atom is a
$(\fai,\,M,\,\epz)$-molecule for all
$\epz\in(n[\frac{q(\fai)}{i(\fai)}-\frac{1}2],\fz)$, the inclusion
$H_{\fai,\,\at}^M(\cx)\subset H^{M,\,\epz}_{\fai,\,\mol}(\cx)$ is
obvious.

Let us finally prove that $H^{M\,\epz}_{\fai,\,\mol}(\cx)\subset
H_{\fai,\,L}(\cx)$. Suppose that $f\in
H^{M\,\epz}_{\fai,\,\mol}(\cx)$. Then there exist
$\{\lz_j\}_j\subset\cc$ and a sequence $\{\bz_j\}_j$ of
$(\fai,\,M,\,\epz)$-molecules  such that $f=\sum_j \lz_j\bz_j$ in
$(\bbmo_{\fai,\,L}(\cx))^{\ast}$ and $\blz(\{\lz_j\bz_j\}_j)<\fz$.

For all $x\in\cx$, from Lemma \ref{l5.1}, it follows that
\begin{eqnarray*}
S_L(f)(x)&&=\lf\{\int_0^\fz\lf\|t^2Le^{-t^2L}f\r
\|^2_{L^2(B(x,t))}\,\frac{dt}{V(x,t)t}\r\}^{1/2}\\
&&=\lf\{\int_0^\fz\lf(\sup_{\|\phi\|_{L^2(B(x,t))}\le1}
\lf|\lf\langle\sum_j\lz_j\bz_j,
t^2Le^{-t^2L}\phi\r\rangle\r|\r)^2\,\frac{dt}{V(x,t)t}\r\}^{1/2}\\
&&\le\sum_j\lf\{\int_0^\fz\lf(\sup_{\|\phi\|_{L^2(B(x,t))}\le1}
\lf|\lf\langle t^2Le^{-t^2L}(\lz_j\bz_j),
\phi\r\rangle\r|\r)^2\,\frac{dt}{V(x,t)t}\r\}^{1/2}\\
&&\le\sum_j S_L(\lz_j\bz_j)(x).
\end{eqnarray*}
By this, the proof of Proposition \ref{p4.3} and Lemma
\ref{2.2}(i), we conclude that, for
$\epz\in(n[\frac{q(\fai)}{i(\fai)}-\frac{1}2],\fz)$,
\begin{eqnarray*}
\int_{\cx}\fai\lf(x,S_L(f)(x)\r)\,d\mu(x)&&\ls\sum_j
\int_{\cx}\fai\lf(x,S_L(\lz_j\bz_j)(x)\r)\,d\mu(x)
\ls\sum_j\fai\lf(B_j,\frac{|\lz_j|}{\|\chi_{B_j}\|_{L^\fai(\cx)}}\r),
\end{eqnarray*}
where, for each $j$, $\bz_j$ is associated with the ball $B_j$,
which further implies that
$\|f\|_{H_{\fai,\,L}(\cx)}\ls\blz(\{\lz_j\bz_j\}_j)$. Then by
taking the infimum over all decompositions of $f$ as above, we see that
$$\|f\|_{H_{\fai,\,L}(\cx)}\ls\|f\|_{H^{M,\,\epz}_{\fai,\,\mol}(\cx)},$$
which completes the proof of Theorem \ref{t5.1}.
\end{proof}

\subsection{The Lusin area function characterization \label{s5.2}}

\hskip\parindent In this subsection, we characterize the space
$H_{\fai,\,L}(\cx)$ by the Lusin area function $S_P$ as in
\eqref{5.1}. First, by using the subadditivity and continuity of $\fai$,
and the uniformly upper type $p_1$ property of $\fai$ for some $p_1\in(0,1]$,
similar to the proof of \cite[Lemma 5.2]{jy11}, we obtain the following auxiliary
conclusion. We omit the details here.

Recall that a \emph{nonnegative sublinear operator $T$}
means that $T$ is sublinear and $T(f)\ge0$
for all $f$ in the domain of $T$.

\begin{lemma}\label{l5.2}
Let $L$ satisfy Assumptions
(A) and (B), $\fai$ be as in Definition \ref{d2.3} and $M\in\nn$ with
$M>\frac{n}{2}[\frac{q(\fai)}{i(\fai)}-\frac{1}2]$, where $n$,
$q(\fai)$ and $i(\fai)$ are, respectively, as in \eqref{2.2},
\eqref{2.12} and \eqref{2.11}. Suppose that $T$ is a linear (resp.
nonnegative sublinear) operator which maps $L^2(\cx)$ continually
into weak-$L^2(\cx)$. If there exists a positive constant $C$ such
that, for all $\lz\in\cc$ and $(\fai,M)$-atoms $\az$,
\begin{equation}\label{5.2}
\int_{\cx}\fai(x,T(\lz\az)(x))\,d\mu(x)\le C
\fai\lf(B,\frac{|\lz|}{\|\chi_B\|_{L^\fai(\cx)}}\r),
\end{equation}
then $T$ extends to a bounded linear (resp. sublinear) operator
from $H_{\fai,\,L}(\cx)$ to $L^\fai(\cx)$; moreover, there exists
a positive constant $C$ such that, for all $f\in
H_{\fai,\,L}(\cx)$, $\|Tf\|_{L^\fai(\cx)}\le
C\|f\|_{H_{\fai,\,L}(\cx)}$.
\end{lemma}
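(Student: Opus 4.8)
The statement is a standard "atoms control the operator, so it extends to the Hardy space" lemma, and the proof should follow the template of \cite[Lemma 5.2]{jy11} adapted to the Musielak--Orlicz setting. I would argue as follows. Take $f\in H_{\fai,\,L}(\cx)$. By Corollary \ref{c4.1}, there exist $\{\lz_j\}_j\subset\cc$ and $(\fai,M)$-atoms $\{\az_j\}_j$ with $\supp\az_j\subset B_j$ such that $f=\sum_j\lz_j\az_j$ in $H_{\fai,\,L}(\cx)$ and $\blz(\{\lz_j\az_j\}_j)\le C\|f\|_{H_{\fai,\,L}(\cx)}$. The first technical point is to justify that $T$ may be applied termwise, i.e. that $Tf=\sum_j\lz_jT(\az_j)$ in an appropriate sense; this uses that the partial sums converge to $f$ in $L^2(\cx)$ on a suitable dense subclass (recall from Proposition \ref{p4.2} that for $f\in H_{\fai,\,L}(\cx)\cap L^2(\cx)$ the decomposition converges in $L^2(\cx)$), together with the assumed continuity of $T$ from $L^2(\cx)$ to weak-$L^2(\cx)$; a density/limiting argument then extends the inequality to all of $H_{\fai,\,L}(\cx)$. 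Since $T$ is sublinear (or linear) and nonnegative, we obtain the pointwise bound $T(f)(x)\le\sum_j|\lz_j|\,T(\az_j)(x)$.

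Next I would pass this pointwise bound through $\fai$. By the uniform $\sigma$-quasi-subadditivity of $\fai$ (Lemma \ref{l2.3}(i)), for any $\lz\in(0,\fz)$,
\begin{equation*}
\int_{\cx}\fai\lf(x,\frac{T(f)(x)}{\lz}\r)\,d\mu(x)
\ls\sum_j\int_{\cx}\fai\lf(x,\frac{|\lz_j|\,T(\az_j)(x)}{\lz}\r)\,d\mu(x).
\end{equation*}
Applying the hypothesis \eqref{5.2} with $\lz_j/\lz$ in place of $\lz$ to each summand — here one uses that $\fai(x,|\lz_j|T(\az_j)(x)/\lz)$ is exactly the quantity estimated by \eqref{5.2}, since $T(\,(\lz_j/\lz)\az_j\,)=(|\lz_j|/\lz)T(\az_j)$ by (sub)linearity and homogeneity — yields
\begin{equation*}
\int_{\cx}\fai\lf(x,\frac{T(f)(x)}{\lz}\r)\,d\mu(x)
\ls\sum_j\fai\lf(B_j,\frac{|\lz_j|}{\lz\,\|\chi_{B_j}\|_{L^\fai(\cx)}}\r).
\end{equation*}
Now choose $\lz:=C'\blz(\{\lz_j\az_j\}_j)$ for a suitably large constant $C'$; by the definition of $\blz$ in Proposition \ref{p4.2}, the right-hand side is then at most a finite constant, so Lemma \ref{l2.5}(i) gives $\|T(f)\|_{L^\fai(\cx)}\ls\blz(\{\lz_j\az_j\}_j)\ls\|f\|_{H_{\fai,\,L}(\cx)}$, which is the desired conclusion. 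The boundedness then follows, and in the linear case the extension is clearly linear; in the sublinear case $T$ remains sublinear on $H_{\fai,\,L}(\cx)$.

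\textbf{Main obstacle.} The only genuinely delicate point is the convergence/limiting argument establishing that $T$ can be applied to the atomic decomposition termwise and that the resulting estimate, first obtained on a dense subspace, extends to all of $H_{\fai,\,L}(\cx)$: one must be careful because the atomic series converges in the quasi-norm $\|\cdot\|_{H_{\fai,\,L}(\cx)}$ (and, for $L^2$-elements, in $L^2(\cx)$), while $T$ is only assumed bounded $L^2(\cx)\to$ weak-$L^2(\cx)$, so one needs to first prove the estimate for $f\in H_{\fai,\,L}(\cx)\cap L^2(\cx)$ using Proposition \ref{p4.2} and the $L^2$-to-weak-$L^2$ continuity (plus the $\gz$-subadditivity of the quasi-norm from Remark \ref{r4.1}(iii) to control tails), and then invoke the density of $H_{\fai,\,L}(\cx)\cap L^2(\cx)$ in $H_{\fai,\,L}(\cx)$ together with the Fatou-type lower semicontinuity of $\|\cdot\|_{L^\fai(\cx)}$ to pass to the limit. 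The remaining steps — the quasi-subadditivity estimate, the homogeneity of $T$, and the final application of Lemma \ref{l2.5}(i) — are routine, and I would refer to the proof of \cite[Lemma 5.2]{jy11} for the details omitted here.
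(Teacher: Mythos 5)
Your proposal is correct and follows the standard approach the paper implicitly endorses by citing \cite[Lemma 5.2]{jy11}: decompose $f$ into $(\fai,M)$-atoms via Corollary \ref{c4.1} (or Proposition \ref{p4.2} for $f\in H_{\fai,\,L}(\cx)\cap L^2(\cx)$), use the $L^2\to$ weak-$L^2$ continuity of $T$ and (sub)linearity to obtain the pointwise bound $T(f)\le\sum_j|\lz_j|T(\az_j)$ a.e.\ (via convergence in measure and a.e.\ convergence along a subsequence), then pass through $\fai$ with Lemma \ref{l2.3}(i), apply the atomic hypothesis \eqref{5.2} termwise, and invoke Lemma \ref{l2.5}(i) to conclude $\|Tf\|_{L^\fai(\cx)}\ls\blz(\{\lz_j\az_j\}_j)\ls\|f\|_{H_{\fai,\,L}(\cx)}$, finishing by density. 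You correctly isolate the one delicate step — justifying the termwise application of $T$ on $H_{\fai,\,L}(\cx)\cap L^2(\cx)$ and the subsequent density extension using the inequality $|T(g)-T(h)|\le T(g-h)$ — and your treatment of it is sound.
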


\begin{theorem}\label{t5.2}
Let $L$ satisfy Assumptions (A) and (B), and $\fai$ be as in Definition
\ref{d2.3} with $\fai\in\rh_{2/[2-I(\fai)]}(\cx)$ and $I(\fai)$ as
in \eqref{2.10}. Then the
spaces $H_{\fai,\,L}(\cx)$ and $H_{\fai,\,S_P}(\cx)$ coincide with
equivalent quasi-norms.
\end{theorem}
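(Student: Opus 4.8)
The plan is to prove the two inclusions $H_{\fai,\,L}(\cx)\subset H_{\fai,\,S_P}(\cx)$ and $H_{\fai,\,S_P}(\cx)\subset H_{\fai,\,L}(\cx)$, together with the corresponding quasi-norm estimates, by re-running the tent-space machinery of Sections \ref{s3} and \ref{s4} with the Poisson semigroup $\{e^{-t\sqrt L}\}_{t>0}$ in place of the heat semigroup. First, applying \eqref{2.7} with $\phi(z):=ze^{-z}$ (which decays appropriately on $[0,\fz)$) and the doubling property, one sees that $S_P$ in \eqref{5.1} is bounded on $L^2(\cx)$; in particular $S_P$ is a nonnegative sublinear operator mapping $L^2(\cx)$ continuously into $L^2(\cx)$, hence into weak-$L^2(\cx)$, as required in Lemma \ref{l5.2}.

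For the inclusion $H_{\fai,\,L}(\cx)\subset H_{\fai,\,S_P}(\cx)$, by Lemma \ref{l5.2} it suffices to establish the analogue of \eqref{4.5}: for all $\lz\in\cc$ and all $(\fai,M)$-atoms $\az$ associated with a ball $B$,
\begin{equation*}
\int_\cx \fai(x,S_P(\lz\az)(x))\,d\mu(x)\ls \fai\lf(B,\frac{|\lz|}{\|\chi_B\|_{L^\fai(\cx)}}\r).
\end{equation*}
Its proof follows the pattern of Proposition \ref{p4.1}: split $\int_\cx=\sum_{k\ge0}\int_{U_k(B)}$; for $k\in\{0,\dots,4\}$ use the uniformly upper type $p_1$ property of $\fai$, H\"older's inequality, $\fai\in\rh_{2/(2-p_1)}(\cx)$ and the $L^2(\cx)$-boundedness of $S_P$, exactly as in \eqref{4.7}; for $k\ge5$ use the uniformly upper and lower type properties together with the off-diagonal bound $\|S_P(\az)\|_{L^2(U_k(B))}\ls 2^{-2kM}[\mu(B)]^{1/2}\|\chi_B\|_{L^\fai(\cx)}^{-1}$. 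This last bound follows from the subordination formula $e^{-t\sqrt L}=\frac{t}{2\sqrt\pi}\int_0^\fz s^{-3/2}e^{-t^2/(4s)}e^{-sL}\,ds$ together with Lemma \ref{l2.1} (equivalently from the finite propagation speed property \eqref{2.6} of $\cos(t\sqrt L)$ via Fourier inversion), writing $\az=L^Mb$ with $\supp b\subset B$, in analogy with \eqref{4.10} and \eqref{4.11}; the resulting series in $k$ then converges because $M>\frac n2[\frac{q(\fai)}{i(\fai)}-\frac12]$. We omit the routine computations.

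For the reverse inclusion, by density it is enough to fix $f\in H_{\fai,\,S_P}(\cx)\cap L^2(\cx)$ and prove $\|f\|_{H_{\fai,\,L}(\cx)}\ls\|S_P f\|_{L^\fai(\cx)}$. Choose an even $\pz\in C^\fz_c(\rr)$ with $\supp\pz\subset(-C_4^{-1},C_4^{-1})$, let $\wz\Phi$ be its Fourier transform and set $\Theta(z):=z^{2(M+1)}\wz\Phi(z)$, so that (by Lemma \ref{l2.2} with $\kappa=M+1$) the kernel of $\Theta(t\sqrt L)=(t^2L)^{M+1}\wz\Phi(t\sqrt L)$ is supported in $\{(x,y):d(x,y)\le t\}$. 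With a suitable normalizing constant $\wz C$ (arranged via the choice of $\pz$ so that $\wz C\int_0^\fz\Theta(u)\,ue^{-u}\,\frac{du}{u}=1$, exactly as \eqref{4.3} is arranged in the heat case), the $H_\fz$-functional calculus for $L$ on $\overline{R(L)}$ yields the Calder\'on reproducing formula
\begin{equation*}
f=\wz C\int_0^\fz \Theta(t\sqrt L)\big(t\sqrt L e^{-t\sqrt L}f\big)\,\frac{dt}{t}=:\pi_{\Theta,\,L}\big(t\sqrt L e^{-t\sqrt L}f\big)
\end{equation*}
in $L^2(\cx)$. Since $\ca(t\sqrt L e^{-t\sqrt L}f)=S_P f\in L^\fai(\cx)$ and, by the $L^2(\cx)$-boundedness of $S_P$, also $t\sqrt L e^{-t\sqrt L}f\in T^2_2(\cx\times(0,\fz))$, we have $t\sqrt L e^{-t\sqrt L}f\in T_\fai(\cx\times(0,\fz))\cap T^2_2(\cx\times(0,\fz))$; applying Theorem \ref{t3.1} and Corollary \ref{c3.1} gives an atomic decomposition convergent in both $T_\fai$ and $T^2_2$. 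Arguing exactly as in Proposition \ref{p4.1}(ii) — which only uses the finite-propagation structure of the operators (now provided by Lemma \ref{l2.2}) and the vanishing factor $z^{2(M+1)}$, both shared by $\Theta$ — the operator $\pi_{\Theta,\,L}$ maps $T_\fai(\cx\times(0,\fz))$ continuously into $H_{\fai,\,L}(\cx)$, sending each $T_\fai$-atom to a multiple of a $(\fai,M)$-atom (the estimate \eqref{4.5} being reused verbatim). Hence
\begin{equation*}
\|f\|_{H_{\fai,\,L}(\cx)}=\|\pi_{\Theta,\,L}(t\sqrt L e^{-t\sqrt L}f)\|_{H_{\fai,\,L}(\cx)}\ls\|t\sqrt L e^{-t\sqrt L}f\|_{T_\fai(\cx\times(0,\fz))}=\|S_P f\|_{L^\fai(\cx)},
\end{equation*}
and combining the two directions finishes the proof.

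The main work lies entirely in the reverse inclusion, and within it in verifying the two Poisson-adapted ingredients of Section \ref{s4}: the atom estimate \eqref{4.5} for $S_P$ (which needs off-diagonal $L^2$-estimates for $t\sqrt Le^{-t\sqrt L}L^M$ with sufficient decay) and the reproducing formula above (which needs $\Theta$ to simultaneously carry the $z^{2(M+1)}$ vanishing, the finite-propagation-speed structure, and a nonzero normalization $\int_0^\fz\Theta(u)ue^{-u}\,\frac{du}{u}\neq0$). Both reduce to transferring heat-semigroup bounds to the Poisson semigroup via subordination or via \eqref{2.6}, which is standard; the only genuinely new checking is the compatibility of these requirements on $\Theta$. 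An alternative route for the reverse inclusion, bypassing a new reproducing formula, is to prove the comparison $\|S_L f\|_{L^\fai(\cx)}\ls\|S_P f\|_{L^\fai(\cx)}$ for $f\in L^2(\cx)$ directly, by writing $t^2Le^{-t^2L}$ as a superposition $\int_0^\fz\eta_s(t\sqrt L)\,s\sqrt Le^{-s\sqrt L}\,\frac{ds}{s}$ through subordination and then invoking a change-of-aperture argument for the functional $\ca$; either route works.
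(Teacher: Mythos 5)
Your proposal is correct and follows essentially the same route as the paper. The forward inclusion uses Lemma \ref{l5.2} together with the $(\fai,M)$-atom estimate for $S_P$ (obtained from subordination and Davies-Gaffney estimates as in \eqref{4.5}), which is exactly the paper's argument. For the reverse inclusion, note that the auxiliary function you construct, $\Theta(z)=z^{2(M+1)}\wz\Phi(z)$, is literally identical to the $\Psi$ already fixed in Section \ref{s4.1}, and the operator $\pi_{\Theta,L}$ is nothing other than $\pi_{\Psi,L}$; so rather than ``arguing exactly as in Proposition \ref{p4.1}(ii)'' to re-derive boundedness of $\pi_{\Theta,L}$, you may simply cite Proposition \ref{p4.1}(ii) directly. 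The only thing that changes from the heat case is the normalizing constant: the paper writes $f=\frac{\wz C_\Psi}{C_\Psi}\pi_{\Psi,L}(t\sqrt L e^{-t\sqrt L}f)$ with $\wz C_\Psi\int_0^\fz\Psi(t)te^{-t}\frac{dt}{t}=1$, whereas $\pi_{\Psi,L}$ itself carries the heat-normalization $C_\Psi$. Your worry about ``compatibility of the requirements on $\Theta$'' is therefore moot: those requirements were already verified in Section \ref{s4} for $\Psi$, and you are inadvertently reconstructing it. Modulo this small redundancy, the proof is sound and matches the paper's.
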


\begin{proof}
We first prove $H_{\fai,\,L}(\cx)\cap H^2(\cx)\subset
H_{\fai,\,S_P}(\cx)\cap H^2(\cx)$. From \eqref{2.7}, it follows that
$S_P$ is bounded on $L^2(\cx)$. Thus, by Lemma \ref{l5.2}, to prove
that $H_{\fai,\,L}(\cx)\cap H^2(\cx)\subset
H_{\fai,\,S_P}(\cx)\cap H^2(\cx)$, we only need to
show that \eqref{5.2} holds true with $T:=S_P$, where $M\in\nn$ with
$M>\frac{n}{2}[\frac{q(\fai)}{i(\fai)}-\frac{1}2]$. From (2.5), the subordination formulae
associated with $L$ (see, for example, \cite[(5.3)]{jy11}) and the uniformly
upper type $p_1\in[I(\fai),1]$ and lower type $p_2\in(0,i(\fai))$ properties of $\fai$,
similar to the proof of \eqref{4.5}, we can show \eqref{5.2} holds true with $T:=S_P$.
We omit the details.

Conversely, we show that $H_{\fai,\,S_P}(\cx)\cap H^2(\cx)\subset
H_{\fai,\,L}(\cx)\cap H^2(\cx)$. Let $f\in H_{\fai,\,S_P}(\cx)\cap
H^2(\cx)$. Then $t\sqrt{L}e^{-t\sqrt{L}}f\in T_\fai(\cx\times(0,\fz))$, which,
together with Proposition \ref{p4.1}(ii), implies that
$\pi_{\Psi,\,L}(t\sqrt{L}e^{-t\sqrt{L}}f)\in H_{\fai,\,L}(\cx)$.
Furthermore, from the $H_{\fz}$ functional calculi, we infer that
$$f=\frac{\wz{C}_{\Psi}}{C_{\Psi}}\pi_{\Psi,\,L}
(t\sqrt{L}e^{-t\sqrt{L}}f)
$$
in $L^2(\cx)$, where $\wz{C}_{\Psi}$ is a positive constant such
that $\wz{C}_{\Psi}\int_0^\fz\Psi(t)te^{-t}\,\frac{dt}{t}=1$ and
$C_{\Psi}$ is as in \eqref{4.2}. This, combined with
$\pi_{\Psi,\,L}(t\sqrt{L}e^{-t\sqrt{L}}f)\in H_{\fai,\,L}(\cx)$,
implies that $f\in H_{\fai,\,L}(\cx)$. Therefore, we know that
$H_{\fai,\,S_P}(\cx)\cap H^2(\cx)\subset H_{\fai,\,L}(\cx)\cap
H^2(\cx)$.

From the above argument, it follows that $H_{\fai,\,S_P}(\cx)\cap
H^2(\cx)=H_{\fai,\,L}(\cx)\cap H^2(\cx)$ with equivalent norms,
which, together with the fact that $H_{\fai,\,S_P}(\cx)\cap
H^2(\cx)$ and $H_{\fai,\,L}(\cx)\cap H^2(\cx)$  are, respectively,
dense in $H_{\fai,\,S_P}(\cx)$ and $H_{\fai,\,L}(\cx)$, and a
density argument, implies that the spaces $H_{\fai,\,S_P}(\cx)$
and  $H_{\fai,\,L}(\cx)$ coincide with equivalent norms. This
finishes the proof of Theorem \ref{t5.2}.
\end{proof}

\section{Applications\label{s6}}

\hskip\parindent In this section, we give some applications of the
Musielak-Orlicz-Hardy space  to the boundedness of operators. More
precisely, in Subsection \ref{s6.1}, we prove that the
Littlewood-Paley $g$-function $g_L$ is bounded from
$H_{\fai,\,L}(\cx)$ to the Musielak-Orlicz space $L^\fai(\cx)$; in
Subsection \ref{s6.2}, we show that the Littlewood-Paley
$g_\lz^\ast$-function $g^\ast_{\lz,\,L}$ is bounded from
$H_{\fai,\,L}(\cx)$ to $L^\fai(\cx)$; in Subsection \ref{s6.3}, we
prove that the spectral multipliers associated with $L$ is bounded
on $H_{\fai,\,L}(\cx)$.

\subsection{Boundedness of Littlewood-Paley $g$-functions $g_L$\label{s6.1}}

\hskip\parindent We begin with the definition of the
Littlewood-Paley $g$-function $g_L$ associated with $L$.

\begin{definition}\label{d6.1}
For all functions $f\in L^2(\cx)$, the \emph{$g$-function $g_L(f)$} is
defined by setting, for all $x\in\cx$,
\begin{equation*}
g_L(f)(x):=\lf\{\int_0^\fz\lf|t^2Le^{-t^2L}f(x)\r|^2\,
\frac{dt}{t}\r\}^{1/2}.
\end{equation*}
\end{definition}

To establish the main result of this subsection, we need the
following Lemma \ref{l6.1}, which is a simple corollary  of
\eqref{2.7}.

\begin{lemma}\label{l6.1}
Let $L$ satisfy Assumptions (A) and (B) and $g_L$ be as in Definition
\ref{d6.1}. Then $g_L$ is bounded on $L^2(\cx)$.
\end{lemma}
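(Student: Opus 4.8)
\textbf{Proof proposal for Lemma \ref{l6.1}.}

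The plan is to deduce the $L^2(\cx)$-boundedness of $g_L$ directly from the quadratic estimate \eqref{2.7}. First I would observe that $g_L(f)(x)=\{\int_0^\fz|\phi(t\sqrt{L})f(x)|^2\,\frac{dt}{t}\}^{1/2}$ with the choice $\phi(z):=z^2e^{-z^2}$; that is, $\phi(t\sqrt{L})=(t\sqrt{L})^2e^{-(t\sqrt{L})^2}=t^2Le^{-t^2L}$ via the spectral calculus for the nonnegative self-adjoint operator $L$. The key point is then to check that this $\phi$ satisfies the decay hypothesis required for \eqref{2.7}: there exists $\dz\in(0,\fz)$ and a constant $C(\dz)$ such that $|\phi(z)|\le C(\dz)\frac{|z|^\dz}{1+|z|^{2\dz}}$ for all $z\in\cc$ (in fact here $z\in[0,\fz)$ suffices, since $L$ is nonnegative). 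This is elementary: with $\dz:=2$, for $z\in[0,\fz)$ one has $\frac{z^2e^{-z^2}}{z^2/(1+z^4)}=(1+z^4)e^{-z^2}$, which is bounded on $[0,\fz)$; hence the inequality holds with an appropriate $C(2)$. Consequently $\int_0^\fz|\phi(t)|^2\,\frac{dt}{t}=\int_0^\fz t^4e^{-2t^2}\,\frac{dt}{t}=\int_0^\fz t^3e^{-2t^2}\,dt<\fz$.

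With these two facts in hand, \eqref{2.7} applies verbatim and gives, for all $f\in L^2(\cx)$,
\begin{equation*}
\|g_L(f)\|_{L^2(\cx)}^2=\int_{\cx}\int_0^\fz\lf|t^2Le^{-t^2L}f(x)\r|^2\,\frac{dt}{t}\,d\mu(x)
=\int_0^\fz\lf\|\phi(t\sqrt{L})f\r\|_{L^2(\cx)}^2\,\frac{dt}{t}
\le\lf\{\int_0^\fz|\phi(t)|^2\,\frac{dt}{t}\r\}\|f\|_{L^2(\cx)}^2,
\end{equation*}
where the interchange of the order of integration is justified by Tonelli's theorem, the integrand being nonnegative and measurable on $\cx\times(0,\fz)$. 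This is exactly the asserted boundedness, with operator norm controlled by $\{\int_0^\fz t^3e^{-2t^2}\,dt\}^{1/2}$.

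There is essentially no obstacle here: the only thing to be mildly careful about is verifying the pointwise decay bound on $\phi$ so that \eqref{2.7} is genuinely applicable, and confirming measurability of $(x,t)\mapsto t^2Le^{-t^2L}f(x)$ on $\cx\times(0,\fz)$ so that Tonelli's theorem applies — both of which are routine. The lemma is stated precisely because this computation is invoked repeatedly (for instance in the proof that $S_L$ is bounded on $L^2(\cx)$ and in the forthcoming boundedness of $g_L$ from $H_{\fai,\,L}(\cx)$ to $L^\fai(\cx)$), so it is worth isolating as a clean consequence of \eqref{2.7}.
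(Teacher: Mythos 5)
Your proof is correct and takes exactly the route the paper intends: the paper does not write out a proof of Lemma \ref{l6.1} at all, merely remarking just before its statement that it is ``a simple corollary of \eqref{2.7}''. Your argument — taking $\phi(z):=z^2e^{-z^2}$ so that $\phi(t\sqrt{L})=t^2Le^{-t^2L}$ by the spectral calculus, verifying the decay bound $|\phi(z)|\le C\,\frac{|z|^\dz}{1+|z|^{2\dz}}$ with $\dz=2$, computing $\int_0^\fz|\phi(t)|^2\,\frac{dt}{t}<\fz$, and invoking Tonelli to identify $\|g_L(f)\|_{L^2(\cx)}^2$ with the left side of \eqref{2.7} — is precisely the computation the authors have in mind, and your remark that the decay bound need only be checked on $[0,\fz)$ (since $L$ is nonnegative self-adjoint, so only the values of $\phi$ on the spectrum of $\sqrt{L}\subset[0,\fz)$ enter the functional calculus) is a correct and welcome clarification of a point the paper glosses over.
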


The main result of this subsection is as follows.

\begin{theorem}\label{t6.1}
Let $L$ satisfy Assumptions (A) and (B) and $\fai$ be as in Definition
\ref{d2.3} with $\fai\in\rh_{2/[2-I(\fai)]}(\cx)$ and $I(\fai)$ as
in \eqref{2.10}. Then $g_L$
is bounded from $H_{\fai,\,L}(\cx)$ to $L^\fai(\cx)$.
\end{theorem}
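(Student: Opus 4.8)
The plan is to invoke the machinery already assembled in the paper, reducing the boundedness of $g_L$ to an atomic estimate via Lemma \ref{l5.2}. First I would observe that, by Lemma \ref{l6.1}, $g_L$ is a nonnegative sublinear operator bounded on $L^2(\cx)$; in particular it maps $L^2(\cx)$ continuously into weak-$L^2(\cx)$, so the hypotheses of Lemma \ref{l5.2} are in force once we verify the atomic condition \eqref{5.2} with $T:=g_L$. Thus the entire proof reduces to showing that there exists a positive constant $C$ such that, for all $\lz\in\cc$ and all $(\fai,M)$-atoms $\az$ adapted to a ball $B$ (with $M\in\nn$ fixed so that $M>\frac{n}{2}[\frac{q(\fai)}{i(\fai)}-\frac12]$),
$$\int_{\cx}\fai(x,g_L(\lz\az)(x))\,d\mu(x)\le C\fai\lf(B,\frac{|\lz|}{\|\chi_B\|_{L^\fai(\cx)}}\r).$$

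The key point is that this estimate is proved by exactly the same argument as \eqref{4.5} in the proof of Proposition \ref{p4.1}. I would split the integral as $\sum_{k=0}^{\fz}\int_{U_k(B)}\fai(x,|\lz|g_L(\az)(x))\,d\mu(x)$ over the annuli $U_k(B)$ defined in \eqref{2.4}. Using $\fai\in\rh_{2/[2-I(\fai)]}(\cx)$ together with Lemma \ref{l2.6}(iv) and the definition of $I(\fai)$, fix $p_1\in[I(\fai),1]$ with $\fai$ of uniformly upper type $p_1$ and $\fai\in\rh_{2/(2-p_1)}(\cx)$; and fix $p_2\in(0,i(\fai))$, $q_0\in(q(\fai),\fz)$ with $M>\frac{n}{2}(\frac{q_0}{p_2}-\frac12)$, $\fai$ of uniformly lower type $p_2$ and $\fai\in\aa_{q_0}(\cx)$. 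For the bounded range $k\in\{0,\dots,4\}$ one uses the uniformly upper type $p_1$ property, H\"older's inequality, $\fai\in\rh_{2/(2-p_1)}(\cx)$, the $L^2(\cx)$-boundedness of $g_L$ from Lemma \ref{l6.1}, and \eqref{2.2}, exactly as in \eqref{4.7}. For $k\ge5$ the crucial ingredient is the off-diagonal decay estimate $\|g_L(\az)\|_{L^2(U_k(B))}\ls 2^{-2kM}[\mu(B)]^{1/2}\|\chi_B\|_{L^\fai(\cx)}^{-1}$, which is obtained in the same way as \eqref{4.11}: writing $\az=L^Mb$, one splits the $t$-integral defining $g_L(\az)$ at $t=d(x,x_B)/4$, applies Fubini's theorem, the Davies-Gaffney estimates of Lemma \ref{l2.1} for $\{(t^2L)^{M+1}e^{-t^2L}\}_{t>0}$, and the size condition (iii) in Definition \ref{d4.2} on $b$; the geometric separation $\dist(\cdot,B)\gs 2^k r_B$ produces the factor $2^{-2kM}$. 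Then splitting $\fai(x,|\lz|g_L(\az)(x))$ via the uniformly upper type $p_1$ and lower type $p_2$ properties as in \eqref{4.8}, and estimating the two resulting pieces by H\"older's inequality, $\fai\in\rh_{2/(2-p_1)}(\cx)$ (resp. $\fai\in\rh_{2/(2-p_2)}(\cx)$, which follows from $p_1\ge p_2$ by Remark \ref{r2.1} and Lemma \ref{l2.6}(ii)), Lemma \ref{l2.6}(vii) and \eqref{2.2}, one gets for each $k\ge5$ a bound of the form $2^{-k\sigma}\fai(B,|\lz|\|\chi_B\|_{L^\fai(\cx)}^{-1})$ with $\sigma>0$ guaranteed by $M>\frac{n}{2}(\frac{q_0}{p_2}-\frac12)$. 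Summing over $k$ gives the desired atomic estimate.

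I do not anticipate a serious obstacle here: the only structural difference between $g_L$ and the Lusin area function $S_L$ is that $g_L$ omits the spatial averaging over the cone $\bgz(x)$, which if anything simplifies the annular estimates, and the $L^2$-boundedness is already recorded in Lemma \ref{l6.1}. The one place demanding a little care is the off-diagonal $L^2$ estimate on $U_k(B)$ for $k\ge5$: without the cone one must directly control $\int_{U_k(B)}\int_0^{d(x,x_B)/4}|(t^2L)^{M+1}e^{-t^2L}b(y)|^2\frac{dt}{t^{4M+1}}$ and its complementary piece, but both are handled by Lemma \ref{l2.1} together with the support and size properties of $b$, precisely mirroring \eqref{4.9}, \eqref{4.10} and \eqref{4.11}. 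Hence the proof is a routine adaptation of the argument for \eqref{4.5}, and I would simply carry out that adaptation, pointing to Proposition \ref{p4.1} for the details of the parallel estimates.
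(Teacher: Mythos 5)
Your proposal is correct and follows essentially the same strategy as the paper: reduce to the atomic estimate \eqref{5.2}, split into annuli $U_k(B)$, treat $k\le 4$ by the $L^2$-boundedness of $g_L$ together with the uniformly upper type $p_1$ property and $\fai\in\rh_{2/(2-p_1)}(\cx)$, and treat $k\ge 5$ by a Davies-Gaffney off-diagonal bound on $\|g_L(\az)\|_{L^2(U_k(B))}$. The only differences from the paper's own argument are cosmetic: you invoke Lemma \ref{l5.2} to package the atomic reduction whereas the paper re-derives it inline from Proposition \ref{p4.2}, and you split the $t$-integral at $t=d(x,x_B)/4$ mirroring \eqref{4.9}--\eqref{4.11} (yielding the rate $2^{-2kM}$), whereas the paper splits at $t=r_B$ in \eqref{6.5} (yielding the slightly weaker but equally sufficient rate $2^{-ks_0}$ with $s_0\in(n[\tfrac{q_0}{p_2}-\tfrac12],2M)$).
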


\begin{proof}
Let $M\in\nn$ with
$M>\frac{n}{2}[\frac{q(\fai)}{i(\fai)}-\frac{1}2]$, where $n$,
$q(\fai)$ and $i(\fai)$ are, respectively, as in \eqref{2.2},
\eqref{2.12} and \eqref{2.11}. Then there exist
$q_0\in(q(\fai),\fz)$ and $p_2\in(0,i(\fai))$ such that
$M>\frac{n}{2}(\frac{q_0}{p_2}-\frac{1}2)$, $\fai$ is of uniformly
lower type $p_2$ and $\fai\in\aa_{q_0}(\cx)$. We first assume that
$f\in H_{\fai,\,L}(\cx)\cap L^2(\cx)$. To show Theorem \ref{t6.1},
it suffices to show that, for any $\lz\in\cc$ and $(\fai,\,M)$-atom
$\az$ supported in the ball $B:=B(x_B,r_B)$,
\begin{eqnarray}\label{6.1}
\int_{\cx}\fai\lf(x,g_L(\lz\az)(x)\r)\,d\mu(x)\ls
\fai\lf(B,\frac{|\lz|}{\|\chi_B\|_{L^\fai(\cx)}}\r).
\end{eqnarray}

Indeed, if \eqref{6.1} holds true, it follows, from Proposition
\ref{p4.2}, that there exist $\{\lz_j\}_j\subset\cc$ and a
sequence $\{\az_j\}_j$ of $(\fai,M)$-atoms such that
$f=\sum_j\lz_j\az_j$ in $H_{\fai,\,L}(\cx)\cap L^2(\cx)$ and
$\blz(\{\lz_j\az_j\}_j)\ls\|f\|_{H_{\fai,\,L}(\cx)}$, which,
together with Lemmas \ref{l6.1} and \ref{l2.3}(i), and \eqref{6.1},
implies that, for all $\lz\in(0,\fz)$,
\begin{eqnarray*}
\int_{\cx}\fai\lf(x,\frac{g_L(f)(x)}{\lz}\r)\,d\mu(x)&&\ls\sum_j
\int_{\cx}\fai\lf(x,\frac{g_L(\lz_j\az_j)(x)}{\lz}\r)\,d\mu(x)\\
&&\ls\sum_j
\fai\lf(B_j,\frac{|\lz_j|}{\lz\|\chi_{B_j}\|_{L^\fai(\cx)}}\r),
\end{eqnarray*}
where, for each $j$, $\supp\az_j\subset B_j$. By this, we see
that $\|g_L(f)\|_{L^\fai(\cx)}
\ls\blz\lf(\{\lz_j\az_j\}_j\r)\ls\|f\|_{H_{\fai,\,L}(\cx)}.$ Since
$H_{\fai,\,L}(\cx)\cap L^2(\cx)$ is dense in $H_{\fai,\,L}(\cx)$,
a density argument then gives the desired conclusion.

Now we prove \eqref{6.1}. First we see that
\begin{equation}\label{6.2}
\int_{\cx}\fai(x,g_L(\lz\az)(x))\,d\mu(x)=
\sum_{j\in\zz_+}\int_{U_j(B)}\fai(x,|\lz|g_L(\az)(x))\,d\mu(x)
=:\sum_{j\in\zz_+}\mathrm{H}_j.
\end{equation}

From the assumption $\fai\in\rh_{2/[2-I(\fai)]}(\cx)$, Lemma \ref{l2.6}(iv) and
the definition of $I(\fai)$, we infer that, there exists $p_1\in[I(\fai),1]$
such that $\fai$ is of uniformly upper type $p_1$ and $\fai\in\rh_{2/(2-p_1)}(\cx)$.
When $j\in\{0,\,\cdots,\,4\}$, by the uniformly upper type $p_1$
property of $\fai$, H\"older's inequality,
$\fai\in\rh_{2/(2-p_1)}(\cx)$ and Lemmas \ref{l6.1} and
\ref{l2.6}(vi), we know that
\begin{eqnarray}\label{6.3}
\mathrm{H}_j&&\le\int_{U_j(B)}\fai\lf(x,\frac{|\lz|}
{\|\chi_{B}\|_{L^\fai(\cx)}}\r)\lf(1+\lf[g_L(\az)(x)
\|\chi_{B}\|_{L^\fai(\cx)}\r]^{p_1}\r)\,d\mu(x)\\ \nonumber &&\ls
\fai\lf(2^jB,\frac{|\lz|}
{\|\chi_{B}\|_{L^\fai(\cx)}}\r)+\|\chi_B\|^{p_1}_{L^\fai(\cx)}
\|g_L(\az)\|^{p_1}_{L^2(\cx)}\\
\nonumber &&\hs\times\lf\{\int_{2^jB}\lf[\fai\lf(x,|\lz|
\|\chi_B\|_{L^\fai(\cx)}^{-1}\r)\r]^{\frac{2}{2-p_1}}\,d\mu(x)\r\}^{\frac{2-p_1}{2}}\\
\nonumber &&\ls\fai\lf(2^jB,\frac{|\lz|}
{\|\chi_{B}\|_{L^\fai(\cx)}}\r)\ls \fai\lf(B,\frac{|\lz|}
{\|\chi_{B}\|_{L^\fai(\cx)}}\r).
\end{eqnarray}

When $j\in\nn$ with $j\ge5$, from the uniformly upper type $p_1$ and
lower type $p_2$ properties of $\fai$, it follows that
\begin{eqnarray}\label{6.4}
\mathrm{H}_j&&\ls\|\chi_B\|_{L^\fai(\cx)}^{p_1}\int_{U_j(B)}
\fai\lf(x,|\lz|\|\chi_B\|^{-1}_{L^\fai(\cx)}\r)
[g_L(\az)(x)]^{p_1}\,d\mu(x)\\
\nonumber &&\hs+\|\chi_B\|_{L^\fai(\cx)}^{p_2}\int_{U_j(B)}
\fai\lf(x,|\lz|\|\chi_B\|^{-1}_{L^\fai(\cx)}\r)
[g_L(\az)(x)]^{p_2}\,d\mu(x)=:\mathrm{E}_j+\mathrm{F}_j.
\end{eqnarray}

To deal with $\mathrm{E}_j$ and $\mathrm{F}_j$, we first estimate
$\int_{U_j(B)}[g_L(\az)(x)]^2\,d\mu(x)$. By the definition of
$g_L$, we see that
\begin{eqnarray}\label{6.5}
\int_{U_j(B)}[g_L(\az)(x)]^2\,d\mu(x)=\int_0^{r_B}\int_{U_j(B)}
\lf|t^2Le^{-t^2L}\az(x)\r|^2\,d\mu(x)\,\frac{dt}{t}+\int_{r_B}^\fz\cdots.
\end{eqnarray}
Take $s_0\in(0,\fz)$ such that
$s_0\in(n[\frac{q_0}{p_2}-\frac{1}2],2M)$. From \eqref{2.5}, we
infer that
\begin{eqnarray}\label{6.6}
&&\int_0^{r_B}\int_{U_j(B)}
\lf|t^2Le^{-t^2L}\az(x)\r|^2\,d\mu(x)\,\frac{dt}{t}\\ \nonumber
&&\hs\ls\int_0^{r_B}\exp\lf\{-\frac{(2^jr_B)^2}{C_3t^2}\r\}
\|\az\|^2_{L^2(B)}\,\frac{dt}{t}\\ \nonumber
&&\hs\ls\lf\{\int_0^{r_B}\frac{t^{2s_0}}{(2^jr_B)^{2s_0}}
\,\frac{dt}{t}\r\}\|\az\|^2_{L^2(B)}\sim2^{-2js_0}\|\az\|^2_{L^2(B)}
\ls2^{-2js_0}\mu(B)\|\chi_B\|^{-2}_{L^\fai(\cx)}.
\end{eqnarray}
Moreover, by the definition of $\az$, we know that there exists
$b\in L^2(B)$ such that $\az=L^M b$ and $\|b\|_{L^2(B)}\le
r^{2M}_B[\mu(B)]^{1/2}\|\chi_{B}\|_{L^\fai(\cx)}^{-1}$. From this
and \eqref{2.5}, it follows that
\begin{eqnarray*}
&&\int_{r_B}^\fz\int_{U_j(B)}
\lf|t^2Le^{-t^2L}\az(x)\r|^2\,d\mu(x)\,\frac{dt}{t}\\ \nonumber
&&\hs=\int_{r_B}^\fz\int_{U_j(B)}
\lf|(t^2L)^{M+1}e^{-t^2L}b(x)\r|^2\,d\mu(x)\,\frac{dt}{t^{4M+1}}\\
\nonumber
&&\hs\ls\int_{r_B}^\fz\exp\lf\{-\frac{(2^jr_B)^2}{C_3t^2}\r\}
\|b\|^2_{L^2(B)}\,\frac{dt}{t^{4M+1}}\ls
\lf\{\int_{r_B}^\fz\frac{t^{2s_0}}{(2^jr_B)^{2s_0}}
\,\frac{dt}{t^{4M+1}}\r\}\|b\|^2_{L^2(B)}\\ \nonumber
&&\hs\ls2^{-2js_0}\mu(B)
\|\chi_B\|^{-2}_{L^\fai(\cx)},
\end{eqnarray*}
which, together with \eqref{6.5} and \eqref{6.6}, implies that
\begin{eqnarray}\label{6.7}
\int_{U_j(B)}\lf[g_L(\az)(x)\r]^2\,d\mu(x)\ls2^{-2js_0}\mu(B)
\|\chi_B\|^{-2}_{L^\fai(\cx)}.
\end{eqnarray}
Thus, by H\"older's inequality, \eqref{6.7},
$\fai\in\rh_{2/(2-p_1)}(\cx)$ and \eqref{2.2}, we conclude that,
for all $j\in\nn$ with $j\ge5$,
\begin{eqnarray}\label{6.8}
\mathrm{E}_j&&\ls\|\chi_B\|^{p_1}_{L^\fai(\cx)}
\lf\{\int_{2^jB}\lf[\fai\lf(x,|\lz|
\|\chi_B\|_{L^\fai(\cx)}^{-1}\r)\r]^{\frac{2}{2-p_1}}\,d\mu(x)\r\}^{\frac{2-p_1}{2}}\\
\nonumber
&&\hs\times\lf\{\int_{U_j(B)}[g_L(\az)(x)]^2\,d\mu(x)\r\}^{\frac{p_1}{2}}\\
\nonumber &&\ls2^{-jp_1s_0}[\mu(B)]^{\frac12-q_0}[\mu(2^jB)]^{q_0-\frac12}
\fai\lf(B,|\lz|\|\chi_B\|^{-1}_{L^\fai(\cx)}\r)\\ \nonumber
&&\ls2^{-jp_1[s_0-n(\frac{q_0}{p_1}-\frac12)]}
\fai\lf(B,|\lz|\|\chi_B\|^{-1}_{L^\fai(\cx)}\r).
\end{eqnarray}
Similarly, by using H\"older's inequality, \eqref{6.7},
$\fai\in\rh_{2/(2-p_1)}(\cx)\subset\rh_{2/(2-p_2)}(\cx)$ and Lemma
\ref{l2.6}(vii), we see that
$\mathrm{F}_j\ls2^{-jp_2[s_0-n(\frac{q_0}{p_2}-\frac12)]}
\fai(B,|\lz|\|\chi_B\|^{-1}_{L^\fai(\cx)})$, which, together with
\eqref{6.8}, \eqref{6.4} and $p_1\ge p_2$, implies that, for each
$j\in\nn$ with $j\ge5$,
$$
\mathrm{H}_j\ls2^{-jp_2[s_0-n(\frac{q_0}{p_2}-\frac12)]}
\fai\lf(B,|\lz|\|\chi_B\|^{-1}_{L^\fai(\cx)}\r).
$$
From this, $s_0>n(\frac{q_0}{p_2}-\frac{1}2)$, \eqref{6.2} and
\eqref{6.3}, we infer that \eqref{6.1} holds true, which completes the
proof of Theorem \ref{t6.1}.
\end{proof}

\begin{remark}\label{r6.1}
When $\cx:=\rn$, $L$ is a nonnegative self-adjoint elliptic
operator in $L^2(\rn)$ and $\fai$ as in \eqref{1.2} with
$\oz\equiv1$ and $\Phi$ concave, Theorem \ref{t6.1} was obtained
in \cite[Theorem 7.1]{jy10}.
\end{remark}

\subsection{Boundedness of Littlewood-Paley $g^{\ast}_\lz$-functions
$g^{\ast}_{\lz,\,L}$\label{s6.2}}

\hskip\parindent In this subsection, we establish the boundedness of
the Littlewood-Paley $g^\ast_\lz$-function $g^\ast_{\lz,\,L}$
associated with $L$ from $H_{\fai,\,L}(\cx)$ to $L^\fai(\cx)$. We
begin with the definition of the Littlewood-Paley
$g^\ast_\lz$-function $g^\ast_{\lz,\,L}$.

\begin{definition}\label{d6.2}
Let $\lz\in(0,\fz)$ and $L$ satisfy Assumptions (A) and (B). For all
$f\in L^2(\cx)$, the \emph{$g^\ast_\lz$-function associated with $L$},
$g^\ast_{\lz,\,L}(f)$, is defined by setting, for all $x\in\cx$,
\begin{equation*}
g^\ast_{\lz,\,L}(f)(x): =\lf\{\int_0^\fz\int_{\cx}
\lf[\frac{t}{t+d(x,y)}\r]^{\lz}\lf|t^2Le^{-t^2L}f(y)\r|^2\,
\frac{d\mu(y)\,dt}{V(x,t)t}\r\}^{1/2}.
\end{equation*}
\end{definition}

To prove the boundedness of $g^\ast_{\lz,\,L}$ from
$H_{\fai,\,L}(\cx)$ to $L^\fai(\cx)$, we need the following
auxiliary conclusion.

\begin{lemma}\label{l6.2}
Let $\az\in(0,\fz)$ and
$$S^\az_L(f)(x):=\lf\{\int_0^\fz\int_{B(x,\az
t)}\lf|t^2Le^{-t^2L}f(y)\r|^2\,
\frac{d\mu(y)\,dt}{V(x,t)t}\r\}^{1/2}$$ for all $f\in L^2(\cx)$ and
$x\in\cx$. Then there exists a positive constant $C$ such that, for
all $f\in L^2(\cx)$, $\|S^\az_L(f)\|_{L^2(\cx)}\le
C\az^{n/2}(1+\az)^{N/2}\|f\|_{L^2(\cx)}$, where $n$ and $N$ are,
respectively, as in \eqref{2.2} and \eqref{2.3}.
\end{lemma}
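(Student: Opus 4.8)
The plan is to prove this by interchanging the order of integration via Fubini's theorem, thereby reducing the whole lemma to a purely geometric estimate and then invoking \eqref{2.7}. Spelling out the definition of $S^\az_L$ and applying Fubini, one gets, for all $f\in L^2(\cx)$,
$$\lf\|S^\az_L(f)\r\|_{L^2(\cx)}^2=\int_0^\fz\int_\cx\lf|t^2Le^{-t^2L}f(y)\r|^2\lf(\int_{\{x\in\cx:\,d(x,y)<\az t\}}\frac{d\mu(x)}{V(x,t)}\r)\,d\mu(y)\,\frac{dt}{t}.$$
So the lemma reduces to the geometric bound that, for all $y\in\cx$ and $t\in(0,\fz)$,
$$\int_{\{x\in\cx:\,d(x,y)<\az t\}}\frac{d\mu(x)}{V(x,t)}\ls\az^n(1+\az)^N,$$
with implicit constant depending only on $n$ and $N$.

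To establish this geometric bound, I would first apply \eqref{2.3} with the roles of $x$ and $y$ interchanged: for $x$ with $d(x,y)<\az t$ it gives $V(y,t)\le C[1+d(x,y)/t]^N V(x,t)\le C(1+\az)^N V(x,t)$, hence $1/V(x,t)\le C(1+\az)^N/V(y,t)$. Integrating this over the set $\{x\in\cx:\,d(x,y)<\az t\}=B(y,\az t)$ and then using the doubling property \eqref{2.2}, one obtains
$$\int_{\{x\in\cx:\,d(x,y)<\az t\}}\frac{d\mu(x)}{V(x,t)}\le C(1+\az)^N\frac{V(y,\az t)}{V(y,t)}\ls\az^n(1+\az)^N,$$
as claimed.

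Finally, plugging this back into the Fubini identity and applying \eqref{2.7} with $\phi(z):=z^2e^{-z^2}$ (which obeys $|\phi(z)|\ls|z|^2/(1+|z|^4)$, so $\int_0^\fz|\phi(t)|^2\,dt/t<\fz$, and for which $\phi(t\sqrt L)=t^2Le^{-t^2L}$) yields
$$\lf\|S^\az_L(f)\r\|_{L^2(\cx)}^2\ls\az^n(1+\az)^N\int_0^\fz\lf\|\phi(t\sqrt L)f\r\|_{L^2(\cx)}^2\,\frac{dt}{t}\ls\az^n(1+\az)^N\|f\|_{L^2(\cx)}^2,$$
which is the desired estimate. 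I do not expect a serious obstacle here: the argument is a routine combination of Fubini's theorem with the doubling property \eqref{2.2} and the non-symmetric volume comparison \eqref{2.3}. The only point requiring a little care is bookkeeping how the aperture $\az$ enters — through the homogeneity exponent $n$ from \eqref{2.2} and through the exponent $N$ from \eqref{2.3} — which is exactly what produces the factor $\az^n(1+\az)^N$ (equivalently, the factor $\az^{n/2}(1+\az)^{N/2}$ at the level of $L^2$-norms).
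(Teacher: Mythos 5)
Your proposal is correct and follows essentially the same route as the paper's proof: Fubini's theorem to pass the $x$-integration inside, the volume comparison \eqref{2.3} to replace $V(x,t)$ by $V(y,t)$ at the cost of $(1+\az)^N$, the doubling property \eqref{2.2} to bound $V(y,\az t)/V(y,t)\ls\az^n$, and finally the spectral estimate \eqref{2.7} with $\phi(z)=z^2 e^{-z^2}$. The only difference is purely presentational (you isolate the geometric integral as a standalone claim), so there is nothing to add.
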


\begin{proof}
By the definition of $S^\az_L$, Fubini's theorem, \eqref{2.2},
\eqref{2.3} and \eqref{2.7}, we see that
\begin{eqnarray*}
\|S^\az_L(f)\|_{L^2(\cx)}^2&&=\int_{\cx}\int_0^\fz\int_{B(x,\az
t)}\lf|t^2Le^{-t^2L}f(y)\r|^2\,\frac{d\mu(y)\,dt}{V(x,t)t}\,d\mu(x)\\
&&\le(1+\az)^N\int_{\cx}\int_0^\fz\int_{B(y,\az
t)}\lf|t^2Le^{-t^2L}f(y)\r|^2\,
\frac{d\mu(x)}{V(y,t)}\,\frac{d\mu(y)\,dt}{t}\\
&&\ls\az^n(1+\az)^N\int_0^\fz\int_{\cx}
\lf|t^2Le^{-t^2L}f(y)\r|^2\,\frac{d\mu(y)\,dt}{t}
\ls\az^n(1+\az)^N\|f\|^2_{L^2(\cx)},
\end{eqnarray*}
which is desired, and hence completes the proof of Lemma
\ref{l6.2}.
\end{proof}

Now we give the main result of this subsection.

\begin{theorem}\label{t6.2}
Let $L$ satisfy Assumptions (A) and (B), $\fai$ be as in Definition \ref{d2.3} with
$\fai\in\rh_{2/[2-I(\fai)]}(\cx)$ and $I(\fai)$ as in \eqref{2.10}, and
$\lz\in([2nq(\fai)+NI(\fai)]/i(\fai),\fz)$, where $n$, $N$, $q(\fai)$ and
$i(\fai)$ are, respectively, as in \eqref{2.2}, \eqref{2.3},
\eqref{2.12} and \eqref{2.11}. Then the operator
$g^\ast_{\lz,\,L}$ is bounded from $H_{\fai,\,L}(\cx)$ to
$L^\fai(\cx)$.
\end{theorem}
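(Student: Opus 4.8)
The plan is to imitate the proof of Theorem~\ref{t6.1}. Since $g^\ast_{\lz,\,L}$ is a nonnegative sublinear operator, by Lemma~\ref{l5.2} it suffices to check that $g^\ast_{\lz,\,L}$ maps $L^2(\cx)$ into (weak-)$L^2(\cx)$ and that, for every $c\in\cc$ and every $(\fai,\,M)$-atom $\az$ supported in a ball $B:=B(x_B,r_B)$ (with $M\in\nn$ taken larger than $\frac n2[\frac{q(\fai)}{i(\fai)}-\frac12]$ and large enough below),
\begin{equation*}
\int_{\cx}\fai\lf(x,g^\ast_{\lz,\,L}(c\az)(x)\r)\,d\mu(x)\ls
\fai\lf(B,\frac{|c|}{\|\chi_B\|_{L^\fai(\cx)}}\r).
\end{equation*}
For the $L^2(\cx)$-boundedness I would split the inner $y$-integral defining $g^\ast_{\lz,\,L}(f)(x)$ over the cone $\{d(x,y)<t\}$ and the dyadic shells $\{2^kt\le d(x,y)<2^{k+1}t\}$, $k\in\zz_+$, on which $[t/(t+d(x,y))]^{\lz}\ls2^{-k\lz}$; this gives the pointwise bound $[g^\ast_{\lz,\,L}(f)(x)]^2\ls\sum_{k\in\zz_+}2^{-k\lz}[S^{2^{k+1}}_L(f)(x)]^2$ with $S^\cdot_L$ as in Lemma~\ref{l6.2}, and then Lemma~\ref{l6.2} together with $\lz>[2nq(\fai)+NI(\fai)]/i(\fai)\ge n+N$ (recall $q(\fai)\ge1$, $i(\fai)\le I(\fai)\le1$ and $N\le n$) makes $\sum_k 2^{-k\lz}2^{k(n+N)}$ summable, so $\|g^\ast_{\lz,\,L}(f)\|_{L^2(\cx)}\ls\|f\|_{L^2(\cx)}$.

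To prove the atomic inequality I would decompose $\int_\cx=\sum_{j\in\zz_+}\int_{U_j(B)}$ and, as in the proof of Theorem~\ref{t6.1}, use Lemma~\ref{l2.6}(iv) to fix $p_1\in[I(\fai),1]$ with $\fai$ of uniformly upper type $p_1$ and $\fai\in\rh_{2/(2-p_1)}(\cx)$, and fix $p_2\in(0,i(\fai))$, $q_0\in(q(\fai),\fz)$ with $\fai$ of uniformly lower type $p_2$ and $\fai\in\aa_{q_0}(\cx)$. For the near shells $j\in\{0,\dots,4\}$ the estimate \eqref{6.3} goes through verbatim, using the uniformly upper type $p_1$ property, H\"older's inequality, $\fai\in\rh_{2/(2-p_1)}(\cx)$, the $L^2(\cx)$-boundedness of $g^\ast_{\lz,\,L}$ just established, Lemma~\ref{l2.6}(vi) and $\|\az\|_{L^2(\cx)}\le[\mu(B)]^{1/2}\|\chi_B\|^{-1}_{L^\fai(\cx)}$, bounding the corresponding terms by a constant multiple of $\fai(B,|c|\|\chi_B\|^{-1}_{L^\fai(\cx)})$. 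For the far shells $j\ge5$, as in \eqref{6.4} the uniformly upper type $p_1$ and lower type $p_2$ properties reduce matters to controlling, for $i\in\{1,2\}$,
\begin{equation*}
\|\chi_B\|^{p_i}_{L^\fai(\cx)}\int_{U_j(B)}\fai\lf(x,|c|\|\chi_B\|^{-1}_{L^\fai(\cx)}\r)
\lf[g^\ast_{\lz,\,L}(\az)(x)\r]^{p_i}\,d\mu(x),
\end{equation*}
and then H\"older's inequality with exponents $2/p_i$ and $2/(2-p_i)$, followed by $\fai\in\rh_{2/(2-p_i)}(\cx)$, Lemma~\ref{l2.6}(vii) (with $\fai\in\aa_{q_0}(\cx)$) and \eqref{2.2}, bounds this by a constant times
$$2^{jnq_0}\lf\{\int_{U_j(B)}\lf[g^\ast_{\lz,\,L}(\az)(x)\r]^2\,d\mu(x)\r\}^{p_i/2}
[\mu(B)]^{-p_i/2}\fai\lf(B,|c|\|\chi_B\|^{-1}_{L^\fai(\cx)}\r).$$

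The decisive point, replacing \eqref{6.7}, is therefore a decay estimate
$$\int_{U_j(B)}\lf[g^\ast_{\lz,\,L}(\az)(x)\r]^2\,d\mu(x)\ls 2^{-j\theta}\,\mu(B)\,
\|\chi_B\|^{-2}_{L^\fai(\cx)}\qquad(j\ge5)$$
with $\theta>2nq_0/p_2$: granted this, the far-shell sum becomes a geometric series in $j$ with ratio $\ls 2^{-j(\theta p_i/2-nq_0)}<1$, and combining with the near-shell estimate finishes the proof of the atomic inequality. To prove the decay estimate I would again use $[g^\ast_{\lz,\,L}(\az)(x)]^2\ls\sum_{k\in\zz_+}2^{-k\lz}[S^{2^{k+1}}_L(\az)(x)]^2$ and estimate $\int_{U_j(B)}[S^{2^{k+1}}_L(\az)(x)]^2\,d\mu(x)$ for each $k$: writing $\az=L^Mb$ with $\supp b\subset B$ and $\|(r_B^2L)^kb\|_{L^2(\cx)}\le r_B^{2M}[\mu(B)]^{1/2}\|\chi_B\|^{-1}_{L^\fai(\cx)}$, one separates the part of the truncated cones $\bgz_{2^{k+1}}(x)$, $x\in U_j(B)$, lying at distance $\ge2^{j-2}r_B$ from $B$ (handled by the Davies--Gaffney estimates for $(t^2L)^{M+1}e^{-t^2L}$ in Lemma~\ref{l2.1}, which give superexponential decay in $2^{j}r_B/t$, hence a clean power $2^{-4Mj}$ after integrating in $t$) from the part where the cone reaches $B$ (which forces $t\gtrsim2^{j-k}r_B$, so that $\int(\cdots)\,dt/t$, the $L^2(\cx)$-bound $\|(t^2L)^{M+1}e^{-t^2L}b\|_{L^2(\cx)}\ls\|b\|_{L^2(\cx)}$ and a routine volume estimate using \eqref{2.2}--\eqref{2.3} yield a bound of the type $2^{-4Mj}2^{k(n+N)}$); summing $\sum_k2^{-k\lz}(\cdots)$ and using $M$ large together with $\lz>[2nq(\fai)+NI(\fai)]/i(\fai)$ makes $\theta$ as large as needed, in particular $\theta>2nq_0/p_2$ for suitable $q_0\in(q(\fai),\fz)$ and $p_2\in(0,i(\fai))$. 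The main obstacle is precisely this decay estimate: one must show that the gain $2^{-k\lz}$ from the $g^\ast_\lz$-weight dominates \emph{both} the loss incurred when the wide cone $\bgz_{2^{k+1}}(x)$ manages to reach the atom and the volume inflation of wide cones (controlled via Lemma~\ref{l6.2} and \eqref{2.2}--\eqref{2.3}), and the exact range $\lz>[2nq(\fai)+NI(\fai)]/i(\fai)$ is what guarantees this balance once it is combined, through the reverse-H\"older exponent $p_1\ge I(\fai)$ and the lower type $p_2<i(\fai)$, with the $\aa_{q_0}(\cx)$-inequality and the doubling exponent $N$.
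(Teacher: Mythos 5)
Your plan is a genuine alternative to the paper's proof, and the skeleton is sound, but the decisive decay estimate is stated with a wrong exponent in $k$ that, if taken at face value, would prove a false (too strong) version of the theorem.

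The paper never sums the $k$-series at the $L^2$ level. It keeps the pointwise bound in the square-root form $g^\ast_{\lz,L}(\gz\az)\ls\sum_k 2^{-k\lz/2}S^{2^k}_L(\gz\az)$, applies Lemma~\ref{l2.3}(i) to push the sum outside $\fai$, and proves \eqref{6.10} for each fixed $k$; the factor $2^{-k\lz/2}$ stays \emph{inside} the argument of $\fai$ and is converted to $2^{-k\lz p_2/2}$ by the lower-type $p_2$ property, which is where the constraint $\lz p_2/2>nq_0+Np_1/2$ (hence $\lz>(2nq_0+Np_1)/p_2$, matching the hypothesis) arises. Crucially, for each $k$ the paper splits into shells $U_j(B_k)$ with $B_k:=2^kB$, so the radial decay exponent $s$ and the aperture loss $2^{k(n+N)}$ are kept separate in \eqref{6.18}. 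Your version sums $\sum_k 2^{-k\lz}\int_{U_j(B)}[S^{2^{k+1}}_L(\az)]^2$ \emph{before} touching $\fai$ and decomposes into shells around $B$, which is legitimate and would in fact give a slightly weaker constraint on $\lz$ (only $\lz>2nq_0/p_2+N$), but it makes the bookkeeping harder and that is exactly where you slipped.

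Concretely: for $j\ge5$ and $j>k$, the ``part 2'' threshold $t\gtrsim2^{j-k}r_B$ together with $\int_{2^{j-k}r_B}^\fz t^{-4M-1}\,dt\sim(2^{j-k}r_B)^{-4M}$ and $\|b\|^2_{L^2}\ls r_B^{4M}\mu(B)\|\chi_B\|^{-2}_{L^\fai(\cx)}$ gives $2^{-4M(j-k)}=2^{-4Mj}2^{4Mk}$, so the correct bound is
$$
\int_{U_j(B)}\lf[S^{2^{k+1}}_L(\az)(x)\r]^2\,d\mu(x)\ls 2^{-4Mj}\,2^{k(4M+n+N)}\,\mu(B)\|\chi_B\|^{-2}_{L^\fai(\cx)},
$$
not $2^{-4Mj}2^{k(n+N)}$: you dropped the $2^{4Mk}$. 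With your stated estimate, $\sum_k2^{-k\lz}2^{k(n+N)}$ converges as soon as $\lz>n+N$, and then $\theta$ can be taken equal to $4M$ (arbitrarily large), which would yield the theorem under the hypothesis $\lz>n+N$ alone --- that cannot be correct (cf.\ the paper's Remark~\ref{r6.2} on the classical sharp range). With the corrected exponent, $\sum_k2^{-k\lz}2^{k(4M+n+N)}$ requires $\lz>4M+n+N$, which is unusable for large $M$. The fix is to introduce a free decay parameter $s\in(n[q_0/p_2-1/2],2M)$ exactly as in the proofs of \eqref{6.7} and \eqref{6.18}: bound $\exp\{-c(2^jr_B/t)^2\}$ by $(t/2^jr_B)^{2s}$ rather than by $(t/2^jr_B)^{4M}$, yielding
$$
\int_{U_j(B)}\lf[S^{2^{k+1}}_L(\az)(x)\r]^2\,d\mu(x)\ls 2^{-2js}\,2^{k(2s+n+N)}\,\mu(B)\|\chi_B\|^{-2}_{L^\fai(\cx)}
$$
(this is exactly \eqref{6.18} reindexed from $U_j(B_k)$ to shells around $B$). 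Summing in $k$ then requires $\lz>2s+n+N$ and gives $\theta=2s$; since all you need is $\theta>2n(q_0/p_2-1/2)$ --- not $\theta>2nq_0/p_2$ as you wrote, the $-n$ matters --- you can pick $s$ just above $n(q_0/p_2-1/2)$, and the $k$-summability becomes $\lz>2nq_0/p_2+N$, which is implied by the theorem's hypothesis. Also, you should handle the $k\gtrsim j$ shells separately (there the wide cone reaches $B$ for essentially all $t$, so there is no $j$-decay and one just uses Lemma~\ref{l6.2}), and check that this tail of the $k$-sum gives $2^{-j(\lz-n-N)}$, which is again acceptable. With these corrections your route goes through and is in fact marginally sharper in the dependence on $N$; without them the proof is broken.
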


\begin{proof}
Let $M\in\nn$ with
$M>\frac{n}{2}[\frac{q(\fai)}{i(\fai)}-\frac{1}2]$ and
$\lz\in([2nq(\fai)+NI(\fai)]/i(\fai),\fz)$, where $n$, $N$, $q(\fai)$,
$I(\fai)$ and $i(\fai)$ are, respectively, as in \eqref{2.2}, \eqref{2.3},
\eqref{2.12}, \eqref{2.10} and \eqref{2.11}. Then by the assumption $\fai\in
\rh_{2/[2-I(\fai)]}(\cx)$, Lemma \ref{l2.6}(iv) and the definitions of
$q(\fai)$, $I(\fai)$ and $i(\fai)$, we know that, there exist
$q_0\in(q(\fai),\fz)$, $p_1\in[I(\fai),1]$ and $p_2\in(0,i(\fai))$ such that
$M>\frac{n}{2}(\frac{q_0}{p_2}-\frac{1}2)$,
$\lz>(2nq_0+Np_1)/p_2$, $\fai$ is of uniformly upper type $p_1$ and uniformly lower type $p_2$,
and $\fai\in\rh_{2/(2-p_1)}(\cx)\cap\aa_{q_0}(\cx)$. To show Theorem \ref{t6.2}, similar
to the proof of Theorem \ref{t6.1}, it suffices to show that, for
all $\gz\in\cc$ and $(\fai,\,M)$-atoms $\az$ supported in the ball
$B:=B(x_B,r_B)$,
\begin{eqnarray}\label{6.9}
\int_{\cx}\fai\lf(x,g^\ast_{\lz,\,L}(\gz\az)(x)\r)\,d\mu(x)\ls
\fai\lf(B,\frac{|\gz|}{\|\chi_B\|_{L^\fai(\cx)}}\r).
\end{eqnarray}

In order to prove \eqref{6.9}, it suffices to show that, for all
$k\in\zz_+$,
\begin{eqnarray}\label{6.10}
\int_{\cx}\fai\lf(x,2^{-k\lz/2}S^{2^k}_L(\gz\az)(x)\r)\,d\mu(x)\ls
2^{-\frac{kp_2}{2}(\lz-\frac{2nq_0+Np_1}{p_2})}
\fai\lf(B,\frac{|\gz|}{\|\chi_B\|_{L^\fai(\cx)}}\r).
\end{eqnarray}
Indeed, if \eqref{6.10} holds true, from the definition of
$g^\ast_{\lz,\,L}$, it follows that, for all $x\in\cx$,
\begin{eqnarray*}
g^\ast_L(\gz\az)(x)&&\ls\lf\{\int_0^\fz\int_{B(x,t)}
\lf|t^2Le^{-t^2L}(\gz\az)(y)\r|^2\,\frac{d\mu(y)}{V(x,t)}\,\frac{dt}{t}
+\sum_{k=1}^{\fz}2^{-k\lz}\int_0^\fz\int_{B(x,2^kt)}
\cdots\r\}^{1/2}\\
&&\ls\sum_{k=0}^\fz 2^{-k\lz/2}S^{2^k}_L(\gz\az)(x),
\end{eqnarray*}
which, together with \eqref{6.10}, Lemma \ref{l2.3}(i) and
$\lz>(2nq_0+Np_1)/p_2$, implies that
\begin{eqnarray*}
\int_{\cx}\fai\lf(x,g^\ast_{\lz,\,L}(\gz\az)(x)\r)\,d\mu(x)
&&\ls\sum_{k=0}^\fz \int_{\cx}\fai\lf(x,2^{-k\lz/2}S^{2^k}_L
(\gz\az)(x)\r)\,d\mu(x)\\
&&\ls\sum_{k=0}^\fz2^{-\frac{kp_2}{2} (\lz-\frac{2nq_0+Np_1}{p_2})}
\fai\lf(B,|\gz|\|\chi_B\|_{L^\fai(\cx)}^{-1}\r)\\
&&\ls\fai\lf(B,|\gz|\|\chi_B\|_{L^\fai(\cx)}^{-1}\r).
\end{eqnarray*}
Thus, \eqref{6.9} holds true.

Now we prove \eqref{6.10}. For each $k\in\zz_+$, let $B_k:=2^{k}B$.
Then
\begin{eqnarray}\label{6.11}
\int_{\cx}
\fai\lf(x,2^{-k\lz/2}S^{2^k}_L(\gz\az)(x)\r)\,d\mu(x)=\sum_{j=0}^\fz
\int_{U_j(B_k)}\cdots.
\end{eqnarray}
For $j\in\{0,\,\cdots,\,4\}$, then by the uniformly upper type
$p_1$ and lower type $p_2$ properties of $\fai$, H\"older's
inequality, $\fai\in\rh_{2/(2-p_1)}(\cx)$, Lemmas \ref{l6.2} and
\ref{l2.6}(vi), we know that, for all $k\in\zz_+$,
\begin{eqnarray}\label{6.12}
\hs\hs\hs&&\int_{U_j(B_k)}
\fai\lf(x,2^{-k\lz/2}S^{2^k}_L(\gz\az)(x)\r)\,d\mu(x)\\
\nonumber&&\hs\ls \int_{U_j(B_k)}
\fai\lf(x,2^{-k\lz/2}|\gz|\|\chi_B\|_{L^\fai(\cx)}^{-1}\r)
\lf(1+\lf[S^{2^k}_L(\az)(x)\|\chi_B\|_{L^\fai(\cx)}\r]^{p_1}\r)\,d\mu(x)\\
\nonumber &&\hs\ls2^{-k\lz p_2/2}\fai\lf(2^{j+k}B,
|\gz|\|\chi_B\|_{L^\fai(\cx)}^{-1}\r)\\ \nonumber
&&\hs\hs+2^{-k\lz
p_2/2}\|\chi_B\|^{p_1}_{L^\fai(\cx)}\lf\{\int_{U_j(B_k)}
\lf[S^{2^k}_L(\az)(x)\r]^2\,d\mu(x)\r\}^{\frac{p_1}2}\\
\nonumber &&\hs\hs\times\lf\{\int_{U_j(B_k)} \lf[\fai\lf(x,
|\gz|\|\chi_B\|_{L^\fai(\cx)}^{-1}\r)\r]^{\frac{2}{2-p_1}}\,d\mu(x)\r\}
^{\frac{2-p_1}{2}}\\
\nonumber &&\hs\ls2^{-k(\frac{\lz p_2}2-nq_0)}\fai\lf(B,
|\gz|\|\chi_B\|_{L^\fai(\cx)}^{-1}\r)+2^{- \frac{k\lz
p_2}2}2^{\frac{k(n+N)p_1}2}\|\az\|^{p_1}_{L^2(\cx)}
\|\chi_B\|^{p_1}_{L^\fai(\cx)}\\
\nonumber&&\hs\hs\times
[\gz(2^{j+k}B)]^{q_0-\frac{p_1}2}[\mu(B)]^{-q_0}\fai\lf(B,
|\gz|\|\chi_B\|_{L^\fai(\cx)}^{-1}\r)\\ \nonumber
&&\hs\ls2^{-k(\frac{\lz p_2}2-nq_0-\frac{Np_1}2)}\fai\lf(B,
|\gz|\|\chi_B\|_{L^\fai(\cx)}^{-1}\r).
\end{eqnarray}

When $j\in\nn$ with $j\ge5$, from the uniformly upper type $p_1$
and lower type $p_2$ properties of $\fai$, we deduce that, for all
$k\in\zz_+$,
\begin{eqnarray}\label{6.13}
&&\int_{U_j(B_k)}
\fai\lf(x,2^{-k\lz/2}S^{2^k}_L(\gz\az)(x)\r)\,d\mu(x)\\
\nonumber&&\hs\ls2^{-k\lz
p_2/2}\|\chi_B\|_{L^\fai(\cx)}^{p_1}\int_{U_j(B_k)}
\fai\lf(x,|\gz|\|\chi_B\|_{L^\fai(\cx)}^{-1}\r)
\lf[S^{2^k}_L(\az)(x)\r]^{p_1}\,d\mu(x)\\ \nonumber
&&\hs\hs+2^{-k\lz
p_2/2}\|\chi_B\|_{L^\fai(\cx)}^{p_2}\int_{U_j(B_k)}
\fai\lf(x,|\gz|\|\chi_B\|_{L^\fai(\cx)}^{-1}\r)
\lf[S^{2^k}_L(\az)(x)\r]^{p_2}\,d\mu(x)\\
\nonumber &&\hs=:\mathrm{H}_{j,\,k}+\mathrm{I}_{j,\,k}.
\end{eqnarray}

To estimate $\mathrm{H}_{j,\,k}$ and $\mathrm{I}_{j,\,k}$, we need
to estimate $\int_{U_j(B_k)}|S^{2^k}_L(\az)(x)|^2\,d\mu(x)$. We
first see that
\begin{eqnarray}\label{6.14}
&&\int_{U_j(B_k)}\lf[S^{2^k}_L(\az)(x)\r]^2\,d\mu(x)\\ \nonumber
&&\hs=\int_{U_j(B_k)}\int_0^{r_B}\int_{B(x,2^kt)}\lf|t^2Le^{-t^2L}(\az)(y)\r|^2
\,\frac{d\mu(y)}{V(x,t)}\,\frac{dt}{t}\,d\mu(x)
+\int_{U_j(B_k)}\int_{r_B}^\fz\cdots\\ \nonumber
&&\hs=:\mathrm{J}_{j,\,k}+\mathrm{K}_{j,\,k}.
\end{eqnarray}

Take $s\in(0,\fz)$ such that
$s\in(n[\frac{q_0}{p_2}-\frac{1}{2}],2M)$. Moreover, for each
$j\in\nn$ with $j\ge5$ and $k\in\zz_+$, let
$\wz{U_j}(B_k):=\{z\in\cx:\ 2^{j-2}2^kr_B\le
d(z,x_B)<2^{j+1}2^kr_B\}$.  Then for any $x\in U_j(B_k)$,
$t\in(0,r_B)$ and $y\in\cx$ with $d(x,y)<2^k t$, we see that $y\in
\wz{U_j}(B_k)$. From this, \eqref{2.3}, Fubini's theorem and
\eqref{2.5}, it follows that
\begin{eqnarray}\label{6.15}
\mathrm{J}_{j,\,k}&&\ls
2^{k(N+n)}\int_0^{r_B}\int_{\wz{U_j}(B_k)}\lf|t^2Le^{-t^2L}(\az)(y)\r|^2
\,\frac{d\mu(y)\,dt}{t}\\ \nonumber
&&\ls2^{k(N+n)}\|\az\|^2_{L^2(B)}\int_0^{r_B}e^{-\frac{[2^{j+k}
r_B]^2}{C_3t^2}}
\,\frac{dt}{t}\ls2^{-2js}2^{-k(2s-N-n)}\|\az\|^2_{L^2(B)}.
\end{eqnarray}

Furthermore, by the definition of $\az$, we know that there exists
$b\in L^2(B)$ such that $\az=L^M b$ and $\|b\|_{L^2(\cx)}\le
r^{2M}_B[\mu(B)]^{1/2}\|\chi_{B}\|_{L^\fai(\cx)}^{-1}$. From this,
we deduce that
\begin{eqnarray}\label{6.16}
\mathrm{K}_{j,\,k}&&\ls
\int_{U_j(B_k)}\int_{r_B}^{2^{j-3}r_B}\int_{B(x,2^kt)}
\lf|(t^2L)^{M+1}e^{-t^2L}(b)(y)\r|^2 \,\frac{d\mu(y)}{V(x,t)}
\,\frac{dt}{t^{4M+1}}\,d\mu(x)\\
\nonumber &&\hs+\int_{U_j(B_k)}\int_{2^{j-3}r_B}^\fz\cdots
=:\mathrm{K}_{j,\,k,\,1}+\mathrm{K}_{j,\,k,\,2}.
\end{eqnarray}
We first estimate $\mathrm{K}_{j,\,k,\,1}$. Let $x\in U_j(B_k)$,
$t\in[r_B,2^{j-3}r_B)$ and $y\in\cx$ with $d(x,y)<2^kt$. Then
$$d(y,x_B)\le d(x,y)+d(x,x_B)\le2^kt+2^j2^kr_B\le2^{j+1}2^k r_B
$$
and
$$d(y,x_B)\ge d(x,x_B)-d(x,y)\ge2^{j-1}2^kr_B-2^{j-3}2^kr_B
\ge2^{j-3}2^kr_B.$$ From this, \eqref{2.3}, Fubini's theorem and
\eqref{2.5}, we infer that
\begin{eqnarray}\label{6.17}
\mathrm{K}_{j,\,k,\,1}&&\ls
2^{k(N+n)}\int_{r_B}^{2^{j-3}r_B}\int_{\wz{U_j}(B_k)}
\lf|(t^2L)^{M+1}e^{-t^2L}(b)(y)\r|^2 \,\frac{d\mu(y)\,dt}{t^{4M+1}}\\
\nonumber &&\ls2^{k(N+n)}\|b\|^2_{L^2(B)}\int_{r_B}^{2^{j-3}r_B}
e^{-\frac{[2^{j+k-3}r_B]^2}{C_3t^2}} \,\frac{dt}{t^{4M+1}}\\
\nonumber &&\ls2^{-2js}2^{-k(2s-N-n)}\mu(B)
\|\chi_B\|^{-2}_{L^\fai(\cx)}.
\end{eqnarray}
For $\mathrm{K}_{j,\,k,\,2}$, by \eqref{2.3}, Fubini's theorem and
\eqref{2.5}, we see that
\begin{eqnarray*}
\mathrm{K}_{j,\,k,\,2}\ls2^{k(N+n)}\|b\|^2_{L^2(B)}\int_{2^{j-3}r_B}^\fz
\,\frac{dt}{t^{4M+1}}\ls2^{-2js}2^{k(N+n)}\mu(B)
\|\chi_B\|^{-2}_{L^\fai(\cx)},
\end{eqnarray*}
which, together with \eqref{6.17} and \eqref{6.16}, implies that,
for all $j\in\nn$ with $j\ge5$ and $k\in\zz_+$,
$$\mathrm{K}_{j,\,k}\ls 2^{-2js}2^{k(N+n)}\mu(B)
\|\chi_B\|^{-2}_{L^\fai(\cx)}.
$$
From this, \eqref{6.14} and \eqref{6.15}, it follows that, for all
$j\in\nn$ with $j\ge5$ and $k\in\zz_+$,
\begin{eqnarray}\label{6.18}
&&\int_{U_j(B_k)}\lf[S^{2^k}_L(\az)(x)\r]^2\,d\mu(x)\ls
2^{-2js}2^{k(N+n)}\mu(B)\|\chi_B\|^{-2}_{L^\fai(\cx)}.
\end{eqnarray}
By \eqref{6.18}, H\"older's inequality, $\fai\in\rh_{2/(2-p_1)}(\cx)$ and
Lemma \ref{l2.6}(vii), we conclude that
\begin{eqnarray}\label{6.19}
\hs\hs\mathrm{H}_{j,\,k}&&\ls2^{-\frac{k\lz p_2}2}
\|\chi_B\|^{p_1}_{L^\fai(B)}\lf\{\int_{U_j(B_k)}
\lf[\fai\lf(x,|\gz|\|\chi_B\|_{L^\fai(\cx)}^{-1}\r)\r]^{\frac{2}{2-p_1}}\,
d\mu(x)\r\}^{\frac{2-p_1}{2}}\\ \nonumber &&\hs\times
\lf\{\int_{U_j(B_k)}\lf[S^{2^k}_L(\az)(x)\r]^2\,d\mu(x)\r\}^{\frac{p_1}2}\\
\nonumber &&\ls2^{-\frac{k\lz
p_2}2}2^{-jsp_1}2^{\frac{k(N+n)p_1}2}
[\mu(2^{j+k}B)]^{q_0-\frac{p_1}2}[\mu(B)]^{\frac{p_1}2-q_0}
\fai\lf(B,|\gz|\|\chi_B\|_{L^\fai(\cx)}^{-1}\r)\\ \nonumber
&&\ls2^{-jp_1[s-n(\frac{q_0}{p_1}-\frac12)]}2^{-kp_2(\frac{\lz}2-
\frac{nq_0}{p_2}-\frac{Np_1}{2p_2})}
\fai\lf(B,|\gz|\|\chi_B\|_{L^\fai(\cx)}^{-1}\r).
\end{eqnarray}
For $\mathrm{I}_{j,\,k}$, similar to \eqref{6.19}, we see that
\begin{eqnarray*}
\mathrm{I}_{j,\,k}&&\ls2^{-\frac{k\lz
p_2}2}\|\chi_B\|^{p_2}_{L^\fai(B)} \lf\{\int_{U_j(B_k)}
\lf[\fai\lf(x,|\gz|\|\chi_B\|_{L^\fai(\cx)}^{-1}\r)\r]^{\frac{2}{2-p_2}}\,
d\mu(x)\r\}^{\frac{2-p_2}{2}}\\ \nonumber &&\hs\times
\lf\{\int_{U_j(B_k)}\lf[S^{2^k}_L(\gz\az)(x)\r]^2\,d\mu(x)\r\}^{\frac{p_2}2}\\
\nonumber &&\ls2^{-\frac{k\lz
p_2}2}2^{-jsp_2}2^{\frac{k(N+n)p_2}2}
[\mu(2^{j+k}B)]^{q_0-\frac{p_2}2}[\mu(B)]^{\frac{p_2}2-q_0}
\fai\lf(B,|\gz|\|\chi_B\|_{L^\fai(\cx)}^{-1}\r)\\ \nonumber
&&\ls2^{-jp_2[s-n(\frac{q_0}{p_2}-\frac{1}2)]}2^{-kp_2(\frac{\lz}2-\frac{nq_0}{p_2}-\frac
N2)}\fai\lf(B,|\gz|\|\chi_B\|_{L^\fai(\cx)}^{-1}\r),
\end{eqnarray*}
which, together with \eqref{6.11}, \eqref{6.12},
\eqref{6.13}, \eqref{6.19}, $p_1\ge p_2$ and $s>n(\frac{q_0}{p_2}-\frac{1}{2})$,
implies that
\begin{eqnarray*}
\int_{\cx} \fai\lf(x,2^{-k\lz/2}S^{2^k}_L(\gz\az)(x)\r)\,d\mu(x)
\ls2^{-\frac{kp_2}{2}(\lz-\frac{2nq_0+Np_1}{p_2})}
\fai\lf(B,|\gz|\|\chi_B\|_{L^\fai(\cx)}^{-1}\r).
\end{eqnarray*}
From this, we deduce that \eqref{6.10} holds true, which completes the
proof of Theorem \ref{t6.2}.
\end{proof}

\begin{remark}\label{r6.2}
We remark that when $\cx:=\rn$ and $L:=-\Delta$,
$g^\ast_{\lz,\,L}$ is just the classical Littlewood-Paley
$g^\ast_\lz$-function.

Let $p\in(0,1]$, $\oz\in A_q(\rn)$ with $q\in[1,\fz)$ and
$\fai(x,t):=\oz(x)t^p$ for all $x\in\rn$ and $t\in[0,\fz)$. We
point out that, in this case, the range of $\lz$ in Theorem \ref{t6.2}
coincides with the result on the classical Littlewood-Paley
$g^\ast_\lz$-function on $\rn$ (see, for example, \cite[Theorem
2]{as77}).
\end{remark}

By Theorem \ref{t6.2} and the fact that $S_L(f)\le
g^\ast_{\lz,\,L}(f)$ pointwise for all $f\in L^2(\cx)$, we
immediately deduce the following Littlewood-Paley $g^\ast_\lz$-function
$g^{\ast}_{\lz,\,L}$ characterization of $H_{\fai,\,L}(\cx)$.

\begin{corollary}\label{c6.1}
Let $L$ satisfy Assumptions (A) and (B),
$g^\ast_{\lz,\,L}$ be as in Definition \ref{d6.2} and $\fai$
as in Definition \ref{d2.3} with
$\fai\in\rh_{2/[2-I(\fai)]}(\cx)$, where $I(\fai)$ is as in \eqref{2.10}.
Assume further that $\lz\in([2nq(\fai)+NI(\fai)]/i(\fai),\fz)$, where $n$, $N$,
$q(\fai)$ and $i(\fai)$ are, respectively, as in \eqref{2.2},
\eqref{2.3}, \eqref{2.12} and \eqref{2.11}. Then $f\in
H_{\fai,\,L}(\cx)$ if and only if $g^\ast_{\lz,\,L}(f)\in
L^\fai(\cx)$; moreover,
$\|f\|_{H_{\fai,\,L}(\cx)}\sim\|g_{\lz,\,L}^\ast(f)\|_{L^\fai(\cx)}$
with the implicit constants independent of $f$.
\end{corollary}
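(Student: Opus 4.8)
The plan is to deduce Corollary \ref{c6.1} from Theorem \ref{t6.2} together with an elementary pointwise comparison between $S_L$ and $g^\ast_{\lz,\,L}$, and then to pass from the $L^2$-dense subspace to all of $H_{\fai,\,L}(\cx)$ by the same density reasoning that concludes the proof of Theorem \ref{t5.2}.

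The easy half is immediate: if $f\in H_{\fai,\,L}(\cx)$, then Theorem \ref{t6.2} already asserts that $g^\ast_{\lz,\,L}$ maps $H_{\fai,\,L}(\cx)$ boundedly into $L^\fai(\cx)$ for $\lz\in([2nq(\fai)+NI(\fai)]/i(\fai),\fz)$, so $g^\ast_{\lz,\,L}(f)\in L^\fai(\cx)$ and $\|g^\ast_{\lz,\,L}(f)\|_{L^\fai(\cx)}\ls\|f\|_{H_{\fai,\,L}(\cx)}$.

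For the converse the key observation is that $S_L(f)(x)\le 2^{\lz/2}\,g^\ast_{\lz,\,L}(f)(x)$ for all $x\in\cx$ and all $f\in L^2(\cx)$; indeed, restricting the integral defining $g^\ast_{\lz,\,L}(f)(x)$ to the cone $\bgz(x)$, on which $d(x,y)<t$ and hence $t/(t+d(x,y))>1/2$, produces exactly $2^{-\lz/2}$ times $S_L(f)(x)$. Given this, I would argue as follows: let $f\in H^2(\cx)$ with $g^\ast_{\lz,\,L}(f)\in L^\fai(\cx)$; since $\fai(x,\cdot)$ is nondecreasing and of uniformly upper type $p_1\in(0,1]$ (which absorbs the harmless constant $2^{\lz/2}$), the pointwise bound together with Lemma \ref{l2.5}(i) gives $\|S_L(f)\|_{L^\fai(\cx)}\ls\|g^\ast_{\lz,\,L}(f)\|_{L^\fai(\cx)}$, whence $f\in\wz{H}_{\fai,\,L}(\cx)$ and $\|f\|_{H_{\fai,\,L}(\cx)}\ls\|g^\ast_{\lz,\,L}(f)\|_{L^\fai(\cx)}$.

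Finally I would upgrade this equivalence on $H^2(\cx)$ to the full statement by completion: introduce $\wz{H}_{\fai,\,g^\ast_\lz}(\cx):=\{f\in H^2(\cx):\ g^\ast_{\lz,\,L}(f)\in L^\fai(\cx)\}$ with quasi-norm $\|g^\ast_{\lz,\,L}(\cdot)\|_{L^\fai(\cx)}$ and let $H_{\fai,\,g^\ast_\lz}(\cx)$ be its completion (using, as in Theorem \ref{t6.2}, the $L^2(\cx)$-boundedness of $g^\ast_{\lz,\,L}$ that follows from Lemma \ref{l6.2}). The two estimates above show that $\wz{H}_{\fai,\,g^\ast_\lz}(\cx)$ and $H_{\fai,\,L}(\cx)\cap H^2(\cx)$ coincide with equivalent quasi-norms; since the former is dense in $H_{\fai,\,g^\ast_\lz}(\cx)$ and $H_{\fai,\,L}(\cx)\cap H^2(\cx)$ is dense in $H_{\fai,\,L}(\cx)$, a density argument identical to the one ending the proof of Theorem \ref{t5.2} yields $H_{\fai,\,L}(\cx)=H_{\fai,\,g^\ast_\lz}(\cx)$ with equivalent quasi-norms, which is the claim. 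I do not expect any genuine obstacle: the pointwise comparison is trivial and the boundedness is already supplied by Theorem \ref{t6.2}; the only step needing a little care is this last extension/density argument, which is what makes sense of $g^\ast_{\lz,\,L}(f)$ for $f\in H_{\fai,\,L}(\cx)$ not necessarily lying in $H^2(\cx)$, and it is handled exactly as before.
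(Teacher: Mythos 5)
Your proposal is correct and follows the same approach as the paper: the paper deduces Corollary \ref{c6.1} in one sentence from Theorem \ref{t6.2} together with the pointwise inequality $S_L(f)\ls g^\ast_{\lz,\,L}(f)$. You make the constant explicit ($S_L(f)\le 2^{\lz/2}g^\ast_{\lz,\,L}(f)$, since $t/(t+d(x,y))>1/2$ on $\bgz(x)$) and spell out the completion/density step, but the underlying argument is identical.
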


\subsection{Boundedness of spectral multipliers\label{s6.3}}

\hskip\parindent In this subsection, we prove a H\"ormander-type
spectral multiplier theorem for $L$ on the Musielak-Orlicz-Hardy
space $H_{\fai,\,L}(\cx)$. We begin with some notions.

Let $L$ satisfy Assumptions (A) and (B), and $m(L)$ be as in
\eqref{1.1}. Let $\phi$ be a nonnegative $C^\fz_c$ function on
$\rr$ such that
\begin{equation}\label{6.20}
\supp\phi\subset(1/4,1)\ \text{and}\
\sum_{\ell\in\zz}\phi(2^{-\ell}\lz)=1\ \text{for all}\
\lz\in(0,\fz).
\end{equation}

Let $s\in[0,\fz)$. Recall that $C^s(\rr)$ is the \emph{space of
all functions} $m$ on $\rr$ for which
\begin{eqnarray*}
\|m\|_{C^s(\rr)}:=\begin{cases}\dsum_{k=0}^s\dsup_{\lz\in\rr}|m^{(k)}(\lz)|,&
s\in\zz_+,\\
\|m^{(\lfz s\rfz)}\|_{\mathrm{Lip}(s-\lfz
s\rfz)}+\dsum_{k=0}^{\lfz s\rfz}\sup_{\lz\in\rr}|m^{(k)}(\lz)|,&
s\not\in\zz_+
\end{cases}
\end{eqnarray*}
is finite, where $m^{(k)}$ with $k\in\nn$ denotes the
\emph{$k$-order derivative of $m$}, and $\|m^{(\lfz
s\rfz)}\|_{\mathrm{Lip}(s-\lfz s\rfz)}$ is defined by
$$\lf\|m^{(\lfz s\rfz)}\r\|_{\mathrm{Lip}(s-\lfz
s\rfz)}:=\sup_{x,y\in\rr,\,x\neq y}\frac{|m^{(\lfz
s\rfz)}(x)-m^{(\lfz s\rfz)}(y)|}{|x-y|^{s-\lfz s\rfz}}.$$

Now we state the main result of this subsection as follows.

\begin{theorem}\label{t6.3}
Let $L$ satisfy Assumptions (A) and (B) and $\fai$ be as in Definition \ref{d2.3} with
$\fai\in\rh_{2/[2-I(\fai)]}(\cx)$, where $I(\fai)$ is as in \eqref{2.10}. Assume that
$\phi$ is a nonnegative $C^\fz_c(\rr)$ function satisfying
\eqref{6.20}. If the bounded Borel function $m:\ [0,\fz)\to\cc$
satisfies that, for some
$s\in(n[\frac{q(\fai)}{i(\fai)}-\frac12],\fz)$, where $n$,
$q(\fai)$ and $i(\fai)$ are, respectively, as in \eqref{2.2},
\eqref{2.12} and \eqref{2.11},
\begin{equation}\label{6.21}
C(\phi,\,s):=\sup_{t\in(0,\fz)}
\|\phi(\cdot)m(t\cdot)\|_{C^s(\rr)}+|m(0)|<\fz,
\end{equation}
then $m(L)$ is bounded on $H_{\fai,\,L}(\cx)$ and there exists a
positive constant $C$ such that, for all $f\in H_{\fai,\,L}(\cx)$,
$$\|m(L)f\|_{H_{\fai,\,L}(\cx)}\le C\|f\|_{H_{\fai,\,L}(\cx)}.$$
\end{theorem}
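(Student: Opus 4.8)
The plan is to reduce the boundedness of $m(L)$ on $H_{\fai,\,L}(\cx)$ to its action on $(\fai,\,M)$-atoms, and then to prove that, for a suitable choice of $M$ and $\epz$, the image $m(L)\az$ of a $(\fai,\,M)$-atom $\az$ associated to a ball $B$ is (up to a uniform multiplicative constant) a $(\fai,\,M,\,\epz)$-molecule associated to $B$. Granting this, the molecular characterization of $H_{\fai,\,L}(\cx)$ established in Theorem \ref{t5.1} together with the atomic decomposition in Corollary \ref{c4.1} (and the density of $H_{\fai,\,L}(\cx)\cap L^2(\cx)$, via a density argument as in the proof of Theorem \ref{t6.1}) yields $\|m(L)f\|_{H_{\fai,\,L}(\cx)}\lesssim\|f\|_{H_{\fai,\,L}(\cx)}$. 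First I would fix $M\in\nn$ with $M>\frac n2[\frac{q(\fai)}{i(\fai)}-\frac12]$ and, using $\fai\in\rh_{2/[2-I(\fai)]}(\cx)$, Lemma \ref{l2.6}(iv), and the definitions of $q(\fai)$, $i(\fai)$, $I(\fai)$, select parameters $p_1\in[I(\fai),1]$, $p_2\in(0,i(\fai))$, $q_0\in(q(\fai),\fz)$ and then an auxiliary exponent $\epz$ with $\epz>n[\frac{q_0}{p_2}-\frac12]$ and $M$ still large enough relative to $\epz$, so that the molecular space $H^{M,\,\epz}_{\fai,\,\mol}(\cx)$ is available.

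The core is the molecular estimate. Write $\az=L^M b$ with $b\in\cd(L^M)$ supported in $B=B(x_B,r_B)$ and $\|(r_B^2L)^k b\|_{L^2(\cx)}\le r_B^{2M}[\mu(B)]^{1/2}\|\chi_B\|_{L^\fai(\cx)}^{-1}$ for $k\in\{0,\dots,M\}$. Since $m(L)$ commutes with $L$, we have $m(L)\az=L^M(m(L)b)$, so $m(L)b$ is the defining function for $m(L)\az$; the task is to control $\|(r_B^2 L)^k m(L)b\|_{L^2(U_j(B))}$ for $j\in\zz_+$ and $k\in\{0,\dots,M\}$, showing it is $\lesssim 2^{-j\epz}r_B^{2M}[\mu(B)]^{1/2}\|\chi_B\|_{L^\fai(\cx)}^{-1}$. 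For $j\in\{0,\dots,4\}$ this is immediate from the $L^2$-boundedness of $m(L)$ (Assumption (A) and the spectral theorem, using $\|m\|_\fz\lesssim C(\phi,s)$), combined with the size conditions on $b$. For $j\ge5$ one decomposes $m$ dyadically via $\phi$ as in \eqref{6.20}, writing $m(t\sqrt L)=\sum_{\ell\in\zz}(\phi\, m)(2^{-\ell}t\sqrt L)$ at an appropriate scale $t\sim r_B$, and invokes the finite-propagation-speed property \eqref{2.6} for $\cos(t\sqrt L)$ together with the Fourier-inversion representation of $(\phi\,m)(t\sqrt L)$ as an integral of $\cos(u\sqrt L)$ against $\widehat{\phi m}(u)$; the Hörmander-type condition \eqref{6.21}, i.e. $\sup_t\|\phi(\cdot)m(t\cdot)\|_{C^s(\rr)}<\fz$ with $s>n[\frac{q(\fai)}{i(\fai)}-\frac12]$, gives the decay $|\widehat{\phi m}(u)|\lesssim(1+|u|)^{-s}$ needed to sum over $\ell$ and to obtain the off-diagonal (Davies–Gaffney type) decay in the annulus $U_j(B)$. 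This is the standard route used by Duong–Yan \cite{dy11} and by Hofmann et al. \cite{hlmmy,hmm}; the factors $(r_B^2L)^k$ are absorbed using the analyticity and the identity $\az=L^Mb$ exactly as in the estimates for $\pi_{\Psi,\,L}$ in the proof of Proposition \ref{p4.1}.

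The main obstacle I anticipate is making the interplay between the smoothness index $s$, the molecular decay $\epz$, and the lower-type/Muckenhoupt indices $p_2,q_0$ quantitatively consistent: one needs $s$ just above $n[\frac{q(\fai)}{i(\fai)}-\frac12]$ to force an off-diagonal decay rate that, after being fed through the reverse-Hölder and doubling estimates (Lemma \ref{l2.6}(vii)(viii) and \eqref{2.2}), still beats the growth of $\fai(2^jB,\cdot)$ relative to $\fai(B,\cdot)$ — precisely the bookkeeping carried out in the proof of Proposition \ref{p4.3}. A secondary technical point is justifying the $L^2(\cx)$-convergence of the dyadic sum $\sum_\ell(\phi m)(2^{-\ell}t\sqrt L)$ acting on $b$ and the interchange of this sum with $L^M$, which follows from \eqref{2.7}, the spectral theorem, and dominated convergence. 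Once the molecular bound is in hand, the passage $m(L)\az=\sum$ (a multiple of) a molecule, summed against the coefficients $\{\lz_j\}_j$ from Corollary \ref{c4.1}, together with the quasi-subadditivity of $\fai$ (Lemma \ref{l2.3}(i)) and Lemma \ref{l2.5}, closes the argument; I would finish with the same density reduction used at the end of the proof of Theorem \ref{t6.1}, noting that $m(L)$ is $L^2(\cx)$-bounded so $H_{\fai,\,L}(\cx)\cap L^2(\cx)$ is mapped into itself.
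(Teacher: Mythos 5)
Your overall strategy (reduce to atoms, show the image of an atom is a molecule, invoke Theorem \ref{t5.1} and density) is indeed the route the paper takes, and the ingredients you list (dyadic decomposition of $m$ via \eqref{6.20}, finite propagation speed for $\cos(t\sqrt{L})$, Fourier inversion and the $C^s$-decay from \eqref{6.21}) are the right ones. However, there is a concrete gap in the passage from a $(\fai,M)$-atom to a molecule, and as stated the molecular estimate you propose cannot be established.

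You take a $(\fai,M)$-atom $\az = L^M b$, set $\nu := m(L)b$, write $m(L)\az = L^M\nu$, and propose to verify that $\nu$ satisfies the $(\fai,M,\epz)$-molecule bounds $\|(r_B^2 L)^k \nu\|_{L^2(U_j(B))}\lesssim 2^{-j\epz}r_B^{2M}[\mu(B)]^{1/2}\|\chi_B\|^{-1}_{L^\fai(\cx)}$ for all $k\in\{0,\dots,M\}$. The case $k=0$ requires off-diagonal decay of $\|m(L)b\|_{L^2(U_j(B))}$ itself. But $b$ is merely an $L^2$ function supported in $B$ with no spectral vanishing at the origin. When you decompose $m(\sqrt{L}) = \sum_{\ell\in\zz}m_\ell(\sqrt{L})$ dyadically and use finite propagation speed plus $|\widehat{m_\ell}(\tau)|\lesssim C(\phi,s)\,2^\ell(1+2^\ell|\tau|)^{-s}$, the off-diagonal piece on $U_j(B)$ contributed by frequency scale $\ell$ is $\lesssim\int_{|\tau|\gtrsim 2^j r_B}|\widehat{m_\ell}(\tau)|\,d\tau\cdot\|b\|_{L^2(B)}\lesssim\min\{1,(2^{\ell+j}r_B)^{1-s}\}\|b\|_{L^2(B)}$, and the sum over $\ell$ with $2^{\ell+j}r_B\lesssim 1$ contributes $\mathrm{O}(1)\cdot\|b\|_{L^2(B)}$ for infinitely many $\ell$ and therefore diverges. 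The Hörmander decay controls only the high-frequency tail; the series over $\ell$ needs an independent low-frequency cutoff of the form $\min\{1,(2^\ell r_B)^{2M}\}$, which is exactly what the paper manufactures. The paper's Lemma \ref{l6.3} starts from a $(\fai,2M)$-atom $\az=L^{2M}b$, sets $\nu:=m(L)L^M b$ (so that $m(L)\az=L^M\nu$), and the extra $L^M$ acting on $b$ provides the low-frequency vanishing $\sim(2^\ell r_B)^{2M}$ making the $\ell$-sum converge; equivalently the verified hypothesis \eqref{6.22} carries the regularizing factor $(I-e^{-r_B^2 L})^M$, and the auxiliary functions $F^\ell_{r,M}(\lz)=m_\ell(\lz)(1-e^{-(r\lz)^2})^M$ together with estimate \eqref{6.25} encode exactly the needed $\min\{1,(2^\ell r_B)^{2M}\}$ factor. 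Your plan works for $k\ge 1$, where $(r_B^2 L)^k$ supplies this cutoff, but it breaks at $k=0$. The fix is to work with $(\fai,2M)$-atoms (or, equivalently, to build the $(I-e^{-r_B^2L})^M$ factor into the target estimate), which is what the paper does.
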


\begin{remark}\label{r6.3}
(i) A typical example of the function $m$ satisfying the condition
of Theorem \ref{t6.3} is $m(\lz)=\lz^{i\gz}$ for all $\lz\in\rr$
and some real-valued $\gz$, where $i$ denotes the \emph{unit
imaginary number} (see Corollary \ref{c6.2} below).

(ii) Theorem \ref{t6.3} covers the results of \cite[Theorem
1.1]{dy11} in the case when $p\in(0,1]$, by taking
$\fai(x,t):=t^p$ for all $x\in\rn$ and $t\in[0,\fz)$.
\end{remark}

To prove Theorem \ref{t6.3}, we need the following Lemma \ref{l6.3}.

\begin{lemma}\label{l6.3}
Let $\fai$ and $L$ be as in Theorem \ref{t6.3}, and $m$ a bounded
Borel function and $M\in\nn$ with
$M>\frac{n}{2}[\frac{q(\fai)}{i(\fai)}-\frac12]$, where $n$,
$q(\fai)$ and $i(\fai)$ are, respectively, as in \eqref{2.2},
\eqref{2.12} and \eqref{2.11}. Assume that there exist $D\in
(n[\frac{q(\fai)}{i(\fai)}-\frac12],\fz)$ and $C\in(0,\fz)$ such
that, for every $j\in\{2,\,3,\,\cdots\}$, any ball $B:=B(x_B,r_B)$
and $f\in L^2(\cx)$ with $\supp f\subset B$,
\begin{equation}\label{6.22}
\lf\|m(L)(I-e^{-r^2_B L})^M f\r\|_{L^2(U_j(B))}\le
C2^{-jD}\|f\|_{L^2(B)}.
\end{equation}
Then $m(L)$ can extend to a bounded linear operator on
$H_{\fai,\,L}(\cx)$. More precisely, there exists a positive
constant $C$ such that, for all $f\in H_{\fai,\,L}(\cx)$,
$\|m(L)f\|_{H_{\fai,\,L}(\cx)}\le C\|f\|_{H_{\fai,\,L}(\cx)}.$
\end{lemma}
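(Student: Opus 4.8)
The plan is to reduce the boundedness of $m(L)$ on $H_{\fai,\,L}(\cx)$ to the atomic machinery already established, i.e. to Proposition \ref{p4.3} and the dual characterization of $H_{\fai,\,L}(\cx)$ in Theorem \ref{t4.1}. First I would fix $M\in\nn$ with $M>\frac n2[\frac{q(\fai)}{i(\fai)}-\frac12]$ as in the hypothesis, and take $f\in H_{\fai,\,L}(\cx)\cap L^2(\cx)$; by Proposition \ref{p4.2} we may write $f=\sum_j\lz_j\az_j$ in both $H_{\fai,\,L}(\cx)$ and $L^2(\cx)$, where each $\az_j$ is a $(\fai,M)$-atom supported in a ball $B_j$, with $\blz(\{\lz_j\az_j\}_j)\ls\|f\|_{H_{\fai,\,L}(\cx)}$. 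Since $m(L)$ is bounded on $L^2(\cx)$ (being a bounded Borel function of a self-adjoint operator), $m(L)f=\sum_j\lz_j m(L)\az_j$ in $L^2(\cx)$. The strategy is then to show that, up to a uniform multiplicative constant, each $m(L)\az_j$ is a $(\fai,M,\epz)$-molecule associated with $B_j$ for a suitable $\epz>n[\frac{q(\fai)}{i(\fai)}-\frac12]$; once this is done, Theorem \ref{t5.1} (the molecular characterization) together with the quasi-subadditivity of $\fai$ (Lemma \ref{l2.3}(i)) and the estimate for the $S_L$-norm of a single molecule from the proof of Proposition \ref{p4.3} yields $\|m(L)f\|_{H_{\fai,\,L}(\cx)}\ls\blz(\{\lz_j\az_j\}_j)\ls\|f\|_{H_{\fai,\,L}(\cx)}$, and a density argument extends this to all of $H_{\fai,\,L}(\cx)$.

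The core of the argument is the molecular estimate for $m(L)\az$, where $\az=L^Mb$ is a $(\fai,M)$-atom on a ball $B=B(x_B,r_B)$. I would set $\wz b:=m(L)b$ and $\wz\bz:=L^M\wz b=m(L)\az$, and verify the size conditions of Definition \ref{d4.3} for $(r_B^2L)^k\wz b$ on the annuli $U_\ell(B)$. For $\ell\in\{0,1,2\}$ (the "local" part) one uses only the $L^2$-boundedness of $m(L)$ and the $(\fai,M)$-atom bounds on $b$, together with $\|\chi_B\|_{L^\fai(\cx)}$-normalization; this is routine. For $\ell\ge 3$ one splits $b=(I-e^{-r_B^2L})^Mb+[I-(I-e^{-r_B^2L})^M]b$. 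The second term is a finite linear combination of $e^{-kr_B^2L}b$, $k\in\{1,\dots,M\}$, whose far-away $L^2$-norms decay super-polynomially by the Davies--Gaffney estimates \eqref{2.5} (Lemma \ref{l2.1}), and applying the $L^2$-bounded operator $m(L)$ afterwards preserves this decay. For the first term, $m(L)(I-e^{-r_B^2L})^Mb$ restricted to $U_\ell(B)$ is controlled by hypothesis \eqref{6.22} (with $D>n[\frac{q(\fai)}{i(\fai)}-\frac12]$), which gives precisely the $2^{-\ell D}$ decay; combined with the atom bound $\|b\|_{L^2(B)}\le r_B^{2M}[\mu(B)]^{1/2}\|\chi_B\|_{L^\fai(\cx)}^{-1}$ one obtains $\|(r_B^2L)^k\wz b\|_{L^2(U_\ell(B))}\ls 2^{-\ell\epz}r_B^{2M}[\mu(B)]^{1/2}\|\chi_B\|_{L^\fai(\cx)}^{-1}$ for any $\epz<D$; choosing $\epz\in(n[\frac{q(\fai)}{i(\fai)}-\frac12],D)$ shows $\wz\bz$ is a constant multiple of a $(\fai,M,\epz)$-molecule. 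One should also note $k$ ranges over $\{0,\dots,M\}$ and that $(r_B^2L)^k$ commutes with both $m(L)$ and $(I-e^{-r_B^2L})^M$, so the same estimates apply uniformly in $k$.

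The main obstacle I anticipate is the bookkeeping in the $\ell\ge 3$ decomposition: one must make sure that applying $m(L)$ (which is not localized) does not destroy the annular decay coming from \eqref{6.22} and from Davies--Gaffney. For the $(I-e^{-r_B^2L})^M$ piece this is handed to us by the hypothesis \eqref{6.22}, so no extra work is needed there; for the complementary piece one relies on the heat-kernel localization away from $B$ before applying the globally $L^2$-bounded $m(L)$, which is legitimate because $m(L)$ does not increase $L^2$-norms. A secondary point is checking that the index constraints line up: we need $\epz>n[\frac{q(\fai)}{i(\fai)}-\frac12]$ for the molecular space $H^{M,\epz}_{\fai,\,\mol}(\cx)$ to be meaningful and equal to $H_{\fai,\,L}(\cx)$ (Theorem \ref{t5.1}), and this is exactly guaranteed by $D>n[\frac{q(\fai)}{i(\fai)}-\frac12]$. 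With these in hand, the summation over $j$ and the passage from $H_{\fai,\,L}(\cx)\cap L^2(\cx)$ to $H_{\fai,\,L}(\cx)$ by density complete the proof.
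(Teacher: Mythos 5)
Your overall architecture is the same as the paper's: reduce via density and Proposition \ref{p4.2} to the statement that $m(L)$ maps atoms to fixed multiples of $(\fai,M,\epz)$-molecules with $\epz>n[\frac{q(\fai)}{i(\fai)}-\frac12]$, then invoke Theorem \ref{t5.1} and the estimate from the proof of Proposition \ref{p4.3}. However, there is a genuine gap in the core molecular estimate, and it is precisely the point on which the paper's proof differs from yours: the paper decomposes $(\fai,\,2M)$-atoms, not $(\fai,\,M)$-atoms.

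Concretely: you split $b=(I-e^{-r_B^2L})^Mb+[I-(I-e^{-r_B^2L})^M]b$ and, for the second piece, claim that the Davies--Gaffney decay of $e^{-ir_B^2L}(r_B^2L)^kb$ on annuli is ``preserved'' after applying $m(L)$ because $m(L)$ is $L^2$-bounded. This is false. Global $L^2$-boundedness gives $\|m(L)g\|_{L^2(\cx)}\ls\|g\|_{L^2(\cx)}$, but it gives no information about $\|m(L)g\|_{L^2(U_\ell(B))}$ for a specific annulus; $m(L)$ is not a local operator and does not propagate spatial localization. Hypothesis \eqref{6.22} is exactly the replacement for locality of $m(L)$, but it can only be invoked when the operator $(I-e^{-r_B^2L})^M$ sits in front of a function supported in a ball, and the second piece $\sum_{i\ge1}c_i\,e^{-ir_B^2L}(r_B^2L)^kb$ has no such factor. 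With $\az=L^Mb$ a $(\fai,M)$-atom, and $k$ ranging over $\{0,\dots,M\}$, there are no ``spare'' powers of $L$ to convert $e^{-ir_B^2L}$ into $(I-e^{-r_B^2L})^M$, because the relevant spectral factor $e^{-ix}(1-e^{-x})^{-M}$ blows up like $x^{-M}$ as $x\to0$ and is unbounded.

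The paper avoids this by starting from $(\fai,\,2M)$-atoms $\az=L^{2M}b$, so that $m(L)\az=L^M\bigl(m(L)L^Mb\bigr)$ and the molecular estimate \eqref{6.23} concerns $m(L)(r_B^2L)^{k+M}b$ with $k+M\ge M$. Having $M$ extra powers of $L$ is essential: the spectral identity $e^{-ix}x^M=(1-e^{-x})^M\cdot\Theta_i(x)$ with $\Theta_i(x):=e^{-ix}x^M(1-e^{-x})^{-M}$ \emph{bounded on $[0,\fz)$} allows one to rewrite the ``bad'' terms as $(I-e^{-r_B^2L})^M\Theta_i(r_B^2L)(\cdots)$, so that hypothesis \eqref{6.22} can be applied again (after a further annular decomposition and covering by $r_B$-balls to handle the fact that $\Theta_i(r_B^2L)$ is itself not local, but does satisfy Davies--Gaffney type estimates). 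This is the content of the step the paper defers to the analogue of \cite[(3.4)]{dy11}. To repair your proposal, replace the $(\fai,M)$-atom $\az=L^Mb$ by a $(\fai,2M)$-atom (permissible, since Proposition \ref{p4.2} furnishes $(\fai,M')$-atoms for any admissible $M'$), take the molecular building block to be $m(L)L^Mb$, and then carry out the decomposition and the factoring through $(I-e^{-r_B^2L})^M$ described above; the rest of your argument then goes through.
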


\begin{proof}
We borrow some ideas from \cite{dy11}. Notice that since
$H_{\fai,\,L}(\cx)\cap H^2(\cx)$ is dense in $H_{\fai,\,L}(\cx)$,
we can define $m(L)$ on $H_{\fai,\,L}(\cx)\cap H^2(\cx)$. Once we
prove that $m(L)$ is bounded from $H_{\fai,\,L}(\cx)\cap H^2(\cx)$
to $H_{\fai,\,L}(\cx)$, by a density argument, we then see that the
operator $m(L)$ can be extended to $H_{\fai,\,L}(\cx)$.

Let $f\in H_{\fai,\,L}(\cx)\cap H^2(\cx)$ and $M\in\nn$ with
$M>\frac{n}{2}[\frac{q(\fai)}{i(\fai)}-\frac12]$. To prove the
desired conclusion, it suffices to prove that, for any
$(\fai,\,2M)$-atom $\az$, $m(L)\az$ is a constant multiple of a
$(\fai,\,M,\,\epz)$-molecule with
$\epz\in(n[\frac{q(\fai)}{i(\fai)}-\frac12],\fz)$. Indeed, if this
holds true, by Proposition \ref{p4.2}, we know that there exist
$\{\lz_j\}\subset\cc$ and a sequence $\{\az_j\}_j$ of
$(\fai,\,2M)$-atoms such that $f=\sum_j\lz_j\az_j$ in
$H_{\fai,\,L}(\cx)\cap L^2(\cx)$ and
$\blz(\{\lz_j\az_j\}_j)\ls\|f\|_{H_{\fai,\,L}(\cx)}$. From this
and the $L^2(\cx)$-boundedness of $m(L)$, we infer that
$m(L)f=\sum_j\lz_j(m(L)\az_j)$ is a molecular decomposition of
$m(L)f$ and
$$\|m(L)f\|_{H^{M,\,\epz}_{\fai,\,\mol}(\cx)}\ls
\blz(\{\lz_j(m(L)\az_j)\}_j)\ls\blz(\{\lz_j\az_j\}_j)
\sim\|f\|_{H_{\fai,\,L}(\cx)}.$$

Let $\az$ be a $(\fai,\,2M)$-atom. Then there exists a function
$b\in\cd(L^{2M})$ such that $\az=L^{2M}b$ satisfies (ii) and (iii)
of Definition \ref{d4.2}. From the spectral theory, it follows
that $m(L)\az=L^M(m(L)L^M b)$. Furthermore, by the definition of
$(\fai,\,M,\,\epz)$-molecules, it remains to prove that, for all
$k\in\{0,\,\cdots,\,M\}$ and $j\in\zz_+$,
\begin{equation}\label{6.23}
\lf\|(r^2_B L)^k m(L)L^M
b\r\|_{L^2(U_j(B))}\ls2^{-j\epz}r^{2M}_B[\mu(B)]^{1/2}
\|\chi_B\|^{-1}_{L^\fai(\cx)}.
\end{equation}
From the $L^2(\cx)$-boundedness of $m(L)$, the $H_\fz$-functional calculi for $L$ and
\eqref{2.5}, similar to the proof of \cite[(3.4)]{dy11}, it follows that \eqref{6.23}
holds true. We omit the
details and hence complete the proof of Lemma \ref{l6.3}.
\end{proof}

Now we give the proof of Theorem \ref{t6.3} by using Lemma
\ref{l6.3}.

\begin{proof}[Proof of Theorem \ref{t6.3}]
We borrow some ideas from \cite{dos02,dy11}. Since that $m$
satisfies \eqref{6.21} if and only if the function $\lz\to
m(\lz^2)$ satisfies the same property, similar to the proof of
\cite[Theorem 1.1]{dy11}, we may consider $m(\sqrt{L})$ instead of
$m(L)$. By $m(\lz)=m(\lz)-m(0)+m(0)$, we know that
$m(\sqrt{L})=(m(\cdot)-m(0))(\sqrt{L})+m(0)I$. Replacing $m$ by
$m-m(0)$, without loss of generality, we may assume, in the
following, that $m(0)=0$. Let $\phi$ be a function as in
\eqref{6.20}. Then for all $\lz\in(0,\fz)$,
\begin{equation*}
m(\lz)=\sum_{\ell\in\zz}\phi(2^{-\ell}\lz)m(\lz)=:\sum_{\ell\in\zz}m_\ell(\lz).
\end{equation*}
Moreover, from \eqref{1.1}, it follows that the sequence
$\sum_{\ell=-N}^N m_\ell(\sqrt{L})$ converges strongly in
$L^2(\cx)$ to $m(\sqrt{L})$. We shall prove that $\sum_{\ell=-N}^N
m_\ell(\sqrt{L})$ is bounded on $H_{\fai,\,L}(\cx)$ with its bound
independent of $N$. This, together with the strong convergence of
\eqref{1.1} in $L^2(\cx)$, the fact that $H_{\fai,\,L}(\cx)\cap
L^2(\cx)$ is dense in $H_{\fai,\,L}(\cx)$ and a density argument,
then gives the desired conclusion.

Now fix $s\in\rr$ with $s>n[q(\fai)/i(\fai)-1/2]$. Let $M\in\nn$
with $M>s/2$. For any $\ell\in\zz$, $r,\,\lz\in(0,\fz)$, we set
$F_{r,\,M}(\lz):=m(\lz)(1-e^{-(r\lz)^2})^M$
and $F^\ell_{r,\,M}(\lz):=m_\ell(\lz)(1-e^{-(r\lz)^2})^M.$
Then we see that
\begin{equation}\label{6.24}
m(\sqrt{L})(I-e^{-r^2L})^M=F_{r,\,M}(\sqrt{L})=\lim_{N\to\fz}
\sum_{\ell=-N}^N F^\ell_{r,\,M}(\sqrt{L})
\end{equation}
in $L^2(\cx)$. Fix a ball $B$. For all $b\in L^2(\cx)$ with $\supp
b\subset B$, by using the $L^2(\cx)$-boundedness of $m(L)$ and \eqref{6.20},
similar to the proof of \cite[(4.8)]{dy11}, we know
that, for all $\ell\in\zz$ and $j\in\nn$ with $j\ge3$,
\begin{equation}\label{6.25}
\lf\|F^\ell_{r_B,\,M}(\sqrt{L})b\r\|_{L^2(U_j(B))}\ls
C(\phi,\,s)2^{-sj}(2^\ell r_B)^{-s} \min\lf\{1,\,(2^\ell
r_B)^{2M}\r\}\|b\|_{L^2(B)},
\end{equation}
which, together with \eqref{6.24}, $s>n[q(\fai)/i(\fai)-1/2]$ and
$M>s/2$, implies that, for all $j\in\nn$ with $j\ge3$,
\begin{eqnarray*}
&&\lf\|m(\sqrt{L})(I-e^{-r_B^2L})^M
b\r\|_{L^2(U_j(B))}\\
&&\hs\ls2^{-js}\lim_{N\to\fz}\sum_{\ell=-N}^N (2^\ell
r_B)^{-s}\min\lf\{1,(2^\ell r_B)^{2M}\r\}\|b\|_{L^2(B)}\\
&&\hs\ls2^{-js}\lf[\sum_{\{\ell\in\zz:\,2^\ell r_B>1\}}(2^\ell
r_B)^{-s}+\sum_{\{\ell\in\zz:\,2^\ell r_B\le1\}}(2^\ell
r_B)^{2M-s}\r]\|b\|_{L^2(B)}\ls2^{-js}\|b\|_{L^2(B)}.
\end{eqnarray*}
By this, we know that the assumptions of Lemma \ref{l6.3} are
satisfied, and hence the desired conclusion of Theorem \ref{t6.3}
holds true, which completes the proof of Theorem \ref{t6.3}.
\end{proof}

In the following corollary, we obtain the boundedness of imaginary
powers of self-adjoint operators on Musielak-Orlicz-Hardy spaces
$H_{\fai,\,L}(\cx)$.

\begin{corollary}\label{c6.2}
Let $\fai$ and $L$ be as in Theorem \ref{t6.3}. Then for any
$\gz\in\rr$, the operator $L^{i\gz}$ is bounded on
$H_{\fai,\,L}(\cx)$. Moreover, for any $\epz\in(0,\fz)$, there
exists a positive constant $C(\epz)$, depending on $\epz$, such that, for all $f\in
H_{\fai,\,L}(\cx)$,
$$\|L^{i\gz}f\|_{H_{\fai,\,L}(\cx)}\le
C(\epz)(1+|\gz|)^{n[\frac{q(\fai)}{i(\fai)}-\frac12]+\epz}
\|f\|_{H_{\fai,\,L}(\cx)},$$
where $n$, $q(\fai)$ and $i(\fai)$
are, respectively, as in \eqref{2.2}, \eqref{2.12} and
\eqref{2.11}.
\end{corollary}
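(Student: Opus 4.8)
The plan is to apply Theorem \ref{t6.3} to the specific multiplier $m(\lz):=\lz^{i\gz}$, together with a careful tracking of how the constant $C(\phi,s)$ in \eqref{6.21} depends on $\gz$. Since the statement of Corollary \ref{c6.2} asks for a quantitative bound of the form $(1+|\gz|)^{n[q(\fai)/i(\fai)-1/2]+\epz}$, the crude application of Theorem \ref{t6.3} is not quite enough on its own; one must combine it with an estimate for $\|\phi(\cdot)m(t\cdot)\|_{C^s(\rr)}$ in terms of $s$ and $|\gz|$, and then optimize the choice of the smoothness parameter $s$.

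First I would observe that $m(\lz)=\lz^{i\gz}$ is a bounded Borel function on $[0,\fz)$ with $|m(\lz)|=1$, and $m(0)$ can be interpreted as $0$ (or one simply notes $|m(0)|\le 1$, which contributes only a harmless additive constant). Next, for any fixed $t\in(0,\fz)$, since $\supp\phi\subset(1/4,1)$, on the support of $\phi(\cdot)$ the argument $t\lz$ ranges over $(t/4,t)$, and $\frac{d^k}{d\lz^k}[(t\lz)^{i\gz}] = (t)^{i\gz}\,(i\gz)(i\gz-1)\cdots(i\gz-k+1)\,\lz^{i\gz-k}$, whose modulus on $\lz\in(1/4,1)$ is bounded by $C_k(1+|\gz|)^k$. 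A similar estimate holds for the Lipschitz seminorm of the top-order derivative when $s\notin\zz_+$. Combining with the (fixed, $t$-independent) bounds on the derivatives of $\phi$ via the Leibniz rule, one gets $\|\phi(\cdot)m(t\cdot)\|_{C^s(\rr)}\le C(\phi,s)(1+|\gz|)^s$ uniformly in $t\in(0,\fz)$; hence $C(\phi,s)$ in \eqref{6.21} (for this particular $m$) is at most $C(\phi,s)(1+|\gz|)^s$.

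Then I would apply Theorem \ref{t6.3} with this $m$ and smoothness index $s$: for every $s>n[q(\fai)/i(\fai)-1/2]$ the operator $L^{i\gz}$ is bounded on $H_{\fai,\,L}(\cx)$, with operator norm controlled by a constant depending on $s$ times $(1+|\gz|)^s$ (tracking the dependence through the proof of Theorem \ref{t6.3} and of Lemma \ref{l6.3}, where the multiplier bound enters linearly through \eqref{6.25}). Finally, given $\epz\in(0,\fz)$, I would simply choose $s:=n[q(\fai)/i(\fai)-1/2]+\epz$, which is admissible, and read off
$$\|L^{i\gz}f\|_{H_{\fai,\,L}(\cx)}\le C(\epz)(1+|\gz|)^{n[\frac{q(\fai)}{i(\fai)}-\frac12]+\epz}\|f\|_{H_{\fai,\,L}(\cx)},$$
as claimed. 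The bulk of the work is bookkeeping; the main obstacle I anticipate is making sure the $\gz$-dependence really enters the bound in Theorem \ref{t6.3} only as the factor $C(\phi,s)$ raised to the first power — i.e., that no hidden factor of $C(\phi,s)$ to a higher power or of $|\gz|$ sneaks in through the molecular-decomposition machinery (Proposition \ref{p4.2}, Lemma \ref{l6.3}) — which requires inspecting the estimate \eqref{6.23}/\eqref{6.25} and confirming the linear appearance of the multiplier norm there. Once that linearity is confirmed, the optimization over $s$ is immediate and the proof is complete.
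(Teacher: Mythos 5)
Your proposal follows exactly the paper's route: apply Theorem \ref{t6.3} with $m(\lz)=\lz^{i\gz}$, establish the estimate $C(\phi,s)\ls(1+|\gz|)^s$ for this multiplier, track the linear dependence on the multiplier norm through \eqref{6.25} and the proof of Theorem \ref{t6.3}, and then choose $s=n[q(\fai)/i(\fai)-\frac12]+\epz$. The additional care you advocate — checking that the multiplier norm enters the molecular bound \eqref{6.23}/\eqref{6.25} only to the first power — is exactly the right thing to verify, and the paper's cited chain confirms it.
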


\begin{proof}
We apply Theorem \ref{t6.3} with $m(\lz):=\lz^{i\gz}$ for all
$\lz\in(0,\fz)$. In this case it is easy to show that, for
$s>n[q(\fai)/i(\fai)-1/2]$, $C(\phi,\,s)\ls(1+|\gz|)^s$, where
$C(\phi,\,s)$ is as in \eqref{6.21} (see, for example,
\cite[Corollary 4.3]{dy11}). From this, \eqref{6.25} and the proof
of Theorem \ref{t6.3}, we deduce that, for all $\epz\in(0,\fz)$,
there exists a positive constant $C(\epz)$, depending on $\epz$,
such that, for all $f\in H_{\fai,\,L}(\cx)$,
$$\|L^{i\gz}f\|_{H_{\fai,\,L}(\cx)}\le
C(\epz)(1+|\gz|)^{n[\frac{q(\fai)}{i(\fai)}-\frac12]+\epz}
\|f\|_{H_{\fai,\,L}(\cx)},$$
which completes the proof of Corollary \ref{c6.2}.
\end{proof}

\section{Applications to Schr\"odinger operators\label{s7}}

\hskip\parindent In this section, let $\cx:=\rn$ and
\begin{equation}\label{7.1}
L:=-\bdz+V
\end{equation}
be a Schr\"oldinger operator, where $0\le V\in L^1_{\loc}(\rn)$. We establish several equivalent
characterizations of the corresponding Musielak-Orlicz-Hardy spaces
$H_{\fai,\,L}(\cx)$, in terms  of the atom,
the molecular, the Lusin-area function associated with  the
Poisson semigroup of $L$, the non-tangential and the radial maximal
functions associated with the heat semigroup generated by $L$, and
the non-tangential and the radial maximal functions associated with
the Poisson semigroup generated by $L$. Moreover, we prove that
the Riesz transform $\nabla L^{-1/2}$ associated with $L$ is
bounded from $H_{\fai,\,L}(\rn)$ to $L^\fai(\rn)$ when
$i(\fai)\in(0,1]$, and from $H_{\fai,\,L}(\rn)$ to the Musielak-Orlicz-Hardy
space $H_{\fai}(\rn)$ introduced by Ky \cite{k} when
$i(\fai)\in(\frac{n}{n+1},1]$.

Since $V$ is a nonnegative function, from the Feynman-Kac formula,
we deduce that the kernel of the semigroup $e^{-tL}$, $h_t$,
satisfies that, for all $x,\,y\in\rn$ and $t\in(0,\fz)$,
\begin{equation*}
0\le h_t(x,y)\le(4\pi t)^{-n/2}\exp\lf\{-\frac{|x-y|^2}{4t}\r\}.
\end{equation*}

\begin{remark}\label{r7.1}
(i) By Remark \ref{r4.1}(i), we know that, in this case,
$H^2(\rn)=L^2(\rn)$.

(ii) In this section, for the sake of convenience, we choose the
norm on $\rn$ to be the \emph{supremum norm}; namely, for any
$x=(x_1,\,x_2,\,\cdots,\,x_n)\in\rn,$ $|x|:=\max\{|x_1|,\,\cdots,\,|x_n|\},$
for which balls determined
by this norm are cubes associated with the usual Euclidean norm
with sides parallel to the axes.
\end{remark}

It is easy to see that $L$ satisfies Assumptions (A) and (B), which, combined with
Theorems \ref{t5.1} and \ref{t5.2}, immediately implies the
following conclusions. We omit the details.

\begin{theorem}\label{t7.1}
Let $L$ be as in \eqref{7.1} and $\fai$ as in Definition \ref{d2.3}
with $\fai\in\rh_{2/[2-I(\fai)]}(\rn)$, where $I(\fai)$ is as in \eqref{2.10}.
Assume further that $M\in\nn$ with
$M>\frac{n}{2}[\frac{q(\fai)}{i(\fai)}-\frac{1}2]$ and
$\epz\in(n[\frac{q(\fai)}{i(\fai)}-\frac{1}{2}],\fz)$, where $n$,
$q(\fai)$ and $i(\fai)$ are, respectively, as in \eqref{2.2},
\eqref{2.12} and \eqref{2.11}. Then the spaces
$H_{\fai,\,L}(\rn)$, $H^M_{\fai,\,\at}(\rn)$,
$H^{M,\,\epz}_{\fai,\,\mol}(\rn)$ and $H_{\fai,\,S_P}(\rn)$
coincide with equivalent quasi-norms.
\end{theorem}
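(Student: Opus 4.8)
The plan is to deduce Theorem~\ref{t7.1} directly from Theorems~\ref{t5.1} and~\ref{t5.2}, which were established in the abstract setting of a metric space with doubling measure carrying a nonnegative self-adjoint operator that satisfies the Davies-Gaffney estimates. Thus the entire argument reduces to checking that the Schr\"odinger operator $L:=-\bdz+V$ with $0\le V\in L^1_{\loc}(\rn)$, together with the underlying space $(\rn,|\cdot|,dx)$ (with the supremum norm as in Remark~\ref{r7.1}(ii)), fits into that framework, and then invoking the two theorems.

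First I would observe that $(\rn,|\cdot|,dx)$ is a metric measure space whose Lebesgue measure is doubling; indeed $|B(x,2r)|=2^n|B(x,r)|$, so~\eqref{2.1} holds with $C_1=2^n$, one may take the constant $n$ in~\eqref{2.2} to be the Euclidean dimension, and~\eqref{2.3} holds with $N=0$. Next, since $0\le V\in L^1_{\loc}(\rn)$, the form sum $L=-\bdz+V$ is a nonnegative self-adjoint operator in $L^2(\rn)$, so Assumption~(A) holds. For Assumption~(B), the Feynman-Kac formula yields the pointwise Gaussian bound on the heat kernel recalled just before Remark~\ref{r7.1}, namely $0\le h_t(x,y)\le(4\pi t)^{-n/2}\exp\{-|x-y|^2/(4t)\}$ for all $x,y\in\rn$ and $t\in(0,\fz)$; a standard integration argument then shows that $\{e^{-tL}\}_{t>0}$ is analytic on $L^2(\rn)$ and satisfies the Davies-Gaffney estimates~\eqref{2.5} (in fact the Gaussian bound additionally forces $N(L)=\{0\}$ and $H^2(\rn)=L^2(\rn)$, as noted in Remark~\ref{r7.1}(i)). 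Hence all hypotheses under which Theorems~\ref{t5.1} and~\ref{t5.2} were proved are in force, with $\cx:=\rn$; note also that the standing assumptions $\fai\in\aa_\fz(\rn)$ and $\fai\in\rh_{2/[2-I(\fai)]}(\rn)$ and the conditions $M>\frac{n}{2}[\frac{q(\fai)}{i(\fai)}-\frac12]$, $\epz\in(n[\frac{q(\fai)}{i(\fai)}-\frac12],\fz)$ are exactly the specializations to $\cx=\rn$ of the corresponding hypotheses of those theorems.

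With this verification in hand, applying Theorem~\ref{t5.1} to $L$ gives that $H_{\fai,\,L}(\rn)$, $H^M_{\fai,\,\at}(\rn)$ and $H^{M,\,\epz}_{\fai,\,\mol}(\rn)$ coincide with equivalent quasi-norms, and applying Theorem~\ref{t5.2} to $L$ gives that $H_{\fai,\,L}(\rn)$ and $H_{\fai,\,S_P}(\rn)$ coincide with equivalent quasi-norms. Chaining these identifications through the common space $H_{\fai,\,L}(\rn)$ yields the asserted equivalence of all four spaces. The main point requiring any actual work — and it is classical rather than an obstacle — is the passage from the Gaussian heat-kernel estimate to the analyticity of $\{e^{-tL}\}_{t>0}$ and the Davies-Gaffney bound~\eqref{2.5}; everything else is a direct citation of the results of Section~\ref{s5}. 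Accordingly, the proof amounts to the verification of Assumptions~(A) and~(B) for $L$ followed by the invocation of Theorems~\ref{t5.1} and~\ref{t5.2}, and the remaining details may be omitted.
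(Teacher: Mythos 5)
Your proposal matches the paper's argument exactly: the paper simply observes that $L$ satisfies Assumptions (A) and (B) (via the Feynman-Kac Gaussian bound) and then cites Theorems~\ref{t5.1} and~\ref{t5.2}, omitting details. Your write-up supplies the routine verification that the paper omits, but the logical route is identical.
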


For any $\bz\in(0,\fz)$, $f\in L^2(\rn)$ and $x\in\rn$, let
$$\cn^\bz_h(f)(x):=\sup_{y\in B(x,\bz t),\,t\in(0,\fz)}
\lf|e^{-t^2L}(f)(y)\r|,\ \ \cn^\bz_P(f)(x):=\sup_{y\in B(x,\bz
t),\,t\in(0,\fz)}\lf|e^{-t\sqrt{L}}(f)(y)\r|,
$$
$\ccr_h(f)(x):=\sup_{t\in(0,\fz)}|e^{-t^2L}(f)(x)|$ and
$\ccr_P(f)(x):=\sup_{t\in(0,\fz)}|e^{-t\sqrt{L}}(f)(x)|$. We \emph{denote
$\cn^1_h(f)$ and $\cn^1_P(f)$ simply by $\cn_h(f)$ and $\cn_P(f)$},
respectively.

\begin{definition}\label{d7.1}
Let $L$ be as in \eqref{7.1} and $\fai$ as in Definition \ref{d2.3}.
A function $f\in H^2(\rn)$ is said to be in
$\wz{H}_{\fai,\,\cn_h}(\rn)$ if $\cn_h(f)\in L^\fai(\rn)$;
moreover, let
$\|f\|_{H_{\fai,\,\cn_h}(\rn)}:=\|\cn_h(f)\|_{L^\fai(\rn)}.$
The \emph{Musielak-Orlicz-Hardy space $H_{\fai,\,\cn_h}(\rn)$}
is defined to be the completion of
$\wz{H}_{\fai,\,\cn_h}(\rn)$ with respect to the quasi-norm
$\|\cdot\|_{H_{\fai,\,\cn_h}(\rn)}$.

The \emph{spaces $H_{\fai,\,\cn_P}(\rn)$, $H_{\fai,\,\ccr_h}(\rn)$}
and \emph{$H_{\fai,\,\ccr_P}(\rn)$} are defined in a similar way.
\end{definition}

Then we give the following several equivalent characterizations of
$H_{\fai,\,L}(\rn)$ in terms of maximal functions associated with
$L$.

\begin{theorem}\label{t7.2}
Assume that $\fai$ and $L$ are as in Theorem \ref{t7.1}. Then the spaces
$H_{\fai,\,L}(\rn)$, $H_{\fai,\,\cn_h}(\rn)$,
$H_{\fai,\,\cn_P}(\rn),\ H_{\fai,\,\ccr_h}(\rn),\
H_{\fai,\,\ccr_P}(\rn)$ and $H_{\fai,\,S_P}(\rn)$ coincide with
equivalent quasi-norms.
\end{theorem}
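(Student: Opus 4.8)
\textbf{Proof strategy for Theorem \ref{t7.2}.}
The plan is to prove the chain of identifications by running a cycle of continuous inclusions and then invoking density, exactly as in the classical setting. By Theorem \ref{t7.1} we already know that $H_{\fai,\,L}(\rn)=H_{\fai,\,S_P}(\rn)$ with equivalent quasi-norms, and that both coincide with the atomic and molecular spaces; so it suffices to fit the four maximal-function spaces into this circle. The natural order is:
$$H_{\fai,\,S_P}(\rn)\subset H_{\fai,\,\cn_P}(\rn)\subset H_{\fai,\,\ccr_P}(\rn)\subset H_{\fai,\,L}(\rn)$$
and, in parallel,
$$H_{\fai,\,L}(\rn)\subset H_{\fai,\,\cn_h}(\rn)\subset H_{\fai,\,\ccr_h}(\rn)\subset H_{\fai,\,L}(\rn).$$
Since each space is defined as the completion of its $H^2(\rn)$-part (and $H^2(\rn)=L^2(\rn)$ here by Remark \ref{r7.1}(i)), it is enough to prove all inclusions on the dense subset $\bigcap(\cdot)\cap H^2(\rn)$ and then pass to completions via a density argument, as was done at the end of the proof of Theorem \ref{t5.2}.

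First I would handle the ``easy direction'' inclusions. The bounds $\ccr_P(f)\le\cn_P(f)$ and $\ccr_h(f)\le\cn_h(f)$ pointwise are trivial, giving $H_{\fai,\,\cn_P}\subset H_{\fai,\,\ccr_P}$ and $H_{\fai,\,\cn_h}\subset H_{\fai,\,\ccr_h}$ for free after applying monotonicity of $L^\fai(\rn)$. For $H_{\fai,\,L}(\rn)\subset H_{\fai,\,\cn_h}(\rn)$ and $H_{\fai,\,S_P}(\rn)\subset H_{\fai,\,\cn_P}(\rn)$, I would use the atomic characterization (Theorem \ref{t7.1}) together with Lemma \ref{l5.2}: it suffices to verify the single-atom estimate \eqref{5.2} with $T:=\cn_h$ (resp. $T:=\cn_P$). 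The $L^2(\rn)$ weak-type boundedness of $\cn_h$ and $\cn_P$ follows from the Gaussian kernel bound on $e^{-tL}$ recorded just before Remark \ref{r7.1} (and the subordination formula for $e^{-t\sqrt L}$), which controls $\cn_h(f)$ and $\cn_P(f)$ by the Hardy--Littlewood maximal function; then the atomic estimate \eqref{5.2} is proved by the now-standard annular decomposition $\sum_j\int_{U_j(B)}\fai(x,|\lz|\cn_h(\az)(x))\,dx$, splitting into $j\le 4$ and $j\ge5$, using the Gaussian off-diagonal decay of $(t^2L)^{M}e^{-t^2L}$ on the far annuli exactly as in the proof of \eqref{4.5} and of Theorem \ref{t6.1}. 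This reuses machinery already developed; I expect it to be routine but lengthy.

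The genuinely hard step is $H_{\fai,\,\ccr_P}(\rn)\subset H_{\fai,\,L}(\rn)$ (equivalently $H_{\fai,\,\ccr_P}\subset H_{\fai,\,S_P}$): this is the ``radial maximal function controls the area function'' implication, and it is the place where the differential structure of $L=-\Delta+V$ is actually needed. Following the excerpt's own outline, I would borrow the argument from \cite[Theorem 8.2]{hlmmy}. Concretely: via the Caccioppoli inequality for $L$, the identity $L=-\Delta+V$ with $0\le V\in L^1_{\loc}$, and the divergence theorem, establish a weighted good-$\lambda$ inequality relating $\cn_P(f)$ and a truncated area function $\wz S_P(f)$ (this is the role of Lemma \ref{l7.2} referred to in the introduction), then invoke the Moser-type local boundedness estimate (Lemma \ref{l7.3}, i.e. \cite[Lemma 8.4]{hlmmy}) which replaces the mean-value property of harmonic functions. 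The key new ingredient over \cite{jy11} is the refined estimate \eqref{7.15}: it lets one run the good-$\lambda$ argument using only the uniformly upper type $p_1\in[I(\fai),1]$ and uniformly lower type $p_2\in(0,i(\fai))$ properties of $\fai$ (plus $\fai\in\aa_\fz(\rn)\cap\rh_{2/[2-I(\fai)]}(\rn)$), avoiding the spurious convexity hypothesis $[\Phi(t^{q_2})]^{q_1}$ convex. Integrating the good-$\lambda$ inequality against $\fai(\cdot,t)\,dx$ and using the $\aa_{q_0}$-condition of $\fai$ to absorb the distribution-function tail, I would conclude $\|S_P(f)\|_{L^\fai(\rn)}\ls\|\ccr_P(f)\|_{L^\fai(\rn)}$, and symmetrically $\|S_L(f)\|_{L^\fai(\rn)}\ls\|\ccr_h(f)\|_{L^\fai(\rn)}$ for the heat analogue (reducing $\ccr_h$ to $\ccr_P$ via subordination, or running the parallel heat version). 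Chaining these inclusions closes both circles; the density argument then upgrades the norm equivalences from the $H^2$-dense subspaces to the full completions, finishing the proof. I expect the good-$\lambda$/Caccioppoli step to be the main obstacle, both technically and in bookkeeping the weighted estimate \eqref{7.15}; everything else is assembly of results already in hand.
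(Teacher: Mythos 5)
Your plan is essentially the paper's plan, and with it the proof goes through; the only caveats are organizational and a couple of technical ingredients you left implicit. The paper proves the theorem by a \emph{single} chain of six inclusions on the dense subspace $L^2(\rn)$,
$$H_{\fai,\,L}\subset H_{\fai,\,\cn_h}\subset H_{\fai,\,\ccr_h}\subset H_{\fai,\,\ccr_P}\subset H_{\fai,\,\cn_P}\subset H_{\fai,\,S_P}\subset H_{\fai,\,L},$$
and then a density argument, whereas you propose two parallel cycles linked afterward by subordination. Once you spell out the link (the pointwise bound $\ccr_P(f)\le C\,\ccr_h(f)$ from the subordination formula, which the paper uses at Step $3$), your two cycles collapse into the same single chain, so this is a cosmetic reorganization rather than a genuinely different route. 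Your Step ``$H_{\fai,\,S_P}\subset H_{\fai,\,\cn_P}$ via atoms and Lemma \ref{l5.2}'' is not in the paper's chain but is harmless and correct: the paper proves the analogous atomic estimate for $\cn_h$ (its Step $1$) and gets the $\cn_P$ inclusion for free by going around the cycle.

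Two things you should make explicit, because the proof does not close without them. First, you should separate the ``hard step'' into the two logically distinct pieces the paper uses: (a) the Moser local-boundedness estimate (Lemma \ref{l7.3}) produces the pointwise bound $\cn^{1/4}_P(f)\ls\{\cm([\ccr_P(f)]^{\gz_0})\}^{1/\gz_0}$ of \eqref{7.14}, which together with the weighted distribution-function computation \eqref{7.15} gives $H_{\fai,\,\ccr_P}\subset H_{\fai,\,\cn_P}$ (paper's Step $4$); and (b) the good-$\lambda$ inequality of Lemma \ref{l7.2}, proved via Caccioppoli and the divergence structure of $L$, gives $H_{\fai,\,\cn_P}\subset H_{\fai,\,S_P}$ (Proposition \ref{p7.1}, the paper's Step $5$). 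You describe the good-$\lambda$ as relating $\cn_P$ to $\wz S_P$ but then say it (plus Moser) yields $\ccr_P\to S_P$ in one stroke; making the intermediate space $H_{\fai,\,\cn_P}$ explicit avoids circularity. Second, both (a) and (b) produce maximal/area functions with a \emph{different aperture} from the one in the definitions ($\cn^{1/4}_P$ in \eqref{7.14}, and $\wz S^{\epz,R,1/20}_P$, $\wz S^{\epz,R,1/2}_P$ in Lemma \ref{l7.2}); you therefore need the change-of-aperture equivalences in $L^\fai(\rn)$, namely \eqref{7.17}--\eqref{7.18} for the non-tangential maximal function (a Fefferman--Stein type argument using $\fai\in\aa_\fz(\rn)$) and Lemma \ref{l7.1} for the truncated area function. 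These are not hard given $\fai\in\aa_\fz(\rn)$, but they are genuine steps the paper proves and your proposal does not mention; without them the good-$\lambda$/Moser machinery does not deliver bounds for $\cn_P$ and $S_P$ as defined.
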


\begin{remark}\label{r7.2}
Theorem \ref{t7.1} completely covers
\cite[Theorem 6.1]{jy11} by taking $\fai$ as in \eqref{1.2} with
$\oz\equiv1$ and $\Phi$ concave. Theorem
\ref{t7.2} completely covers \cite[Theorem 6.4]{jy11} by taking
$\fai$ as in \eqref{1.2} with $\oz\equiv1$ and $\Phi$
satisfying that $\Phi$ is concave on $(0,\fz)$ and there exist
$q_1,\,q_2\in(0,\fz)$ such that
$q_1 < 1 < q_2$ and $[\Phi(t^{q_2})]^{q_1}$ is a convex function
on $(0,\fz)$.
\end{remark}

To prove Theorem \ref{t7.2}, we first establish the following
Proposition \ref{p7.1}.

\begin{proposition}\label{p7.1}
Let $\fai$ and $L$ be as in Theorem \ref{t7.1}. Then
$H_{\fai,\,\cn_P}(\rn)\cap L^2(\rn)\subset H_{\fai,\,S_P}(\rn)\cap
L^2(\rn)$. Moreover, there exists a positive constant $C$ such
that, for all $f\in H_{\fai,\,\cn_P}(\rn)\cap L^2(\rn)$,
$\|f\|_{H_{\fai,\,S_P}(\rn)}\le C\|f\|_{H_{\fai,\,\cn_P}(\rn)}$.
\end{proposition}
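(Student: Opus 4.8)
The plan is to prove a "good-$\lambda$" type inequality relating the non-tangential maximal function $\cn_P(f)$ and a truncated Lusin area function of $f$, following the strategy of \cite[Theorem 8.2]{hlmmy} adapted to the Musielak-Orlicz setting. First I would fix $f\in H_{\fai,\,\cn_P}(\rn)\cap L^2(\rn)$ and, for $x\in\rn$, introduce a truncated variant $\wz S_P(f)$ of $S_P(f)$ which integrates $|t\sqrt{L}e^{-t\sqrt{L}}f(y)|^2$ over a cone whose aperture and height are cut off. The reason for truncating is that $u(y,t):=e^{-t\sqrt{L}}f(y)$ satisfies an elliptic-type equation in the upper half-space associated with $L+\partial_t^2$, so on a Whitney box one can use the Caccioppoli inequality associated with $L$ together with the divergence theorem to control the $L^2$-integral of $|\nabla_{y,t}u|^2$ by a boundary term involving $\cn_P(f)$. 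This is precisely Lemma \ref{l7.2} referenced in the introduction: a weighted good-$\lambda$ inequality of the form
$$
\mu\lf(\lf\{x:\ \wz S_P(f)(x)>2\lz,\ \cn_P(f)(x)\le\gz\lz\r\}\cap K\r)
\ls \gz^2\, \mu\lf(\lf\{x:\ \wz S_P(f)(x)>\lz\r\}\cap K\r)
$$
for suitable compact $K$, with the Moser-type local boundedness estimate \cite[Lemma 8.4]{hlmmy} (Lemma \ref{l7.3}) playing the role of the mean value property. The delicate point compared with \cite{jy11} is that here one must track the weight $\fai(\cdot,t)$ through the level-set estimates; this is handled using $\fai\in\aa_{q_0}(\cx)$ and the uniformly upper/lower type properties, which is exactly the improved estimate \eqref{7.15} promised in the introduction.

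Second, I would convert the good-$\lambda$ inequality into a norm inequality in $L^\fai(\rn)$. The standard device is: for any nonnegative sublinear (or just suitably measurable) $g$, $\int_\rn \fai(x,g(x))\,d\mu(x)$ can be written via the layer-cake formula as an integral over $\lz$ of $\fai$-measures of level sets; feeding the good-$\lambda$ inequality into this representation and choosing $\gz$ small (absorbing the $\gz^2$ factor using the uniformly \emph{lower} type $p_2$ of $\fai$, so that $\fai(x,2\lz)\ls \gz^{-p_1}$-type bounds do not destroy the gain) yields
$$
\int_\rn \fai\lf(x,\wz S_P(f)(x)\r)\,d\mu(x)\ls \int_\rn \fai\lf(x,\cn_P(f)(x)\r)\,d\mu(x),
$$
and hence $\|\wz S_P(f)\|_{L^\fai(\rn)}\ls\|\cn_P(f)\|_{L^\fai(\rn)}=\|f\|_{H_{\fai,\,\cn_P}(\rn)}$, using Lemma \ref{l2.5}(i) to pass between modular and Luxembourg-norm estimates. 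Finally, a monotone convergence / Fatou argument as the truncation parameters tend to their limits upgrades the bound on $\wz S_P(f)$ to the bound on the full $S_P(f)$; this requires that $f\in L^2(\rn)$ so that $S_P(f)\in L^2(\rn)$ by \eqref{2.7} and all quantities are finite. This gives $f\in H_{\fai,\,S_P}(\rn)\cap L^2(\rn)$ with the stated norm control.

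The main obstacle I anticipate is the passage through $\fai$ of the good-$\lambda$ inequality when $\fai$ is genuinely $x$-dependent: the classical proof multiplies a distributional inequality by $p\lz^{p-1}$ and integrates, but here the "weight" $\fai(x,t)$ has its time-variable $t$ coupled to the running level $\lz$, so one cannot simply pull it out. The resolution is to exploit that $\fai\in\aa_{q_0}(\rn)$ to compare $\fai(E,\lz)$ with $\fai(E',\lz)$ for comparable sets $E\subset E'$ (Lemma \ref{l2.6}(vii)), and to use the uniformly lower type $p_2$ property to absorb the small constant $\gz$; this is the content of the ``more delicate estimate \eqref{7.15}'' mentioned in the introduction and is what allows one to drop the extra convexity hypothesis of \cite[Theorem 6.4]{jy11}. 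A secondary technical point is verifying the Caccioppoli inequality and divergence-theorem manipulations in the present low-regularity setting ($0\le V\in L^1_{\loc}$); for this I would invoke the special differential structure of $L=-\Delta+V$ directly, as in \cite[Theorem 8.2]{hlmmy}, rather than any abstract Davies-Gaffney argument, since the pointwise equation for $e^{-t\sqrt L}f$ is genuinely needed.
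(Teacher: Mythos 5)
Your proposal follows the paper's route for Proposition \ref{p7.1} essentially step for step: the $\fai$-weighted good-$\lambda$ inequality (Lemma \ref{l7.2}, proved via the Caccioppoli inequality, the divergence theorem, and the Moser local boundedness estimate Lemma \ref{l7.3}), the layer-cake identity $\int_{\rn}\fai(x,g(x))\,dx\sim\int_0^\fz t^{-1}\int_{\{g>t\}}\fai(x,t)\,dx\,dt$ combined with Fubini, absorption by choosing $\gz$ small, and finally Fatou's lemma to remove the truncation parameters. The one ingredient you leave unmentioned is the change-of-aperture Lemma \ref{l7.1}: since Lemma \ref{l7.2} compares the level sets of $\wz S^{\epz,R,1/20}_P(f)$ with those of $\wz S^{\epz,R,1/2}_P(f)$, one must first know that $\int_{\rn}\fai(x,\wz S^{\epz,R,\az}_P(f)(x))\,dx$ is comparable over different apertures $\az$ before the $\gz^{\epz_0}$ term can be re-absorbed on the left, and this comparison (via the Coifman--Meyer--Stein tent-space estimates and $\fai\in\aa_\fz(\rn)$) requires its own argument; also, the weighting of the good-$\lambda$ inequality actually exploits the reverse H\"older property of $\fai$ (Lemma \ref{l2.6}(viii)) rather than the $\aa_{q_0}$ condition, and the estimate \eqref{7.15} you cite belongs to Step~4 of Theorem \ref{t7.2}'s proof, not to Proposition \ref{p7.1}.
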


To prove Proposition \ref{p7.1}, we first introduce some notions.
Let $\az\in(0,\fz)$ and $\epz,\,R\in(0,\fz)$ with $\epz<R$. For
$f\in L^2(\rn)$, define the \emph{truncated Lusin-area function}
$S^{\epz,\,R,\,\az}_P(f)(x)$ for all $x\in\rn$, by setting,
$$S^{\epz,\,R,\,\az}_P(f)(x):=\lf\{\int_{\bgz^{\epz,\,R}_\az(x)}
\lf|t\sqrt{L}e^{-t\sqrt{L}}(f)(y)\r|^2\frac{dy\,dt}{t^{n+1}}\r
\}^{1/2},
$$
where
\begin{equation}\label{7.2}
\bgz^{\epz,\,R}_\az(x):=\{(y,t)\in\rn\times(\epz,R):\ |x-y|<\az t\}.
\end{equation}
Then we have the following conclusion about the truncated
Lusin-area function.

\begin{lemma}\label{l7.1}
Let $\fai$ be as in Definition \ref{d2.3} and $\az,\,\bz\in(0,\fz)$.
Then for all $0\le\epz< R<\fz$ and $f\in L^2(\rn)$,
$$\int_{\rn}\fai\lf(x, S_P^{\epz,\,R,\,\az}(f)(x)\r)\,dx\sim
\int_{\rn}\fai\lf(x, S_P^{\epz,\,R,\,\bz}(f)(x)\r)\,dx,
$$
where the implicit constants are independent of $\epz,\,R$ and $f$.
\end{lemma}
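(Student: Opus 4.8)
The plan is to reduce the assertion to a change-of-aperture estimate for cone integrals over $\rn\times(\epz,R)$ and then to push it through the machinery of Section~\ref{s3}. By the symmetry of the asserted equivalence we may assume $\bz>\az$. Since $\bgz^{\epz,\,R}_{\az}(x)\subset\bgz^{\epz,\,R}_{\bz}(x)$ for every $x\in\rn$ (see \eqref{7.2}) and $\fai(x,\cdot)$ is nondecreasing, we have $S_P^{\epz,\,R,\,\az}(f)(x)\le S_P^{\epz,\,R,\,\bz}(f)(x)$, whence $\int_{\rn}\fai(x,S_P^{\epz,\,R,\,\az}(f)(x))\,dx\le\int_{\rn}\fai(x,S_P^{\epz,\,R,\,\bz}(f)(x))\,dx$; so only the reverse inequality is at issue. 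It is convenient to first record, by \eqref{2.7} (applied with $\phi(s):=se^{-s}$), that $\int_{\epz}^{R}\|t\sqrt{L}e^{-t\sqrt{L}}f\|_{L^2(\rn)}^2\,\frac{dt}{t}\le C\|f\|_{L^2(\rn)}^2<\fz$, and hence, by Fubini's theorem, $\int_{\rn}[S_P^{\epz,\,R,\,\nu}(f)(x)]^2\,dx\sim\nu^{n}\int_{\epz}^{R}\|t\sqrt{L}e^{-t\sqrt{L}}f\|_{L^2(\rn)}^2\,\frac{dt}{t}<\fz$ for every $\nu\in(0,\fz)$. In particular $S_P^{\epz,\,R,\,\nu}(f)\in L^2(\rn)$, and, as $S_P^{\epz,\,R,\,\nu}(f)$ is lower semicontinuous (Fatou's lemma applied to the characteristic functions of the open cones), the sets $O_k:=\{x\in\rn:\ S_P^{\epz,\,R,\,\az}(f)(x)>2^{k}\}$, $k\in\zz$, are open and of finite measure.

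The engine of the argument will be the weighted $L^2$ inequality: for every nonnegative $w\in L^1_{\loc}(\rn)$,
\[
\int_{\rn}\big[S_P^{\epz,\,R,\,\bz}(f)(x)\big]^2 w(x)\,dx\le C\Big(\frac{\bz}{\az}\Big)^{n}\int_{\rn}\big[S_P^{\epz,\,R,\,\az}(f)(x)\big]^2\cm(w)(x)\,dx ,
\]
which follows immediately from Fubini's theorem and the elementary volume comparison $\int_{|z-y|<\bz t}w(z)\,dz\ls(\bz/\az)^{n}\int_{|z-y|<\az t}\cm(w)(z)\,dz$ (the average of $w$ over $B(y,\bz t)$ being $\ls\inf_{|z-y|<\az t}\cm(w)(z)$). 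Feeding this inequality, with $w$ the characteristic function of suitable exceptional sets, into the classical Calder\'on--Zygmund good-$\lz$ scheme for Lusin area functions --- together with Chebyshev's inequality, the Whitney covering Lemma~\ref{l3.2}, the averaging Lemma~\ref{l3.1}, and the weak type $(1,1)$ of $\wz{\cm}$ recalled just before Lemma~\ref{l3.1} --- one should obtain a good-$\lz$ inequality of the usual shape: there exist $A_0>1$ and, for each $\sz\in(0,1)$, a constant $\ez=\ez(\sz)\in(0,1)$, such that, for all $k\in\zz$,
\[
\big|\{x\in\rn:\ S_P^{\epz,\,R,\,\bz}(f)(x)>A_0 2^{k}\}\setminus\{x\in\rn:\ \cm(\chi_{O_{k-1}})(x)>\ez\}\big|\le\sz\,\big|\{x\in\rn:\ S_P^{\epz,\,R,\,\bz}(f)(x)>2^{k-1}\}\big| ,
\]
the truncation $\epz<t<R$ merely shrinking the cone regions and being harmless throughout.

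To pass from this Lebesgue-measure good-$\lz$ inequality to the modular, I would use that, $\fai$ being a growth function, $\fai(\cdot,t)\in\aa_{\fz}(\rn)$ with $\aa_{\fz}$-constant uniform in $t\in(0,\fz)$. Consequently the standard self-improvement of the $\aa_{\fz}$ condition (via Lemma~\ref{l2.6}) upgrades the last display to each of the weights $\fai(\cdot,2^{k})$; summing over $k$ (after the routine restriction of the range of $k$ needed to justify the absorption), fixing $\sz$ small relative to $A_0$ and to a uniform lower type $p_2\in(0,i(\fai))$ of $\fai$, and using the weighted weak type $(1,1)$ bound $\fai(\{\cm(\chi_{O_{k-1}})>\ez\},2^{k})\ls\fai(O_{k-1},2^{k})$ (again uniform in $2^k$ by $\fai\in\aa_{\fz}(\rn)$), one arrives at
\[
\sum_{k\in\zz}\fai\big(\{x\in\rn:\ S_P^{\epz,\,R,\,\bz}(f)(x)>2^{k}\},\,2^{k}\big)\le C\sum_{k\in\zz}\fai\big(\{x\in\rn:\ S_P^{\epz,\,R,\,\az}(f)(x)>2^{k}\},\,2^{k}\big) .
\]
Finally, a dyadic summation using only the uniformly upper type $p_1\in(0,1]$ and uniformly lower type $p_2\in(0,1]$ of $\fai$ (exactly as in the proof of Lemma~\ref{l3.3}, together with the analogous reverse summation) shows that, for every $\nu\in(0,\fz)$,
\[
\sum_{k\in\zz}\fai\big(\{x\in\rn:\ S_P^{\epz,\,R,\,\nu}(f)(x)>2^{k}\},\,2^{k}\big)\sim\int_{\rn}\fai\big(x,S_P^{\epz,\,R,\,\nu}(f)(x)\big)\,dx ,
\]
with implicit constants independent of $\epz$, $R$ and $f$; combining this with the preceding display yields $\int_{\rn}\fai(x,S_P^{\epz,\,R,\,\bz}(f)(x))\,dx\ls\int_{\rn}\fai(x,S_P^{\epz,\,R,\,\az}(f)(x))\,dx$, which would complete the proof.

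The hardest part will be the middle step: the good-$\lz$ / Calder\'on--Zygmund covering argument has to be carried out \emph{simultaneously, and with uniform constants,} for the whole one-parameter family $\{\fai(\cdot,t)\}_{t>0}$ of Muckenhoupt weights --- a single Whitney decomposition must serve all of them at once. This is exactly what the uniform $\aa_{\fz}(\rn)$ membership and the uniform type conditions built into the notion of a growth function provide, and it is the only point where the genuinely Musielak--Orlicz nature of $\fai$ (as opposed to a fixed Muckenhoupt weight times an Orlicz function) enters in an essential way; apart from it, the argument is the classical change of aperture for Lusin area functions, transplanted to the present setting by the tools of Section~\ref{s3}.
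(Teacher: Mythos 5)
Your reduction to the reverse inequality and the Fefferman--Stein-type weighted $L^2$ inequality with $\cm(w)$ on the right are both correct (the volume-comparison argument you sketch works). The genuine gap is at the pivotal step where you write that one ``should obtain a good-$\lz$ inequality of the usual shape'': this is asserted, not derived, and the engine you propose is not the right tool to supply it. The Whitney-cube step of a classical good-$\lz$ argument requires a \emph{localized, unweighted} comparison: on a Whitney cube $Q$ of the super-level set of $S^{\epz,\,R,\,\bz}_P(f)$ that has a nearby ``good'' point, one splits the cone height at scale $\ell(Q)$, uses a local $L^2$ bound (restricted to a fixed dilate of $Q$ and to $t\ls\ell(Q)$) for the small-$t$ part, and a pointwise bound via the good point for the large-$t$ part. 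Inserting $w=\chi_{Q}$ or the characteristic function of an exceptional set into your $\cm(w)$ inequality does not produce this: $\cm(\chi_{Q})$ has a slowly decaying tail, so the inequality sees all of $S^{\epz,\,R,\,\az}_P(f)$ rather than its local piece, and the crucial height truncation at $\ell(Q)$ is lost.

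The paper sidesteps the Whitney/good-$\lz$ machinery entirely. Taking $F:=\{x:\ S^{\epz,\,R,\,\bz}_P(f)(x)\le\lz\}$ (under the paper's convention $\az>\bz$, the reverse of yours), it quotes the two truncated-cone comparison estimates \eqref{7.3} and \eqref{7.4} of Coifman--Meyer--Stein (the latter is precisely the role played by Lemma~\ref{l3.1}) to obtain the single integrated estimate \eqref{7.5}, namely $\int_{F^\ast_\gz}[S^{\epz,\,R,\,\az}_P(f)]^2\,dx\ls\int_F[S^{\epz,\,R,\,\bz}_P(f)]^2\,dx$, and then combines it with $\fai\in\aa_q$ via Lemma~\ref{l2.6}(vi) and a direct layer-cake/Fubini computation using $\fai(x,t)\sim\int_0^t\fai(x,s)s^{-1}\,ds$ (Lemma~\ref{l2.3}(ii)), together with the uniformly upper type $p_1$ property. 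There is no Lebesgue-measure good-$\lz$ first, no separate upgrade to each weight $\fai(\cdot,2^k)$, and no dyadic resummation \`a la Lemma~\ref{l3.3}. If you wish to complete your route, the local Whitney-cube estimate you are missing is exactly what \eqref{7.3}--\eqref{7.4} provide; the $\cm(w)$ inequality is the wrong engine for it.
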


\begin{proof}
First we recall two useful conclusions established in \cite{cms85}.
Let $\az,\,\bz\in(0,\fz)$, $\epz,\,R\in(0,\fz)$ with $\epz<R$. Then
for any closed subset $F$ of $\rn$ whose complement has finite
measure and any nonnegative measurable function $H$ on
$\rn\times(0,\fz)$,
\begin{eqnarray}\label{7.3}
\int_F\lf\{\int_{\bgz^{\epz,\,R}_{\az}(x)}H(y,t)\,dy\,dt\r\}\,dx\ls
\int_{\ccr^{\epz,\,R}_{\az}(F)}H(y,t)t^n\,dy\,dt,
\end{eqnarray}
where $\bgz^{\epz,\,R}_{\az}(x)$ is as in \eqref{7.2},
$\ccr^{\epz,\,R}_{\az}(F):=\cup_{x\in
F}\bgz^{\epz,\,R}_{\az}(x)$ and the implicit constants are
independent of $F$, $\epz,\,R$ and $H$. Let $\gz\in(0,1)$ and
$F^\ast_{\gz}$ be as in Section \ref{s3}. Then
\begin{eqnarray}\label{7.4}
\int_{\ccr^{\epz,\,R}_{\az}(F^\ast_\gz)}H(y,t)t^n\,dy\,dt\ls
\int_F\lf\{\int_{\bgz^{\epz,\,R}_{\bz}(x)}H(y,t)\,dy\,dt\r\}\,dx.
\end{eqnarray}

Let $\az,\,\bz\in(0,\fz)$. Without loss of generality, we may assume
that $\az>\bz$. Let $\epz,\,R\in(0,\fz)$ with $\epz<R$ and $f\in
L^2(\rn)$. Fix $\lz\in(0,\fz)$. Let $\gz\in(0,1)$, $F:=\{x\in\rn:\
S^{\epz,\,R,\,\bz}_P(f)(x)\le\lz\}$ and $O:=\rn\setminus F$.
Assume that $F_\gz^\ast$ and $O_\gz^\ast$ are as in Section
\ref{s3}. Then by \eqref{7.3} with $F:=F_\gz^\ast$ and
$H(y,t):=|t\sqrt{L}e^{-t\sqrt{L}}(f)(y)|^2t^{-(n+1)}$, we know
that
\begin{eqnarray*}
\int_{F_\gz^\ast}\lf[S^{\epz,\,R,\,\az}_P(f)(x)\r]^2\,dx\ls
\int_{\ccr^{\epz,\,R}_{\az}(F_\gz^\ast)}\lf|t\sqrt{L}e^{-t\sqrt{L}}(f)(y)\r|^2
t^{-1}\,dy\,dt.
\end{eqnarray*}
This, combined with \eqref{7.4} by choosing
$H(y,t):=|t\sqrt{L}e^{-t\sqrt{L}}(f)(y)|^2t^{-(n+1)}$, yields that
\begin{eqnarray}\label{7.5}
\int_{F_\gz^\ast}\lf[S^{\epz,\,R,\,\az}_P(f)(x)\r]^2\,dx\ls
\int_{F}\lf[S^{\epz,\,R,\,\bz}_P(f)(x)\r]^2\,dx.
\end{eqnarray}

Let $q\in(q(\fai),\fz)$. Then $\fai\in\aa_q(\rn)$, which, together
with \eqref{7.5} and Lemma \ref{l2.6}(vi),
implies that, for all $t\in(0,\fz)$,
\begin{eqnarray*}
&&\int_{\{x\in\rn:\ S^{\epz,\,R,\,\az}_P(f)(x)>\lz\}}\fai(x,t)\,dx\\
\nonumber &&\hs\le \int_{O_\gz^\ast}\fai(x,t)\,dx+\int_{\{x\in
F_\gz^\ast:\ S^{\epz,\,R,\,\az}_P(f)(x)>\lz\}}\cdots\\
\nonumber &&\hs\ls\int_{\{x\in \rn:\
\cm(\chi_O)(x)>1-\gz\}}\fai(x,t)\,dx+\int_{\{x\in F_\gz^\ast:\
S^{\epz,\,R,\,\az}_{P}(f)(x)>\lz\}}\cdots\\ \nonumber
&&\hs\ls\int_{\rn}|\chi_O(x)|^q\fai(x,t)\,dx+\frac{1}{\lz^2}\int_F
\lf[S^{\epz,\,R,\,\bz}_P(f)(x)\r]^2\fai(x,t)\,dx\\ \nonumber
&&\hs\sim\int_{\{x\in\rn:\
S^{\epz,\,R,\,\bz}_P(f)(x)>\lz\}}\fai(x,t)\,dx+\frac{1}{\lz^2}\int_F
\lf[S^{\epz,\,R,\,\bz}_P(f)(x)\r]^2\fai(x,t)\,dx.
\end{eqnarray*}
From this, the fact that
$\fai(x,t)\sim\int_0^t\frac{\fai(x,s)}{s}\,ds$ for all $x\in\rn$ and
$t\in(0,\fz)$, Fubini's theorem and the uniformly upper type $p_1$
property of $\fai$ with $p_1\in(0,1]$, it follows that
\begin{eqnarray*}
&&\int_{\rn}\fai\lf(x,S^{\epz,\,R,\,\az}_P (f)(x)\r)\,dx\\
&&\hs\sim\int_{\rn}\lf\{\int_0^{S^{\epz,\,R,\,\az}_P
(f)(x)}\frac{\fai(x,t)}{t}\,dt\r\}\,dx\\ &&\hs\ls\int_0^\fz
\int_{\{x\in\rn:\ S^{\epz,\,R,\,\az}_P
(f)(x)>t\}}\frac{\fai(x,t)}{t}\,dx\,dt\\ &&\hs\ls\int_0^\fz
\frac{1}{t}\int_{\{x\in\rn:\ S^{\epz,\,R,\,\bz}_P
(f)(x)>t\}}\fai(x,t)\,dx\,dt+\int_0^\fz \frac{1}{t^3}\int_F
\lf[S^{\epz,\,R,\,\bz}_P(f)(x)\r]^2\fai(x,t)\,dx\,dt\\
&&\hs\sim\int_{\rn}\fai\lf(x,S^{\epz,\,R,\,\bz}_P(f)(x)\r)\,dx+\int_{\rn}
\lf\{\int_{S^{\epz,\,R,\,\bz}_P(f)(x)}^\fz\frac{\fai(x,t)}{t^3}\,dt\r\}
\lf[S^{\epz,\,R,\,\bz}_P(f)(x)\r]^2\,dx\\
&&\hs\sim\int_{\rn}\fai\lf(x,S^{\epz,\,R,\,\bz}_P(f)(x)\r)\,dx
+\int_{\rn}\lf[S^{\epz,\,R,\,\bz}_P(f)(x)\r]^{2-p_1}
\fai\lf(x,S^{\epz,\,R,\,\bz}_P(f)(x)\r)\\
&&\hs\hs\times\lf\{\int_{S^{\epz,\,R,\,\bz}_P(f)(x)}^\fz\frac{1}{t^{3-p_1}}\,dt\r\}\,dx
\sim\int_{\rn}\fai\lf(x,S^{\epz,\,R,\,\bz}_P(f)(x)\r)\,dx,
\end{eqnarray*}
which completes the proof of Lemma \ref{l7.1}.
\end{proof}

Let $\az\in(0,\fz)$ and $\epz,\,R\in(0,\fz)$ with $\epz<R$. For
$f\in L^2(\rn)$, define the \emph{truncated Lusin-area function
$\wz{S}^{\epz,\,R,\,\az}_P(f)(x)$} for all $x\in\rn$, by setting,
$$\wz{S}^{\epz,\,R,\,\az}_P(f)(x):=\lf\{\int_{\bgz^{\epz,\,R}_\az(x)}
\lf|t\ol{\nabla}e^{-t\sqrt{L}}(f)(y)\r|^2\frac{dy\,dt}{t^{n+1}}\r
\}^{1/2},
$$
where $\bgz^{\epz,\,R}_\az(x)$ is as in \eqref{7.2} and
$\ol{\nabla}:=(\nabla,\partial_t)$. When
$\az=1$, we denote $\wz{S}^{\epz,\,R,\,1}_P(f)$ simply by
$\wz{S}^{\epz,\,R}_P(f)$. Obviously, for any $\az\in(0,\fz)$,
$\epz,\,R\in(0,\fz)$ with $\epz<R$ and $f\in L^2(\rn)$,
$S^{\epz,\,R,\,\az}_P(f)\le\wz{S}^{\epz,\,R,\,\az}_P(f)$
pointwise. Now we give the following Lemma \ref{l7.2}, which
establishes a ``good-$\lz$ inequality" concerning the truncated
Lusin-area function $\wz{S}^{\epz,\,R,\,\az}_P$ and the
non-tangential maximal function $\cn_P$.

\begin{lemma}\label{l7.2}
There exist positive constants $C$ and $\epz_0\in(0,1]$ such that,
for all $\gz\in(0,1]$, $\lz\in(0,\fz)$, $\epz,\,R\in(0,\fz)$ with
$\epz<R$, $f\in H_{\fai,\,\cn_P}(\rn)\cap L^2(\rn)$ and
$t\in(0,\fz)$,
\begin{eqnarray}\label{7.6}
&&\int_{\{x\in\rn:\
\wz{S}^{\epz,\,R,\,1/20}_P(f)(x)>2\lz,\,\cn_P(f)(x)\le\gz\lz\}}
\fai(x,t)\,dx\\
\nonumber &&\hs\le C\gz^{\epz_0}\int_{\{x\in\rn:\
\wz{S}^{\epz,\,R,\,1/2}_P(f)(x)>\lz\}}\fai(x,t)\,dx.
\end{eqnarray}
\end{lemma}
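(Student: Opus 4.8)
The plan is to prove \eqref{7.6} by a weighted Calder\'on--Zygmund (``good-$\lz$'') argument carried out at a fixed time level $t\in(0,\fz)$, regarding $w:=\fai(\cdot,t)$ as an $\aa_\fz(\rn)$ weight whose relevant constants are, by Definition \ref{d2.2}, uniform in $t$. Put $O:=\{x\in\rn:\ \wz S^{\epz,\,R,\,1/2}_P(f)(x)>\lz\}$ and $E:=\{x\in\rn:\ \wz S^{\epz,\,R,\,1/20}_P(f)(x)>2\lz,\ \cn_P(f)(x)\le\gz\lz\}$. Since a smaller aperture yields a smaller area function, $\wz S^{\epz,\,R,\,1/20}_P(f)\le\wz S^{\epz,\,R,\,1/2}_P(f)$ pointwise, hence $E\subset O$; also $f\in L^2(\rn)$ and the $L^2(\rn)$-boundedness of $\wz S^{\epz,\,R,\,1/2}_P$ (a consequence of \eqref{2.7}) give $|O|<\fz$, so $O$ is a proper open subset of $\rn$ (if $O=\emptyset$ there is nothing to prove). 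Fix a Whitney decomposition $O=\bigcup_j Q_j$ into cubes of bounded overlap with $\dist(Q_j,O^\complement)\sim\ell(Q_j)$. Because $w\in\aa_\fz(\rn)$ with $t$-uniform constants, it suffices to prove the \emph{unweighted} estimate $|E\cap Q_j|\ls\gz^{2}|Q_j|$ for each $j$: the $\aa_\fz$-property then upgrades it to $w(E\cap Q_j)\ls\gz^{\epz_0}w(Q_j)$ for some $\epz_0\in(0,1]$ depending only on $\fai$, and summing over $j$ (using bounded overlap and $E\subset O$) produces \eqref{7.6}.

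To estimate $|E\cap Q|$ for $Q:=Q_j$, we may assume $E\cap Q\neq\emptyset$, and we fix $x_Q\in E\cap Q$ (so $\cn_P(f)(x_Q)\le\gz\lz$) and a point $y_Q\in O^\complement$ with $\dist(Q,y_Q)\ls\ell(Q)$ (so $\wz S^{\epz,\,R,\,1/2}_P(f)(y_Q)\le\lz$). For $x\in Q$, split $\bgz^{\epz,\,R}_{1/20}(x)$ into its \emph{distant} part $\{(y,s):\ s>c\ell(Q)\}$ and its \emph{local} part $\{(y,s):\ \epz<s\le c\ell(Q)\}$, where $c=c(n)$ is a large geometric constant. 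A triangle-inequality computation shows that, for $x\in Q$, the distant part lies inside $\bgz^{\epz,\,R}_{1/2}(y_Q)$, so the distant contribution to $\wz S^{\epz,\,R,\,1/20}_P(f)(x)$ is at most $\wz S^{\epz,\,R,\,1/2}_P(f)(y_Q)\le\lz$; consequently, for $x\in E\cap Q$ the square of the \emph{local} part of $\wz S^{\epz,\,R,\,1/20}_P(f)(x)$ exceeds $3\lz^2$. Chebyshev's inequality, Fubini's theorem and the bound $|(E\cap Q)\cap B(y,s/20)|\ls s^n$ then yield
\begin{equation*}
|E\cap Q|\ls\frac1{\lz^2}\iint_{\Omega}s\,\lf|\ol\nabla u(y,s)\r|^2\,dy\,ds,
\end{equation*}
where $u(y,s):=e^{-s\sqrt L}f(y)$, $\ol\nabla:=(\nabla,\partial_s)$, and $\Omega:=\bigcup_{x\in E\cap Q}\{(y,s):\ |x-y|<s/20,\ \epz<s<c\ell(Q)\}=\{(y,s):\ \epz<s<c\ell(Q),\ 20\dist(y,E\cap Q)<s\}$ is a bounded Lipschitz subdomain of $\rn\times(0,\fz)$ with perimeter $\ls|Q|$ (its slanted boundary being a $20$-Lipschitz graph). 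The decisive structural point is that, since $1/20<1$, every $(y,s)\in\Omega$ sits in the aperture-one cone over some $x\in E\cap Q$, so $|u(y,s)|\le\cn_P(f)(x)\le\gz\lz$ on all of $\Omega$ --- and the same holds on a fixed Lipschitz dilate $\Omega^\ast\supset\Omega$ built in the same way but with aperture $1/10$ (still below the threshold aperture one).

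It then remains to prove $\iint_{\Omega^\ast}s|\ol\nabla u(y,s)|^2\,dy\,ds\ls\gz^2\lz^2|Q|$. Here $u$ solves $\ol\nabla\cdot\ol\nabla u=\partial_s^2u+\Delta_yu=Vu$ on $\rn\times(0,\fz)$, and one has the pointwise identity
\begin{equation*}
\ol\nabla\cdot\lf(s\,\ol\nabla(u^2)-u^2\,\mathbf e_s\r)=2s\,|\ol\nabla u|^2+2s\,Vu^2,
\end{equation*}
$\mathbf e_s$ denoting the unit vector in the $s$-direction. Integrating this over $\Omega^\ast$, discarding the nonnegative term $2\iint_{\Omega^\ast}sVu^2$ (this is where $V\ge0$ enters decisively), and applying the divergence theorem reduces the matter to the boundary integral $\int_{\partial\Omega^\ast}\lf(2s\,|u|\,|\partial_\nu u|+u^2\r)\,d\sigma$. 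Since $|u|\le\gz\lz$ on $\overline{\Omega^\ast}$ and $|\partial\Omega^\ast|\ls|Q|$, the term with $u^2$ is $\ls\gz^2\lz^2|Q|$; for the gradient term one decomposes $\partial\Omega^\ast$ dyadically in $s$, covers each piece by Whitney boxes of $\rn\times(0,\fz)$ at the corresponding scale $s_i$, and uses --- on each such box --- a trace inequality together with the Caccioppoli inequality for $L$ and the Moser-type local boundedness estimate of Lemma \ref{l7.3} to bound the local contribution of $s^2|\ol\nabla u|^2$ on the surface by $\ls s_i^n\gz^2\lz^2$; since $\partial\Omega^\ast$ carries $n$-dimensional measure summing geometrically to $\ls|Q|$, this gives $\int_{\partial\Omega^\ast}s^2|\ol\nabla u|^2\,d\sigma\ls\gz^2\lz^2|Q|$, and a Cauchy--Schwarz estimate $\int_{\partial\Omega^\ast}2s|u||\partial_\nu u|\,d\sigma\le2(\sup_{\Omega^\ast}|u|)(\int_{\partial\Omega^\ast}d\sigma)^{1/2}(\int_{\partial\Omega^\ast}s^2|\ol\nabla u|^2\,d\sigma)^{1/2}\ls\gz^2\lz^2|Q|$ closes the loop. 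Altogether $|E\cap Q|\ls\gz^2|Q|$, and \eqref{7.6} follows.

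I expect the main obstacle to be the control of the boundary integral produced by the divergence theorem. The enlarged region $\Omega^\ast$ must be chosen so that, at once, the pointwise bound $|u|\le\gz\lz$ persists on the whole closure $\overline{\Omega^\ast}$ --- in particular on the portion of $\partial\Omega^\ast$ at the lower truncation height $s=\epz$, which is exactly why $\Omega^\ast$ has to be built from genuine (aperture $\le1$) cones over the points of $E\cap Q$ rather than from a sawtooth over $Q$ itself --- while $\partial\Omega^\ast$, decomposed into dyadic $s$-scales, has total surface measure (with the weights appearing above) summing to a geometric series comparable to $|Q|$, so that no logarithmic loss in $R/\epz$ is incurred; and all the constants coming from Caccioppoli, Moser, the trace inequality, the $\aa_\fz$-property of $\fai$ and the Whitney decomposition must stay uniform in $\epz$, $R$, $\lz$ and $\gz$. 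A related delicate point is justifying the differential identity for $u=e^{-s\sqrt L}f$ and the divergence theorem on this Lipschitz sawtooth when $V$ is merely in $L^1_{\loc}(\rn)$; this is handled by approximation together with the Caccioppoli bound, replacing any would-be pointwise gradient bound by the $L^2$-averaged estimates furnished by the Caccioppoli inequality and Lemma \ref{l7.3}.
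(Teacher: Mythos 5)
Your proposal is correct and follows essentially the same approach as the paper: a Whitney decomposition of $\{\wz{S}^{\epz,\,R,\,1/2}_P(f)>\lz\}$, a pointwise truncation of the cone at scale $\sim\ell(Q)$ using a nearby point in the complement, reduction to the unweighted estimate $|E\cap Q|\ls\gz^2|Q|$ followed by an upgrade via the reverse H\"older/$\aa_\fz$ property of $\fai(\cdot,t)$, and finally the divergence-theorem/Caccioppoli/Moser estimate on the sawtooth region. The paper delegates this last step to the proof of \cite[(3.9)]{yys}, whereas you spell out the differential identity and the boundary estimate explicitly, so the content matches.
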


\begin{proof}
We prove this lemma by borrowing some ideas from
\cite{ar03,amr08,yys}. Fix $0<\epz<R<\fz$, $\gz\in(0,1]$ and
$\lz\in(0,\fz)$. Let $f\in H_{\fai,\,\cn_P}(\rn)\cap L^2(\rn)$ and
$$O:=\lf\{x\in\rn:\
\wz{S}^{\epz,\,R,\,1/2}_P (f)(x)>\lz\r\}.$$ It is easy to see that
$O$ is an open subset of $\rn$. Let  $O=\cup_k Q_k$ be the
Whitney decomposition of $O$, where $\{Q_k\}_k$ are closed dyadic
cubes of $\rn$ with disjoint interiors and $2Q_k\subset O$, but
$(4Q_k)\cap O^\complement\neq\emptyset$. To show \eqref{7.6}, by
$O=\cup_{k}Q_k$ and the disjoint property of $\{Q_k\}_k$, it
suffices to show that there exists $\epz_0\in(0,1]$ such that, for all
$k$,
\begin{eqnarray}\label{7.7}
\int_{\{x\in Q_k:\ \wz{S}^{\epz,\,R,\,1/20}_P (f)(x)>2\lz,\, \cn_P
(f)(x)\le\gz\lz\}}\fai(x,t)\,dx\ls
\gz^{\epz_0}\int_{Q_k}\fai(x,t)\,dx.
\end{eqnarray}

From now on, we fix $k$ and denote by $l_k$ the {\it sidelength} of
$Q_k$.

If $x\in Q_k$, then
\begin{eqnarray}\label{7.8}
\wz{S}^{\max\{10l_k,\,\epz\},\,R,\,1/20}_P (f)(x)\le\lz.
\end{eqnarray}
Indeed, pick $x_k\in4Q_k\cap O^\complement$. For any
$(y,t)\in\rn\times(0,\fz)$, if $|x-y|<\frac{t}{20}$ and
$t\ge\max\{10l_k,\,\epz\}$, then $|x_k-y|\le|x_k-x|+|x-y|<4l_k
+\frac{t}{20}<\frac{t}{2}$, which implies that
$\bgz^{\max\{10l_k,\,\epz\},\,R}_{1/20}(x)\subset
\bgz^{\max\{10l_k,\,\epz\},\,R}_{1/2}(x_k)$. From this, it follows
that
$$\wz{S}^{\max\{10l_k,\,\epz\},\,R,\,1/20}_P (f)(x)\le
\wz{S}^{\max\{10l_k,\,\epz\},\,R,\,1/2}_P (f)(x_k)\le\lz.$$ Thus,
\eqref{7.8} holds true.

When $\epz\ge10l_k$, by \eqref{7.8}, we see that
$$\lf\{x\in
Q_k:\ \wz{S}^{\epz,\,R,\,1/20}_P (f)(x)>2\lz,\, \cn_P
(f)(x)\le\gz\lz\r\}=\emptyset$$ and hence \eqref{7.7} holds true. When
$\epz<10l_k$, to show \eqref{7.7}, by the fact that
$\wz{S}^{\epz,\,R,\,1/20}_P (f)\le \wz{S}^{\epz,\,10l_k,\,1/20}_P
(f)+\wz{S}^{10l_k,\,R,\,1/20}_P (f)$ and \eqref{7.8}, it remains
to show that, for all $t\in(0,\fz)$,
\begin{eqnarray}\label{7.9}
\int_{\{x\in Q_k\cap F:\
g(x)>\lz\}}\fai(x,t)\,dx\ls\gz^{\epz_0}\int_{Q_k}\fai(x,t)\,dx,
\end{eqnarray}
where $g:=\wz{S}^{\epz,\,10l_k,\,1/20}_P (f)$ and $F:=\{x\in\rn:\
\cn_P (f)(x)\le\gz\lz\}$.

To prove \eqref{7.9}, we claim that
\begin{eqnarray}\label{7.10}
|\{x\in Q_k\cap F:\ g(x)>\lz\}|\ls\gz^{2}|Q_k|.
\end{eqnarray}
If \eqref{7.10} holds true, it follows, from the fact that
$\fai\in\aa_{\fz}(\rn)$ and Lemma \ref{l2.6}(v), that there exists
$r\in(1,\fz)$ such that $\fai\in \rh_r(\rn)$, which, together with
\eqref{7.10} and Lemma \ref{l2.6}(viii), implies that, for all
$t\in(0,\fz)$,
\begin{eqnarray*}
\frac{1}{\fai(Q_k,t)}\int_{\{x\in Q_k\cap F:\
g(x)>\lz\}}\fai(x,t)\,dx\ls\lf\{\frac{|\{x\in Q_k\cap
F:\ g(x)>\lz\}|}{|Q_k|}\r\}^{(r-1)/r}\ls\gz^{2(r-1)/r}.
\end{eqnarray*}
Let $\epz_0:=2(r-1)/r$. Then $\int_{\{x\in Q_k\cap F:\
g(x)>\lz\}}\fai(x,t)\,dx\ls\gz^{\epz_0}\fai(Q_k,t)$, which implies
that \eqref{7.9} holds true.

Now we show \eqref{7.10}. By Tchebychev's inequality, we know that
\eqref{7.10} can be deduced from
\begin{eqnarray}\label{7.11}
\int_{Q_k\cap F}[g(x)]^2\,dx\ls(\gz\lz)^2|Q_k|.
\end{eqnarray}
From the Caccioppoli inequality associated with $L$ (see, for
example, \cite[Lemma 8.3]{hlmmy}), the differential structure
of $L$ and the divergence theorem, similar to the proof of \cite[(3.9)]{yys},
it follows that \eqref{7.11} holds true. We omit the details and
hence complete the proof of Lemma \ref{l7.2}.
\end{proof}

Now we prove Proposition \ref{p7.1} by using Lemmas \ref{l7.1} and
\ref{l7.2}.

\begin{proof}[Proof of Proposition \ref{p7.1}]
Assume that $f\in H_{\fai,\,\cn_P}(\rn)\cap L^2 (\rn)$. Take
$p_2\in(0,i(\fai))$ such that $\fai$ is uniformly lower type $p_2$.
By Lemma \ref{l2.3}(ii), we know that
$\fai(x,t)\sim\int_0^t \frac{\fai(x,s)}{s}\,ds$ for all $x\in\rn$
and $t\in(0,\fz)$, which, together with Fubini's theorem and Lemma
\ref{l7.2}, implies that, for all $\epz,\,R\in(0,\fz)$ with $\epz<R$
and $\gz\in(0,1]$,
\begin{eqnarray}\label{7.12}
&&\int_{\rn}\fai\lf(x,\wz{S}^{\epz,\,R,\,1/20}_P (f)(x)\r)\,dx\\
\nonumber &&\hs\sim\int_{\rn}\int_0^{\wz{S}^{\epz,\,R,\,1/20}_P
(f)(x)}\frac{\fai(x,t)}{t}\,dt\,dx \\
\nonumber &&\hs\sim\int_0^{\fz}\frac{1}{t} \int_{\{x\in\rn:\
\wz{S}^{\epz,\,R,\,1/20}_P (f)(x)>t\}}
\fai(x,t)\,dx\,dt\\
\nonumber &&\hs\ls\int_0^{\fz}\frac{1}{t} \int_{\{x\in\rn:\
\cn_P(f)(x)>\gz t\}}\fai(x,t)\,dx\,dt\\ \nonumber &&\hs\hs+
\gz^{\epz_0}\int_0^{\fz}\frac{1}{t} \int_{\{x\in\rn:\
\wz{S}^{\epz,\,R,\,1/2}_P (f)(x)>t/2\}}\cdots\\
\nonumber &&\hs\ls\frac{1}{\gz}\int_0^{\fz}\int_{\{x\in\rn:\
\cn_P(f)(x)> t\}}\frac{\fai(x,t)}{t}\,dx\,dt\\
\nonumber &&\hs\hs+\gz^{\epz_0} \int_0^{\fz}\int_{\{x\in\rn:\
\wz{S}^{\epz,\,R,\,1/2}_P (f)(x)>t\}}\cdots\\
\nonumber &&\hs\sim\frac{1}{\gz}\int_{\rn}\fai\lf(x,\cn_P
(f)(x)\r)\,dx+\gz^{\epz_0}\int_{\rn}\fai\lf(x,\wz{S}^{\epz,\,R,\,1/2}_P
(f)(x)\r)\,dx.
\end{eqnarray}
Furthermore, by \eqref{7.12}, Lemma \ref{l7.1} and $\wz{S}^{\epz,\,R,\,1/2}_P
(f)\le\wz{S}^{\epz,\,R}_P (f)$ pointwise, we conclude that, for all
$\gz\in(0,1]$, and $\epz,\,R\in(0,\fz)$ with $\epz<R$,
\begin{eqnarray*}
\int_{\rn}\fai\lf(x,\wz{S}^{\epz,\,R}_P
(f)(x)\r)\,dx&\sim&\int_{\rn}\fai\lf(x,\wz{S}^{\epz,\,R,\,1/20}_P
(f)(x)\r)\,dx\\
&\ls&\frac{1}{\gz}\int_{\rn}\fai\lf(x,\cn_P
(f)(x)\r)\,dx+\gz^{\epz_0}\int_{\rn}\fai\lf(x,\wz{S}^{\epz,\,R}_P
(f)(x)\r)\,dx,
\end{eqnarray*}
which, together with the facts that, for all $\lz\in(0,\fz)$,
$\wz{S}^{\epz,\,R}_P (f/\lz)=\wz{S}^{\epz,\,R}_P (f)/\lz$ and $\cn_P
(f/\lz)=\cn_P (f)/\lz$, implies that there exists a positive
constant $\wz{C}$ such that
\begin{eqnarray}\label{7.13}
&&\int_{\rn}\fai\lf(x,\frac{\wz{S}^{\epz,\,R}_P (f)(x)}{\lz}\r)\,dx\\
\nonumber
&&\hs\le\wz{C}\lf[\frac{1}{\gz}\int_{\rn}\fai\lf(x,\frac{\cn_P
(f)(x)}{\lz}\r)\,dx+\gz^{\epz_0}\int_{\rn}\fai
\lf(x,\frac{\wz{S}^{\epz,\,R}_P (f)(x)}{\lz}\r)\,dx\r].
\end{eqnarray}
Take $\gz\in(0,1]$ such that $\wz{C}\gz^{\epz_0}=1/2$. Then from
\eqref{7.13} and the fact that
$S^{\epz,\,R}_P(f)\le\wz{S}^{\epz,\,R}_P(f)$ pointwise, we deduce
that, for all $\lz\in(0,\fz)$,
\begin{equation*}
\int_{\rn}\fai\lf(x,\frac{S^{\epz,\,R}_P
(f)(x)}{\lz}\r)\,dx\le\int_{\rn}\fai\lf(x,\frac{\wz{S}^{\epz,\,R}_P
(f)(x)}{\lz}\r)\,dx\ls\int_{\rn}\fai\lf(x,\frac{\cn_P
(f)(x)}{\lz}\r)\,dx.
\end{equation*}
By the Fatou lemma and letting $\epz\to0$ and $R\to\fz$, we know
that, for all $\lz\in(0,\fz)$,
$$\int_{\rn}\fai\lf(x,\frac{S_P
(f)(x)}{\lz}\r)\,dx\ls\int_{\rn}\fai\lf(x,\frac{\cn_P
(f)(x)}{\lz}\r)\,dx,$$ which implies that $\|S_P
(f)\|_{L^{\fai}(\rn)}\ls\|\cn_P (f)\|_{L^{\fai}(\rn)}$ and hence completes
the proof of Proposition \ref{p7.1}.
\end{proof}

To prove Theorem \ref{t7.2}, we need the following Moser type local boundedness estimate
from \cite[Lemma 8.4]{hlmmy}.

\begin{lemma}\label{l7.3}
Let $u$ be a weak solution of $\wz{L}u:=Lu-\partial^2_t u=0$ in
the ball $B(Y_0,2r)\subset\rr^{n+1}_+$. Then for all
$p\in(0,\fz)$, there exists a positive constant $C(n,p)$, depending on $n$ and $p$, such that
$$\sup_{Y\in B(Y_0,r)}|u(Y)|\le C(n,\,p)\lf\{\frac{1}{r^{n+1}}
\int_{B(Y_0,2r)}|u(Y)|^p\,dY\r\}^{1/p}.
$$
\end{lemma}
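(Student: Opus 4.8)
The plan is to recognize Lemma \ref{l7.3} as the classical De Giorgi--Nash--Moser interior $L^\fz$-bound for subsolutions of a Schr\"odinger-type operator, and to establish it by a Moser iteration followed by the standard self-improvement of the exponent. The first point to notice is that, since $\wz Lu=Lu-\pat_t^2u=-\Delta_xu+Vu-\pat_t^2u$ and the potential $V=V(x)\ge0$ is independent of $t$, a weak solution of $\wz Lu=0$ in $B(Y_0,2r)$ is precisely a weak solution of $-\Delta_{(x,t)}u+Vu=0$, where $\Delta_{(x,t)}$ denotes the full $(n+1)$-dimensional Laplacian; moreover $B(Y_0,2r)\subset\rr^{n+1}_+$ is an open ball lying strictly inside the upper half-space, so only interior estimates are needed and the hyperplane $\{t=0\}$ plays no role.

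I would first prove the case $p=2$ by Moser iteration. Fix $\beta\ge1$ and a cut-off $\eta\in C^\fz_c(B(Y_0,2r))$. Testing the weak formulation of $-\Delta_{(x,t)}u+Vu=0$ against $\eta^2|u|^{2\beta-2}u$ --- more precisely, against a Lipschitz truncation of the nonlinearity, which is removed at the end by monotone convergence --- yields the Caccioppoli-type inequality
\[
\int\eta^2\lf|\nabla_{(x,t)}\lf(|u|^\beta\r)\r|^2+\int V\eta^2|u|^{2\beta}\ls\int|\nabla\eta|^2|u|^{2\beta},
\]
with an implicit constant independent of $\beta\ge1$. The crucial observation is that, because $V\ge0$, the term $\int V\eta^2|u|^{2\beta}$ stands on the favourable side of the inequality and may simply be discarded; in particular the argument never requires any integrability of $V$ beyond $V\in L^1_{\loc}$. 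Combining the Caccioppoli inequality with the $(n+1)$-dimensional Sobolev inequality (with gain exponent $\chi:=(n+1)/(n-1)$ when $n\ge2$, and any fixed $\chi\in(1,\fz)$ when $n=1$) produces a reverse H\"older inequality bounding, for concentric balls $B(Y_0,\rho_1)\subset B(Y_0,\rho_2)$ with $r\le\rho_1<\rho_2\le2r$, the $L^{2\beta\chi}$-average of $|u|$ over $B(Y_0,\rho_1)$ by a constant times $(\rho_2-\rho_1)^{-2}r^2$ times the $L^{2\beta}$-average over $B(Y_0,\rho_2)$. Iterating this over $\beta=\chi^k$ and over radii $\rho_k=r(1+2^{-k})$, $k\in\zz_+$, and summing the resulting exponents (which converge since $\sum_kk\chi^{-k}<\fz$), one arrives at $\sup_{Y\in B(Y_0,r)}|u(Y)|\ls\{r^{-(n+1)}\int_{B(Y_0,2r)}|u(Y)|^2\,dY\}^{1/2}$.

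To pass to a general $p\in(0,\fz)$ I would proceed as follows. For $p\ge2$, Jensen's inequality on $B(Y_0,2r)$ gives $\{r^{-(n+1)}\int_{B(Y_0,2r)}|u|^2\}^{1/2}\ls\{r^{-(n+1)}\int_{B(Y_0,2r)}|u|^p\}^{1/p}$, so the case $p=2$ already implies the claim. For $p\in(0,2)$ one uses the familiar absorption trick: setting $\Phi(\sigma):=\sup_{B(Y_0,\sigma)}|u|$ for $\sigma\in[r,2r]$, applying the $p=2$ estimate on a pair of concentric balls $B(Y_0,\sigma_1)\subset B(Y_0,\sigma_2)$ (with the dependence on $\sigma_2-\sigma_1$ tracked) and interpolating $\int_{B(Y_0,\sigma_2)}|u|^2\le\Phi(\sigma_2)^{2-p}\int_{B(Y_0,2r)}|u|^p$, one obtains, after Young's inequality with exponents $2/(2-p)$ and $2/p$, that $\Phi(\sigma_1)\le\frac12\Phi(\sigma_2)+C(n,p)(\sigma_2-\sigma_1)^{-(n+1)/p}\{\int_{B(Y_0,2r)}|u|^p\}^{1/p}$ for all $r\le\sigma_1<\sigma_2\le2r$; a standard iteration lemma then absorbs the $\Phi$-term and yields $\Phi(r)\le C(n,p)\{r^{-(n+1)}\int_{B(Y_0,2r)}|u|^p\}^{1/p}$, as desired.

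I expect the only genuine obstacle to be the low regularity of $V$: one must make sure that the test functions above are admissible in the weak formulation and that all manipulations with $|u|^\beta$ are legitimate when $V$ is merely locally integrable. This is handled, as indicated, by first working with the bounded truncations $\min\{|u|,m\}$, for which every integral is finite and the computation is classical, and only afterwards letting $m\to\fz$; the sign condition $V\ge0$ guarantees that the potential term carries the good sign at every stage and can be discarded, so no hypothesis on $V$ beyond $V\in L^1_{\loc}(\rn)$ is used. Since this statement is exactly \cite[Lemma 8.4]{hlmmy}, one may also simply invoke that reference.
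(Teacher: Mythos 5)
The paper does not prove this lemma at all: it is stated verbatim as a citation of \cite[Lemma 8.4]{hlmmy} and then used directly. Your proposal supplies the full Moser-iteration argument that underlies that citation (and you also note at the end that one may simply invoke \cite{hlmmy}), so there is no conflict with the paper, only more detail. The argument you give is sound. The decisive structural observation --- that $\wz{L}u=0$ is the same as $-\Delta_{(x,t)}u+Vu=0$ with $V=V(x)\ge 0$ independent of $t$, so that the potential term sits on the favourable side of the Caccioppoli inequality and can be discarded outright --- is exactly right, and it is what makes the conclusion hold under no hypothesis on $V$ beyond $0\le V\in L^1_{\loc}(\rn)$. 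The iteration to get the $p=2$ bound, the Jensen step for $p\ge 2$, and the interpolation-plus-Young absorption scheme with the standard iteration lemma for $p\in(0,2)$ are all classical and correctly set up, including the uniform-in-$\beta$ constant in the Caccioppoli estimate and the truncation/monotone-convergence device needed to justify the nonlinear test functions when $u$ is only a weak solution. Since the ball $B(Y_0,2r)$ lies strictly in $\rr^{n+1}_+$, you are right that only interior estimates are involved and the boundary hyperplane plays no role. In short, your proof is a correct self-contained substitute for the citation the paper relies on.
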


Now we prove Theorem \ref{t7.2} by using Theorem \ref{t7.1}, Lemma
\ref{l7.3} and Proposition \ref{p7.1}.

\begin{proof}[Proof of Theorem \ref{t7.2}]
The proof of Theorem \ref{t7.2} is divided into the following six
steps.

\textbf{Step 1.} $H_{\fai,\,L}(\rn)\cap L^2(\rn)\subset
H_{\fai,\,\cn_h}(\rn)\cap L^2(\rn)$. Let $M$ be as in Theorem
\ref{t7.1}. By Theorem \ref{t7.1}, we know that
$H_{\fai,\,L}(\rn)\cap L^2(\rn)
=H^M_{\fai,\,\at}(\rn)\cap L^2(\rn)$ with equivalent quasi-norms. Thus,
we only need to prove
$H^M_{\fai,\,\at}(\rn)\cap L^2(\rn)\subset
H_{\fai,\,\cn_h}(\rn)\cap L^2(\rn)$. To this end, similar to the
proof of \eqref{4.5}, it suffices to show that, for any $\lz\in\cc$ and
$(\fai,\,M)$-atom $a$ with $\supp a\subset B:= B(x_B,r_B)$,
\begin{equation*}
\int_{\rn}\fai\lf(x,\cn_h(\lz a)(x)\r)\,dx\ls
\fai\lf(B,|\lz|\|\chi_B\|^{-1}_{L^\fai(\rn)}\r).
\end{equation*}
From the $L^2(\rn)$-boundedness of $\cn_h$ and \eqref{2.5}, similar to the proof of
\eqref{4.5}, it follows that the above estimate holds true. We omit the details here.

\textbf{Step 2.} $H_{\fai,\,\cn_h}(\rn)\cap L^2(\rn)\subset
H_{\fai,\,\ccr_h}(\rn)\cap L^2(\rn)$, which is deduced from the fact
that, for all $f\in L^2(\rn)$ and $x\in\rn$,
$\ccr_h(f)(x)\le\cn_h(f)(x)$.

\textbf{Step 3.} $H_{\fai,\,\ccr_h}(\rn)\cap L^2(\rn)\subset
H_{\fai,\,\ccr_P}(\rn)\cap L^2(\rn)$. By the subordination formula
associated with $L$,
$$e^{-t\sqrt{L}}=\frac{1}{\sqrt{\pi}}\int_0^\fz
e^{-\frac{t^2}{4u}L}e^{-u}u^{-1/2}\,du
$$
with $t\in(0,\fz)$ (see, for example, \cite{ar03}), we know that, for
all $f\in L^2(\rn)$ and $x\in\rn$,
\begin{eqnarray*}
\ccr_P(f)(x)&&\le\sup_{t\in(0,\fz)}\int_0^{\fz}
\frac{e^{-u}}{\sqrt{u}}\lf|e^{-\frac{t^2}{4u}L}(f)(x)\r|\,du
\ls\ccr_h(f)(x)\int_0^{\fz}
\frac{e^{-u}}{\sqrt{u}}\,du\ls\ccr_h(f)(x),
\end{eqnarray*}
which implies that, for all $f\in H_{\fai,\,\ccr_h}(\rn)\cap
L^2(\rn)$, $\|f\|_{H_{\fai,\,\ccr_P}(\rn)}\ls
\|f\|_{H_{\fai,\,\ccr_h}(\rn)}$. From this and the arbitrariness
of $f$, we deduce that
$H_{\fai,\,\ccr_h}(\rn)\cap L^2(\rn)\subset
H_{\fai,\,\ccr_P}(\rn)\cap L^2(\rn).$

\textbf{Step 4.} $H_{\fai,\,\ccr_P}(\rn)\cap L^2(\rn)\subset
H_{\fai,\,\cn_P}(\rn)\cap L^2(\rn)$. For all $f\in L^2(\rn)$,
$x\in\rn$ and $t\in(0,\fz)$, let $u(x,t):= e^{-tL^{1/2}}(f)(x)$.
Then $\wz{L}u=Lu-\partial^2_t u=0$ in $\rr^{n+1}_+$. Let $x\in\rn$
and $t\in(0,\fz)$. Then by Lemma \ref{l7.3}, we know that, for any
$\gz\in(0,1)$ and $y\in Q(x,t/4)$,
\begin{eqnarray*}
\lf|e^{-t\sqrt{L}}(f)(y)\r|^{\gz}&&\ls\frac{1}{t^{n+1}}\int_{t/2}^{3t/2}
\int_{Q(x,t/2)}\lf|e^{-s\sqrt{L}}(f)(z)\r|^{\gz}\,dz\,ds\\
&&\ls\frac{1}{t^n}\int_{Q(x,t)}|\ccr_P(f)(z)|^{\gz}\,dz\ls
\cm\lf([\ccr_P(f)]^{\gz}\r)(x),
\end{eqnarray*}
which implies that, for all $f\in L^2(\rn)$ and $x\in\rn$,
\begin{eqnarray}\label{7.14}
\cn^{1/4}_P(f)(x)\ls
\lf\{\cm\lf(\lf[\ccr_P(f)\r]^{\gz}\r)(x)\r\}^{1/\gz}.
\end{eqnarray}
Let $q_0\in(q(\fai),\fz)$, $p_2\in(0,i(\fai))$ and $\gz_0\in(0,1)$
such that $\gz_0q_0<p_2$. Then we know that $\fai$ is of uniformly
lower type $p_2$ and $\fai\in\aa_{q_0}(\rn)$. For any
$\az\in(0,\fz)$ and $g\in L^{q_0}_{\loc}(\rn)$, let
$g=g\chi_{\{x\in\rn:\ |g(x)|\le\az\}}+g\chi_{\{x\in\rn:\
|g(x)|>\az\}}=:g_1+g_2$. Then from Lemma \ref{l2.6}(vi),
we infer that, for all $t\in(0,\fz)$,
\begin{eqnarray*}
&&\int_{\{x\in\rn:\ \cm(g)(x)>2\az\}}\fai(x,t)\,dx\\
&&\hs\le\int_{\{x\in\rn:\
\cm(g_2)(x)>\az\}}\fai(x,t)\,dx\le\frac{1}{\az^{q_0}}\int_{\rn}
\lf[\cm(g_2)(x)\r]^{q_0}\fai(x,t)\,dx\\
&&\hs\ls\frac{1}{\az^{q_0}}\int_{\rn}
|g_2(x)|^{q_0}\fai(x,t)\,dx\sim\frac{1}{\az^{q_0}}\int_{\{x\in\rn:\
|g(x)|>\az\}} |g(x)|^{q_0}\fai(x,t)\,dx,
\end{eqnarray*}
which implies that, for all $\az\in(0,\fz)$,
\begin{eqnarray}\label{7.15}
&&\int_{\{x\in\rn:\
\lf[\cm([\ccr_P(f)]^{\gz_0})(x)\r]^{1/\gz_0}>\az\}}\fai(x,t)\,dx\\ \nonumber
&&\hs\ls\frac{1}{\az^{\gz_0 q_0}}\int_{\{x\in\rn:\ [\ccr_P
(f)(x)]^{\gz_0}>\frac{\az^{\gz_0}}{2}\}}
\lf[\ccr_P(f)(x)\r]^{\gz_0 q_0}\fai(x,t)\,dx\\ \nonumber
&&\hs\ls\sz_{\ccr_P(f),\,t}\lf(\frac{\az}{2^{1/\gz_0}}\r)+\frac{1}{\az^{\gz_0
q_0}} \int_{\frac{\az}{2^{1/\gz_0}}}^{\fz}\gz_0q_0s^{\gz_0q_0-1}
\sz_{\ccr_P(f),\,t}(s)\,ds,
\end{eqnarray}
here and in what follows,
$\sz_{\ccr_P(f),\,t}(\az):=\int_{\{x\in\rn:\
\ccr_P(f)(x)>\az\}}\fai(x,t)\,dx$. From this, \eqref{7.14}, the
uniformly upper type $p_1$ and lower type $p_2$ properties of $\fai$
and $\gz_0q_0<p_2$, it follows that
\begin{eqnarray*}
&&\int_{\rn}\fai\lf(x,\cn^{1/4}_P(f)(x)\r)\,dx\\
&&\hs\ls\int_{\rn} \fai\lf(x,\lf[\cm\lf
(\lf[\ccr_P(f)\r]^{\gz_0}\r)(x)\r]^{1/\gz_0}\r)\,dx\\
&&\hs\ls\int_{\rn}\int_0^{\{\cm\lf([\ccr_P(f)]
^{\gz_0}\r)(x)\}^{1/\gz_0}}\frac{\fai(x,t)}{t}\,dt\,dx\\
&&\hs\sim\int_0^{\fz}\frac{1}{t}\int_{\{x\in\rn:\ [\cm
([\ccr_P(f)]
^{\gz_0})(x)]^{1/\gz_0}>t\}}\fai(x,t)\,dx\,dt\\
&&\hs\ls\int_0^{\fz}\frac{1}{t}\int_{\{x\in\rn:\
\ccr_P(f)(x)>\frac{t}{2^{1/\gz_0}}\}}\fai(x,t)\,dx\,dt\\
&&\hs\hs+\int_0^{\fz}\frac{1}{t^{\gz_0 q_0+1}}\lf\{\int_{\frac{t}
{2^{1/\gz_0}}}^{\fz}\gz_0 q_0s^{\gz_0 q_0-1}\sz_{\ccr_P(f),\,t}
(s)\,ds\r\}\,dt\\
&&\hs\sim\mathrm{J}_{\ccr_P(f)} +\int_0^{\fz}\gz_0 q_0s^{\gz_0
q_0-1}\lf\{\int_0^{2^{1/\gz_0}s}
\frac{1}{t^{\gz_0 q_0+1}}\sz_{\ccr_P(f),\,t}(s)\,dt\r\}\,ds\\
&&\hs\ls\mathrm{J}_{\ccr_P(f)}+\int_0^{\fz}\gz_0 q_0s^{\gz_0 q_0-1}
\sz_{\ccr_P(f),\,t}(s)\fai(x,2^{1/\gz_0}s)
\lf\{\int_0^{2^{1/\gz_0}s}\lf[\frac{t}{2^{1/\gz_0}s}\r]^{p_2}
\frac{1}{t^{\gz_0 q_0+1}}\,dt\r\}\,ds\\
&&\hs\ls\mathrm{J}_{\ccr_P(f)}+\int_0^{\fz}\gz_0 q_0s^{\gz_0
q_0-1}\sz_{\ccr_P(f),\,t}(s)
\frac{\fai(x,s)}{(2^{\frac{1}{\gz_0}}s)^{p_2}}
\lf\{\int_0^{2^{1/\gz_0}s}t^{p_2-\gz_0 q_0-1}\,dt\r\}\,ds\\
&&\hs\ls\mathrm{J}_{\ccr_P(f)} +\int_0^{\fz}\int_{\{x\in\rn:\
\ccr_P(f)(x)>s\}}\frac{\fai(x,s)}{s}\,ds\sim\int_{\rn}
\fai\lf(x,\ccr_P(f) (x)\r)\,dx,
\end{eqnarray*}
where
$$\mathrm{J}_{\ccr_P(f)}:=\int_0^{\fz}
\int_{\{x\in\rn:\ \ccr_P(f)(x)>t\}}\frac{\fai(x,t)}{t}\,dx\,dt,$$
 which,
together the fact that, for all $\lz\in(0,\fz)$,
$\cn^{1/4}_P(f/\lz)=\cn^{1/4}_P(f)/\lz$ and
$\ccr_P(f/\lz)=\ccr_P(f) /\lz$, implies that, for all
$\lz\in(0,\fz)$,
$$\int_{\rn}\fai\lf(x,\frac{\cn^{1/4}_P(f)(x)}{\lz}\r)\,dx\ls
\int_{\rn}\fai\lf(x,\frac{\ccr_P(f) (x)}{\lz}\r)\,dx.
$$
From this, we further deduce that
\begin{eqnarray}\label{7.16}
\lf\|\cn^{1/4}_P(f)\r\|_{L^\fai(\cx)}\ls
\lf\|\ccr_P(f)\r\|_{L^\fai(\cx)}.
\end{eqnarray}

To end the proof of this step, we claim that, for all $g\in
L^2(\rn)$,
\begin{eqnarray}\label{7.17}
\lf\|\cn^{1/4}_P(g)\r\|_{L^\fai(\cx)}\sim
\|\cn_P(g)\|_{L^\fai(\cx)}.
\end{eqnarray}

Then by \eqref{7.16} and \eqref{7.17}, we conclude that
$\|\cn_P(f)\|_{L^\fai(\cx)}\ls\|\ccr_P(f)\|_{L^\fai(\cx)}$. From
this and the arbitrariness of $f$, we deduce that
$H_{\fai,\,\ccr_P}(\rn)\cap L^2(\rn)\subset
H_{\fai,\,\cn_P}(\rn)\cap L^2(\rn)$.

Now we show \eqref{7.17}. We borrow some ideas from \cite[p.\,166,
Lemma 1]{fs72}. By the change of variables, it suffices to prove
that
\begin{eqnarray}\label{7.18}
\int_{\rn}\fai\lf(x,\cn^N_P(f)(x)\r)\,dx\ls
\int_{\rn}\fai\lf(x,\cn_P(f)(x)\r)\,dx,
\end{eqnarray}
where $N$ is a positive constant with $N\in(1,\fz)$. For any
$\az\in(0,\fz)$, let
$$E_{\az}:=\{x\in\rn:\ \cn_P(f)(x)>\az\}\ \text{and}\
E^\ast_\az:=\{x\in\rn:\ \cm(\chi_{E_\az})(x)>\wz{C}/N^n\},$$
where $\wz{C}\in(0,1)$ is a positive constant. By $\fai\in\aa_\fz(\rn)$,
we know that there exists $p\in(q(\fai),\fz)$ such that
$\fai\in\aa_p(\rn)$. From this and Lemma \ref{l2.6}(vi), it follows that,
for all $t\in[0,\fz)$,
\begin{eqnarray}\label{7.19}
\int_{E^\ast_\az}\fai(x,t)\,dx\ls\frac{
N^{np}}{\wz{C}^p}\int_{E_\az}\fai(x,t)\,dx.
\end{eqnarray}

Moreover, we claim that $\cn^N_P(f)(x)\le\az$ for all $x\not\in
E^\ast_\az$. Indeed, fix any given $(y,t)\in\rn\times(0,\fz)$
satisfying $|y-x|<Nt$. Then $B(y,t)\not\subset E_\az$. If this is
not true, then
$$\cm(\chi_{E_\az})(x)\ge\frac{|B(y,t)|}{|B(y,Nt)|}=\frac{1}{N^n}
>\frac{\wz{C}}{N^n}.
$$
This gives a contradiction with $x\not\in E^\ast_\az$, and hence
the claim holds true. From the claim, we deduce that there exists $z\in
B(y,t)$ such that $\cn_P(f)(z)\le\az$, which implies that
$|e^{-t\sqrt{L}}(f)(y)|\le\cn_P(f)(z)\le\az$. By this and the
choice of $(y,t)$, we conclude that, for all $x\not\in E^\ast_\az$,
$\cn^N_P(f)(x)\le\az$, which, together with Lemma \ref{l2.3}(ii),
Fubini's theorem and \eqref{7.19}, implies that
\begin{eqnarray*}
\int_\rn\fai\lf(x,\cn_P^N(f)(x)\r)\,dx&&\sim\int_\rn\int_0^{\cn_P^N(f)(x)}
\frac{\fai(x,t)}{t}\,dt\,dx\\ &&\sim\int_0^\fz\int_{\{x\in\rn:\
\cn_P^N(f)(x)>t\}}\frac{\fai(x,t)}{t}\,dx\,dt\ls\int_0^\fz
\int_{E^\ast_t}\frac{\fai(x,t)}{t}\,dx\,dt\\ &&\ls\int_0^\fz
\int_{E_t}\frac{\fai(x,t)}{t}\,dx\,dt
\sim\int_\rn\fai\lf(x,\cn_P(f)(x)\r)\,dx.
\end{eqnarray*}
Thus, the claim \eqref{7.18} holds true.

\textbf{Step 5.} $H_{\fai,\,\cn_P}(\rn)\cap L^2(\rn)\subset
H_{\fai,\,S_P}(\rn)\cap L^2(\rn)$. This is just the conclusion of
Proposition \ref{p7.1}.

\textbf{Step 6.} $H_{\fai,\,S_P}(\rn)\cap L^2(\rn)\subset
H_{\fai,\,L}(\rn)\cap L^2(\rn)$. This is directly deduced from
Theorem \ref{t7.1}.

From Steps 1 though 6, we deduce that
\begin{eqnarray*}
H_{\fai,\,L}(\rn)\cap L^2(\rn)&&\!\!= \!\! H_{\fai,\,\cn_h}(\rn)\cap
L^2(\rn)=\!\!H_{\fai,\,\ccr_h}(\rn)\cap
L^2(\rn)\\
&&\!\!=\!\!H_{\fai,\,\ccr_P}(\rn)\cap L^2(\rn)\!=\!H_{\fai,\,\cn_P}(\rn)\cap
L^2(\rn)\!=\!H_{\fai,\,S_P}(\rn)\cap L^2(\rn)
\end{eqnarray*}
with equivalent quasi-norms, which, together with the fact that
$H_{\fai,\,L}(\rn)\cap L^2(\rn)$,
$H_{\fai,\,\cn_h}(\rn)\cap L^2(\rn)$,
$H_{\fai,\,\ccr_h}(\rn)\cap L^2(\rn)$, $H_{\fai,\,\ccr_P}(\rn)\cap
L^2(\rn)$, $H_{\fai,\,\cn_P}(\rn)\cap
L^2(\rn)$ and $H_{\fai,\,S_P}(\rn)\cap L^2(\rn)$ are,
respectively, dense in $H_{\fai,\,L}(\rn)$,
$H_{\fai,\,\cn_h}(\rn)$,
$H_{\fai,\,\ccr_h}(\rn)$, $H_{\fai,\,\ccr_P}(\rn)$,
$H_{\fai,\,\cn_P}(\rn)$ and $H_{\fai,\,S_P}(\rn)$, and a density
argument, then implies that the spaces $H_{\fai,\,L}(\rn)$,
$H_{\fai,\,\cn_h}(\rn)$, $H_{\fai,\,\ccr_h}(\rn)$,
$H_{\fai,\,\ccr_P}(\rn)$, $H_{\fai,\,\cn_P}(\rn)$
and $H_{\fai,\,S_P}(\rn)$ coincide with equivalent quasi-norms, which
completes the proof of Theorem \ref{t7.2}.
\end{proof}

Now we consider the boundedness of the Riesz transform $\nabla
L^{-1/2}$ associated with $L$. By the functional calculus of $L$,
we know that, for all $f\in L^2(\rn)$,
\begin{equation}\label{7.20}
\nabla L^{-1/2}f=\frac{1}{2\sqrt{\pi}}\int_0^\fz\nabla
e^{-tL}f\frac{dt}{\sqrt{t}}.
\end{equation}
It is well known that $\nabla L^{-1/2}$ is bounded on $L^2(\rn)$
(see, for example, \cite[(8.20)]{hlmmy}). To establish the main
results in this subsection about the boundedness of the Riesz
transform $\nabla L^{-1/2}$ on $H_{\fai,\,L}(\rn)$, we need the following conclusion,
which is just \cite[Lemma 8.5]{hlmmy} (see also \cite[Lemma 6.2]{jy11}).

\begin{lemma}\label{l7.4}
There exist two positive constants $C$ and $c$ such that, for all
closed sets $E$ and $F$ in $\rn$ and $f\in L^2(E)$,
$$\lf\|t\nabla e^{-t^2L}f\r\|_{L^2(F)}\le C
\exp\lf\{-\frac{[\dist(E,F)]^2}{ct^2}\r\}\|f\|_{L^2(E)}.
$$
\end{lemma}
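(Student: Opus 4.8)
The plan is to deduce this off-diagonal estimate for $t\nabla e^{-t^2L}$ from the Davies--Gaffney bounds already available for the heat semigroup, exploiting the special structure $L=-\Delta+V$ with $V\ge0$ via a Caccioppoli-type energy argument with a cutoff function. Set $d:=\dist(E,F)$; the case $d=0$ is immediate from the $L^2(\rn)$-bound below, so assume $d>0$. First I would record the diagonal bound: since $V\ge0$, one has $\|\nabla u\|_{L^2(\rn)}^2\le\langle Lu,u\rangle$ for $u$ in the form domain of $L$, so, applying this to $u=e^{-t^2L}f\in\cd(L)$ and using self-adjointness (Assumption (A)),
$$\lf\|t\nabla e^{-t^2L}f\r\|_{L^2(\rn)}^2\le\lf\langle t^2Le^{-t^2L}f,\,e^{-t^2L}f\r\rangle=\lf\langle t^2Le^{-2t^2L}f,\,f\r\rangle\le\sup_{\lz\ge0}\lf(t^2\lz e^{-2t^2\lz}\r)\|f\|_{L^2(\rn)}^2\ls\|f\|_{L^2(\rn)}^2.$$
When $t\ge d$ this already gives the claim, since then $d^2/t^2\le1$, whence $\|t\nabla e^{-t^2L}f\|_{L^2(F)}\le\|t\nabla e^{-t^2L}f\|_{L^2(\rn)}\ls\|f\|_{L^2(E)}\ls\exp\{-d^2/t^2\}\|f\|_{L^2(E)}$; so it remains to treat $0<t<d$.

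For $0<t<d$, choose a Lipschitz cutoff $\eta$ with $0\le\eta\le1$, $\eta\equiv1$ on $F$, $\supp\eta\subset\{x\in\rn:\dist(x,F)<d/2\}$ and $|\nabla\eta|\ls1/d$, so that $\dist(\supp\eta,E)\ge d/2$. With $u:=e^{-t^2L}f$, testing the quadratic form of $L$ against $\eta^2u$ and using $V\ge0$ together with the Cauchy--Schwarz inequality to absorb the cross term $2\,\mathrm{Re}\int_{\rn}\eta u\,\nabla\eta\cdot\overline{\nabla u}$ yields
$$\int_{\rn}\eta^2|\nabla u|^2\,dx\ls\lf|\lf\langle Lu,\,\eta^2u\r\rangle\r|+\frac1{d^2}\|u\|_{L^2(\supp\eta)}^2\ls\lf\|Le^{-t^2L}f\r\|_{L^2(\supp\eta)}\|u\|_{L^2(\supp\eta)}+\frac1{d^2}\|u\|_{L^2(\supp\eta)}^2.$$
Now the off-diagonal bounds enter: Assumption (B) gives $\|u\|_{L^2(\supp\eta)}\ls\exp\{-d^2/(Ct^2)\}\|f\|_{L^2(E)}$, and Lemma \ref{l2.1} (with $k=1$) gives $\|Le^{-t^2L}f\|_{L^2(\supp\eta)}=t^{-2}\|(t^2L)e^{-t^2L}f\|_{L^2(\supp\eta)}\ls t^{-2}\exp\{-d^2/(Ct^2)\}\|f\|_{L^2(E)}$. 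Substituting and multiplying by $t^2$, we get, for $0<t<d$,
$$\lf\|t\nabla e^{-t^2L}f\r\|_{L^2(F)}^2\le t^2\int_{\rn}\eta^2|\nabla u|^2\,dx\ls\lf(1+\frac{t^2}{d^2}\r)\exp\lf\{-\frac{d^2}{Ct^2}\r\}\|f\|_{L^2(E)}^2\ls\exp\lf\{-\frac{d^2}{ct^2}\r\}\|f\|_{L^2(E)}^2,$$
where $1+t^2/d^2<2$ has been harmlessly absorbed; taking square roots and combining with the case $t\ge d$ completes the proof.

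The only step that is not routine bookkeeping is the energy identity itself: one must justify that $e^{-t^2L}f$ lies in the form domain of $L$ and that the integration by parts producing $\int_{\rn}\eta^2|\nabla u|^2+\int_{\rn}\eta^2V|u|^2=\mathrm{Re}\langle Lu,\eta^2u\rangle-2\,\mathrm{Re}\int_{\rn}\eta u\,\nabla\eta\cdot\overline{\nabla u}$ is legitimate for a merely Lipschitz $\eta$ and for $V$ only locally integrable. This is exactly where the special differential structure $L=-\Delta+V$ is used, and it is handled by a standard approximation argument (mollifying $\eta$, truncating $V$), as in \cite[Lemma 8.5]{hlmmy}; everything else reduces to the Davies--Gaffney bounds already provided by Lemma \ref{l2.1} and Assumption (B).
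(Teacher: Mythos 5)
The paper does not reprove this lemma; it simply cites it as \cite[Lemma 8.5]{hlmmy} (see also \cite[Lemma 6.2]{jy11}). Your argument is a correct reconstruction of precisely the Caccioppoli-type energy argument used in that source: the $L^2(\rn)$ bound for $t\nabla e^{-t^2L}$ from the form inequality $\|\nabla u\|_{L^2(\rn)}^2\le\langle Lu,u\rangle$ (valid since $V\ge0$) together with $\sup_{\lz\ge0}t^2\lz e^{-2t^2\lz}<\fz$; the trivial treatment of $t\ge\dist(E,F)$; and, for $t<\dist(E,F)$, the cutoff $\eta$ with $\dist(\supp\eta,E)\ge d/2$, the energy identity $\int\eta^2|\nabla u|^2+\int V\eta^2|u|^2=\mathrm{Re}\langle Lu,\eta^2u\rangle-2\,\mathrm{Re}\int\eta\overline u\,\nabla\eta\cdot\nabla u$ with the cross term absorbed by Cauchy--Schwarz, and finally Assumption (B) plus Lemma \ref{l2.1} (with $k=1$) feeding in the Gaussian factors. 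You also correctly flag the one non-routine step, namely justifying the integration by parts for Lipschitz $\eta$ and $V\in L^1_{\loc}(\rn)$, and note that it is handled by the standard mollification/truncation argument. This matches the cited proof in both strategy and detail, so there is nothing further to add.
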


\begin{theorem}\label{t7.3}
Let $\fai$ and $L$ be as in Theorem \ref{t7.1}. Then the Riesz transform
$\nabla L^{-1/2}$ is bounded from $H_{\fai,\,L}(\rn)$ to
$L^\fai(\rn)$.
\end{theorem}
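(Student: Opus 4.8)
The plan is to deduce the boundedness of $\nabla L^{-1/2}$ from $H_{\fai,\,L}(\rn)$ to $L^\fai(\rn)$ from the atomic characterization of $H_{\fai,\,L}(\rn)$ in Theorem \ref{t7.1} together with Lemma \ref{l5.2}. Since $\nabla L^{-1/2}$ is bounded on $L^2(\rn)$, the nonnegative sublinear operator $f\mapsto|\nabla L^{-1/2}f|$ maps $L^2(\rn)$ continuously into $L^2(\rn)$, hence into weak-$L^2(\rn)$; therefore, by Lemma \ref{l5.2}, it suffices to prove that there exists a positive constant $C$ such that, for all $\lz\in\cc$, all $M\in\nn$ with $M>\frac n2[\frac{q(\fai)}{i(\fai)}-\frac12]$ and every $(\fai,\,M)$-atom $a$ associated with a ball $B:=B(x_B,r_B)$,
\[
\int_{\rn}\fai\lf(x,\lf|\nabla L^{-1/2}(\lz a)(x)\r|\r)\,dx\le C\fai\lf(B,\frac{|\lz|}{\|\chi_B\|_{L^\fai(\rn)}}\r).
\]
Writing $a=L^Mb$ with $\supp b\subset B$ and the size bounds in Definition \ref{d4.2}, I would decompose the left-hand side as $\sum_{k\in\zz_+}\int_{U_k(B)}\fai(x,|\lz|\,|\nabla L^{-1/2}a(x)|)\,dx$ over the annuli $U_k(B)$ of \eqref{2.4}, and follow the scheme of the proof of \eqref{4.5} (equivalently, of Proposition \ref{p4.1} and Theorem \ref{t6.1}). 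By $\fai\in\rh_{2/[2-I(\fai)]}(\rn)$ and Lemma \ref{l2.6}(iv), fix $p_1\in[I(\fai),1]$ with $\fai$ of uniformly upper type $p_1$ and $\fai\in\rh_{2/(2-p_1)}(\rn)$, and fix $p_2\in(0,i(\fai))$ and $q_0\in(q(\fai),\fz)$ with $\fai$ of uniformly lower type $p_2$, $\fai\in\aa_{q_0}(\rn)$ and $M>\frac n2(\frac{q_0}{p_2}-\frac12)$.

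For the local part $k\in\{0,\dots,4\}$, the uniformly upper type $p_1$ property of $\fai$, H\"older's inequality, $\fai\in\rh_{2/(2-p_1)}(\rn)$, the $L^2(\rn)$-boundedness of $\nabla L^{-1/2}$ and \eqref{2.2} bound the corresponding term by $\fai(B,|\lz|\|\chi_B\|_{L^\fai(\rn)}^{-1})$, exactly as in \eqref{4.7}. For $k\ge5$, I would split $\fai(x,|\lz|\,|\nabla L^{-1/2}a(x)|)$, using the uniformly upper type $p_1$ and lower type $p_2$ properties of $\fai$, into two terms of the shape $\|\chi_B\|_{L^\fai(\rn)}^{p}\,\fai(x,|\lz|\|\chi_B\|_{L^\fai(\rn)}^{-1})\,[|\nabla L^{-1/2}a(x)|\,\|\chi_B\|_{L^\fai(\rn)}]^{p}$ with $p\in\{p_1,p_2\}$, as in \eqref{4.8}; then H\"older's inequality with $\fai\in\rh_{2/(2-p_1)}(\rn)$ and $\fai\in\rh_{2/(2-p_2)}(\rn)$ (the latter from $p_1\ge p_2$ and Lemma \ref{l2.6}(ii)), Lemma \ref{l2.6}(vii) and \eqref{2.2} reduce the matter to the off-diagonal $L^2$-estimate
\[
\lf\|\nabla L^{-1/2}a\r\|_{L^2(U_k(B))}\ls 2^{-ks_0}[\mu(B)]^{1/2}\|\chi_B\|_{L^\fai(\rn)}^{-1},\qquad k\ge5,
\]
for some $s_0\in(n[\frac{q_0}{p_2}-\frac12],2M)$; granting this, the resulting factors $2^{-kps_0}[\mu(2^kB)/\mu(B)]^{q_0-p/2}\ls 2^{-kp[s_0-n(q_0/p-1/2)]}$ are summable in $k$ since $s_0>n(\frac{q_0}{p_2}-\frac12)\ge n(\frac{q_0}{p_1}-\frac12)$.

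The main obstacle will be the off-diagonal $L^2$-estimate above. To prove it I would start from \eqref{7.20}, write $\nabla L^{-1/2}a=c\int_0^\fz t\nabla e^{-t^2L}(t^2L)^Mb\,\frac{dt}{t^{2M+1}}$ after the change of variables $t\mapsto t^2$, and split this integral at $t=r_B$. For each fixed $t$ and $k\ge5$ one has $\dist(\supp b,U_k(B))\gs 2^kr_B$; factoring the operator as $\lf(t\nabla e^{-t^2L/2}\r)\circ\lf((t^2L)^Me^{-t^2L/2}\r)$ and combining the Davies-Gaffney type bound of Lemma \ref{l7.4} for $t\nabla e^{-t^2L}$ with Lemma \ref{l2.1} for $(t^2L)^Me^{-t^2L}$ (together with the standard annulus decomposition of the intermediate function), one controls $\|t\nabla e^{-t^2L}(t^2L)^Mb\|_{L^2(U_k(B))}$ by $\exp\{-(2^kr_B)^2/(ct^2)\}\|b\|_{L^2(B)}$; on $(0,r_B)$ this exponential dominates $t^{-2M-1}$, and on $(r_B,\fz)$ it is bounded, for any $s_0$, by $(t/2^kr_B)^{s_0}$, so Minkowski's integral inequality, the choice $s_0\in(n[\frac{q_0}{p_2}-\frac12],2M)$, and the bound $\|b\|_{L^2(B)}\ls r_B^{2M}[\mu(B)]^{1/2}\|\chi_B\|_{L^\fai(\rn)}^{-1}$ yield the desired $2^{-ks_0}$ decay. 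A secondary point, already settled in the proof of \eqref{4.5}, is that the spatial and time variables of $\fai$ are intertwined, so Jensen's inequality cannot be used to pull $|\nabla L^{-1/2}(\lz a)(x)|$ out of the time slot of $\fai$; this is precisely why the uniformly upper and lower type decomposition of $\fai$ is invoked in the range $k\ge5$. Finally, since $H_{\fai,\,L}(\rn)\cap L^2(\rn)$ is dense in $H_{\fai,\,L}(\rn)$, a density argument as in the proof of Theorem \ref{t6.1} extends the estimate from atoms to all of $H_{\fai,\,L}(\rn)$.
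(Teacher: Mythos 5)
Your proposal is correct and follows essentially the same route as the paper's proof: reduce to the estimate on a single $(\fai,M)$-atom, decompose over the annuli $U_k(B)$, handle the local annuli $k\le 4$ by $L^2$-boundedness of $\nabla L^{-1/2}$ together with $\rh_{2/(2-p_1)}$, handle $k\ge 5$ by the uniformly upper/lower type split into $p_1$- and $p_2$-pieces plus H\"older and $\aa_{q_0}$, and control the resulting off-diagonal $L^2$ norm $\|\nabla L^{-1/2}a\|_{L^2(U_k(B))}$ via \eqref{7.20}, Minkowski, a split of the time integral at $t=r_B$, and the Davies--Gaffney bound of Lemma \ref{l7.4} (composed with Lemma \ref{l2.1} to absorb the $(t^2L)^M$). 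This is precisely the paper's scheme in \eqref{7.23}--\eqref{7.29}. Two small remarks. First, the paper chooses to work with $\az$ directly on $(0,r_B)$ and with $b$ on $(r_B,\fz)$, while you keep the substituted form $t\nabla(t^2L)^Me^{-t^2L}b\,t^{-2M-1}$ throughout; this is only cosmetic and gives the same exponential bound. Second, by tracking the admissible exponent $s_0$ in $\exp\{-(2^kr_B)^2/(ct^2)\}\ls(t/2^kr_B)^{s_0}$ you observe that any $s_0\in(n[q_0/p_2-1/2],2M)$ works, so $M>\frac{n}{2}[\frac{q(\fai)}{i(\fai)}-\frac12]$ already suffices; the paper's hypothesis $M>\frac{n}{2}[\frac{q(\fai)}{i(\fai)}-\frac12]+\frac12$ corresponds to its specific (suboptimal) choice $s_0=2M-1$ in \eqref{7.27}--\eqref{7.28}. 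This does not change the theorem, since $M$ can always be taken larger, but your version is slightly sharper in the choice of the intermediate parameter.
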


\begin{proof}
First let $f\in H_{\fai,\,L}(\rn)\cap L^2(\rn)$ and $M\in\nn$ with
$M>\frac{n}{2}[\frac{q(\fai)}{i(\fai)}-\frac{1}{2}]+\frac{1}{2}$, where $n$,
$q(\fai)$ and $i(\fai)$ are, respectively, as in \eqref{2.2},
\eqref{2.12} and \eqref{2.11}. Then there exist
$p_2\in(0,i(\fai))$ and $q_0\in(q(\fai),\fz)$ such that
$M>\frac{n}{2}(\frac{q_0}{p_2}-\frac{1}{2})+\frac{1}{2}$, $\fai$ is
uniformly lower type $p_2$ and $\fai\in\aa_{q_0}(\rn)$. Moreover,
by Proposition \ref{p4.2}, we know that there exist
$\{\lz_j\}_j\subset\cc$ and a sequence $\{\az_j\}_j$ of
$(\fai,\,M)$-atoms such that $f=\sum_j\lz_j\az_j$ in $L^2(\rn)$
and $\|f\|_{H_{\fai,\,L}(\rn)}\sim\|f\|_{H^M_{\fai,\,\at}(\rn)}$,
which, together with the $L^2(\rn)$-boundedness of $\nabla
L^{-1/2}$, implies that
\begin{eqnarray}\label{7.21}
\nabla L^{-1/2}(f)=\sum_j\lz_j\nabla L^{-1/2}(\az_j)
\end{eqnarray}
in $L^2(\rn)$.

To finish the proof of Theorem \ref{t7.3}, it suffices to show
that, for any $\lz\in\cc$ and $(\fai,\,M)$-atom $\az$ supported in
$B:=B(x_B,r_B)$,
\begin{eqnarray}\label{7.22}
\int_{\rn}\fai\lf(x,\nabla
L^{-1/2}(\az)(x)\r)\,dx\ls\fai\lf(B,|\lz|\|\chi_B\|^{-1}_{L^\fai(\cx)}\r).
\end{eqnarray}
If \eqref{7.22} holds true, then it follows, from this and \eqref{7.21},
that
\begin{eqnarray*}
\int_{\rn}\fai\lf(x,\nabla
L^{-1/2}(f)(x)\r)\,dx\ls\sum_j\fai\lf(B_j,|\lz_j|
\|\chi_{B_j}\|^{-1}_{L^\fai(\cx)}\r),
\end{eqnarray*}
where, for each $j$, $\supp\az_j\subset B_j$. By this and
$\|f\|_{H_{\fai,\,L}(\rn)}\sim\|f\|_{H^M_{\fai,\,\at}(\rn)}$, we
conclude that
$\|\nabla L^{-1/2}(f)\|_{L^\fai(\rn)}\ls\|f\|_{H_{\fai,\,L}(\rn)},$
which, together with the fact that $H_{\fai,\,L}(\rn)\cap
L^2(\rn)$ is dense in $H_{\fai,\,L}(\rn)$ and a density argument,
implies that $\nabla L^{-1/2}$ is bounded from $H_{\fai,\,L}(\rn)$
to $L^\fai(\rn)$.

Now we prove \eqref{7.22}. By the definition of $\az$, we know
that there exists $b\in\cd(L^M)$ such that $\az=L^M b$ and (ii)
and (iii) of Definition \ref{d4.2} hold true. First we see that
\begin{eqnarray}\label{7.23}
\hs\hs\int_{\rn}\fai\lf(x,\lz\nabla
L^{-1/2}(\az)(x)\r)\,dx=\sum_{j=0}^\fz\int_{U_j(B)}\fai\lf(x,\lz\nabla
L^{-1/2}(\az)(x)\r)\,dx=:\sum_{j=0}^\fz\mathrm{I}_j.
\end{eqnarray}

From the assumption $\fai\in\rh_{2/[2-I(\fai)]}(\rn)$, Lemma \ref{l2.6}(iv) and
the definition of $I(\fai)$, we infer that, there exists $p_1\in[I(\fai),1]$
such that $\fai$ is of uniformly upper type $p_1$ and $\fai\in\rh_{2/(2-p_1)}(\rn)$.
When $j\in\{0,\,\cdots,\,4\}$, by the uniformly upper type $p_1$
property of $\fai$, H\"older's inequality, the
$L^2(\rn)$-boundedness of $\nabla L^{-1/2}$, $\fai\in\rh_{2/(2-p_1)}(\rn)$
and Lemma \ref{l2.6}(vii), we conclude that
\begin{eqnarray}\label{7.24}
\mathrm{I}_j&&\ls\int_{U_j(B)}\fai\lf(x,|\lz|
\|\chi_{B}\|^{-1}_{L^\fai(\rn)}\r)\lf(1+\lf[|\nabla L^{-1/2}(\az)(x)|
\|\chi_{B}\|_{L^\fai(\cx)}\r]^{p_1}\r)\,dx\\ \nonumber &&\ls
\fai\lf(2^jB,|\lz|
\|\chi_{B}\|^{-1}_{L^\fai(\rn)}\r)+\|\chi_B\|^{p_1}_{L^\fai(\rn)}
\lf\|\nabla L^{-1/2}(\az)\r\|^{p_1}_{L^2(\rn)}\\
\nonumber &&\hs\times\lf\{\int_{2^jB}\lf[\fai\lf(x,|\lz|
\|\chi_B\|_{L^\fai(\rn)}^{-1}\r)\r]^{\frac{2}{2-p_1}}\,dx\r\}
^{\frac{2-p_1}{2}}\\
\nonumber &&\ls\fai\lf(2^jB,|\lz|
\|\chi_{B}\|^{-1}_{L^\fai(\rn)}\r)\ls \fai\lf(B,|\lz|
\|\chi_{B}\|^{-1}_{L^\fai(\rn)}\r).
\end{eqnarray}

When $j\in\nn$ with $j\ge5$, from the uniformly upper type $p_1$ and the
lower type $p_2$ properties of $\fai$, it follows that
\begin{eqnarray}\label{7.25}
\hs\hs\mathrm{I}_j&&\ls\|\chi_B\|_{L^\fai(\rn)}^{p_1}\int_{U_j(B)}
\fai\lf(x,|\lz|\|\chi_B\|^{-1}_{L^\fai(\rn)}\r) \lf|\nabla
L^{-1/2}(\az)(x)\r|^{p_1}\,dx\\
\nonumber &&\hs+\|\chi_B\|_{L^\fai(\rn)}^{p_2}\int_{U_j(B)}
\fai\lf(x,|\lz|\|\chi_B\|^{-1}_{L^\fai(\rn)}\r) \lf|\nabla
L^{-1/2}(\az)(x)\r|^{p_2}\,dx=:\mathrm{E}_j+\mathrm{F}_j.
\end{eqnarray}

To deal with $\mathrm{E}_j$ and $\mathrm{F}_j$, we first estimate
$\int_{U_j(B)}|\nabla L^{-1/2}(\az)(x)|^2\,dx$. By \eqref{7.20},
 the change of variables and Minkowski's inequality, we see that, for
 each $j\in\nn$ with $j\ge5$,
\begin{eqnarray}\label{7.26}
&&\int_{U_j(B)}\lf|\nabla L^{-1/2}(\az)(x)\r|^2\,dx\\
\nonumber&&\hs\ls\int_0^\fz\lf\{\int_{U_j(B)} \lf|\nabla
e^{-t^2L}\az(x)\r|^2\,dx\r\}^{1/2}\,dt\\ \nonumber
&&\hs\sim\int_0^{r_B}\lf\{\int_{U_j(B)}\lf|t\nabla
e^{-t^2L}\az(x)\r|^2\,dx\r\}^{1/2}\,\frac{dt}{t}\\ \nonumber
&&\hs\hs+\int_{r_B}^\fz\lf\{\int_{U_j(B)}\lf|t\nabla
(t^2L)^Me^{-t^2L}b(x)\r|^2\,dx\r\}^{1/2}\,\frac{dt}{t^{2M+1}}
=:\mathrm{H}_{j,\,1}+\mathrm{H}_{j,\,2}.
\end{eqnarray}

We first estimate $\mathrm{H}_{j,\,1}$. From Lemma \ref{l7.4}, we
infer that
\begin{eqnarray}\label{7.27}
\mathrm{H}_{j,\,1}&&\ls\int_0^{r_B}\exp\lf\{-\frac{(2^jr_B)^2}{ct^2}\r\}
\|\az\|_{L^2(B)}\,\frac{dt}{t}\\ \nonumber
&&\ls\lf\{\int_0^{r_B}\frac{t^{2M-1}}{(2^jr_B)^{2M-1}}
\,\frac{dt}{t}\r\}\|\az\|_{L^2(B)}\sim2^{-(2M-1)j}\|\az\|_{L^2(B)}\\
\nonumber &&\ls2^{-(2M-1)j}|B|^{1/2}\|\chi_B\|^{-1}_{L^\fai(\rn)}.
\end{eqnarray}

For $\mathrm{H}_{j,\,2}$, by Lemma \ref{l7.4}, we see that
\begin{eqnarray*}
\mathrm{H}_{j,\,2}&&\ls\int_{r_B}^\fz
\exp\lf\{-\frac{(2^jr_B)^2}{ct^2}\r\}
\|b\|_{L^2(B)}\,\frac{dt}{t^{2M+1}}\\ \nonumber
&&\hs\ls\int_{r_B}^\fz\frac{t^{(2M-1)}}{(2^jr_B)^{(2M-1)}}
\,\frac{dt}{t^{2M+1}} \|b\|_{L^2(B)}\ls2^{-(2M-1)j}|B|^{1/2}
\|\chi_B\|^{-1}_{L^\fai(\rn)},
\end{eqnarray*}
which, together with \eqref{7.26} and \eqref{7.27}, implies that,
for all $j\in\nn$ with $j\ge5$,
\begin{eqnarray}\label{7.28}
\lf\{\int_{U_j(B)}\lf|\nabla
L^{-1/2}(\az)(x)\r|^2\,dx\r\}^{1/2}\ls2^{-(2M-1)j}|B|^{1/2}
\|\chi_B\|^{-1}_{L^\fai(\rn)}.
\end{eqnarray}
Thus, from H\"older's inequality, \eqref{7.28} and
$\fai\in\rh_{2/(2-p_1)}(\rn)\subset\rh_{2/(2-p_2)}(\rn)$, similar to
the proof of \eqref{6.8}, we infer that
\begin{eqnarray}\label{7.29}
\mathrm{E}_j\ls2^{-jp_1[(2M-1+\frac n2)-\frac{nq_0}{p_1}]}
\fai\lf(B,|\lz|\|\chi_B\|^{-1}_{L^\fai(\rn)}\r).
\end{eqnarray}

Similarly, by using H\"older's inequality, \eqref{7.28} and
$\fai\in\rh_{2/(2-p_2)}(\rn)$, we see that
$$\mathrm{F}_j\ls2^{-jp_2[(2M-1+\frac n2)-\frac{nq_0}{p_2}]}
\fai\lf(B,|\lz|\|\chi_B\|^{-1}_{L^\fai(\rn)}\r),
$$
which, together with \eqref{7.25}, \eqref{7.29} and $p_1\ge p_2$, implies that, for
each $j\in\nn$ with $j\ge5$,
$$
\mathrm{I}_j\ls2^{-jp_2[(2M-1+\frac n2)-\frac{nq_0}{p_2}]}
\fai\lf(B,|\lz|\|\chi_B\|^{-1}_{L^\fai(\rn)}\r).
$$
From this, $M>\frac{n}{2}(\frac{q_0}{p_2}-\frac{1}2)+\frac{1}{2}$,
\eqref{7.23} and \eqref{7.24}, we infer that \eqref{7.22} holds true,
which completes the proof of Theorem \ref{t7.3}.
\end{proof}

Now we recall the definition of the Musielak-Orlicz-Hardy space $H_{\fai}(\rn)$
introduced by Ky \cite{k}.

\begin{definition}\label{d7.2}
Let $\fai$ be as in Definition \ref{d2.3}. The
\emph{Musielak-Orlicz-Hardy space $H_{\fai}(\rn)$} is the space of all
distributions $f\in\cs'(\rn)$ such that $\cg(f)\in L^\fai(\rn)$
with the \emph{quasi-norm}
$\|f\|_{H_\fai(\rn)}:=\|\cg(f)\|_{L^\fai(\rn)}$, where $\cs'(\rn)$
and $\cg(f)$ denote, respectively, the \emph{dual space} of the
Schwartz functions space (namely, the \emph{space of tempered distributions})
and the \emph{grand maximal function of $f$}.
\end{definition}

To state the atomic characterization of $H_\fai(\rn)$ established
by Ky, we recall the notion of atoms introduced by Ky
\cite{k}.
\begin{definition}\label{d7.3}
Let $\fai$ be as in Definition \ref{d2.3}.

(I) For each ball $B\subset\rn$, the \emph{space} $L^q_\fai(B)$ with
$q\in[1,\fz]$ is defined to be the set of all measurable functions
$f$ on $\rn$ supported in $B$ such that
\begin{equation*}
\|f\|_{L^q_{\fai}(B)}:=
\begin{cases}\dsup_{t\in (0,\fz)}
\lf[\dfrac{1}
{\fai(B,t)}\dint_{\rn}|f(x)|^q\fai(x,t)\,dx\r]^{1/q}<\fz,& q\in [1,\fz),\\
\|f\|_{L^{\fz}(B)}<\fz,&q=\fz.
\end{cases}
\end{equation*}

(II) A triplet $(\fai,\,q,\,s)$ is said to be \emph{admissible},
if $q\in(q(\fai),\fz]$ and $s\in\zz_+$ satisfies $s\ge\lfz
n[\frac{q(\fai)}{i(\fai)}-1]\rfz$. A measurable function $a$ on
$\rn$ is called a \emph{$(\fai,\,q,\,s)$-atom} if there exists a ball
$B\subset\rn$ such that

$\mathrm{(i)}$ $\supp a\subset B$;

$\mathrm{(ii)}$
$\|a\|_{L^q_{\fai}(B)}\le\|\chi_B\|_{L^\fai(\rn)}^{-1}$;

$\mathrm{(iii)}$ $\int_{\rn}a(x)x^{\az}\,dx=0$ for all
$\az\in\zz_+^n$ with $|\az|\le s$.

(III) The  \emph{atomic Musielak-Orlicz-Hardy space},
$H^{\fai,\,q,\,s}(\rn)$, is defined to be the set of all
$f\in\cs'(\rn)$ satisfying that $f=\sum_jb_j$ in $\cs'(\rn)$,
where $\{b_j\}_j$ is a sequence of multiples of
$(\fai,\,q,\,s)$-atoms with $\supp b_j\subset B_j$ and
$\sum_j\fai(B_j,\|b_j\|_{L^q_{\fai}(B_j)})<\fz.$
Moreover, letting
\begin{eqnarray*}
&&\blz_q(\{b_j\}_j):= \inf\lf\{\lz\in(0,\fz):\
\sum_j\fai\lf(B_j,\frac{\|b_j\|_{L^q_{\fai}(B_j)}}{\lz}\r)\le1\r\},
\end{eqnarray*}
the \emph{quasi-norm} of $f\in H^{\fai,\,q,\,s}(\rn)$ is defined
by $\|f\|_{H^{\fai,\,q,\,s}(\rn)}:=\inf\lf\{
\blz_q(\{b_j\}_j)\r\}$, where the infimum is taken over all the
decompositions of $f$ as above.
\end{definition}

To establish the boundedness of $\nabla L^{-1/2}$ from
$H_{\fai,\,L}(\rn)$ to $H_\fai(\rn)$, we need the atomic
characterization of the space $H_\fai(\rn)$ obtained by Ky
\cite{k}.

\begin{lemma}\label{l7.5}
Let $\fai$ be as in Definition \ref{d2.3} and $(\fai,\,q,\,s)$
admissible. Then $H_\fai(\rn)=H^{\fai,\,q,\,s}(\rn)$ with
equivalent quasi-norms.
\end{lemma}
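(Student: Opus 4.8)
The plan is to prove the two inclusions $H^{\fai,\,q,\,s}(\rn)\subset H_\fai(\rn)$ and $H_\fai(\rn)\subset H^{\fai,\,q,\,s}(\rn)$, each with the quasi-norm control built into the symbol $\subset$ as in the paper's conventions, from which the equivalence of quasi-norms is immediate. The scheme is the classical Fefferman--Stein--Coifman--Latter one; the only genuinely new bookkeeping is that the weight $\fai(\cdot,t)$ also depends on $t$, so every estimate must be carried out uniformly in $t\in(0,\fz)$, and the atoms here are normalized in the $t$-dependent norm $L^q_\fai(B)$ rather than in $L^q(B)$.

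For $H^{\fai,\,q,\,s}(\rn)\subset H_\fai(\rn)$: first I would record the standard size and decay estimates for the grand maximal function of a single $(\fai,\,q,\,s)$-atom $a$ supported in a ball $B:=B(x_B,r_B)$, namely that $\cg(a)$ is controlled near $B$ by an $L^q$-based Hardy--Littlewood maximal function of $a$, and off $2B$ decays like $[r_B/(r_B+|x-x_B|)]^{n+s+1}$ (with the implied constant a multiple of $|B|^{-1}\|a\|_{L^1(B)}$), the latter using the vanishing moments of $a$ up to order $s$ together with a Taylor expansion; the admissibility requirement $s\ge\lfz n[\frac{q(\fai)}{i(\fai)}-1]\rfz$ is exactly what makes this decay integrable against $\fai$. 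Using H\"older's inequality, the reverse H\"older property $\fai\in\rh_r(\rn)$ for suitable $r>1$ to pass between $\fai$-averages and Lebesgue averages, Lemma \ref{l2.6}(vi)--(viii), the doubling estimate \eqref{2.2}, and the $L^q_\fai(B)$-normalization of $a$, I would deduce $\int_\rn\fai(x,\cg(\lz a)(x))\,dx\ls\fai(B,|\lz|\|\chi_B\|_{L^\fai(\rn)}^{-1})$ uniformly in $\lz$. Then, given $f=\sum_j b_j$ in $\cs'(\rn)$ with $\{b_j\}_j$ multiples of $(\fai,\,q,\,s)$-atoms, the subadditivity of $\cg$, Lemma \ref{l2.3}(i) and the uniformly lower type $p_2$ property of $\fai$ yield $\|\cg(f)\|_{L^\fai(\rn)}\ls\blz_q(\{b_j\}_j)$, i.e., $\|f\|_{H_\fai(\rn)}\ls\|f\|_{H^{\fai,\,q,\,s}(\rn)}$.

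For the reverse inclusion $H_\fai(\rn)\subset H^{\fai,\,q,\,s}(\rn)$: by a density argument I would first assume $f\in H_\fai(\rn)\cap L^q(\rn)$ and then perform the Calder\'on--Zygmund decomposition of $f$ at the heights $\lz=2^k$, $k\in\zz$: set $\boz_k:=\{x\in\rn:\ \cg(f)(x)>2^k\}$ (open), choose a Whitney covering $\{Q_{k,\,i}\}_i$ of $\boz_k$ with an associated smooth partition of unity $\{\phi_{k,\,i}\}_i$, subtract from each $f\phi_{k,\,i}$ the appropriate polynomial of degree at most $s$ to form the good and bad parts, and telescope in $k$ to obtain $f=\sum_{k,\,i}\lz_{k,\,i}a_{k,\,i}$ in $\cs'(\rn)$ and in $L^q(\rn)$, where each $a_{k,\,i}$ is, up to a harmless constant, a $(\fai,\,q,\,s)$-atom supported in a fixed dilate of $Q_{k,\,i}$ and $\lz_{k,\,i}\sim 2^k\|\chi_{Q_{k,\,i}}\|_{L^\fai(\rn)}$. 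The standard $L^q$-size estimates for the pieces of this decomposition, combined with $\fai\in\aa_{q_0}(\rn)\cap\rh_r(\rn)$, give the $L^q_\fai$-normalization of the $a_{k,\,i}$; and a Lemma \ref{l3.3}--type estimate $\sum_k\fai(\boz_k,2^k/\lz)\ls\int_\rn\fai(x,\cg(f)(x)/\lz)\,dx$ for all $\lz\in(0,\fz)$, together with the bounded overlap of the Whitney cubes, gives $\blz_q(\{\lz_{k,\,i}a_{k,\,i}\}_{k,\,i})\ls\|f\|_{H_\fai(\rn)}$. A routine density/limiting argument then removes the extra hypothesis $f\in L^q(\rn)$.

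The hard part will be the second inclusion, and specifically the need to verify, uniformly in $t$, the two competing requirements on the atoms produced by the Calder\'on--Zygmund construction at once: the vanishing-moment condition (which forces the subtraction of polynomial projections and a careful check that the resulting bad functions really have moments up to order $s$) and the $L^q_\fai(B)$-normalization, which --- unlike the classical unweighted $L^q$ bound --- is not a pointwise matter and genuinely relies on $\fai\in\rh_r(\rn)\cap\aa_{q_0}(\rn)$ to interchange $\fai$-weighted averages with Lebesgue averages. Ensuring all implicit constants are independent of $t\in(0,\fz)$ throughout, and establishing the $\cs'(\rn)$-convergence of the a priori infinite atomic sum, are the remaining technical obstacles; once these are settled, Lemma \ref{l7.5} follows from the two inclusions.
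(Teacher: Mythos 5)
The paper itself does not prove this lemma; it is quoted as an external result, namely Ky \cite[Theorem 3.1]{k}. Your outline follows the same classical route that Ky adapts there: size and decay estimates for $\cg$ of a single $(\fai,q,s)$-atom (with moment cancellation supplying the decay exponent $n+s+1$, and the admissibility constraint on $s$ calibrated exactly so that this decay is summable against $\fai$) for the inclusion $H^{\fai,q,s}(\rn)\subset H_\fai(\rn)$, and a weighted Calder\'on--Zygmund decomposition of the grand maximal function at heights $2^k$, with polynomial corrections and telescoping in $k$, for the converse inclusion. The caveats you flag---uniformity in $t\in(0,\fz)$, passage between $\fai(\cdot,t)$-weighted averages and Lebesgue averages via the uniformly Muckenhoupt and reverse H\"older conditions, verification of the vanishing moments on the telescoped pieces, and $\cs'(\rn)$-convergence of the atomic series---are precisely the points that occupy the bulk of Ky's argument, so your roadmap is accurate even though it remains a sketch rather than a complete proof.
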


Now we prove that the Riesz transform  $\nabla L^{-1/2}$ is
bounded from $H_{\fai,\,L}(\rn)$ to $H_\fai(\rn)$ by using
Proposition \ref{p4.2} and Lemma \ref{l7.5}.

\begin{theorem}\label{t7.4}
Let $\fai$ be as in Definition \ref{d2.3}, $L$ as in \eqref{7.1},
$q(\fai)$ and $r(\fai)$ as in \eqref{2.12} and \eqref{2.13},
respectively. Assume that $q(\fai)<2$ and
$r(\fai)>\frac{2}{2-q(\fai)}$. Then the Riesz transform $\nabla
L^{-1/2}$ is bounded from $H_{\fai,\,L}(\rn)$ to $H_\fai(\rn)$.
\end{theorem}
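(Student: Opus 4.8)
The plan is to deduce the boundedness from the atomic characterizations at both ends --- Proposition \ref{p4.2} (equivalently Theorem \ref{t7.1}) on the domain side and Lemma \ref{l7.5} on the target side. First I would fix $M\in\nn$ with $M>\frac n2[\frac{q(\fai)}{i(\fai)}-\frac12]+\frac12$, together with $q_1\in(\max\{1,q(\fai)\},\,2-2/r(\fai))$; since $q(\fai)\ge1$ the hypothesis $r(\fai)>\frac2{2-q(\fai)}\ge2$ is exactly what makes this interval nonempty, and it forces $q_1\in(1,2)$, so that $\nabla L^{-1/2}$ is bounded on $L^{q_1}(\rn)$. By Lemma \ref{l2.6}(iv) we may then choose $q_0\in(q(\fai),\fz)$ and $p_2\in(0,i(\fai))$ with $\fai\in\rh_{2/(2-q_1)}(\rn)\cap\aa_{q_0}(\rn)$, $\fai$ of uniformly lower type $p_2$, and $M>\frac n2(\frac{q_0}{p_2}-\frac12)+\frac12$. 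Given $f\in H_{\fai,\,L}(\rn)\cap L^2(\rn)$, Proposition \ref{p4.2} provides $\{\lz_\iota\}_\iota\subset\cc$ and $(\fai,\,M)$-atoms $\{\az_\iota\}_\iota$ with $\supp\az_\iota\subset B_\iota$, $f=\sum_\iota\lz_\iota\az_\iota$ in $L^2(\rn)$, and $\blz(\{\lz_\iota\az_\iota\}_\iota)\ls\|f\|_{H_{\fai,\,L}(\rn)}$; by the $L^2(\rn)$-boundedness of $\nabla L^{-1/2}$, $\nabla L^{-1/2}f=\sum_\iota\lz_\iota\nabla L^{-1/2}(\az_\iota)$ in $L^2(\rn)$, hence in $\cs'(\rn)$. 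It therefore suffices to show that, for every $(\fai,\,M)$-atom $\az$ supported in $B:=B(x_B,r_B)$, one has $\nabla L^{-1/2}(\az)=\sum_{k\in\zz_+}\mu_k a_k$ in $L^2(\rn)$, where each $a_k$ is a $(\fai,\,q_1,\,0)$-atom (in the sense of Definition \ref{d7.3}) supported in $2^{k+1}B$ and, for all $\lz\in\cc$, $\sum_{k\in\zz_+}\fai(2^{k+1}B,|\lz|\,\|\mu_ka_k\|_{L^{q_1}_\fai(2^{k+1}B)})\ls\fai(B,|\lz|\|\chi_B\|_{L^\fai(\rn)}^{-1})$. (The admissibility of the triplet $(\fai,\,q_1,\,0)$, namely $\lfz n[\frac{q(\fai)}{i(\fai)}-1]\rfz=0$, is part of the standing hypotheses.)

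Next I would build this decomposition by the standard molecule-into-atoms procedure, arguing componentwise on $b:=\nabla L^{-1/2}(\az)$ (the bounds being identical for each component). From the Davies--Gaffney estimate for $t\nabla e^{-t^2L}$ (Lemma \ref{l7.4}) and \eqref{7.20}, the computation in \eqref{7.26}--\eqref{7.28} gives $\|b\|_{L^2(U_j(B))}\ls2^{-(2M-1)j}|B|^{1/2}\|\chi_B\|_{L^\fai(\rn)}^{-1}$ for $j\ge5$, and $\|b\|_{L^2(\rn)}\ls|B|^{1/2}\|\chi_B\|_{L^\fai(\rn)}^{-1}$ by the $L^2$-boundedness of $\nabla L^{-1/2}$; since $2M-1>n/2$ these yield $b\in L^1(\rn)$, and moreover $\int_{\rn}b(x)\,dx=0$, because by \eqref{7.20} and the Gaussian bounds on the kernel of $\nabla e^{-tL}$ one has $\int_{\rn}\nabla e^{-tL}(\az)(x)\,dx=0$ for each $t\in(0,\fz)$ and Fubini's theorem applies. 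Putting $I_k:=\int_{2^kB}b\,dx$ for $k\in\zz_+$ and $I_{-1}:=0$ (so $I_k\to0$), I would set
\begin{equation*}
g_k:=\chi_{U_k(B)}b-I_k\,\frac{\chi_{2^{k+1}B}}{|2^{k+1}B|}+I_{k-1}\,\frac{\chi_{2^{k}B}}{|2^{k}B|},\qquad k\in\zz_+.
\end{equation*}
Then each $g_k$ is supported in $2^{k+1}B$, has $\int_{\rn}g_k\,dx=0$, $\sum_{k\in\zz_+}g_k=b$ in $L^2(\rn)$ (by a telescoping/Abel argument using $I_k\to0$), and $\|g_k\|_{L^2(\rn)}\ls2^{-(2M-1)k}|B|^{1/2}\|\chi_B\|_{L^\fai(\rn)}^{-1}$ --- for $k\le4$ directly from $\|b\|_{L^2(\rn)}$, and for $k\ge5$ by combining the annular estimate with $|I_k|\ls\sum_{i>k}|U_i(B)|^{1/2}\|b\|_{L^2(U_i(B))}\ls|B|\,\|\chi_B\|_{L^\fai(\rn)}^{-1}2^{(n/2-2M+1)k}$.

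Finally I would turn size into atoms and sum. By H\"older's inequality with exponents $2/q_1$ and $2/(2-q_1)$ together with $\fai\in\rh_{2/(2-q_1)}(\rn)$, for all $t\in(0,\fz)$,
\begin{equation*}
\lf[\frac1{\fai(2^{k+1}B,t)}\int_{\rn}|g_k(x)|^{q_1}\fai(x,t)\,dx\r]^{1/q_1}\ls\|g_k\|_{L^2(\rn)}\,|2^{k+1}B|^{-1/2}\ls2^{-(2M-1+n/2)k}\|\chi_B\|_{L^\fai(\rn)}^{-1},
\end{equation*}
so $\|g_k\|_{L^{q_1}_\fai(2^{k+1}B)}\ls2^{-(2M-1+n/2)k}\|\chi_B\|_{L^\fai(\rn)}^{-1}$; hence $a_k:=g_k\,(\|g_k\|_{L^{q_1}_\fai(2^{k+1}B)}\|\chi_{2^{k+1}B}\|_{L^\fai(\rn)})^{-1}$ is a $(\fai,\,q_1,\,0)$-atom supported in $2^{k+1}B$ and $b=\sum_k\mu_ka_k$ with $\mu_k:=\|g_k\|_{L^{q_1}_\fai(2^{k+1}B)}\|\chi_{2^{k+1}B}\|_{L^\fai(\rn)}$. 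Then, by the uniformly lower type $p_2$ property of $\fai$ and Lemma \ref{l2.6}(vii),
\begin{align*}
\sum_{k\in\zz_+}\fai\lf(2^{k+1}B,|\lz|\,\|\mu_ka_k\|_{L^{q_1}_\fai(2^{k+1}B)}\r)&\ls\sum_{k\in\zz_+}2^{-(2M-1+n/2)kp_2}\,\fai\lf(2^{k+1}B,|\lz|\|\chi_B\|_{L^\fai(\rn)}^{-1}\r)\\
&\ls\sum_{k\in\zz_+}2^{k[nq_0-(2M-1+n/2)p_2]}\,\fai\lf(B,|\lz|\|\chi_B\|_{L^\fai(\rn)}^{-1}\r)\\
&\ls\fai\lf(B,|\lz|\|\chi_B\|_{L^\fai(\rn)}^{-1}\r),
\end{align*}
the last series converging precisely because $M>\frac n2(\frac{q_0}{p_2}-\frac12)+\frac12$. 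Summing over $\iota$ and using $\blz(\{\lz_\iota\az_\iota\}_\iota)\ls\|f\|_{H_{\fai,\,L}(\rn)}$ together with Lemma \ref{l2.5}(ii), the double decomposition $\nabla L^{-1/2}f=\sum_{\iota,k}\lz_\iota\mu_k^{(\iota)}a_k^{(\iota)}$ (valid in $L^2(\rn)$, hence in $\cs'(\rn)$) is an admissible atomic decomposition of $\nabla L^{-1/2}f$ in $H^{\fai,\,q_1,\,0}(\rn)$ with norm $\ls\|f\|_{H_{\fai,\,L}(\rn)}$; Lemma \ref{l7.5} identifies $H^{\fai,\,q_1,\,0}(\rn)$ with $H_\fai(\rn)$, and a density argument (using that $H_{\fai,\,L}(\rn)\cap L^2(\rn)$ is dense in $H_{\fai,\,L}(\rn)$) completes the proof. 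I expect the construction and bookkeeping of the splitting $\nabla L^{-1/2}(\az)=\sum_k\mu_ka_k$ to be the main obstacle: one must simultaneously keep each piece supported in a dilate of $B$, restore the vanishing mean (which is why both the cancellation $\int_{\rn}\nabla L^{-1/2}(\az)=0$ and the restriction making $s=0$ admissible are needed), and propagate the weighted $L^{q_1}_\fai$-size estimate --- and it is the tension between the range $(1,2]$ of $L^p$-boundedness of $\nabla L^{-1/2}$ and the reverse-H\"older exponent $2/(2-q_1)$ that produces exactly the hypotheses $q(\fai)<2$ and $r(\fai)>\frac2{2-q(\fai)}$.
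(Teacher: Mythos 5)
Your proof is correct and follows essentially the same strategy as the paper's: decompose $f$ into $(\fai,M)$-atoms via Proposition \ref{p4.2}, split each $\nabla L^{-1/2}(\az)$ into mean-zero pieces supported on dyadic dilates of $B$, turn the $L^2$-size control into $L^{q}_\fai$-size control via H\"older together with $\fai\in\rh_{2/(2-q)}(\rn)$, and sum using the lower type and $\aa_{q_0}$ properties and Lemma \ref{l2.5}(ii). Two minor technical deviations are worth noting. First, your decomposition uses the cumulative integrals $I_k=\int_{2^kB}b\,dx$ and full-ball bump functions $\chi_{2^{k+1}B}/|2^{k+1}B|$, whereas the paper localizes the bumps to annuli $\wz\chi_k=\chi_{U_k(B)}/|U_k(B)|$ and then generates a second family of atoms $N_{k+1}(\wz\chi_{k+1}-\wz\chi_k)$ by Abel summation; both are standard molecule-into-atom constructions, and either requires the cancellation $\int_{\rn}\nabla L^{-1/2}(\az)\,dx=0$, which you justify a bit more explicitly. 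Second, you import the annular decay $\|\nabla L^{-1/2}(\az)\|_{L^2(U_j(B))}\ls2^{-(2M-1)j}|B|^{1/2}\|\chi_B\|_{L^\fai(\rn)}^{-1}$ from the proof of Theorem \ref{t7.3} (hence the extra $+\frac12$ in your requirement on $M$), while the paper instead invokes the sharper off-diagonal bounds for $\nabla L^{-1/2}(I-e^{-tL})^K$ and $\nabla L^{-1/2}(tLe^{-tL})^K$ to get the cleaner decay $2^{-2kM}$; since $M$ may be chosen as large as one likes, this difference has no effect on the theorem. Your remark that the $L^{q_1}$-boundedness of $\nabla L^{-1/2}$ ``is what makes this work'' is not actually invoked in your argument (only the $L^2$-boundedness plus the reverse H\"older bridge is used), so it can be dropped, but this is cosmetic.
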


\begin{proof}
Let $f\in H_{\fai,\,L}(\rn)\cap L^2(\rn)$ and $M\in\nn$ with
$M>\frac{n}{2}[\frac{q(\fai)}{i(\fai)}-\frac{1}{2}]$. Then there
exist $p_2\in(0,i(\fai))$ and $q_0\in(q(\fai),\fz)$ such that
$M>\frac{n}{2}(\frac{q_0}{p_2}-\frac{1}{2})$, $\fai$ is uniformly
lower type $p_2$ and $\fai\in\aa_{q_0}(\rn)$. Moreover, by
Proposition \ref{p4.2}, we know that there exist
$\{\lz_j\}_j\subset\cc$ and a sequence $\{\az_j\}_j$ of
$(\fai,\,M)$-atoms such that $f=\sum_j\lz_j\az_j$ in $L^2(\rn)$
and $\|f\|_{H_{\fai,\,L}(\rn)}\sim\|f\|_{H^M_{\fai,\,\at}(\rn)}$.
Moreover, we know that \eqref{7.21} also holds true in this case.

Let $\az$ be a $(\fai,\,M)$-atom with $\supp\az\subset
B:=B(x_B,r_B)$. For $k\in\zz_+$, let $\chi_k:=\chi_{U_k (B)}$,
$\wz{\chi}_k := |U_k (B)|^{-1}\chi_k$, $m_k :=\int_{U_k (B)}\nabla
L^{-1/2}(\az)(x)\,dx$ and $M_k:=\nabla L^{-1/2}(\az)\chi_k-m_k
\wz{\chi}_k$. Then we have
\begin{equation}\label{7.30}
\nabla L^{-1/2}(\az)=\sum_{k=0}^{\fz}M_k +\sum_{k=0}^{\fz}m_k
\wz{\chi}_k.
\end{equation}
For $j\in\zz_+$, let $N_j :=\sum_{k=j}^{\fz}m_k$. By an
argument similar to that used in the proof of \cite[Theorem 6.3]{jy11},
we know that $\int_{\rn}\az(x)\,dx=0$, which, together with \eqref{7.30},
yields that
\begin{equation}\label{7.31}
\nabla L^{-1/2}(\az)=\sum_{k=0}^{\fz}M_k
+\sum_{k=0}^{\fz}N_{k+1}\lf(\wz{\chi}_{k+1} -\wz{\chi}_k\r).
\end{equation}
Obviously, for all $k\in\zz_+$,
\begin{equation}\label{7.32}
\supp M_k\subset 2^{k+1}B\ \ \text{and} \ \ \int_{\rn}M_k
(x)\,dx=0.
\end{equation}

When $k\in\{0,\,\cdots,\,4\}$, by H\"older's inequality and the
$L^2(\rn)$-boundedness of $\nabla L^{-1/2}$, we conclude that
\begin{eqnarray}\label{7.33}
\hs\|M_k\|_{L^2(\rn)}&&\le\lf\{\int_{U_k(B)}|\nabla
L^{-1/2}\az(x)|^2\,dx\r\}^{1/2}+\lf\{\int_{U_k(B)}|m_k
\wz{\chi}_k(x)|^2\,dx\r\}^{1/2}\\ \nonumber
&&\ls\|\az\|_{L^2(\rn)}+|m_k||U_k(B)|^{-1/2}\ls\|\az\|_{L^2(\rn)}
\ls|B|^{1/2}\|\chi_B\|^{-1}_{L^\fai(\rn)}.
\end{eqnarray}

From the Davies-Gaffney estimates \eqref{2.5} and the $H_\fz$-functional calculi for
$L$, similar to the proof of \cite[Theorem 3.4]{hm09}, it follows that
there exists $K\in\nn$ with $K>n/4$ such that, for all
$t\in(0,\fz)$, closed sets $E,\,F$ in $\rn$ with $\dist(E,F)>0$
and $g\in L^2(\rn)$ with $\supp g\subset E$,
$$\lf\|\nabla L^{-1/2}\lf(I-e^{-tL}\r)^K g\r\|_{L^2(F)}\ls
\lf(\frac{t}{[\dist(E,F)]^2}\r)^K\|g\|_{L^2(E)}
$$
and
$$\lf\|\nabla L^{-1/2}\lf(tLe^{-tL}\r)^K g\r\|_{L^2(F)}\ls
\lf(\frac{t}{[\dist(E,F)]^2}\r)^K\|g\|_{L^2(E)}.
$$
By this, we conclude that, when $k\in\nn$ with $k\ge5$,
\begin{eqnarray}\label{7.34}
\lf\|\nabla L^{-1/2}\az\r\|_{L^2(U_k(B))}&&\ls\lf\|\nabla
L^{-1/2}\lf(I-e^{-r_B^2 L}\r)^M\az\r\|_{L^2(U_k(B))}\\
\nonumber&&\hs+\sum_{k=1}^M\lf\|\nabla L^{-1/2}\lf(r_B^2
Le^{-\frac{k}{M}r_B^2 L}\r)^M r_B^{-2M}b\r\|_{L^2(U_k(B))}\\
\nonumber &&\ls2^{-2kM} |B|^{1/2}\|\chi_B\|^{-1}_{L^\fai(\rn)},
\end{eqnarray}
which, together with H\"older's inequality, implies that, when
$k\in\nn$ with $k\ge5$,
\begin{eqnarray}\label{7.35}
\hs\|M_k\|_{L^2(\rn)}\ls\lf\|\nabla
L^{-1/2}\az\r\|_{L^2(U_k(B))}\ls2^{-2kM}
|B|^{1/2}\|\chi_B\|^{-1}_{L^\fai(\rn)}.
\end{eqnarray}

Furthermore, by $q(\fai)<2$ and $r(\fai)>2/[2-q(\fai)]$, we
know that there exists $q\in(q(\fai),2)$ such that
$\fai\in\aa_q(\rn)$ and $\rh_{2/(2-q)}(\rn)$. From this,
H\"older's inequality, \eqref{7.33} and \eqref{7.35}, it follows
that, for all $k\in\zz_+$ and  $t\in(0,\fz)$,
\begin{eqnarray}\label{7.36}
&&\lf[\fai(2^{k+1}B,t)\r]^{-1}\int_{2^{k+1}B}|M_k(x)|^q\fai(x,t)\,dx\\ \nonumber
&&\hs\le\lf[\fai(2^{k+1}B,t)\r]^{-1}
\lf\{\int_{2^{k+1}B}|M_k(x)|^2\,dx\r\}^{\frac q2}
\lf\{\int_{2^{k+1}B}[\fai(x,t)]^{\frac 2{2-q}}\,dx\r\}^{\frac{2-q}2}\\ \nonumber
&&\hs\ls2^{-2qkM}|B|^{\frac q2}
\|\chi_B\|^{-q}_{L^\fai(\rn)}|2^{k+1}B|^{-\frac q2},
\end{eqnarray}
which implies that
\begin{eqnarray}\label{7.37}
\|M_k\|_{L^q_\fai(2^{k+1}B)}\ls2^{-(2M+\frac n2)k}
\|\chi_B\|^{-1}_{L^\fai(B)}.
\end{eqnarray}
Then by \eqref{7.37} and \eqref{7.32}, we conclude that, for each
$k\in\zz_+$, $M_k$ is a multiple of a $(\fai,\,q,\,\,0)$-atom. Moreover, from
\eqref{7.35}, it follows that $\sum_{k=0}^{\fz}M_k$ converges in $L^2(\rn)$.

Now we estimate
$\|N_{k+1}(\wz{\chi}_{k+1}-\wz{\chi}_k)\|_{L^2(\rn)}$ with
$k\in\zz_+$. By H\"older's inequality and \eqref{7.34}, we see that
\begin{eqnarray}\label{7.38}
\lf\|N_{k+1}(\wz{\chi}_{k+1}-\wz{\chi}_k)\r\|_{L^2(\rn)}&&\ls|N_{k+1}||2^k
B|^{-\frac12}\ls\sum_{j=k+1}^\fz|m_{j+1}||2^k B|^{-\frac12}\\ \nonumber
&&\ls\sum_{j=k+1}^\fz|2^k B|^{-\frac12}|2^j B|^{\frac12}\|\nabla
L^{-1/2}\az\|_{L^2(U_j(B))}\\ \nonumber
&&\ls2^{-2kM}|B|^{\frac12}\|\chi_B\|^{-1}_{L^\fai(\rn)}.
\end{eqnarray}
From this and H\"older's inequality, similar to the proof of \eqref{7.37}, we deduce that,
for all $k\in\zz_+$,
\begin{eqnarray}\label{7.39}
\lf\|N_{k+1}(\wz{\chi}_{k+1}-\wz{\chi}_k)\r\|_{L^q_\fai(2^{k+1}B)}
\ls2^{-(2M+\frac n2)k} \|\chi_B\|^{-1}_{L^\fai(B)},
\end{eqnarray}
which, together with $\int_\rn
(\wz{\chi}_{k+1}(x)-\wz{\chi}_k(x))\,dx=0$ and
$\supp(\wz{\chi}_{k+1}-\wz{\chi}_k)\subset 2^{k+1}B$, implies that,
for each $k\in\zz_+$, $N_{k+1}(\wz{\chi}_{k+1}-\wz{\chi}_k)$ is a
multiple of a $(\fai,\,q,\,\,0)$-atom. Moreover, by \eqref{7.38}, we see that
$\sum_{k=0}^\fz N_{k+1}(\wz{\chi}_{k+1}-\wz{\chi}_k)$ converges in $L^2(\rn)$.

Thus, \eqref{7.31} is an atomic decomposition of $\nabla
L^{-1/2}\az$ and, further by \eqref{7.37}, \eqref{7.39}, the
uniformly lower type $p_2$ property of $\fai$ and
$M>\frac{n}{2}(\frac{q_0}{p_2}-\frac{1}{2})$, we know that
\begin{eqnarray}\label{7.40}
\qquad&&\sum_{k\in\zz_+}\fai\lf(2^{k+1}B,\|M_k\|_{L^q_\fai(2^{k+1}B)}\r)
+\sum_{k\in\zz_+}\fai\lf(2^{k+1}B,
\|N_{k+1}(\wz{\chi}_{k+1}-\wz{\chi}_k)\|_{L^q_\fai(2^{k+1}B)}\r)\\
\nonumber &&\hs\ls
\sum_{k\in\zz_+}\fai\lf(2^{k+1}B,2^{-(2M+\frac n2)k}
\|\chi_B\|^{-1}_{L^\fai(\rn)}\r)\ls\sum_{k\in\zz_+}
2^{-(2M+\frac n2)p_2}2^{knq_0}\ls1.
\end{eqnarray}

Replacing $\az$ by $\az_j$, consequently, we then denote $M_k$,
$N_k$ and $\wz{\chi}_k$ in \eqref{7.31}, respectively, by
$M_{j,\,k}$, $N_{j,\,k}$ and $\wz{\chi}_{j,\,k}$. Similar to \eqref{7.31}, we know that
\begin{eqnarray*}
\nabla L^{-1/2}f&=&\sum_j\sum_{k=0}^{\fz}\lz_j
M_{j,\,k}+\sum_j\sum_{k=0}^{\fz} \lz_j N_{j,\,k+1}
(\wz{\chi}_{j,\,k+1}- \wz{\chi}_{j,\,k}),
\end{eqnarray*}
where, for each $j$ and $k$, $M_{j,\,k}$ and $N_{j,\,k+1}
(\wz{\chi}_{j,\,k+1}- \wz{\chi}_{j,\,k})$ are multiples of
$(\fai,\,q,\,\,0)$-atoms and the both summations hold true in $L^2 (\rn)$,
and hence in $\cs'(\rn)$. Moreover, from \eqref{7.40} with $B$,
$M_{k}$, $N_{k+1} (\wz{\chi}_{k+1}- \wz{\chi}_{k})$ replaced by
$B_j$, $M_{j,\,k}$, $N_{j,\,k+1} (\wz{\chi}_{j,\,k+1}-
\wz{\chi}_{j,\,k})$, respectively, we deduce that
$$\blz_q\lf(\{M_{j,\,k}\}_{j,\,k}\r)+
\blz_q\lf(\{N_{j,\,k+1} (\wz{\chi}_{j,\,k+1}-
\wz{\chi}_{j,\,k})\}_{j,\,k}\r) \ls\blz\lf(\{\lz_j \az_j\}_j\r)\ls
\lf\|f\r\|_{H_{\fai,\,L}(\rn)}.
$$

From this and Lemma \ref{l7.5}, we deduce that $\|\nabla
L^{-1/2}f\|_{H_{\fai}(\rn)}\ls\lf\|f\r\|_{H_{\fai,\,L}(\rn)}$,
which, together with the fact that $H_{\fai,\,L}(\rn)\cap
L^2(\rn)$ is dense in $H_{\fai,\,L}(\rn)$ and a density argument,
implies that $\nabla L^{-1/2}$ is bounded from $H_{\fai,\,L}(\rn)$
to $H_\fai(\rn)$. This finishes the proof of Theorem \ref{t7.4}.
\end{proof}

\begin{remark}\label{r7.3}
(i) Theorem \ref{t7.4} completely covers \cite[Theorem 8.6]{hlmmy}
by taking $\fai(x,t):=t$ for all $x\in\rn$ and $t\in[0,\fz)$.

(ii) Theorem \ref{t7.3}
completely covers \cite[Theorem 6.2]{jy11} by taking $\fai$ as in
\eqref{1.2} with $\oz\equiv1$ and $\Phi$ concave,
and Theorem \ref{t7.4} completely covers \cite[Theorem 6.3]{jy11}
by taking $\fai$ as in \eqref{1.2} with $\oz\equiv1$, $\Phi$
concave and $p_\Phi\in(\frac{n}{n+1},1]$, where $p_\Phi$ is
as in \eqref{2.8}.
\end{remark}

\medskip

{\bf Acknowledgements.} The authors would like to express
their deep thanks to the referees for their careful
reading and many valuable remarks which made this article
more readable.

\bigskip

\noindent Dachun Yang and Sibei Yang

\smallskip

\noindent School of Mathematical Sciences, Beijing Normal
University, Laboratory of Mathematics and Complex Systems,
Ministry of Education, Beijing 100875, People's Republic of China

\smallskip

\noindent{\it E-mails:} \texttt{dcyang@bnu.edu.cn} (D. Yang) and
\texttt{yangsibei@mail.bnu.edu.cn} (S. Yang)
\end{document}